\documentclass[11pt,reqno]{amsart}
\usepackage[foot]{amsaddr}
\usepackage[margin=1in]{geometry}
\usepackage{amsthm}
\usepackage{amsmath}
\usepackage{amssymb}
\usepackage{bbm}
\usepackage{enumerate}
\usepackage{graphicx}
\usepackage{algorithm}
\usepackage{algorithmic}
\usepackage{breqn}
\usepackage{booktabs}
\usepackage{multirow}
\usepackage{siunitx}
\usepackage{comment}
\usepackage[hidelinks]{hyperref}

\numberwithin{equation}{section}
\theoremstyle{definition}
\newtheorem{theorem}{Theorem}[section]
\newtheorem{proposition}[theorem]{Proposition}
\newtheorem{lemma}[theorem]{Lemma}

\newtheorem*{definition*}{Definition}
\newtheorem{assumption}[theorem]{Assumption}

\newtheorem*{remark*}{Remark}
\newtheorem{example}[theorem]{Example}

\newcommand{\R}{\mathbb{R}}
\newcommand{\C}{\mathbb{C}}
\newcommand{\E}{\mathbb{E}}
\renewcommand{\P}{\mathbb{P}}
\newcommand{\N}{\mathcal{N}}
\newcommand{\eps}{\varepsilon}
\newcommand{\1}{\mathbbm{1}}
\newcommand{\Id}{\operatorname{Id}}
\renewcommand{\Im}{\operatorname{Im}}
\renewcommand{\Re}{\operatorname{Re}}
\newcommand{\der}{\mathrm{d}}
\renewcommand{\O}{O_{\prec}}

\newcommand{\Tr}{\operatorname{Tr}}
\newcommand{\col}{\operatorname{col}}
\renewcommand{\vec}{\operatorname{vec}}
\newcommand{\diag}{\operatorname{diag}}

\newcommand{\supp}{\operatorname{supp}}
\newcommand{\dist}{\operatorname{dist}}
\newcommand{\ordereddist}{\operatorname{ordered-dist}}
\newcommand{\spec}{\operatorname{spec}}
\newcommand{\HS}{{\text{HS}}}

\newcommand{\ba}{\mathbf{a}}
\newcommand{\bb}{\mathbf{b}}
\newcommand{\e}{\mathbf{e}}
\renewcommand{\i}{\mathbf{i}}
\renewcommand{\j}{\mathbf{j}}
\newcommand{\bt}{\mathbf{t}}
\renewcommand{\u}{\mathbf{u}}
\renewcommand{\v}{\mathbf{v}}
\newcommand{\w}{\mathbf{w}}
\newcommand{\x}{\mathbf{x}}
\newcommand{\y}{\mathbf{y}}
\newcommand{\z}{\mathbf{z}}

\newcommand{\bmu}{\boldsymbol{\mu}}
\newcommand{\balpha}{\boldsymbol{\alpha}}
\newcommand{\bbeta}{\boldsymbol{\beta}}
\newcommand{\bgamma}{\boldsymbol{\gamma}}
\newcommand{\bdelta}{\boldsymbol{\delta}}
\newcommand{\beps}{\boldsymbol{\varepsilon}}
\newcommand{\one}{\mathbf{1}}

\renewcommand{\a}{\alpha}
\renewcommand{\b}{\beta}
\newcommand{\g}{\gamma}

\renewcommand{\r}{\rho}

\newcommand{\vB}{\check{B}}

\newcommand{\vF}{\check{F}}
\newcommand{\vG}{\check{G}}

\newcommand{\vM}{\check{M}}
\newcommand{\vU}{\check{U}}

\newcommand{\vY}{\check{Y}}

\newcommand{\vm}{\check{m}}
\newcommand{\vn}{\check{n}}

\newcommand{\valpha}{\check{\alpha}}

\newcommand{\cC}{\mathcal{C}}

\newcommand{\cE}{\mathcal{E}}

\newcommand{\I}{\mathcal{I}}
\newcommand{\cM}{\mathcal{M}}

\newcommand{\Q}{\mathcal{Q}}

\newcommand{\cS}{\mathcal{S}}
\newcommand{\cT}{\mathcal{T}}
\newcommand{\cY}{\mathcal{Y}}
\newcommand{\cZ}{\mathcal{Z}}

\newcommand{\hf}{\hat{f}}
\newcommand{\hg}{\hat{g}}
\newcommand{\hv}{\mathbf{\hat{v}}}

\newcommand{\hl}{\hat{\lambda}}
\newcommand{\hmu}{\hat{\mu}}

\newcommand{\hK}{\widehat{K}}

\newcommand{\hLambda}{\widehat{\Lambda}}
\newcommand{\hSigma}{\widehat{\Sigma}}
\newcommand{\hcE}{\widehat{\mathcal{E}}}
\newcommand{\hcV}{\widehat{\mathcal{V}}}
\newcommand{\oS}{\mathring{S}}
\newcommand{\oV}{\mathring{V}}
\newcommand{\oX}{\mathring{X}}
\newcommand{\MS}{\text{MS}}

\title[Spiked covariances in random effects models]
{Spiked covariances and principal components analysis in high-dimensional
random effects models}
\author{Zhou Fan}
\author{Iain M. Johnstone}
\address{Department of Statistics, Stanford University}
\author{Yi Sun}
\address{Department of Mathematics, Columbia University}
\email{zhoufan@stanford.edu, imj@stanford.edu, yisun@math.columbia.edu}

\begin{document}

\maketitle
\begin{abstract}
We study principal components analyses in multivariate random and mixed effects
linear models, assuming a spherical-plus-spikes structure for the covariance
matrix of each random effect. We characterize the behavior of outlier sample
eigenvalues and eigenvectors of MANOVA variance components estimators in such
models under a high-dimensional asymptotic regime. Our results show that an 
aliasing phenomenon may occur in high dimensions, in which eigenvalues and
eigenvectors of the MANOVA estimate for one variance component may be
influenced by the
other components. We propose an alternative procedure for estimating the true
principal eigenvalues and eigenvectors that asymptotically corrects for
this aliasing problem.
\end{abstract}

\section{Introduction}
We study multivariate random and mixed effects linear models.
As a simple example, consider a twin study measuring $p$ quantitative traits in 
$n$ individuals, consisting of $n/2$ pairs of identical twins. We may model the
observed traits of the $j^{\text{th}}$ individual in the $i^{\text{th}}$ pair as
\begin{equation}\label{eq:oneway}
\y_{i,j}=\bmu+\balpha_i+\beps_{i,j} \in \R^p.
\end{equation}
Here, $\bmu$ is a deterministic vector of mean trait values in the population,
and
\[\balpha_i \overset{iid}{\sim} \N(0,\Sigma_1), \qquad \beps_{i,j}
\overset{iid}{\sim} \N(0,\Sigma_2)\]
are unobserved, independent random vectors modeling trait variation at the pair
and individual levels. Assuming the absence of shared environment, the
covariance matrices $\Sigma_1,\Sigma_2 \in \R^{p \times p}$ may be interpreted
as the genetic and environmental components of variance.

Since the pioneering work of R.\ A.\ Fisher \cite{fisher}, such models have been
widely used to decompose the variation of quantitative traits into constituent
variance components. Genetic variance is commonly further decomposed into
additive, dominance, and epistatic components \cite{wright}. Components of
environmental variance may be individual-specific or potentially
also shared within families or batches of an experimental protocol. In many
applications, for example measuring the heritability of traits, predicting
evolutionary response to selection, and correcting for confounding variation
from experimental procedures, it is of interest to estimate the individual
variance components \cite{falconermackay,lynchwalsh,visscheretal}.
Classically, this may be done by examining the resemblance between relatives
in simple classification designs \cite{fisher,comstockrobinson}. In modern
genome-wide association studies, where genotypes are observed at a set
of genetic markers, this is often done using models which treat
contributions of single-nucleotide polymorphisms to
polygenic traits as independent and unobserved random effects
\cite{yangetal,zhouetal,moseretal,lohetal,finucaneetal}.

These types of mixed effects linear models are often applied in univariate
contexts, $p=1$, to study the genetic basis of individual traits.
However, certain questions arising in evolutionary biology require an
understanding of the joint variation of multiple, and oftentimes many,
quantitative phenotypes \cite{lande,landearnold,houle,houleetal}. In such
multivariate contexts, it is
often natural to interpret the covariance matrices of the variance components
in terms of their principal component
decompositions \cite{blows,blowsmcguigan}. For example, the largest
eigenvalues and effective rank of the additive genetic component of covariance
indicate the extent to which evolutionary response to natural selection is
genetically constrained to a lower dimensional phenotypic subspace, and the
principal eigenvectors indicate likely directions of phenotypic response
\cite{mezeyhoule,hineblows,walshblows,hineetal,blowsetal}.
Similar interpretations apply to the spectral structure
of variance components that capture variation due to genetic mutation
\cite{mcguiganmutation}. In studies involving gene-expression
phenotypes, trait dimensionality in the several thousands is common
\cite{mcguiganetal,colletetal}.

\begin{figure}
\includegraphics[width=0.33\textwidth,page=1]{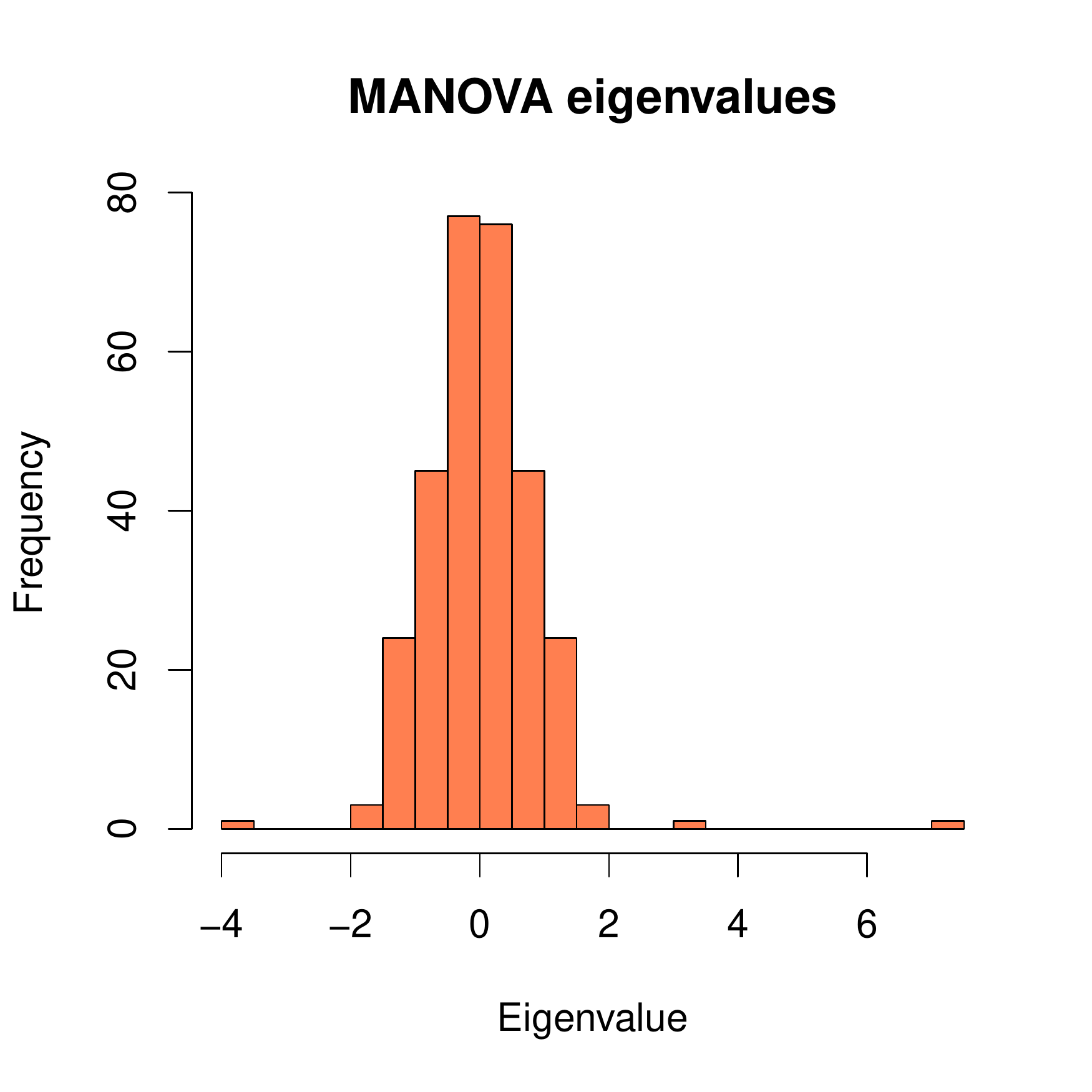}%
\includegraphics[width=0.33\textwidth,page=2]{introduction.pdf}%
\includegraphics[width=0.33\textwidth,page=3]{introduction.pdf}
\caption{Eigenvalues and principal eigenvector of the MANOVA and REML estimates
of $\Sigma_1$ in a one-way design with $I=300$ groups of size $J=2$ and $p=300$
traits. The true group covariance is
$\Sigma_1=6\e_1\e_1'$ (rank one), and the true error covariance is
$\Sigma_E=29\v\v'+\Id$ where $\v=\frac{1}{2}\e_1+\frac{\sqrt{3}}{2}\e_2$.
Histograms display eigenvalues averaged
across 100 simulations. The rightmost plot displays the empirical
mean and 90\% ellipsoids for the first two coordinates of the unit-norm
principal eigenvector (MANOVA in red and REML in blue), with $\e_1$ and $\v$
shown in black.}\label{fig:introduction}
\end{figure}

Motivated by these applications, we study in this work the spectral behavior of 
variance components estimates when the number of traits $p$ is large. To
illustrate some of the problems that may arise,
Figure \ref{fig:introduction} depicts
the eigenvalues and principal eigenvector of the multivariate analysis of
variance (MANOVA) \cite{searlerounsaville,searleetal} and multivariate
restricted maximum likelihood (REML) \cite{klotzputter,meyerREML} estimates of
$\Sigma_1$ in the balanced one-way model (\ref{eq:oneway}). REML estimates
were computed by the post-processing procedure described in \cite{amemiya}.
In this example,
the true group covariance $\Sigma_1$ has rank one, representing a single
direction of variation. The true error covariance $\Sigma_2$
also represents a single direction of variation which is partially aligned
with that of $\Sigma_1$, plus additional isotropic noise. Partial alignment of
eigenvectors of $\Sigma_2$ with those of $\Sigma_1$ may be common, for example,
in sibling designs where the additive genetic covariance contributes both to
$\Sigma_1$ and $\Sigma_2$. We observe several problematic
phenomena concerning either the MANOVA or REML estimate $\hSigma_1$:\\

\noindent {\bf Eigenvalue dispersion.} The eigenvalues of $\hSigma_1$ are widely
dispersed, even though all but one true eigenvalue of $\Sigma_1$ is non-zero.\\

\noindent {\bf Eigenvalue aliasing.} The estimate $\hSigma_1$ exhibits multiple
outlier eigenvalues which indicate significant directions of variation, even
though the true matrix $\Sigma_1$ has rank one.\\

\noindent {\bf Eigenvalue bias.} The largest eigenvalue of $\hSigma_1$ is biased
upwards from the true eigenvalue of $\Sigma_1$.\\

\noindent {\bf Eigenvector aliasing.} The principal eigenvector of
$\hSigma_1$ is not aligned with the true eigenvector of
$\Sigma_1$, but rather is biased in the direction of the eigenvector
of $\Sigma_2$.\\

Several eigenvalue shrinkage and rank-reduced estimation procedures have been
proposed to address some of these shortcomings, with associated simulation
studies of their performance in low-to-moderate dimensions
\cite{hayeshill,kirkpatrickmeyerdirect,meyerkirkpatricksmoothed,meyerkirkpatrickperils,meyerkirkpatrickbending}.
In this work, we will focus on higher-dimensional applications and study
these phenomena theoretically and from an asymptotic viewpoint.

We focus on MANOVA-type estimators, with particular attention on balanced
classification designs where such estimators are canonically defined
(cf.\ Section \ref{sec:balanced}). We leave the study of REML and
likelihood-based estimation as an important avenue for future work.
We consider the asymptotic regime where $n,p \to \infty$ proportionally,
and the number of realizations of each random effect also
increases proportionally with $n$ and $p$. In this setting, the
dispersion of sample eigenvalues was studied in \cite{fanjohnstonebulk}.
We study here the latter three phenomena above, under the simplifying assumption
of a spiked covariance model where the noise exhibited by each random effect is
isotropic \cite{johnstone}. It was observed in \cite{fanjohnstoneedges} that
in this isotropic setting, the equations describing eigenvalue dispersion reduce
to the Marcenko-Pastur equation \cite{marcenkopastur}, and we review this
in Section \ref{sec:model}.

In Section \ref{sec:outliers}, we provide a probabilistic characterization of
the behavior of outlier eigenvalues and eigenvectors. We show that in the
presence of high-dimensional noise,
each outlier eigenvalue $\hl$ of a MANOVA estimate $\hSigma$ is
close to an eigenvalue of a certain surrogate matrix which is a linear
combination of different population variance components.
When $\hl$ is an isolated eigenvalue, we show furthermore that it exhibits
asymptotic Gaussian fluctuations on the $n^{-1/2}$ scale, and
its corresponding eigenvector $\hv$ is partially aligned with the eigenvector
of this surrogate.

These results describe quantitatively the aliasing phenomena exhibited
in Figure \ref{fig:introduction}---eigenvalues and eigenvectors of the MANOVA
estimate for one variance component may be influenced by the other components.
In Section \ref{sec:estimation}, we propose a
new procedure for estimating the true principal eigenvalues and eigenvectors of 
a single variance component, by identifying alternative matrices $\hSigma$
in the linear span of the classical MANOVA estimates
where the surrogate matrix depends only on the single component being estimated.
We prove theoretically that the resulting eigenvalue estimates are
consistent in the high-dimensional asymptotic regime. The
eigenvector estimates remain inconsistent due to the high-dimensional noise,
but we show that they are asymptotically void of aliasing effects. We provide
finite-sample simulations of the performance of this algorithm in the one-way
design (\ref{eq:oneway}) for moderately large $n$ and $p$.

Proofs are contained in Section \ref{sec:proofs} and Appendix
\ref{sec:resolventapprox}. Our probabilistic results are analogous to
those regarding outlier eigenvalues and
eigenvectors for the spiked sample covariance model, studied in
\cite{baiketal,baiksilverstein,paul,nadler,baiyaoCLT}, and our proofs use
the matrix perturbation approach of \cite{paul} which is similar also to the
approaches of
\cite{benaychgeorgesnadakuditi,benaychgeorgesetal,baiyaogeneralized}.
An extra ingredient needed in our proof is a deterministic approximation for
arbitrary linear and quadratic functions of entries of the resolvent in
the Marcenko-Pastur model. We establish this for spectral arguments
separated from the limiting support, building on the local laws for this setting
in \cite{bloemendaletal,knowlesyin} and using a fluctuation averaging idea
inspired by \cite{erdosbernoulli,erdosyauyin,erdoslocalSC,erdosER}.
We note that new qualitative phenomena emerge in our model which are not
present in the setting of spiked sample covariance matrices---outliers may
depend on the alignments between population spike eigenvectors in different
variance components, and a single spike may generate multiple outliers. This
latter phenomenon was observed in a
different context in \cite{belinschietal}, which studied sums and products of
independent unitarily invariant matrices in spiked settings. We discuss two
points of contact between our results and those of
\cite{benaychgeorgesnadakuditi} and \cite{belinschietal} in Examples
\ref{ex:multperturb} and \ref{ex:freeaddition}.

\subsection*{Notational conventions}
For a square matrix $X$, $\spec(X)$ is its multiset of eigenvalues (counting
multiplicity). For a law $\mu_0$ on $\R$, we denote its closed support
\[\supp(\mu_0)=\{x \in \R: \mu_0([x-\eps,x+\eps])>0 \text{ for all } \eps>0\}.\]

$\e_i$ is the $i^{\text{th}}$ standard basis vector, $\Id$ is the identity
matrix, and $\one$ is the all-1's column vector, where dimensions are understood
from context. We use $\Id_n$ and $\one_n$ to explicitly emphasize the dimension
$n$.

$\|\cdot\|$ is the Euclidean norm for vectors and the Euclidean operator norm
for matrices. $\|\cdot\|_\HS$ is the matrix Hilbert-Schmidt norm.
$A \otimes B$ is the matrix tensor product.
When $Y$ and $M$ are matrices, $Y \sim \N(M,A \otimes B)$ is
shorthand for $\vec(Y') \sim \N(\vec(M'),A \otimes B)$, where
$\vec(Y')$ and $\vec(M')$ are the row-wise vectorizations of $Y$ and $M$.
$X'$ is the transpose of $X$, $\col(X)$ is its column span, and $\ker(X)$ is
its kernel or null space.

For subspaces $U$ and $V$, $\dim(U)$ is the dimension of $U$,
$U \oplus V$ is the orthogonal direct sum, and
$V \ominus U$ is the orthogonal complement of $U$ in $V$.

For $z \in \C$, we typically write $z=E+i\eta$ where $E=\Re z$ and $\eta=\Im z$.
For $A \subset \C$, $\dist(z,A)=\inf\{|y-z|:y \in A\}$ is the distance from $z$
to $A$.

\subsection*{Acknowledgments}
We thank quantitative geneticist Mark W.\ Blows for introducing us to this
problem and its applications in evolutionary biology. ZF was supported by a Hertz Foundation Fellowship. IMJ was supported in
part by grants NIH R01 EB001988 and NSF DMS 1407813. YS was supported by a Junior Fellow award from the Simons Foundation and NSF Grant DMS-1701654. ZF
and YS would like to acknowledge the Park City Mathematics
Institute (NSF grant DMS:1441467) where part of this research was conducted.

\section{Model}\label{sec:model}
We consider observations $Y \in \R^{n \times p}$ of $p$
traits in $n$ individuals, modeled by a Gaussian mixed effects linear model
\begin{equation}\label{eq:mixedmodel}
Y = X\beta+U_1\alpha_1+\ldots+U_k\alpha_k, \qquad \alpha_r \sim \N(0,
\Id_{m_r} \otimes \Sigma_r) \quad \text{for } r=1,\ldots,k.
\end{equation}
The matrices $\alpha_1,\ldots,\alpha_k$ are independent, with
each matrix $\alpha_r \in \R^{m_r \times p}$ having independent rows,
representing $m_r$ (unobserved) realizations of a $p$-dimensional random effect
with distribution $\N(0,\Sigma_r)$.
The incidence matrix $U_r \in \R^{n \times m_r}$, which is known from the
experimental protocol, determines how the
random effect contributes to the observations $Y$. The first term $X\beta$
models possible additional fixed effects, where $X \in \R^{n \times q}$ is a
known design matrix of $q$ regressors and $\beta \in \R^{q \times p}$ contains
the corresponding regression coefficients.

This model is usually written with an additional residual error term
$\eps \in \R^{n \times p}$. We incorporate this
by allowing the last random effect to be $\alpha_k=\eps$ and $U_k=\Id_n$.
For example, the one-way model (\ref{eq:oneway}) corresponds
to (\ref{eq:mixedmodel}) where $k=2$. Supposing there are $I$ groups of equal
size $J$, we set $m_1=I$, $m_2=n=IJ$, stack the vectors
$\y_{i,j}$, $\balpha_i$, and $\beps_{i,j}$ as the rows of
$Y$, $\alpha_1$, and $\alpha_2$, and identify
\begin{equation}\label{eq:onewaymatrix}
X=\one_n, \qquad \beta=\bmu',\qquad
U_1=\Id_I \otimes \one_J=\begin{pmatrix}
\one_J & & \\
& \ddots & \\
& & \one_J \\
\end{pmatrix},\qquad U_2=\Id_n.
\end{equation}
Here, $X$ is a single all-1's regressor, and $U_1$ has $I$ columns indicating
the $I$ groups. We discuss examples with $k \geq 3$ random effects in Section
\ref{sec:balanced}.

Under the general model (\ref{eq:mixedmodel}),
$Y$ has the multivariate normal distribution
\begin{equation}\label{eq:multivariatenormal}
Y \sim \N(X\beta,\;U_1U_1' \otimes \Sigma_1+\ldots+U_kU_k' \otimes \Sigma_k).
\end{equation}
The unknown parameters of the model are
$(\beta,\Sigma_1,\ldots,\Sigma_k)$. We study estimators of
$\Sigma_1,\ldots,\Sigma_k$ which are invariant to $\beta$ and take the form
\begin{equation}\label{eq:hatSigma}
\hSigma=Y'BY,
\end{equation}
where the estimation matrix $B \in \R^{n \times n}$ is symmetric and satisfies
$BX=0$. To obtain an estimate of $\Sigma_r$,
observe that $\E[\alpha_r'M\alpha_r]=(\Tr M)\Sigma_r$ for any matrix $M$.
Then, as $\alpha_1,\ldots,\alpha_k$ are independent with mean 0,
\begin{equation}\label{eq:unbiasedestimation}
\E[Y'BY]=\sum_{r=1}^k
\E[\alpha_r'U_r'BU_r\alpha_r]=\sum_{r=1}^k \Tr(U_r'BU_r)\Sigma_r.
\end{equation}
So $\hSigma$ is an unbiased estimate of $\Sigma_r$ when $B$ satisfies
$\Tr U_r'BU_r=1$ and $\Tr U_s'BU_s=0$ for all $s \neq r$.

In balanced classification designs, discussed in greater detail in Section
\ref{sec:balanced}, the classical MANOVA estimators are obtained by setting
$B$ to be combinations of projections onto subspaces of $\R^n$.
For example, in the one-way model corresponding to (\ref{eq:onewaymatrix}),
defining $\pi_1,\pi_2 \in \R^{n \times n}$ as
the orthogonal projections onto $\col(U_1) \ominus \col(\one_n)$ and
$\R^n \ominus \col(U_1)$, the MANOVA estimators of $\Sigma_1$
and $\Sigma_2$ are given by
\begin{equation}\label{eq:MANOVAoneway}
\hSigma_1=Y'\left(\frac{1}{J} \cdot \frac{\pi_1}{I-1}-\frac{1}{J} \cdot
\frac{\pi_2}{n-I} \right)Y,
\qquad \hSigma_2=Y'\frac{\pi_2}{n-I}Y.
\end{equation}
In unbalanced designs and more general models, various alternative choices of
$B$ lead to estimators in the generalized MANOVA \cite{searleetal}
and MINQUE/MIVQUE families \cite{rao,lamotte,swallowsearle}.

We study spectral properties of the matrix (\ref{eq:hatSigma}) in a
high-dimensional asymptotic regime, assuming a spiked model for each variance
component $\Sigma_r$.
\begin{assumption}\label{assump:main}
The number of effects $k$ is fixed while $n,p,m_1,\ldots,m_k \to \infty$. There
are constants $C,c,\bar{C}>0$ such that
\begin{enumerate}[(a)]
\item (Number of traits) $c<p/n<C$.
\item (Model design) $c<m_r/n<C$ and $\|U_r\|<C$ for each $r=1,\ldots,k$.
\item (Estimation matrix) $B=B'$, $BX=0$, and $\|B\|<C/n$.
\item (Spiked covariance) For each $r=1,\ldots,k$,
\[\Sigma_r=\sigma_r^2\Id+V_r\Theta_r V_r',\]
where $V_r \in \R^{p \times l_r}$ has orthonormal columns,
$\Theta_r \in \R^{l_r \times l_r}$ is diagonal, 
$0 \leq \sigma_r^2<C$, $0 \leq l_r<C$, and $\|\Theta_r\|<\bar{C}$. (We set
$V_r\Theta_rV_r'=0$ when $l_r=0$.)
\end{enumerate}
\end{assumption}

Under Assumption \ref{assump:main}(d),
each $\Sigma_r$ has an isotropic noise level $\sigma_r^2$ (possibly 0 if
$\Sigma_r$ is low-rank)
and a bounded number of signal eigenvalues greater than this noise level.
We allow $\sigma_r^2$, $l_r$, $V_r$, and $\Theta_r$ to vary with $n$ and $p$.
We will be primarily interested in scenarios where at least one variance
$\sigma_1^2,\ldots,\sigma_k^2$ is of size $O(1)$, although let us remark
that setting $\sigma_1^2=\ldots=\sigma_k^2=0$ also recovers the classical
low-dimensional asymptotic regime where the ambient dimension of the data is
bounded as $n \to \infty$.

In classification designs, Assumption \ref{assump:main}(b) holds when the
number of outer-most groups is proportional to $n$, and groups (and sub-groups) 
are bounded in size. This encompasses typical designs in classical
settings \cite{robertsona,robertsonb}, and we discuss several examples in
Section \ref{sec:balanced}. In models where $U_r$ is
a matrix of genotype values at $m_r$ SNPs, Assumption \ref{assump:main}(b) 
holds if $m_r \asymp n$ and $U_r$ is entrywise bounded by $C/\sqrt{n}$. This
latter condition is satisfied if genotypes at each SNP are normalized to mean 0
and variance $1/n$, and SNPs with minor allele frequency below a
constant threshold are removed. Under Assumption \ref{assump:main}(b),
the scaling $\|B\|<1/n$ in Assumption \ref{assump:main}(c) is then
natural to ensure $\Tr U_r'BU_r$ is bounded for each $r=1,\ldots,k$, and
hence $\E[Y'BY]$ is on the same scale as $\Sigma_1,\ldots,\Sigma_k$.

Throughout, we denote by
$\cS \subset \R^p$ the combined column span of $V_1,\ldots,V_k$,
where $\cS=\emptyset$ if $l_1=\ldots=l_k=0$.
$P_\cS$ and $P_{\cS^\perp}$ denote the
orthogonal projections onto $\cS$ and its orthogonal complement. We set
\[L=\dim \cS,\qquad N=p-L,\qquad M=m_1+\ldots+m_k,\]
and define a block matrix $F$ by
\begin{equation}\label{eq:F}
F_{rs}=N\sigma_r\sigma_s U_r'BU_s \in \R^{m_r \times m_s},\qquad
F=\begin{pmatrix} F_{11} & \cdots & F_{1k} \\ \vdots & \ddots & \vdots \\
F_{k1} & \cdots & F_{kk} \end{pmatrix} \in \R^{M \times M}.
\end{equation}
The ``null'' setting of no spikes, $L=0$, was studied in
\cite{fanjohnstoneedges}, which made the following simple observation.
\begin{proposition}\label{prop:nullform}
If $L=0$, then $\hSigma$ is equal in law to $X'FX$ where
$X \in \R^{M \times N}$ has i.i.d.\ $\N(0,1/N)$ entries.
\end{proposition}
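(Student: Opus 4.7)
The plan is to unpack the definitions under $L=0$ and carry out a single change of variables. When $L=0$, every spike vanishes, so Assumption \ref{assump:main}(d) gives $\Sigma_r = \sigma_r^2 \Id_p$ for every $r$, which means each $\alpha_r \in \R^{m_r \times p}$ has i.i.d.\ $\N(0,\sigma_r^2)$ entries. Equivalently, $\alpha_r = \sigma_r Z_r$ where $Z_r \in \R^{m_r \times p}$ has i.i.d.\ standard Gaussian entries, and the $Z_r$ are independent across $r$. Also, since $L=0$ we have $N = p$.

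Next I would use the condition $BX = 0$ from Assumption \ref{assump:main}(c) to eliminate the fixed effect: $Y'BY = (Y - X\beta)'B(Y - X\beta)$, since the cross terms involving $X\beta$ are zero both on the left and on the right. Then $Y - X\beta = U_1\alpha_1 + \ldots + U_k\alpha_k = U\alpha$, where $U = [U_1 \mid \cdots \mid U_k] \in \R^{n \times M}$ and $\alpha \in \R^{M \times p}$ is the vertical concatenation of the $\alpha_r$'s. Substituting gives
\[
\hSigma = Y'BY = \alpha' U'BU\, \alpha.
\]

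Now I would perform the change of variables $\alpha = D Z$, where $D = \diag(\sigma_1 \Id_{m_1}, \ldots, \sigma_k \Id_{m_k}) \in \R^{M \times M}$ and $Z \in \R^{M \times p}$ stacks the $Z_r$'s. By independence of the $Z_r$ and the block-diagonal structure of $D$, the matrix $Z$ has i.i.d.\ $\N(0,1)$ entries (of total size $M \times N$ since $p=N$). Then
\[
\hSigma = Z' D U'BU D\, Z,
\]
and the $(r,s)$ block of $D U'BU D$ is exactly $\sigma_r \sigma_s U_r' B U_s = F_{rs}/N$. Setting $X = Z/\sqrt{N}$ so that $X$ has i.i.d.\ $\N(0,1/N)$ entries gives $\hSigma = X' F X$, as claimed.

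The proof is essentially bookkeeping; there is no real obstacle beyond making sure the two scalings (the $\sigma_r$ absorbed into the variance of each $\alpha_r$, and the factor $N$ placed inside the definition of $F$) are tracked consistently. The statement holds pathwise after the change of variables, so equality in distribution is immediate from the joint Gaussianity of the $\alpha_r$'s.
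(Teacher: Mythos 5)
Your proof is correct and follows essentially the same route as the paper's: use $BX=0$ to kill the fixed effect, absorb the scalar variance $\sigma_r$ (and the normalizing $\sqrt{N}$) into a standardized Gaussian matrix, and identify the resulting quadratic form with $X'FX$ block by block. The only cosmetic difference is that you package the rescaling as a single block-diagonal change of variables $\alpha = DZ$ rather than substituting $\alpha_r = \sqrt{N}\sigma_r X_r$ directly into the double sum, but the content is the same.
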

\begin{proof}
If $L=0$, we may write $\alpha_r=\sqrt{N}\sigma_r X_r$,
where $X_r \in \R^{m_r \times N}$ has i.i.d.\ $\N(0,1/N)$ entries. Then,
applying $BX=0$,
\[\hSigma=Y'BY=\sum_{r,s=1}^k \alpha_r'U_r'BU_s\alpha_s=\sum_{r,s=1}^k X_r'
(N\sigma_r\sigma_s U_r'BU_s)X_s=\sum_{r,s=1}^k X_r'F_{rs}X_s.\]
The result follows upon 
stacking $X_1,\ldots,X_k$ row-wise as $X \in \R^{M \times N}$.
\end{proof}

In this case, the asymptotic spectrum of $\hSigma$ is described by the
Marcenko-Pastur equation:
\begin{theorem}\label{thm:MP}
For each $z \in \C^+$, there is a unique value $m_0(z) \in \C^+$ which
satisfies
\begin{equation}\label{eq:MP}
z=-\frac{1}{m_0(z)}+\frac{1}{N}\Tr \Big( F[\Id+m_0(z)F]^{-1} \Big).
\end{equation}
This function $m_0:\C^+ \to \C^+$ defines the Stieltjes transform of a
probability distribution $\mu_0$ on $\R$.

Under Assumption \ref{assump:main}, if $L=0$, then
$\mu_{\hSigma}-\mu_0 \to 0$ weakly almost surely as $n,p,m_1,\ldots,m_k \to
\infty$, where $\mu_{\hSigma}$
is the empirical distribution of eigenvalues of $\hSigma$.
\end{theorem}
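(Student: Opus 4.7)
The plan is to invoke Proposition \ref{prop:nullform} to replace $\hSigma$ with the generalized sample covariance $X'FX$, where $X\in\R^{M\times N}$ has i.i.d.\ $\N(0,1/N)$ entries and $F\in\R^{M\times M}$ is deterministic and symmetric with $\|F\|=O(1)$ (by Assumption \ref{assump:main}, using $\sigma_r,\|U_r\|\le C$, $\|B\|\le C/n$, and $N\asymp n$). The conclusion then reduces to a Marcenko--Pastur-type statement for Gaussian sample covariance matrices with an indefinite Hermitian weighting, which I would verify in the Stieltjes transform framework.

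For the deterministic part, I would rewrite (\ref{eq:MP}) as the fixed-point equation $m=\Phi_z(m)$ with $\Phi_z(m):=-(z-\tfrac{1}{N}\Tr F(\Id+mF)^{-1})^{-1}$. Expanding via $F=\sum_i\lambda_i\u_i\u_i'$ and comparing imaginary parts shows that $\Phi_z$ maps $\C^+$ into itself, so a solution $m_0(z)\in\C^+$ exists by a standard normal-families argument. For uniqueness, if $m_1,m_2\in\C^+$ both solve (\ref{eq:MP}), subtracting the equations and factoring out $m_1-m_2$ leaves a coefficient of the form $1+m_1m_2\cdot\tfrac{1}{N}\sum_j\lambda_j^2/((1+m_1\lambda_j)(1+m_2\lambda_j))$; combining Cauchy--Schwarz with the identity $1/|m_i|^2=\Im z/\Im m_i+\tfrac{1}{N}\sum_j\lambda_j^2/|1+m_i\lambda_j|^2$ (obtained from $\Im$ of the equation) bounds this coefficient strictly away from zero, forcing $m_1=m_2$. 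Analyticity of $m_0$ in $z$ is inherited from $\Phi_z$ via the implicit function theorem, and the asymptotic $z=-1/m_0+O(1)$ as $|z|\to\infty$ yields $iy\,m_0(iy)\to-1$, verifying via Nevanlinna that $m_0$ is the Stieltjes transform of a probability measure $\mu_0$ on $\R$.

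For the almost-sure weak convergence, by the Stieltjes continuity theorem it suffices to prove $m_N(z):=\tfrac{1}{N}\Tr(X'FX-zI)^{-1}\to m_0(z)$ almost surely for each fixed $z\in\C^+$. Writing $G:=(X'FX-zI)^{-1}$ and applying Gaussian integration by parts on the entries of $X$ in the identity $(X'FX-zI)G=I$, then taking a normalized trace, produces an approximate self-consistent relation
\[-\tfrac{1}{\E[m_N(z)]}=z-\tfrac{1}{N}\Tr F\bigl(\Id+\E[m_N(z)]\,F\bigr)^{-1}+o(1),\]
so $\E[m_N(z)]$ is an approximate solution of (\ref{eq:MP}). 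The Gaussian Poincar\'e inequality applied to the Lipschitz function $X\mapsto m_N(z)$ (with constant $O(N^{-1/2}(\Im z)^{-2})$) concentrates $m_N(z)$ around $\E[m_N(z)]$ almost surely, and quantitative stability of (\ref{eq:MP}) at $m_0(z)$ then forces $m_N(z)\to m_0(z)$ a.s.

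The most delicate step is this quantitative stability: the Lipschitz constant of $\Phi_z$ at $m_0(z)$ can approach $1$ as $z$ approaches the edge of $\supp(\mu_0)$, so one must exploit the strict inequality $|m_0|^2\cdot\tfrac{1}{N}\sum_j\lambda_j^2/|1+m_0\lambda_j|^2<1$ on $\C^+$ (with slack controlled by $\Im z/\Im m_0$) to keep the linearization contractive. Once an approximate solution of (\ref{eq:MP}) can be pushed to the genuine fixed point $m_0$, the preceding steps combine to yield the weak almost-sure convergence $\mu_{\hSigma}\to\mu_0$ and complete the proof.
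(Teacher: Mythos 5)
Your proposal takes a genuinely different route from the paper's proof. The paper simply cites the classical Marcenko--Pastur / Silverstein--Bai literature in the convergent-parameter setting (where $M/N$ and the spectral law of $F$ have fixed limits), and then observes that the formulation under Assumption~\ref{assump:main} follows from Prohorov's theorem and a subsequence argument. You propose instead a self-contained Stieltjes-transform proof via Gaussian integration by parts, concentration, and stability, treating $\mu_0$ as $n$-dependent throughout; this buys full self-containment and avoids the compactness/subsequence step entirely, at the cost of rebuilding standard machinery. Your deterministic part is correct: self-mapping of $\C^+$ for existence, the Cauchy--Schwarz argument with the identity $1/|m_i|^2 = \Im z/\Im m_i + N^{-1}\sum_j \lambda_j^2/|1+m_i\lambda_j|^2$ for uniqueness (note the displayed coefficient should be $1 - m_1m_2\cdot N^{-1}\sum_j \lambda_j^2/((1+m_1\lambda_j)(1+m_2\lambda_j))$, with a minus sign, though this does not change the conclusion), and Nevanlinna together with $iy\,m_0(iy) \to -1$ for the Stieltjes-transform property.

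Two steps in the probabilistic part need repair, both fixable but genuinely missing as written. First, $X \mapsto m_N(z)$ is not globally Lipschitz: differentiating the resolvent gives $\|\nabla m_N\|_2 \le C\|F\|\,\|X\|\,N^{-1/2}(\Im z)^{-2}$, and the $\|X\|$ factor is unbounded. You need to first restrict to the high-probability event $\{\|X\| \le C\}$, on which the Lipschitz constant becomes the claimed $O(N^{-1/2}(\Im z)^{-2})$, or replace Poincar\'e with a martingale-difference (bounded-difference on resolvent entries) argument \`a la Bai--Silverstein which does not require a global Lipschitz bound. Second, the Poincar\'e inequality only gives a variance bound $\mathrm{Var}(m_N(z)) = O(N^{-2}(\Im z)^{-4})$, which does not by itself yield almost-sure convergence; you need sub-Gaussian concentration (from Gaussian isoperimetry on the truncated event) plus Borel--Cantelli, or higher-moment estimates. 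Two smaller remarks: since both $\mu_{\hSigma}$ and $\mu_0$ depend on $n$, the Stieltjes continuity theorem must be applied in its two-sequence form (pointwise convergence of the difference of transforms plus tightness yields $\int f\,d\mu_{\hSigma} - \int f\,d\mu_0 \to 0$ for bounded continuous $f$); and the concern about contractivity of $\Phi_z$ degenerating near the spectral edge is not relevant here, because for weak convergence you only use fixed $z$ with $\Im z \ge \delta > 0$, where the stability constant is uniformly controlled by $\delta$.
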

\begin{proof}
See \cite{marcenkopastur,silverstein,silversteinbai} in the setting
where $M/N$ converges to a positive constant and the spectral distribution of
$F$ converges to a fixed limit. The above formulation follows from
Prohorov's theorem and a subsequence argument.
\end{proof}

Denote the $\delta$-neighborhood of the support of $\mu_0$ by
\[\supp(\mu_0)_\delta=\{x \in \R:\;\dist(x,\supp(\mu_0))<\delta\}.\]
We emphasize that $\mu_0$ and its support depend on $N,M,F$, although we
suppress this dependence notationally.
Then, if $L=0$, all eigenvalues of $\hSigma$ fall within
$\supp(\mu_0)_\delta$ with high probability:
\begin{theorem}\label{thm:sticktobulk}
Fix any constants $\delta,D>0$. Under Assumption \ref{assump:main}, if $L=0$,
then for a constant $n_0(\delta,D)>0$ and all $n \geq n_0(\delta,D)$,
\[\P\big[\,\spec(\hSigma) \subset \supp(\mu_0)_\delta\,\big]>1-n^{-D}.\]
\end{theorem}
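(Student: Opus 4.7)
The plan is to reduce to a Marchenko-Pastur setting via Proposition \ref{prop:nullform} and invoke an isotropic local law for the resolvent at spectral arguments outside $\supp(\mu_0)$. Throughout, I write $\hSigma = X'FX$ where $X \in \R^{M \times N}$ has i.i.d.\ $\N(0, 1/N)$ entries; under Assumption \ref{assump:main}, $M/N$ is bounded above and below away from $0$ and $\|F\|$ is bounded.

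The first main step is to establish a local law for the resolvent $G(z) = (\hSigma - z\Id)^{-1}$ at spectral parameters $z = E + i\eta$ with $\dist(E, \supp(\mu_0)) \geq \delta$ and $\eta \geq n^{-1+\eps}$. Writing $\Pi(z)$ for the natural deterministic approximation to $G(z)$ built from $m_0(z)$ and the MP equation (\ref{eq:MP}), the target is an isotropic estimate of the form $|\u'(G(z) - \Pi(z))\v| \prec n^{-1/2}$ for deterministic unit vectors $\u,\v$. The key consequence is that $\|G(z)\|$ is $O_\prec(1)$ on this region, because $\Pi(z)$ is bounded there. I expect this resolvent approximation to be developed in Appendix \ref{sec:resolventapprox}, following \cite{bloemendaletal,knowlesyin} together with a fluctuation averaging idea.

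Granted the local law, no-outlier rigidity follows by contradiction. With probability $1 - O(n^{-D})$, $\|\hSigma\| \leq \|F\|\,\|X\|^2 = O(1)$, so eigenvalues lie in a bounded interval $[-C,C]$. Cover $\{E : \dist(E,\supp(\mu_0)) \geq \delta,\;|E| \leq C\}$ by a grid of width $n^{-10}$, apply the local law at each grid point $z_j = E_j + in^{-1+\eps}$ via a union bound, and use the deterministic Lipschitz estimate $\|\partial_z G(z)\| \leq \eta^{-2}$ to interpolate to all $z$ in the region with the same imaginary part. If $\hSigma$ had an eigenvalue $\lambda$ with $\dist(\lambda,\supp(\mu_0)) \geq \delta$, then at $z = \lambda + in^{-1+\eps}$ we would have $\|G(z)\| \geq \eta^{-1} = n^{1-\eps}$, contradicting $\|G(z)\| = O_\prec(1)$.

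The main obstacle is the local law itself. Since $B$ is not required to be positive semidefinite, the matrix $F$ in (\ref{eq:F}) can be indefinite, and $\supp(\mu_0)$ may have several connected components (possibly straddling the origin). The standard Marchenko-Pastur local laws of \cite{bloemendaletal,knowlesyin} are formulated for nonnegative population covariances, so their self-consistent equation and fluctuation-averaging arguments must be extended to this indefinite setting, and sharpened from traces to arbitrary bilinear forms in the resolvent entries; this is precisely the ingredient advertised in the introduction and carried out in Appendix \ref{sec:resolventapprox}. Once that input is in hand, the rigidity argument above is essentially routine.
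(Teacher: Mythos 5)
The paper does not prove this theorem itself; its ``proof'' is a one-line citation to \cite{baisilverstein,knowlesyin} for positive semidefinite $F$ and to \cite[Theorem 2.8]{fanjohnstoneedges} for the extension to indefinite $F$. Your outline is in the spirit of what those references establish, but there are two concrete gaps.

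First, the deduction ``$\|G(z)\| = O_\prec(1)$ on $U_\delta$'' from the isotropic local law is not valid. The isotropic estimate $|\u'(G(z)-\Pi(z))\v| \prec n^{-1/2}$ holds for each \emph{fixed, deterministic} pair $\u,\v$, so it gives no control on the operator norm: the vector achieving $\|G(z)\| \geq \eta^{-1}$ is the random eigenvector of $\hSigma$ nearest $\Re z$, and a union bound over unit vectors is unavailable. The standard way to close the rigidity argument is through the averaged quantity $m_N(z) = N^{-1}\Tr G(z)$ rather than $\|G\|$: if $\lambda \in \spec(\hSigma)$ with $\dist(\lambda,\supp(\mu_0)) \geq \delta$, then at $z = \lambda + i\eta$ with $\eta \asymp N^{-1}$ one has $\Im m_N(z) \geq (N\eta)^{-1} \asymp 1$, while $\Im m_0(z) \leq \eta/\delta^2 \ll 1$ and an averaged local law gives $m_N(z)-m_0(z) \prec N^{-1}$ on $U_\delta$, a contradiction. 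That substitution, not a norm bound on $G$, is where rigidity is actually extracted.

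Second, sourcing the local law from Appendix \ref{sec:resolventapprox} is circular within this paper. The estimates there (see the proof of Lemma \ref{lemma:entrywisebounds}, and through it Lemmas \ref{lemma:matrixlocallaw}, \ref{lemma:mNapprox}, and \ref{lemma:secondorderapprox}) are all derived on the high-probability event that $\spec(X'TX) \subset \supp(\mu_0)_{\delta/2}$, which is exactly the content of Theorem \ref{thm:sticktobulk}. The external references avoid this by proving the local law self-contained, bootstrapping from large $\Im z$ down and verifying stability of the self-consistent equation also when $F$ has negative eigenvalues; that bootstrap is the genuinely nontrivial input, and it is what the paper outsources to \cite[Theorem 2.8]{fanjohnstoneedges} rather than reproducing.
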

\begin{proof}
See \cite{baisilverstein,knowlesyin} for positive definite $F$, and
\cite[Theorem 2.8]{fanjohnstoneedges} for the extension to $F$ having negative
eigenvalues.
\end{proof}

More generally, when $L>0$ so that $\Sigma_1,\ldots,\Sigma_k$ exhibit a
bounded number of spike eigenvalues, the bulk eigenvalue distribution of
$\hSigma$ is still described by the above law $\mu_0$, and
Theorem \ref{thm:sticktobulk} implies that only
a bounded number of eigenvalues of $\hSigma$ should fall far from
$\supp(\mu_0)$. These outlier eigenvalues and their corresponding eigenvectors
are the focus of our study.

\section{Outlier eigenvalues and eigenvectors}\label{sec:outliers}
In this section, we describe results that characterize
the asymptotic behavior of outlier eigenvalues
and eigenvectors of a general matrix $\hSigma$ of the form (\ref{eq:hatSigma}).

Let $m_0(z)$ be the Stieltjes transform of the law $\mu_0$ in Theorem
\ref{thm:MP}, defined for all $z \in \C \setminus \supp(\mu_0)$ via
\begin{equation}\label{eq:m0stieltjes}
m_0(z)=\int_\R \frac{1}{x-z}\,\mu_0(dx).
\end{equation}
Let $\Tr_r$ denote the trace of the
$(r,r)$ block in the $k \times k$ block decomposition of $\C^{M \times M}$
corresponding to $M=m_1+\ldots+m_k$.
For $z \in \C \setminus \supp(\mu_0)$, define
\begin{equation} \label{eq:TSigma}
T(z)=z\Id-\sum_{r=1}^k t_r(z)\Sigma_r, \qquad
t_r(z)=\frac{1}{N\sigma_r^2} \Tr_r \Big(F[\Id+m_0(z)F]^{-1}\Big).
\end{equation}
Here, if $\sigma_r^2=0$, then $t_r(z)$ remains well-defined by the identity
\begin{equation}\label{eq:Fidentity}
F[\Id+m_0(z)F]^{-1}=-m_0(z)F[\Id+m_0(z)F]^{-1}F+F
\end{equation}
and the definition of $F$ in (\ref{eq:F}). Let
\begin{equation}\label{eq:Lambda0}
\Lambda_0=\left[\;\lambda \in \R \setminus \supp(\mu_0):\;
0=\det(T(\lambda))\;\right]
\end{equation}
be the multiset of real roots of the function
$z \mapsto \det(T(z))$, counted with their
analytic multiplicities. We record here the following alternative definition
of $T(z)$, and properties of $T(z)$ and $\Lambda_0$.

\begin{proposition}[Properties of $T(z)$]\label{prop:Tproperties}
\hspace{1in}
\begin{enumerate}[(a)]
\item The matrix $T(z)$ is equivalently defined as
\begin{equation}\label{eq:TTheta}
T(z)=-\frac{1}{m_0(z)}\Id-\sum_{r=1}^k t_r(z)V_r\Theta_r V_r'.
\end{equation}
\item For each $z \in \C \setminus \supp(\mu_0)$, $\ker T(z) \subseteq \cS$.
\item For $\lambda \in \R \setminus \supp(\mu_0)$, 
$\partial_\lambda T(\lambda)-\Id$ is positive semi-definite.
\item For $\lambda \in \Lambda_0$,
its multiplicity as a root of $0=\det(T(\lambda))$ is equal to
$\dim \ker T(\lambda)$.
\end{enumerate}
\end{proposition}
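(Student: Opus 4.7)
The plan is to establish the four parts in order: (a) and (b) are algebraic consequences of the Marcenko--Pastur equation, while (c) and (d) rest on a sign analysis for the derivative of $T$.

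For (a), I would substitute the spiked decomposition $\Sigma_r=\sigma_r^2\Id+V_r\Theta_rV_r'$ into (\ref{eq:TSigma}) and observe that, in the $k\times k$ block structure of $\C^{M\times M}$,
\[
\tfrac{1}{N}\Tr\bigl(F[\Id+m_0(z)F]^{-1}\bigr)=\sum_{r=1}^k\sigma_r^2\,t_r(z).
\]
The Marcenko--Pastur equation (\ref{eq:MP}) then rearranges to $z+1/m_0(z)=\sum_r\sigma_r^2 t_r(z)$, and subtracting this scalar multiple of $\Id$ from (\ref{eq:TSigma}) yields (\ref{eq:TTheta}). For (b), I use (\ref{eq:TTheta}) together with the vanishing of $V_r'$ on $\cS^\perp$: for any $v\in\ker T(z)$, decompose $v=v_\cS+v_{\cS^\perp}$; then $T(z)v_\cS\in\cS$ while $T(z)v_{\cS^\perp}=-v_{\cS^\perp}/m_0(z)\in\cS^\perp$, so the two components of $T(z)v=0$ vanish separately, forcing $v_{\cS^\perp}=0$ (whenever $m_0(z)\ne0$) and hence $v\in\cS$.

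For (c), differentiating (\ref{eq:TSigma}) gives $\partial_\lambda T(\lambda)-\Id=-\sum_r t_r'(\lambda)\Sigma_r$, so it suffices to show $t_r'(\lambda)\le 0$. Setting $G(z)=F[\Id+m_0(z)F]^{-1}$, the resolvent identity yields $G'(z)=-m_0'(z)G(z)^2$. For real $\lambda\in\R\setminus\supp(\mu_0)$, $F$ is symmetric and $m_0(\lambda)\in\R$, so $G(\lambda)$ is real symmetric and $G(\lambda)^2\succeq 0$; in particular $\Tr_r(G(\lambda)^2)\ge 0$. Combined with $m_0'(\lambda)=\int(x-\lambda)^{-2}\mu_0(dx)>0$, this gives $t_r'(\lambda)\le 0$ when $\sigma_r>0$. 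The degenerate case $\sigma_r=0$ is handled by first using (\ref{eq:Fidentity}) to rewrite $t_r$ without the formal $1/\sigma_r^2$ prefactor and then repeating the computation. Since each $\Sigma_r\succeq 0$, the bound $-\sum_r t_r'(\lambda)\Sigma_r\succeq 0$ follows.

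Finally, for (d), fix $\lambda\in\Lambda_0$ and set $K=\ker T(\lambda)$; by (b), $K\subseteq\cS$, and the symmetry of $T(\lambda)$ forces its restriction to $K^\perp$ to be invertible. Writing $T(z)$ in block form with respect to $\R^p=K\oplus K^\perp$ and applying the Schur complement gives $\det T(z)=\det T_{K^\perp K^\perp}(z)\cdot\det S(z)$ with
\[
S(z)=T_{KK}(z)-T_{KK^\perp}(z)T_{K^\perp K^\perp}(z)^{-1}T_{K^\perp K}(z).
\]
Since $\det T_{K^\perp K^\perp}(\lambda)\ne 0$, the multiplicity of $\lambda$ as a root of $\det T$ equals that of $\det S$. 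As the off-diagonal blocks $T_{KK^\perp}(\lambda)$ and $T_{K^\perp K}(\lambda)$ vanish by the definition of $K$ and the symmetry of $T(\lambda)$, Taylor expansion at $\lambda$ gives $S(z)=(z-\lambda)\partial_\lambda T_{KK}(\lambda)+O((z-\lambda)^2)$. By (c), $\partial_\lambda T(\lambda)\succeq\Id$, so $\partial_\lambda T_{KK}(\lambda)$ is positive definite on $K$, and hence $\det S(z)$ vanishes to order exactly $\dim K$. The main obstacle is making the sign analysis in (c) clean in the $\sigma_r=0$ case, which requires carefully invoking the algebraic identity (\ref{eq:Fidentity}) to remove the spurious singularity before differentiating; once that is in place, the block-decomposition argument for (d) is essentially routine.
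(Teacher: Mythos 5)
Your proof is correct, and for parts (a)--(c) it follows essentially the same route as the paper: (a) by substituting the spiked decomposition and using the Marcenko--Pastur equation to replace the scalar $z-\sum_r\sigma_r^2 t_r(z)$ by $-1/m_0(z)$; (b) by the block-diagonal action of the form (\ref{eq:TTheta}) on $\cS\oplus\cS^\perp$; and (c) by the sign computation $\partial_\lambda t_r=-(N\sigma_r^2)^{-1}(\partial_\lambda m_0)\Tr_r F(\Id+m_0F)^{-2}F\le0$, which is algebraically identical to yours since $G(\lambda)^2=F(\Id+m_0F)^{-2}F$ when $F$ and $(\Id+m_0F)^{-1}$ commute. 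Your extra care with the $\sigma_r^2=0$ degeneracy in (c) is sensible and goes somewhat beyond what the paper spells out; note also that your parenthetical ``whenever $m_0(z)\ne0$'' in (b) is never a restriction, because the Marcenko--Pastur equation (\ref{eq:MP}) forces $m_0(z)\ne0$ on all of $\C\setminus\supp(\mu_0)$.

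Part (d) is where you genuinely diverge. The paper argues at the level of eigenvalue trajectories: since $\partial_\lambda T(\lambda)\succeq\Id$, each eigenvalue $\mu_i(\lambda')$ of $T(\lambda')$ moves at rate comparable to $\lambda'-\lambda$, so $|\det T(\lambda')|\asymp|\lambda'-\lambda|^{d}$ for $d=\dim\ker T(\lambda)$, and this order of vanishing along the real axis is then read off as the analytic multiplicity. You instead block-decompose $T(z)$ along $\R^p=K\oplus K^\perp$ with $K=\ker T(\lambda)$, use symmetry at the real point $\lambda$ to see the off-diagonal blocks and $T_{KK}$ vanish there, factor $\det T(z)=\det T_{K^\perp K^\perp}(z)\cdot\det S(z)$ via the Schur complement, and Taylor-expand the Schur complement to get $\det S(z)=(z-\lambda)^{\dim K}\det(\partial_\lambda T_{KK}(\lambda))+O((z-\lambda)^{\dim K+1})$ with the leading coefficient nonzero by (c). Both proofs are correct; yours has the advantage of working directly with the holomorphic function $\det T(z)$ in a complex neighborhood, so the identification of the analytic multiplicity is immediate, whereas the paper's phrasing relies on the (true but unstated) fact that a real-axis asymptotic $|\det T(\lambda')|\asymp|\lambda'-\lambda|^d$ determines the analytic order. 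The paper's argument is shorter; yours is more explicitly rigorous.
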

\begin{proof}
By conjugation symmetry and continuity, the Marcenko-Pastur identity
(\ref{eq:MP}) holds for each $z \in \C \setminus \supp(\mu_0)$.
Part (a) then follows from substituting
$\Sigma_r=\sigma_r^2\Id+V_r\Theta_rV_r'$ into (\ref{eq:TSigma})
and applying (\ref{eq:MP}).
Part (b) follows from (a), as $T(z)$ is the direct sum of an operator on $\cS$
and a non-zero multiple of $\Id$ on the orthogonal complement
$\cS^\perp$. Differentiating
(\ref{eq:m0stieltjes}), $\partial_\lambda m_0(\lambda)>0$ for each $\lambda \in
\R \setminus \supp(\mu_0)$, so $\partial_\lambda t_r
=-(N\sigma_r^2)^{-1}(\partial_\lambda m_0) \Tr_r F(\Id+m_0F)^{-2}F \leq 0$.
Then part (c) follows from (\ref{eq:TSigma}).
For $\lambda \in \Lambda_0$, this implies each
eigenvalue $\mu_i(\lambda)$ of $T(\lambda)$ satisfies
$\mu_i(\lambda)-\mu_i(\lambda') \asymp (\lambda-\lambda')$
as $\lambda' \to \lambda$, so $|\det T(\lambda')| \asymp |\lambda-\lambda'|^d$
for $d=\dim\ker T(\lambda)$. This yields (d).
\end{proof}

For two finite multisets $A,B \subset \R$, define
\[\ordereddist(A,B)=\begin{cases} \infty & \text{ if } |A| \neq |B|\\
\max_i (|a_{(i)}-b_{(i)}|) & \text{ if } |A|=|B|,\end{cases}\]
where $a_{(i)}$ and $b_{(i)}$ are the ordered values of $A$ and $B$ counting
multiplicity.
The following shows that the outlier eigenvalues of $\hSigma$ are close to
the elements of $\Lambda_0$. Note that by (\ref{eq:TSigma}),
each $\lambda \in \Lambda_0$ is an eigenvalue of the matrix
\begin{equation}\label{eq:surrogate}
t(\lambda) \cdot \Sigma \equiv t_1(\lambda)\Sigma_1+\ldots+t_k(\lambda)\Sigma_k.
\end{equation}
When $\hSigma$ is the MANOVA
estimator of a variance component $\Sigma_r$, we may interpret this
matrix as a ``surrogate'' for the true matrix $\Sigma_r$ of interest.

\begin{theorem}[Outlier locations]\label{thm:spikemapping}
Fix constants $\delta,\eps,D>0$. Under Assumption \ref{assump:main},
for a constant $n_0(\delta,\eps,D)>0$ and
all $n \geq n_0(\delta,\eps,D)$, with probability at least $1-n^{-D}$
there exist $\Lambda_\delta \subseteq \Lambda_0$
and $\hLambda_\delta \subseteq \spec(\hSigma)$, containing
all elements of these multisets outside $\supp(\mu_0)_\delta$, such that
\[\ordereddist(\Lambda_\delta, \hat{\Lambda}_\delta)<n^{-1/2+\eps}.\]
\end{theorem}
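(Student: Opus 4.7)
The plan is to adapt the matrix perturbation approach of Paul and others cited in the introduction, reducing the eigenvalue equation for $\hSigma$ outside $\supp(\mu_0)_\delta$ to a deterministic equation $\det T(\lambda) \approx 0$, and then matching roots via a quantitative stability argument. Decompose $\alpha_r = \sigma_r Z_r + W_r \Theta_r^{1/2} V_r'$ with $Z_r, W_r$ independent matrices of i.i.d.\ $\N(0,1)$ entries. Using $BX = 0$, the estimator splits as $\hSigma = \hSigma_0 + \Delta$, where $\hSigma_0 = Y_0'BY_0$ for $Y_0 = X\beta + \sum_r \sigma_r U_r Z_r$ has the same distribution as the null matrix of Proposition \ref{prop:nullform} (embedded in $\R^p$), and
\[
\Delta = V\Psi'B\Psi V' + V\Psi'BY_0 + Y_0'B\Psi V' = \begin{bmatrix} V & Y_0'B\Psi \end{bmatrix} \begin{pmatrix} \Psi'B\Psi & \Id \\ \Id & 0 \end{pmatrix} \begin{bmatrix} V & Y_0'B\Psi \end{bmatrix}',
\]
with $V = [V_1,\ldots,V_k]$ and $\Psi = [U_1 W_1 \Theta_1^{1/2},\ldots,U_k W_k \Theta_k^{1/2}]$. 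For $\lambda \notin \spec \hSigma_0$, which on $\supp(\mu_0)_\delta^c$ holds with high probability by Theorem \ref{thm:sticktobulk} applied to $\hSigma_0$, Sylvester's determinant identity reduces $\det(\hSigma - \lambda \Id) = 0$ to the vanishing of a bounded-size determinant built from the resolvent $G_0(\lambda) := (\hSigma_0 - \lambda\Id)^{-1}$ sandwiched between $V$ and $Y_0'B\Psi$.

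Next I would approximate each bilinear form in this reduced determinant by its deterministic limit. Tracking the definitions of $m_0(z)$ and $t_r(z)$ in the theorem statement, the forms $V'G_0 V$, $V'G_0 Y_0'B\Psi$, and $\Psi'BY_0\,G_0\,Y_0'B\Psi$ should all concentrate, with fluctuations of size $n^{-1/2+\eps}$, on expressions built from $m_0(\lambda)$, $t_r(\lambda)$, and $\Theta_r$, assembled so that after simplification the reduced stochastic determinant equals $\det T(\lambda)$ plus an additive perturbation of Euclidean size $n^{-1/2+\eps}$. This requires a deterministic approximation for arbitrary linear and quadratic forms of entries of $G_0(\lambda)$ at fixed distance $\delta>0$ from $\supp(\mu_0)$---the new analytic input announced in the introduction and carried out in Appendix \ref{sec:resolventapprox}, combining the entrywise local laws of \cite{bloemendaletal,knowlesyin} with a fluctuation-averaging argument. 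Because the $Z_r$ and $W_r$ are jointly independent, $\Psi$ is independent of $Y_0$, so the cross-terms containing $Y_0'B\Psi$ can be handled by conditioning on $\Psi$ and applying the quadratic-form approximation to the conditionally Gaussian $Y_0$.

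Third, I would extract outliers by a local perturbation argument near each $\lambda_0 \in \Lambda_0 \cap \supp(\mu_0)_\delta^c$. By Proposition \ref{prop:Tproperties}(b), $\ker T(\lambda_0) \subseteq \cS$ has some dimension $d \leq L$, and by part (d) the algebraic multiplicity of $\lambda_0$ as a root of $\det T$ is exactly $d$; part (c) gives that the $d$ eigenvalues of $T(\lambda)$ vanishing at $\lambda_0$ have derivatives bounded below by $1$, so on a small disk around $\lambda_0$ the restriction of $T(\lambda)$ to $\ker T(\lambda_0)$ is a $d \times d$ matrix whose eigenvalues pass transversally through zero. Block-diagonalizing so that the stochastic matrix equals $T(\lambda) + O(n^{-1/2+\eps})$ with the relevant $d \times d$ block isolated, Weyl's inequality produces $d$ eigenvalues of $\hSigma$ within $n^{-1/2+\eps}$ of $\lambda_0$. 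A compactness argument---$|\det T|$ is uniformly bounded away from zero on $\supp(\mu_0)_\delta^c$ minus small neighborhoods of $\Lambda_0$, while the stochastic determinant stays uniformly close to $\det T$ there---rules out extraneous outliers. Gathering one cluster of eigenvalues around each element of $\Lambda_0 \cap \supp(\mu_0)_\delta^c$, and enlarging both multisets symmetrically to absorb any roots of $\det T$ lying just inside $\supp(\mu_0)_\delta$, produces $\Lambda_\delta$ and $\hLambda_\delta$ satisfying the required ordered-distance bound.

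The main obstacle will be the resolvent approximation in the second step. Unlike the standard spiked sample covariance setting, $\hSigma_0 = Y_0'BY_0$ is built from a block-structured Gaussian matrix whose kernel $F$ from (\ref{eq:F}) may be indefinite, and the entrywise local laws available from \cite{bloemendaletal,knowlesyin} give only $\O((N\eta)^{-1/2})$-type control on individual entries of $G_0(z)$ at spectral scale $\eta$. Pushing this down to a genuine $n^{-1/2+\eps}$ deterministic approximation for bilinear forms at $\eta = \Theta(1)$---uniform enough that the $d \times d$ restrictions appearing in the third step remain close to $T(\lambda)$ at the stated rate---is where the fluctuation-averaging technology of the Erd\H{o}s school is essential and where the principal analytical novelty of the argument lies.
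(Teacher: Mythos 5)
Your plan is sound and would lead to a correct proof, but it takes a genuinely different route from the paper. You decompose the random effect $\alpha_r$ into an isotropic bulk $\sigma_r Z_r$ plus a low-rank spike $W_r\Theta_r^{1/2}V_r'$ and then treat $\hSigma = \hSigma_0 + \Delta$ as an additive low-rank perturbation, reducing via Sylvester to a $2l \times 2l$ determinant involving $V'G_0V$, $V'G_0 Y_0'B\Psi$, $\Psi'BY_0G_0Y_0'B\Psi$, and $\Psi'B\Psi$. The paper instead rotates coordinates in $\R^p$ so that $\cS$ occupies the first $L$ coordinates, writes each $\alpha_r$ with a $\cS$-part $\Xi_r$ and a $\cS^\perp$-part $X_r$, and takes the Schur complement $\hK(z)$ of the lower-right $N\times N$ block of $\hSigma - z\Id$; this produces a single $L\times L$ object $\hK(z) = -\Xi'G_M(z)\Xi - z\Id$ that is directly close to $K(z)$, the $\cS$-restriction of $-T(z)$. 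Both routes need the same fluctuation-averaged local laws for the linearized resolvent (Appendix A's Lemmas~\ref{lemma:matrixlocallaw}--\ref{lemma:secondorderapprox}), and both use the monotonicity $\partial_\lambda T - \Id \succeq 0$ from Proposition~\ref{prop:Tproperties}(c) to match roots of the deterministic and stochastic determinants interval by interval. The trade-offs: the Schur-complement route produces only one type of bilinear form to control and lands immediately on $\det K = \pm\det T|_{\cS}$, whereas your Sylvester determinant has four random blocks whose deterministic limits must be assembled and simplified by hand (e.g.\ verifying the identity $\Tr U_r'BU_r - t_r(\lambda) = $ the limit of $\Tr U_r'BY_0G_0Y_0'BU_r$, which follows from the trace local law and the resolvent identity (\ref{eq:Fidentity})). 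One bookkeeping detail to watch: your $\hSigma_0 = Y_0'BY_0$ is a $p\times p$ matrix built from $p$-column Gaussians, so its limiting spectral law is defined by the Marcenko--Pastur equation with aspect ratio $M/p$, not $M/N$ as in (\ref{eq:MP}). Since $|p-N| = L = O(1)$ this produces only $O(1/n)$ discrepancies in $m_0$, $t_r$, and $\supp(\mu_0)$ --- absorbable at the $n^{-1/2+\eps}$ precision of the theorem --- but it is a genuine mismatch that must be tracked, not ignored, when invoking Theorem~\ref{thm:sticktobulk} for $\hSigma_0$ and when asserting that the limit of your reduced determinant equals $\det T(\lambda)$ with the paper's $m_0$.
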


The multiset $\Lambda_0$ represents a theoretical prediction for the locations
of the outlier eigenvalues of $\hSigma$---this is depicted in
Figure \ref{fig:outliers} for an example of the one-way design. 
We clarify that $\Lambda_0$ is deterministic but $n$-dependent, and it may
contain values arbitrarily close to $\supp(\mu_0)$. Hence we
state the result as a matching between two sets $\Lambda_\delta$ and
$\hLambda_\delta$ rather than the convergence of outlier eigenvalues of
$\hSigma$ to a fixed set $\Lambda_0$. We
allow $\Lambda_\delta$ and $\hLambda_\delta$ to contain values within
$\supp(\mu_0)_\delta$ so as to match values of the other set close to the
boundaries of $\supp(\mu_0)_\delta$.

\begin{figure}
\centering
\includegraphics[width=0.33\textwidth,page=1]{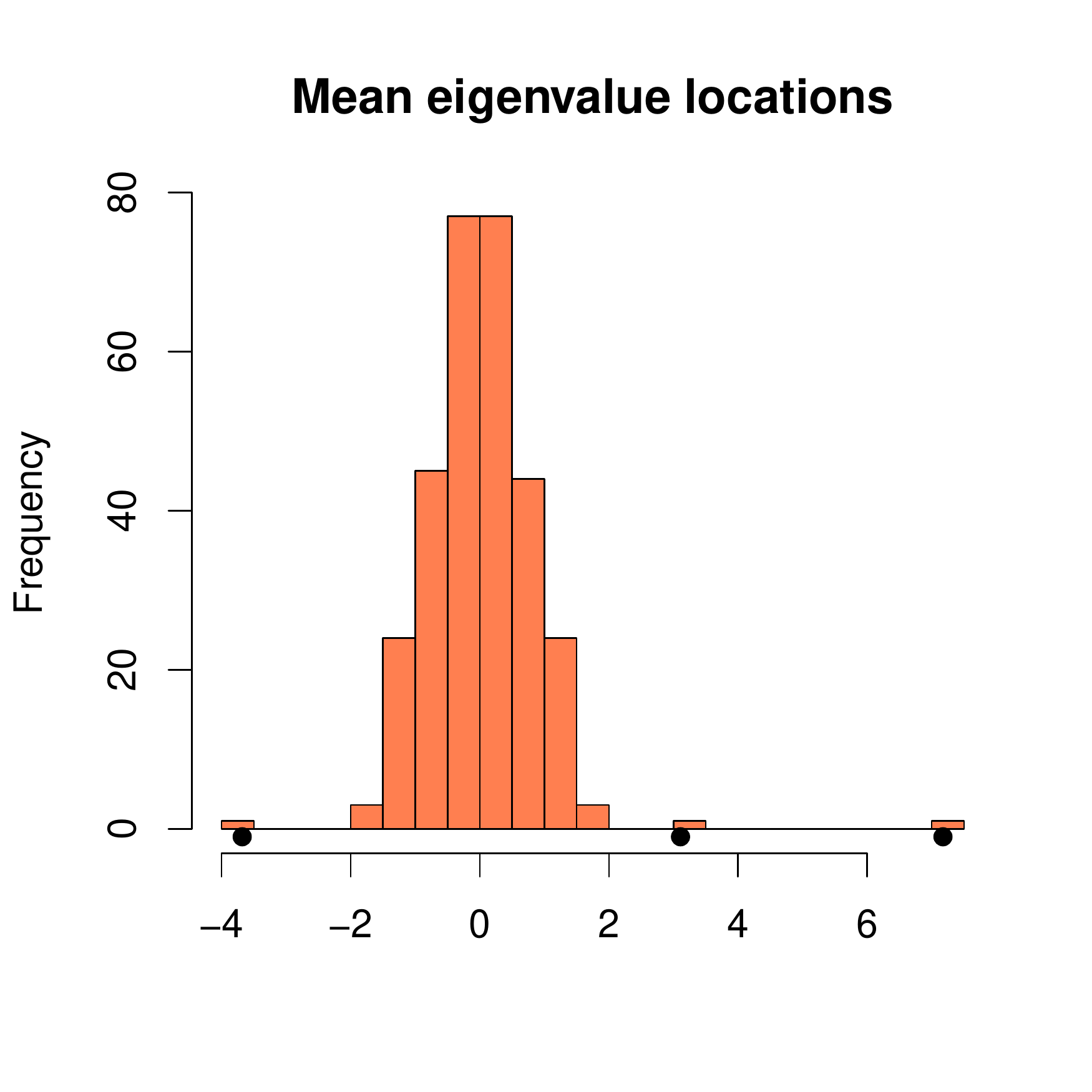}%
\includegraphics[width=0.33\textwidth,page=2]{outliers.pdf}
\caption{Outlier predictions for the MANOVA estimate $\hSigma_1$ in a one-way
design. The population covariances are $\Sigma_1=6\e_1\e_1'$ and
$\Sigma_2=29\v\v'+\Id$, where
$\v=\frac{1}{2}\e_1+\frac{\sqrt{3}}{2}\e_2$. Left: Mean eigenvalue locations of
$\hSigma_1$ across 10000 simulations, with black dots on the axis indicating
the predicted values $\lambda \in \Lambda_0$. Right: Means and 90\% ellipsoids
for the projections of the three outlier eigenvectors
onto $\cS=\col(\e_1,\e_2)$, with black dots indicating the predictions
of Theorem \ref{thm:eigenvectors}. The simulated setting is $I=300$ groups of
size $J=2$, and $p=300$ traits.}\label{fig:outliers}
\end{figure}

\begin{remark*}\label{remark:phasetransition}
In the setting of sample covariance matrices $\hSigma$ for i.i.d.\ multivariate
samples, there is a phase transition phenomenon in which spike values greater
than a certain threshold yield outlier eigenvalues in $\hSigma$, while
spike values less than this threshold do not
\cite{baiketal,baiksilverstein,paul}. This phenomenon occurs also in our
setting and is implicitly captured by the cardinality $|\Lambda_0|$, which
represents the number of predicted outlier eigenvalues of $\hSigma$.
In particular, $\Lambda_0$ will be
empty if the spike values of $\Theta_1,\ldots,\Theta_k$ are sufficiently
small. However, the phase transition thresholds and predicted outlier
eigenvalue locations in our setting are
defined jointly by $\Theta_1,\ldots,\Theta_k$ and the alignments between
$V_1,\ldots,V_k$, rather than by the individual spectra of
$\Sigma_1,\ldots,\Sigma_k$.
\end{remark*}

We next describe eigenvector projections and eigenvalue fluctuations for
isolated outliers.

\begin{theorem}[Eigenvector projections]\label{thm:eigenvectors}
Fix constants $\delta,\eps,D>0$. Suppose $\lambda \in \Lambda_0 \setminus
\supp(\mu_0)_\delta$ has multiplicity one, and
$|\lambda-\lambda'| \geq \delta$ for all other $\lambda'
\in \Lambda_0$. Let $\v$ be the unit vector in $\ker T(\lambda)$, and let
$\hv$ be the unit eigenvector of the eigenvalue $\hl$ of $\hSigma$
closest to $\lambda$.
Then, under Assumption \ref{assump:main},
\begin{enumerate}[(a)]
\item For all $n \geq n_0(\delta,\eps,D)$ and some choice of sign for
$\v$, with probability at least $1-n^{-D}$,
\[\big\|P_\cS \hv-(\v'\partial_\lambda T(\lambda) \v)^{-1/2}\v\big\|
<n^{-1/2+\eps}.\]
\item $P_{\cS^\perp}\hv/\|P_{\cS^\perp} \hv\|$ is uniformly distributed over
unit vectors in $\cS^\perp$ and is independent of $P_{\cS}\hv$.
\end{enumerate}
\end{theorem}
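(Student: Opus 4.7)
This follows from rotational invariance. If $R\in O(p)$ satisfies $R|_\cS=\Id$, then every $\Sigma_r=\sigma_r^2\Id+V_r\Theta_rV_r'$ is preserved by conjugation by $R$. Since $BX=0$ allows us to take $\beta=0$ without changing $\hSigma=Y'BY$, Gaussian rotational invariance gives $Y\overset{d}{=}YR$ and therefore $R'\hSigma R\overset{d}{=}\hSigma$. Hence $R'\hv\overset{d}{=}\hv$ for the isolated outlier eigenvector $\hv$. Since $R'$ fixes $P_\cS\hv$ pointwise and acts as an arbitrary element of $O(\cS^\perp)$ on $P_{\cS^\perp}\hv$, the conditional law of $P_{\cS^\perp}\hv$ given $P_\cS\hv$ is invariant under $O(\cS^\perp)$. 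This forces $P_{\cS^\perp}\hv/\|P_{\cS^\perp}\hv\|$ to be Haar-uniform on the unit sphere in $\cS^\perp$ and independent of $P_\cS\hv$.

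\textbf{Part (a): Schur complement reduction.} Following Paul's strategy, I would exploit the fact that the spikes live in the bounded-dimensional subspace $\cS$. Block-decompose $\R^p=\cS\oplus\cS^\perp$ and write
$$\hSigma=\begin{pmatrix} A & B\\ B' & C\end{pmatrix},\qquad A=P_\cS\hSigma P_\cS,\ B=P_\cS\hSigma P_{\cS^\perp},\ C=P_{\cS^\perp}\hSigma P_{\cS^\perp}.$$
The key structural observation is that $C$ is a pure-noise matrix of the null type: since $V_r\subseteq\cS$ for each $r$, we have $P_{\cS^\perp}\Sigma_r P_{\cS^\perp}=\sigma_r^2 P_{\cS^\perp}$, so in any orthonormal basis of $\cS^\perp$, $C$ has exactly the structure of Proposition \ref{prop:nullform} in dimension $N=p-L$ with the same block matrix $F$ from (\ref{eq:F}). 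By Theorem \ref{thm:sticktobulk}, $\spec(C)\subset\supp(\mu_0)_\delta$ with probability $1-n^{-D}$, so the resolvent $\cR(z):=(z\Id_{\cS^\perp}-C)^{-1}$ is analytic in a neighborhood of $\hl$. Writing $\hv=(u,w)$ with $u=P_\cS\hv$, $w=P_{\cS^\perp}\hv$, and eliminating $w=\cR(\hl)B'u$ from $(\hSigma-\hl\Id)\hv=0$ yields the $L$-dimensional Schur-complement equation
$$\big[A+B\,\cR(\hl)\,B'-\hl\Id_\cS\big]\,u=0,\qquad w=\cR(\hl)B'u.$$

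\textbf{Deterministic approximation and identification with $T$.} The next step is to replace the random $L\times L$ matrix $A+B\cR(\hl)B'$ by its deterministic limit, with error $O_{\prec}(n^{-1/2+\eps})$. Both $A$ and $B\cR(\hl)B'$ are quadratic forms in the Gaussian noise, and the resolvent-approximation lemma proved in Appendix~\ref{sec:resolventapprox}---which builds on the isotropic MP local laws of \cite{bloemendaletal,knowlesyin} via a fluctuation-averaging step inspired by \cite{erdosyauyin,erdoslocalSC}---supplies controlled deterministic proxies for both. Combining them with the self-consistent equation (\ref{eq:MP}) defining $m_0$ and $t_r$, one obtains
$$A+B\,\cR(\hl)\,B'\;=\;\sum_{r=1}^k t_r(\hl)\,P_\cS\Sigma_r P_\cS\;+\;O_{\prec}(n^{-1/2+\eps}),$$
which by (\ref{eq:TSigma}) rewrites the Schur equation as $T(\hl)|_\cS\,u=O_{\prec}(n^{-1/2+\eps})$. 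Theorem \ref{thm:spikemapping} and the simplicity hypothesis give $\hl\to\lambda\in\Lambda_0$ with $\ker T(\lambda)=\mathrm{span}(\v)$, so $u=c\,\v+O_{\prec}(n^{-1/2+\eps})$ for some scalar $c$, where the sign of $\v$ is chosen to match the sign of $c$.

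\textbf{Normalization and the main obstacle.} The scalar $c$ is pinned down by $\|\hv\|^2=\|u\|^2+\|w\|^2=1$. Differentiating the deterministic approximation in $z$ (which requires the same approximation to extend to a complex neighborhood of $\hl$) gives $-B\,\cR(\hl)^2\,B'\approx\sum_r t_r'(\hl)P_\cS\Sigma_r P_\cS=\Id_\cS-\partial_\lambda T(\lambda)|_\cS$, so $\|w\|^2=u'B\,\cR(\hl)^2\,B'u\approx c^2\,\v'(\partial_\lambda T(\lambda)-\Id)\v$. Substituting into $\|\hv\|^2=1$ yields
$$1\approx c^2+c^2\,\v'\bigl(\partial_\lambda T(\lambda)-\Id\bigr)\v\;=\;c^2\,\v'\partial_\lambda T(\lambda)\,\v,$$
whence $c=(\v'\partial_\lambda T(\lambda)\v)^{-1/2}$, well-defined by Proposition \ref{prop:Tproperties}(c). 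The main obstacle is the deterministic approximation lemma itself: one needs a sharp $O_{\prec}(n^{-1/2+\eps})$ bound on bilinear forms $u'\cR(z)v$ and their first $z$-derivatives, uniformly for $z$ at a fixed distance from $\supp(\mu_0)$, summed against the bounded-dimensional quadratic-form weights arising from $A$ and $B$. Entrywise local laws alone yield only a suboptimal $(N\eta)^{-1/2}$-style rate; promoting them to the $n^{-1/2+\eps}$ averaged rate needed here is the substantive work carried out in Appendix \ref{sec:resolventapprox}.
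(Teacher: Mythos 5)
Your proposal is correct and takes essentially the same approach as the paper: both pass to the Schur complement of the $\cS^\perp$ block (your $A+B\cR(z)B'-z\Id$ is the paper's $\hK(z)$), show it is $O_\prec(n^{-1/2})$-close to $-T(z)|_\cS$ together with its $z$-derivative via the appendix resolvent-approximation lemmas, and then combine Davis--Kahan with the normalization identity $1=-\hv_1'\partial_\lambda\hK(\hl)\hv_1$ to pin down the alignment constant $(\v'\partial_\lambda T(\lambda)\v)^{-1/2}$. Part (b) is the same rotational-invariance argument as in the paper.
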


Thus $(\v'\partial_\lambda T(\lambda)\v)^{-1/2}\v$ represents a theoretical
prediction for the projection of the sample eigenvector $\hv$ onto the subspace
$\cS$---this is also displayed in Figure \ref{fig:outliers} for the one-way
design. Here, $(\v'\partial_\lambda T(\lambda)\v)^{-1/2}$ is the predicted
inner-product alignment between $\v$ and $\hv$, which by
Proposition \ref{prop:Tproperties}(c) is at most 1.

Next, let $\|\cdot\|_{rs}$ denote the Hilbert-Schmidt norm of the $(r,s)$
block in the $k \times k$ block decomposition of $\C^{M \times M}$ corresponding
to $M=m_1+\ldots+m_k$. Define
\begin{equation}\label{eq:wrs}
w_{rs}(z)=\frac{\|F(\Id+m_0(z)F)^{-1}\|_{rs}^2}{N\sigma_r^2\sigma_s^2},
\end{equation}
where this is again well-defined by (\ref{eq:Fidentity}) even
if $\sigma_r^2=0$ and/or $\sigma_s^2=0$.
\begin{theorem}[Gaussian fluctuations]\label{thm:CLT}
Fix a constant $\delta>0$.
Suppose $\lambda \in \Lambda_0 \setminus \supp(\mu_0)_\delta$
has multiplicity one, and $|\lambda-\lambda'| \geq \delta$
for all other $\lambda' \in \Lambda_0$. Let $\v$ be the unit vector in $\ker
T(\lambda)$, and let $\hl$ be the eigenvalue of $\hSigma$ closest to $\lambda$.
Then under Assumption \ref{assump:main},
\[\nu(\lambda)^{-1/2}(\hl-\lambda) \to \N(0,1)\]
where
\[\nu(\lambda)=\frac{2}{N(\v'\partial_\lambda T(\lambda)\v)^2}
\left(\frac{(\v'\partial_\lambda T(\lambda) \v-1)^2}{\partial_\lambda
m_0(\lambda)}+\sum_{r,s=1}^k w_{rs}(\lambda)(\v'\Sigma_r \v)(\v'\Sigma_s\v)
\right).\]
Furthermore, $\nu(\lambda)>c/n$ for a constant $c>0$.
\end{theorem}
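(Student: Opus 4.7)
The plan is to use the matrix-perturbation strategy of \cite{paul} that underlies Theorems \ref{thm:spikemapping} and \ref{thm:eigenvectors}. Decompose each random effect as $\alpha_r = \sigma_r W_r + Z_r \Theta_r^{1/2} V_r'$, where $W_r \in \R^{m_r \times p}$ and $Z_r \in \R^{m_r \times l_r}$ have mutually independent i.i.d.\ $\N(0,1)$ entries, and further split $W_r = W_r P_\cS + W_r P_{\cS^\perp}$. The $\cS^\perp$ parts assemble into the null model $X'FX$ of Proposition \ref{prop:nullform} with resolvent $G_0(z) = (z\Id - X'FX)^{-1}$. A Schur complement against $\cS^\perp$ converts $\det(\hl\Id - \hSigma) = 0$ into an equivalent $L\times L$ equation $\det \hat T(\hl) = 0$ on $\cS$, where $\hat T(z)$ collects a deterministic spike piece, quadratic forms $W'G_0(z)W$ from noise--noise contributions, and bilinear forms $W'G_0(z)Z$ from noise--spike cross terms.

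The resolvent-approximation results of Appendix \ref{sec:resolventapprox} give $\hat T(z) = T(z)|_\cS + \O(n^{-1/2})$ in operator norm, uniformly in $z$ separated from $\supp(\mu_0)$. Let $\v \in \cS$ be the unit kernel vector of $T(\lambda)$ (Proposition \ref{prop:Tproperties}(b)). Since $\lambda$ is a simple root of $\det T$ (Proposition \ref{prop:Tproperties}(d)), projecting $\hat T(\hl)\hv_\cS = 0$ onto $\v$ and Taylor expanding $T(\hl)$ about $\lambda$ yields
\begin{equation*}
(\hl - \lambda)\,\v'\partial_\lambda T(\lambda)\v = -\v'\big(\hat T(\lambda) - T(\lambda)\big)\v + \O(n^{-1}),
\end{equation*}
where $\v'\partial_\lambda T(\lambda)\v \geq 1$ by Proposition \ref{prop:Tproperties}(c). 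The asserted CLT then reduces to a Gaussian limit theorem for the numerator on the right.

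The numerator splits into two asymptotically independent random pieces: a linear form in the entries of $Z$ (from the noise--spike cross terms) and a centered quadratic form in $W P_{\cS^\perp}$ (from $W'G_0(\lambda)W - \E\,W'G_0(\lambda)W$). The linear piece is exactly Gaussian, and a covariance computation---using $\v'V_r\Theta_r V_r' = \v'(\Sigma_r - \sigma_r^2 \Id)$ to rewrite sums in terms of $\v'\Sigma_r\v$---gives variance $\frac{2}{N}\sum_{r,s} w_{rs}(\lambda)(\v'\Sigma_r\v)(\v'\Sigma_s\v)$. The quadratic piece, after conditioning on $X$ and applying the CLT for Gaussian quadratic forms together with the local-law identity $\partial_\lambda(N^{-1}\Tr G_0(\lambda)) \to \partial_\lambda m_0(\lambda)$, is asymptotically Gaussian with variance $\frac{2}{N}(\v'\partial_\lambda T(\lambda)\v - 1)^2 / \partial_\lambda m_0(\lambda)$, the prefactor arising from the identity $\sum_r(-\partial_\lambda t_r)\v'\Sigma_r\v = \v'\partial_\lambda T(\lambda)\v - 1$. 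Summing the two variances and dividing by $(\v'\partial_\lambda T(\lambda)\v)^2 = O(1)$ yields $\nu(\lambda)$. For the lower bound $\nu(\lambda) > c/n$, both bracketed summands are nonnegative: if $\v'\partial_\lambda T(\lambda)\v$ is bounded away from $1$ the first summand is $\Omega(1)$ (using $\partial_\lambda m_0(\lambda) = O(1)$), otherwise each $\v'\Sigma_r\v$ associated with $\partial_\lambda t_r \neq 0$ is close to $\sigma_r^2$, and the positive semi-definite structure of $(w_{rs}(\lambda))$ together with $\v'\Sigma_r\v \geq \sigma_r^2$ keeps the second summand $\Omega(1)$; in either case $\nu(\lambda) \geq c/N \asymp c/n$.

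The main obstacle is the deterministic approximation together with the joint Gaussian fluctuation analysis for the quadratic and bilinear forms in $G_0(z)$ that make up $\hat T(z)$. These are handled via the resolvent-approximation machinery of Appendix \ref{sec:resolventapprox}, which extends the anisotropic local Marchenko--Pastur law of \cite{bloemendaletal,knowlesyin} to spectral arguments separated from the bulk using a fluctuation-averaging argument in the spirit of \cite{erdosyauyin}.
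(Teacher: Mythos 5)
Your overall skeleton---Schur complement to an $L\times L$ equation on $\cS$, Taylor expansion along the null vector $\v$ of $T(\lambda)$, a Lyapunov CLT for Gaussian quadratic forms, and the resolvent approximations of Appendix~\ref{sec:resolventapprox}---matches the paper's. The gap is in how you split the fluctuating numerator and how you attribute the two summands of $\nu(\lambda)$. In the paper's argument the numerator is, up to $\O(n^{-1})$, a \emph{single} centered Gaussian quadratic form $-(\z'G_M(\lambda)\z - \E[\z'G_M(\lambda)\z\mid X])$, where $\z=\Xi\v$ collects \emph{all} of the $\cS$-component randomness of the data at once (both the isotropic noise restricted to $\cS$ and the spike directions). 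Lemma~\ref{lemma:lyapunov} is applied once, conditionally on $X$, and the conditional variance $2\|V'G_MV\|_\HS^2$ is then shown to concentrate using Lemmas~\ref{lemma:matrixlocallaw} and~\ref{lemma:secondorderapprox}. The two summands of $\nu(\lambda)$ arise from expanding $G_M=\Pi_M+F\Delta F$ \emph{inside} that variance: the deterministic $\Pi_M$ part produces the $w_{rs}$ term and the random $F\Delta F$ part produces the $(\v'\partial_\lambda T\v-1)^2/\partial_\lambda m_0$ term via Lemma~\ref{lemma:secondorderapprox}. They are two contributions to one concentrating conditional variance, not the variances of two independent Gaussian pieces.

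Your split $\z=\z^W+\z^Z$ (isotropic $\cS$-noise vs.\ spike components) actually yields \emph{three} centered fluctuating pieces after conditioning on $X$: a quadratic form in $\z^W$, the bilinear cross term $2\z^W{}'G_M\z^Z$, and a quadratic form in $\z^Z$. You have dropped the $\z^Z$-quadratic piece; its conditional variance is of order $n^{-1}$ whenever some $\v'V_r\Theta_rV_r'\v>0$, which always holds since $\sum_r t_r(\lambda)\v'V_r\Theta_rV_r'\v=-1/m_0(\lambda)\neq 0$. Moreover the variance attribution is incorrect even for the pieces you keep: each of the three contributes to \emph{both} brackets of $\nu(\lambda)$, because the $w_{rs}$-vs.-$\partial_\lambda m_0$ split comes from $\Pi_M$ vs.\ $F\Delta F$, a decomposition orthogonal to your $W$-vs.-$Z$ split of $\z$. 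For example, the conditional variance of your bilinear cross piece concentrates around
\[
\frac{4}{N}\sum_{r,s=1}^k\sigma_r^2\,\big(\v'V_s\Theta_sV_s'\v\big)\left[w_{rs}(\lambda)+\frac{\partial_\lambda t_r(\lambda)\,\partial_\lambda t_s(\lambda)}{\partial_\lambda m_0(\lambda)}\right],
\]
not $\frac{2}{N}\sum_{r,s}w_{rs}(\lambda)(\v'\Sigma_r\v)(\v'\Sigma_s\v)$ as claimed. The three pieces are uncorrelated and their variances do sum to $2\|V'G_MV\|_\HS^2$, but the term-by-term correspondence you assert cannot carry the proof; treating $\z$ as a single Gaussian vector, as the paper does, avoids these bookkeeping errors. (Your case split for the lower bound $\nu(\lambda)>c/n$ is also not airtight; the paper instead argues directly from $|\sum_r t_r(\lambda)\v'V_r\Theta_rV_r'\v|=|1/m_0(\lambda)|\geq\delta$ that some $r$ has both $|t_r(\lambda)|$ and $\v'\Sigma_r\v$ bounded below, giving $\|V'G_MV\|_\HS>cn^{-1/2}$.)
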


Figure \ref{fig:CLT} illustrates the accuracy of this Gaussian approximation for
two settings of the one-way design. We observe that the approximation is fairly
accurate in a setting with a single outlier, but (in the simulated sample sizes
$n=600$ and $p=300$) does not adequately
capture a skew in the outlier distribution in a setting with an additional
positive outlier produced by a large spike in $\Sigma_2$. This skew is reduced
in examples where there is increased separation between these two positive
outliers.

\begin{figure}
\centering
\includegraphics[width=0.4\textwidth,page=3]{outliers.pdf}%
\includegraphics[width=0.4\textwidth,page=4]{outliers.pdf}
\includegraphics[width=0.4\textwidth,page=3]{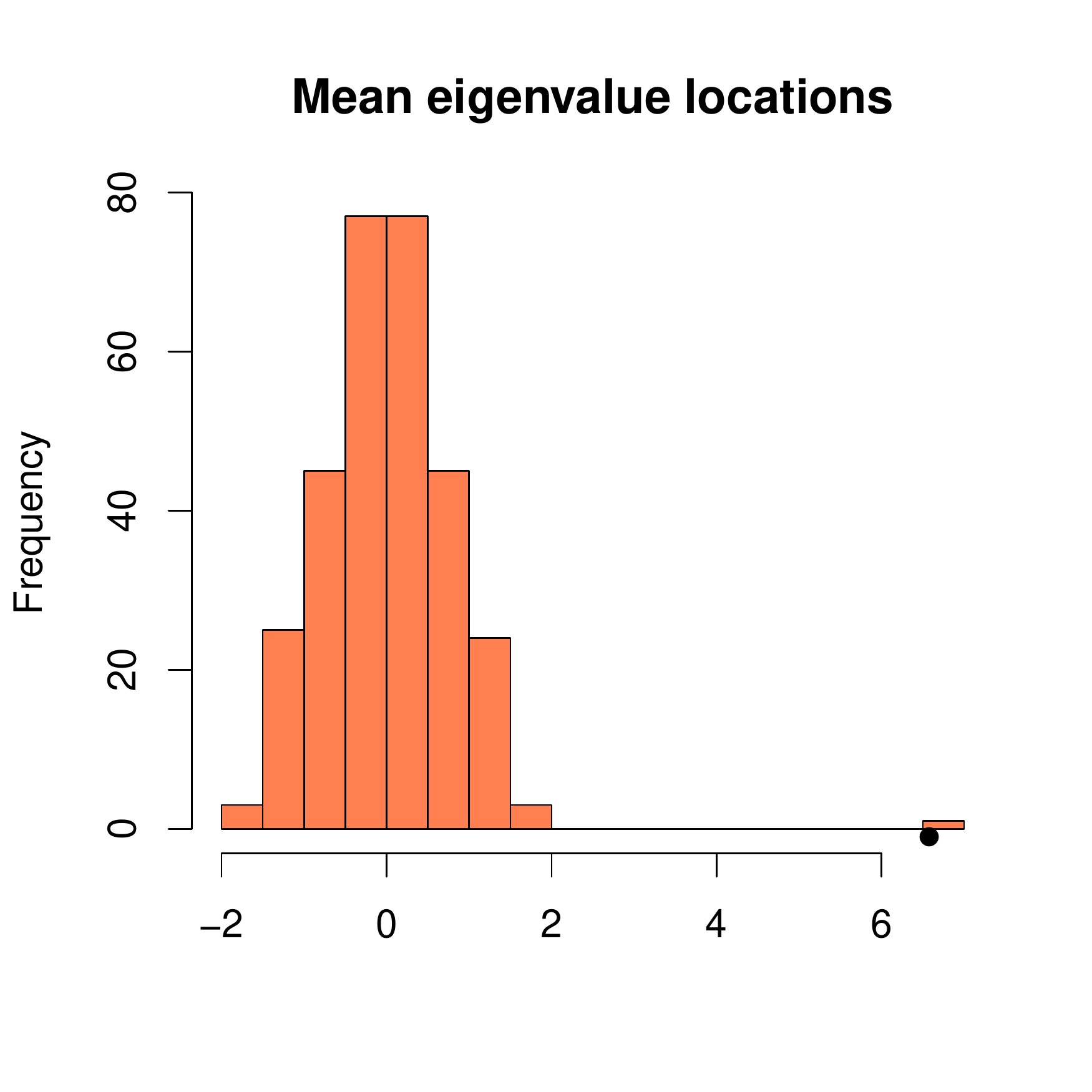}%
\includegraphics[width=0.4\textwidth,page=4]{outliers_single.pdf}
\caption{Outlier eigenvalue fluctuations in a one-way design.
Displayed are fluctuations of the largest outlier eigenvalue
of $\hSigma_1$ across 10000 simulations, compared with the density function
and quantiles of the Gaussian distribution with mean and variance given in
Theorem \ref{thm:CLT}. The simulated setting is $I=300$ groups of size $J=2$,
$p=300$ traits, and (top) $\Sigma_1=6\e_1\e_1'$ and
$\Sigma_2=29\v\v'+\Id$ where
$\v=\frac{1}{2}\e_1+\frac{\sqrt{3}}{2}\e_2$, or (bottom)
$\Sigma_1=6\,\e_1\e_1'$ and $\Sigma_2=\Id$.}\label{fig:CLT}
\end{figure}

\begin{example}
In the setting of large population spike eigenvalues,
it is illustrative to understand the predictions of
Theorem \ref{thm:spikemapping} using a Taylor expansion. Let us carry this
out for the MANOVA estimator $\hSigma_1$ in the setting of a balanced one-way
design (\ref{eq:oneway}) with $I$ groups of $J$ individuals.

Recalling the form (\ref{eq:MANOVAoneway}) for $\hSigma_1$, the
computation in Proposition \ref{prop:balanced}(b) for general
balanced designs will yield, in this setting, the explicit expressions
\begin{align*}
t_1(\lambda)&=\frac{(I-1)J}{(I-1)J+N(J\sigma_1^2+\sigma_2^2)m_0(\lambda)},\\
t_2(\lambda)&=\frac{I-1}{(I-1)J+N(J\sigma_1^2+\sigma_2^2)m_0(\lambda)}
-\frac{n-I}{(n-I)J-N\sigma_2^2m_0(\lambda)}.
\end{align*}
Suppose first that there is a single large spike eigenvalue
$\mu=\theta+\sigma_1^2$ in $\Sigma_1$,
and no spike eigenvalues in $\Sigma_2$. Theorem \ref{thm:spikemapping} and
the form (\ref{eq:TTheta}) for $T(\lambda)$
indicate that outlier eigenvalues should appear near the locations
\[\Lambda_0=[\;\lambda \in \R \setminus
\supp(\mu_0):m_0(\lambda)t_1(\lambda)=-1/\theta\;].\]
It is known that $m_0$ is injective on $\R
\setminus \supp(\mu_0)$ (see \cite[Theorems 4.1 and 4.2]{silversteinchoi}).
Hence $m_0(\lambda)t_1(\lambda)$ is also injective by the above explicit form,
so $|\Lambda_0| \leq 1$. Applying a Taylor expansion around $\lambda=\infty$,
we obtain from (\ref{eq:MP})
\begin{align*}
m_0(\lambda)&=-\frac{1}{\lambda}-\frac{1}{\lambda^2} \cdot \frac{1}{N}\Tr F
+O(1/\lambda^3)=-\frac{1}{\lambda}-\frac{\sigma_1^2}{\lambda^2}
+O(1/\lambda^3),\\
m_0(\lambda)t_1(\lambda)&=-\frac{1}{\lambda}-\frac{1}{\lambda^2}
\left(\sigma_1^2+\frac{N}{(I-1)J}(J\sigma_1^2+\sigma_2^2)\right)
+O(1/\lambda^3),
\end{align*}
where $N=p-1$. For large $\theta$ and $\mu$, solving
$m_0(\lambda)t_1(\lambda)=-1/\theta$ yields
\[\lambda \approx \theta+\sigma_1^2+c_1=\mu+c_1,
\qquad c_1=\frac{N}{(I-1)J}(J\sigma_1^2+\sigma_2^2).\]
So we expect to observe one outlier with an approximate upward bias of $c_1$.

Next, suppose there is a single large spike eigenvalue $\mu=\theta+\sigma_2^2$
in $\Sigma_2$, and no spike eigenvalues in $\Sigma_1$. Then we expect
outlier eigenvalues near the locations
\[\Lambda_0=[\;\lambda \in \R \setminus
\supp(\mu_0):m_0(\lambda)t_2(\lambda)=-1/\theta\;].\]
Since $m_0(\lambda)$ is injective and the condition
$m_0(\lambda)t_2(\lambda)=-1/\theta$ is quadratic in $m_0(\lambda)$, we obtain
$|\Lambda_0| \leq 2$. Taylor expanding around $|\lambda|=\infty$,
we have after some simplification
\[m_0(\lambda)t_2(\lambda)=-\frac{1}{\lambda^2} \cdot \frac{N}{(I-1)J}
\left(\sigma_1^2+\frac{n-1}{n(J-1)}\sigma_2^2\right)+O(1/|\lambda|^3).\]
Then for large $\theta$, solving $m_0(\lambda)t_2(\lambda)=-1/\theta$ yields
two predicted outlier eigenvalues near
\[\lambda \approx \pm \sqrt{c_2\theta}, \qquad
c_2=\frac{N}{(I-1)J}\left(\sigma_1^2+\frac{n-1}{n(J-1)}\sigma_2^2\right).\]
Let us emphasize that these predictions are in the asymptotic regime where
$n,N \to \infty$ and $\lambda$ is a large but fixed constant, rather than
$\lambda \to \infty$ jointly with $n,N$.

Finally, consider a single spike $\mu_1=\theta_1+\sigma_1^2$ in $\Sigma_1$
and a single spike $\mu_2=\theta_2+\sigma_2^2$ in $\Sigma_2$. Letting the
corresponding spike eigenvectors have inner-product $\rho$, we expect outliers
near
\begin{align*}
\Lambda_0&=\left[\;\lambda:
0=\det\left(-\frac{1}{m_0(\lambda)}\Id_2-
t_1(\lambda)\theta_1\begin{pmatrix} 1 & 0 \\ 0 & 0 \end{pmatrix}
-t_2(\lambda)\theta_2\begin{pmatrix} \rho^2 & \rho\sqrt{1-\rho^2} \\
\rho\sqrt{1-\rho^2} & 1-\rho^2 \end{pmatrix}\right)\right]\\
&=\left[\;\lambda:
0=1+m_0(\lambda)\Big(t_1(\lambda)\theta_1+
t_2(\lambda)\theta_2\Big)+m_0(\lambda)^2
t_1(\lambda)t_2(\lambda)\theta_1\theta_2(1-\rho^2) \;\right].
\end{align*}
This is a cubic condition in $m_0(\lambda)$, so $|\Lambda_0| \leq 3$. Applying
the above Taylor expansions around $\lambda=\infty$, this condition becomes
\[0=1-\frac{\theta_1}{\lambda}-\frac{\theta_1(\sigma_1^2+c_1)}{\lambda^2}
-\frac{\theta_2c_2}{\lambda^2}+\frac{\theta_1\theta_2(1-\rho^2)c_2}{\lambda^3}
+O\left(\frac{\theta_1+\theta_2}{\lambda^3}
+\frac{\theta_1\theta_2}{\lambda^4}\right).\]
In a setting where $\theta_1$ and $\theta_2$ are large and of comparable size,
there is a predicted outlier $\lambda$ near $\theta_1$. More precisely,
expanding the above around $\lambda=\theta_1$, the location of this outlier is
\[\lambda \approx \theta_1+\sigma_1^2+c_1+(\theta_2/\theta_1)\rho^2c_2
=\mu_1+c_1+(\theta_2/\theta_1)\rho^2c_2.\]
Thus the upward bias of this outlier is increased from $c_1$, when there
are no spikes in $\Sigma_2$, to $c_1+(\theta_2/\theta_1)\rho^2c_2$.
\end{example}

We conclude this section by describing two points of contact between
Theorem \ref{thm:spikemapping} and the results of
\cite{benaychgeorgesnadakuditi} and \cite{belinschietal}.

\begin{example}\label{ex:multperturb}
Consider the model (\ref{eq:mixedmodel}) with $X=0$, $k=1$,
$U_1=\Id_n$, and $\sigma_1^2=1$. Then (\ref{eq:F}) yields $F=NB$. Writing
$\alpha_1=\sqrt{N}X_1\Sigma_1^{1/2}$ where $X_1$ has i.i.d.\ $\N(0,1/N)$
entries, we then simply have
\[\hSigma \overset{L}{=}\Sigma_1^{1/2}X_1'FX_1\Sigma_1^{1/2},\qquad
\Sigma_1=\Id+V_1\Theta_1V_1'.\]
The law $\mu_0$ approximates the empirical spectral distribution of $X_1'FX_1$.
Applying (\ref{eq:TSigma}), (\ref{eq:MP}), and (\ref{eq:m0stieltjes}) we obtain
\[m_0(z)t_1(z)=N^{-1}\Tr
\Big[m_0(z)F(\Id+m_0(z)F)^{-1}\Big]=zm_0(z)+1=\int \frac{x}{x-z}\mu_0(dx).\]
The function $-m_0(z)t_1(z)$ is the ``$T$-transform'' of
$\mu_0$. By the form (\ref{eq:TTheta}),
the outlier eigenvalues of $\hSigma$ are predicted by
\[\Lambda_0=\bigcup_{i=1}^{l_1} [\lambda \in \R \setminus \supp(\mu_0):
-m_0(\lambda)t_1(\lambda)=1/\theta_i],\]
where $\theta_1,\ldots,\theta_{l_1}$ are the diagonal entries of $\Theta_1$.
This matches the multiplicative perturbation result of
\cite[Theorem 2.7]{benaychgeorgesnadakuditi}. Depending on $F$, the function
$m_0(\lambda)t_1(\lambda)$ is not necessarily injective over $\lambda \in \R
\setminus \supp(\mu_0)$, and hence a single spike
$\theta_i$ can generate multiple outlier eigenvalues of $\hSigma$ even
in this simple setting.
\end{example}

\begin{example}\label{ex:freeaddition}
Consider the model (\ref{eq:mixedmodel}) with $X=0$, $k=2$, $m_1+m_2=n$,
and the columns of $U_1,U_2$ together forming an orthonormal basis of $\R^n$.
(Thus $U_1'U_1=\Id_{m_1}$, $U_2'U_2=\Id_{m_2}$, and $U_1'U_2=0$.)
Consider $\sigma_1^2=\sigma_2^2=1$ and
\[B=\frac{a_1}{N}U_1U_1'+\frac{a_2}{N}U_2U_2'\]
for any real scalars $a_1,a_2 \neq 0$. Then $U_1'BU_2=0$. Writing
$\alpha_r=\sqrt{N}X_r\Sigma_r^{1/2}$, we have
\[\hSigma \overset{L}{=}\hSigma_1+\hSigma_2,
\qquad \hSigma_r=\Sigma_r^{1/2}(a_rX_r'X_r)\Sigma_r^{1/2} \quad \text{ for }
r=1,2,\]
where $\Sigma_r=\Id+V_r\Theta_rV_r'$, and $X_1$ and $X_2$
have i.i.d.\ $\N(0,1/N)$ entries.

Suppose now that the spike eigenvectors of $\Sigma_1$ and $\Sigma_2$ are
unaligned, i.e.\ $V_1'V_2=0$. Then (\ref{eq:TTheta}) implies that
the outlier eigenvalues of $\hSigma$ are predicted by
\begin{equation}\label{eq:Lambda0subordination}
\Lambda_0=\bigcup_{i=1}^{l_1} 
[\lambda: m_0(\lambda)t_1(\lambda)=-1/\theta_{1,i}] \cup \bigcup_{j=1}^{l_2}
[\lambda: m_0(\lambda)t_2(\lambda)=-1/\theta_{2,j}],
\end{equation}
where $\theta_{1,i}$ and $\theta_{2,j}$ are the diagonal entries of $\Theta_1$
and $\Theta_2$.
In this setting, $F=\diag(a_1\Id_{m_1},a_2\Id_{m_2})$, so (\ref{eq:MP})
and (\ref{eq:TTheta}) yield
\begin{align*}
z&=-\frac{1}{m_0(z)}+\frac{m_1}{N}\frac{a_1}{1+a_1m_0(z)}
+\frac{m_2}{N}\frac{a_2}{1+a_2m_0(z)},\\
t_r(z)&=\frac{m_r}{N}\frac{a_r}{1+a_rm_0(z)} \quad \text{ for } r=1,2
\end{align*}
as the equations defining $m_0,t_1,t_2$.
On the other hand, when $V_1'V_2=0$, the matrices $\hSigma_1$
and $\hSigma_2$ are asymptotically free.
The outlier eigenvalues of the individual matrix $\hSigma_1$ are predicted by
\[\check{\Lambda}_0=\bigcup_{i=1}^{l_1}
[\lambda:\check{m}_0(\lambda)\check{t}_1(\lambda)=-1/\theta_{1,i}]\]
where $\check{m}_0,\check{t}_1$ are defined by
\[z=-\frac{1}{\check{m}_0(z)}+\frac{m_1}{N}\frac{a_1}{1+a_1\check{m}_0(z)},
\qquad \check{t}_1(z)=\frac{m_1}{N}\frac{a_1}{1+a_1\check{m}_0(z)}\]
and $\check{m}_0(z)$ is the Stieltjes transform modeling the
spectral distribution of $a_1X_1'X_1$.
Then $m_0(z)=\check{m}_0(\omega_1(z))$ and
$t_1(z)=\check{t}_1(\omega_1(z))$, where
\[\omega_1(z)=z-\frac{m_2}{N}\frac{a_2}{1+a_2m_0(z)}\]
is the first subordination function with respect to the free additive
convolution. Then the first set on the right
of (\ref{eq:Lambda0subordination}) is
simply $\omega_1^{-1}(\check{\Lambda}_0)$. A similar statement holds for the
second set of (\ref{eq:Lambda0subordination}), the second subordination
function, and the outlier eigenvalues of $\hSigma_2$,
and our results coincide with the prediction
of \cite[Theorem 2.1]{belinschietal}.
\end{example}

\section{Estimation of principal eigenvalues and
eigenvectors}\label{sec:estimation}

The results of the preceding section indicate that each outlier
eigenvalue/eigenvector of $\hSigma$ may be interpreted as estimating
an eigenvalue/eigenvector of a surrogate matrix
(\ref{eq:surrogate}). When there is no high-dimensional noise,
$\sigma_1^2=\ldots=\sigma_k^2=0$, we may verify that $t_r(\lambda)=\Tr U_r'BU_r$
for each $r=1,\ldots,k$ and any $\lambda$. In this setting,
if $\hSigma$ is an unbiased MANOVA estimate of a single component
$\Sigma_r$, then (\ref{eq:unbiasedestimation}) implies that the surrogate matrix
is also simply $\Sigma_r$.

In the presence of high-dimensional noise, this is no longer true. Even for the
MANOVA estimate $\hSigma$ of $\Sigma_r$, the surrogate matrix
may depend on multiple variance components $\Sigma_1,\ldots,\Sigma_k$.
We propose an alternative algorithm for estimating
eigenvalues and eigenvectors of $\Sigma_r$, based on the idea of searching
for matrices $\hSigma=Y'BY$ where this surrogate depends only on $\Sigma_r$.
Figure \ref{fig:estimation} depicts
differences between the MANOVA eigenvector and our estimated eigenvector in
several examples for the one-way model.

\begin{figure}
\includegraphics[width=0.33\textwidth,page=1]{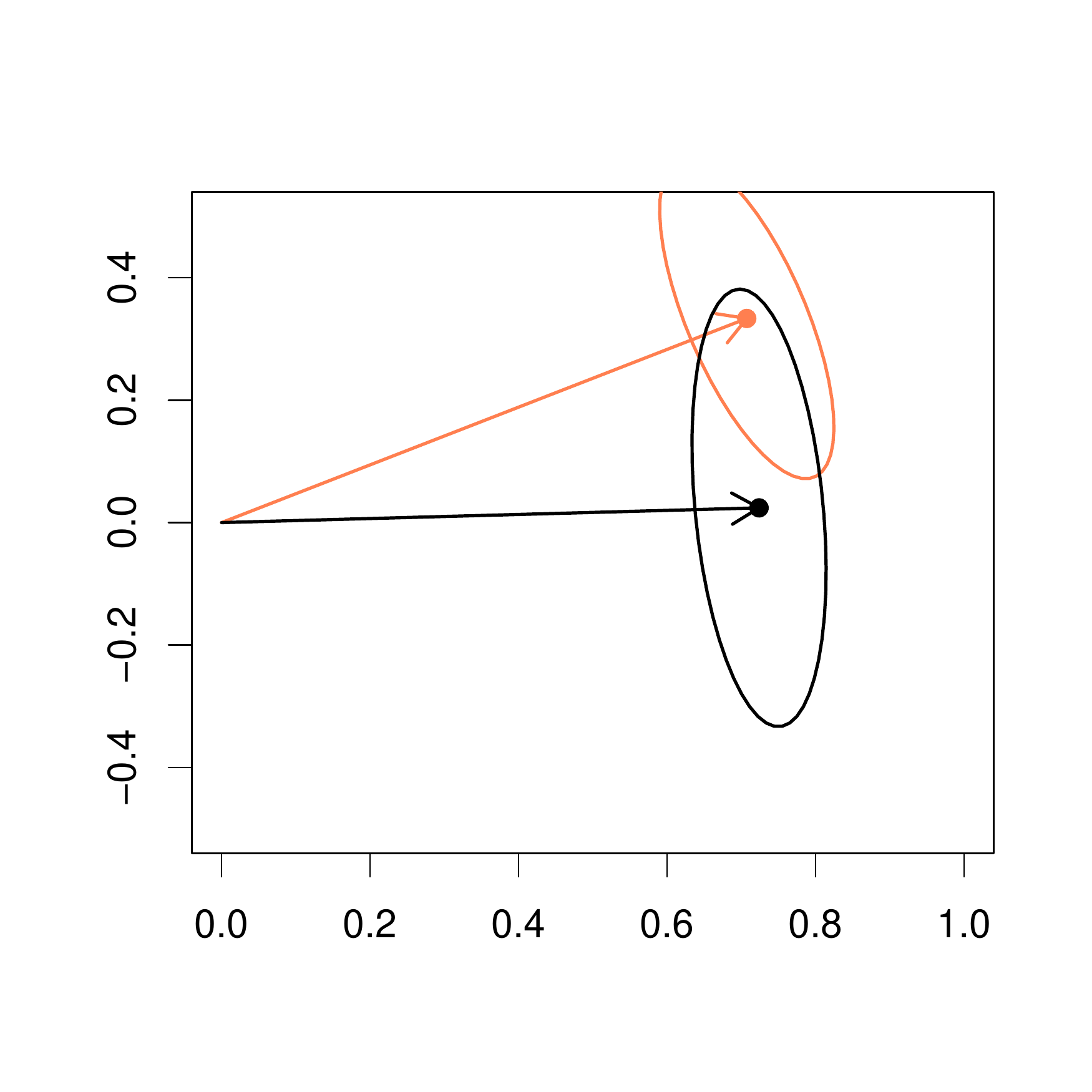}%
\includegraphics[width=0.33\textwidth,page=2]{estimation.pdf}%
\includegraphics[width=0.33\textwidth,page=3]{estimation.pdf}
\caption{Estimates of the principal eigenvector of $\Sigma_1$ in a one-way
design. The population covariances are $\Sigma_1=\mu \e_1\e_1'$ and
$\Sigma_2=29\v\v'+\Id$, where $\v=\frac{1}{2}\e_1+\frac{\sqrt{3}}{2}\e_2$
and (left) $\mu=6$, (middle) $\mu=8$, or (right) $\mu=10$. Means
and 90\% ellipsoids across 100 simulations are shown for the first two
coordinates of the unit-norm leading MANOVA eigenvector (red) and of the
unit-norm estimate of
Algorithm \ref{alg:spikes} (black). The design is $I=150$ groups of size $J=2$
with $p=600$ traits.}\label{fig:estimation}
\end{figure}

We implement this algorithmic idea as follows: Fix $k$ symmetric matrices
$B_1,\ldots,B_k \in \R^{n \times n}$ satisfying Assumption \ref{assump:main}(c).
For $\ba=(a_1,\ldots,a_k) \in \R^k$, denote
\[B(\ba)=\sum_{r=1}^k a_rB_r.\]
Let $F(\ba)$ be the matrix defined in (\ref{eq:F}) for $B=B(\ba)$, let
$\hSigma(\ba)=Y'B(\ba)Y$, and let $\mu_0(\ba)$, $m_0(z,\ba)$,
and $t_r(z,\ba)$ be the law $\mu_0$ and the functions
$m_0(z)$ and $t_r(z)$ defined
with $F=F(\ba)$. We search for coefficients $\ba \in \R^k$
where $\hSigma(\ba)$ has an outlier
eigenvalue $\hl$ satisfying $t_s(\hl,\ba)=0$ for all
$s \neq r$. At any such pair $(\hl,\ba)$, the surrogate matrix $t(\hl) \cdot
\Sigma$ depends only on $\Sigma_r$, and we have
$T(\hl,\ba)=\hl\Id-t_r(\hl,\ba)\Sigma_r$ by (\ref{eq:TSigma}). By Theorem
\ref{thm:spikemapping}, we expect $\hl$ to be close to a value $\lambda$ where
\begin{equation}\label{eq:algorithmmotivation}
0=\det T(\lambda,\ba) \approx \det (\hl\Id-t_r(\hl,\ba)\Sigma_r).
\end{equation}
Thus, we estimate an eigenvalue $\mu$ of $\Sigma_r$ by
$\hmu=\hl/t_r(\hl,\ba)$. Furthermore, by Theorem \ref{thm:eigenvectors}, we
expect the eigenvector $\hv$ of $\hSigma(\ba)$ corresponding to $\hl$ to satisfy
\[P_\cS \hv \approx (\w'\partial_\lambda T(\lambda,\ba)\w)^{-1/2}\w,\]
where $\w$ is the null vector of $T(\lambda,\ba)$. By
(\ref{eq:algorithmmotivation}), we expect $\w \approx \v$ where $\v$
is the eigenvector of $\Sigma_r$ corresponding to $\mu$. Thus, we
estimate $\v$ by $\hv$.

\begin{algorithm}[t]
\begin{algorithmic}
\STATE{Initialize $\cM=\emptyset$. Fix $\delta>0$ a small constant.}
\FOR{each $\ba \in S^{k-1}$ and each $\hl \in \spec(\hSigma(\ba)) \cap
\I_\delta(\ba)$}
  \IF{$t_s(\hl,\ba)=0$ for all $s \in \{1,\ldots,k\} \setminus \{r\}$}
    \STATE{Add $(\hmu,\hv)$ to $\cM$, where $\hmu=\hl/t_r(\hl,\ba)$ and
        $\hv$ is the unit eigenvector such that $\hSigma(\ba)\hv=\hl\hv$.}
  \ENDIF
\ENDFOR
\STATE{Return $\cM$}
\end{algorithmic}
\caption{Algorithm for estimating eigenvalues and eigenvectors of $\Sigma_r$}
\label{alg:spikes}
\end{algorithm}

The procedure is summarized in Algorithm \ref{alg:spikes}. We note that the
combinations $\ba$ where $t_s(\lambda,\ba) \approx 0$ for $s \neq r$ are not
known a priori---in particular, they depend on the unknown spike
eigenvalues and eigenvectors to be estimated. Hence we search for such values
$\ba \in \R^k$. By scale invariance, we restrict to $\ba$ on the unit sphere
\[S^{k-1}=\{\ba \in \R^k:\|\ba\|=1\}.\]
We further restrict to outlier eigenvalues $\hl \in
\spec(\hSigma(\ba))$ which fall above $\supp(\mu_0)$, belonging to
\[\I_\delta(\ba)=\{x \in \R:x \geq y+\delta \text{ for all } y \in
\supp(\mu_0(\ba))\}.\]
We note that outliers falling below $\supp(\mu_0)$ 
will be identified as corresponding to $-\ba \in S^{k-1}$, and for simplicity
of the procedure, we ignore any outliers that fall between intervals of
$\supp(\mu_0(\ba))$.

\begin{figure}
\centering
\includegraphics[width=0.5\textwidth,page=19]{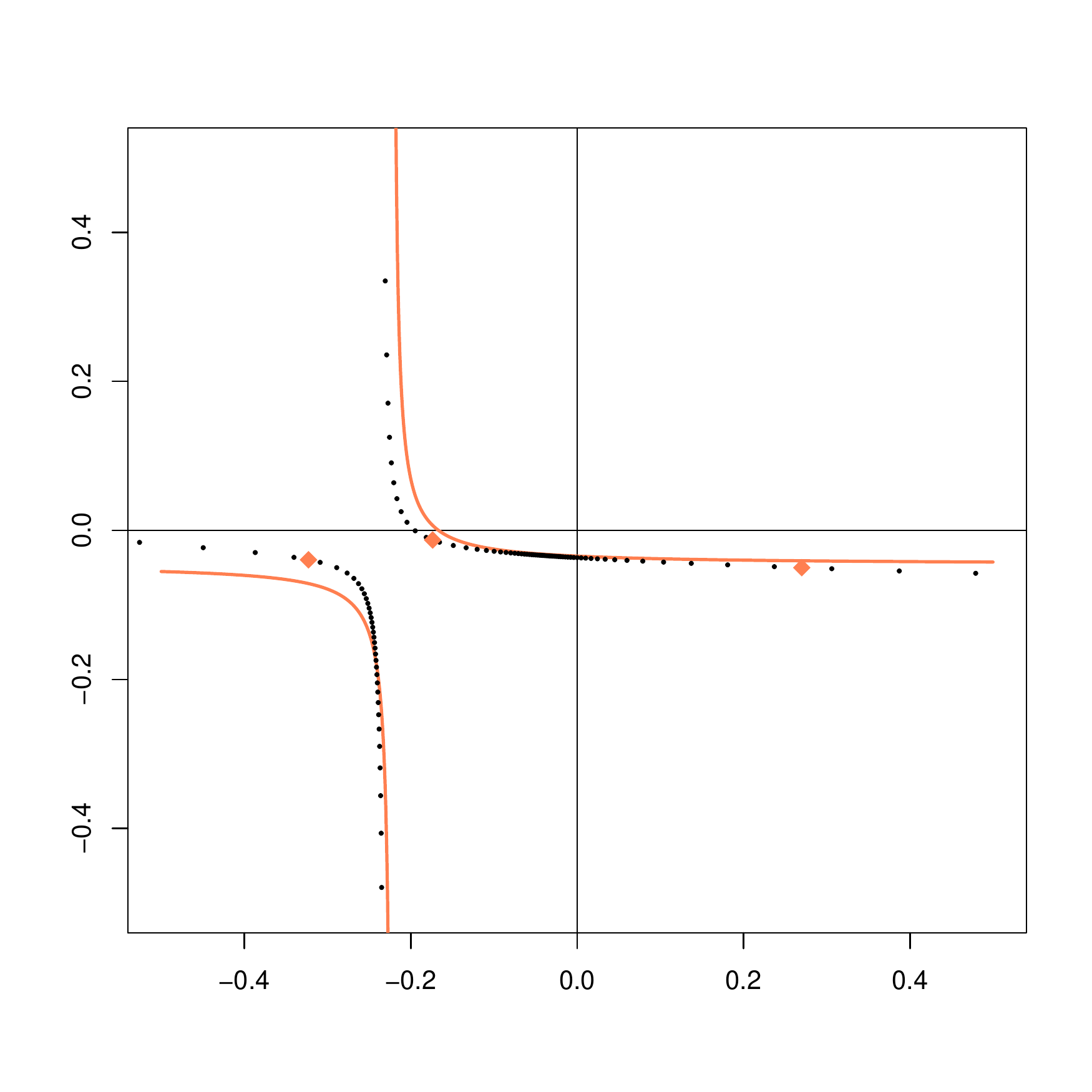}
\caption{Illustration of Algorithm \ref{alg:spikes} for the one-way design,
where $k=2$. The setting is the same as in Figure \ref{fig:outliers}. The
red curve depicts the locus $\mathcal{L}$ from (\ref{eq:Ltrue}) on the
$(s_1,s_2)$ plane, which has one $s_1$-intercept at $(-1/6,0)$ and one
$s_2$-intercept at $(0,-1/29)$. Black points show values in
$\widehat{\mathcal{L}}$ corresponding to $\ba=(a_1,a_2)$ belonging to a grid of
100 equispaced points on the unit circle. The three points of
$\widehat{\mathcal{L}}$ corresponding to the three outliers of the MANOVA
estimate $\hSigma_1$, where $(a_1,a_2)=\pm (1/J,-1/J)$, are
depicted in red.}\label{fig:Tplot}
\end{figure}

One may understand the behavior of Algorithm \ref{alg:spikes} by plotting the
values
\begin{equation}\label{eq:Lobserved}
\widehat{\mathcal{L}}=\left\{
m_0(\hl,\ba) \cdot (t_1(\hl,\ba),\ldots,t_k(\hl,\ba)):\;
\ba \in S^{k-1}, \;\hl \in \spec(\hSigma(\ba)) \cap \I_\delta(\ba)\right\}.
\end{equation}
This is illustrated for an example
of the one-way design in Figure \ref{fig:Tplot}. By Theorem
\ref{thm:spikemapping}, we expect these values to fall close to
\[m_0(\lambda,\ba) \cdot (t_1(\lambda,\ba),\ldots,t_k(\lambda,\ba)),\]
where $\lambda$ is the deterministic prediction for the location of $\hl$,
satisfying $0=\det T(\lambda,\ba)$. By this condition and the
form (\ref{eq:TTheta}) for $T$, these values belong to the locus
\begin{equation}\label{eq:Ltrue}
\mathcal{L}=\left\{(s_1,\ldots,s_k) \in \R^k:\,0=\det\left(\Id+\sum_{r=1}^k
s_rV_r\Theta_rV_r'\right)\right\},
\end{equation}
which does not depend on $\ba$ and is defined solely by the spike parameters
$\Theta_1,\ldots,\Theta_k$ and $V_1,\ldots,V_k$. This is depicted also in Figure
\ref{fig:Tplot}. (We have picked a simulation to display in Figure
\ref{fig:Tplot} where $\widehat{\mathcal{L}}$ and $\mathcal{L}$ are
particularly close, for purposes of illustration.)
The spike values $\theta$ on the diagonal of $\Theta_r$
are in 1-to-1 correspondence with the points
$(0,\ldots,0,-1/\theta,0,\ldots,0) \in \mathcal{L}$
which fall on the $r^{\text{th}}$
coordinate axis. Algorithm \ref{alg:spikes} may be
understood as estimating these intercepts by the intercepts of the
observed locus $\widehat{\mathcal{L}}$.

We have written Algorithm \ref{alg:spikes} in the idealized setting
where we search over all $\ba \in S^{k-1}$. In practice, we discretize
$S^{k-1}$ as in Figure \ref{fig:Tplot} and search over this discretization for
pairs $(\hl,\ba)$ where $t_s(\hl,\ba) \approx 0$ for all $s \neq r$. We then
numerically refine each located pair $(\hl,\ba)$. Computing the values
$t_r(\hl,\ba)$ and the lower endpoint of $\I_\delta(\ba)$ requires knowledge of
the noise variances $\sigma_1^2,\ldots,\sigma_k^2$. These computations are
particularly simple in balanced classification designs, and we discuss this
in Section \ref{sec:balanced}. If $\sigma_1^2,\ldots,\sigma_k^2$
are unknown, they may be replaced by $1/n$-consistent
estimates, for example $\hat{\sigma}_r^2=p^{-1}\Tr \hSigma_r$ where $\hSigma_r$
is the unbiased MANOVA estimate for $\Sigma_r$. (See
\cite[Proposition 2.13]{fanjohnstoneedges} for a proof. In practice, large
outliers of $\hSigma_r$ may be removed before computing the trace.) The unknown
quantity $N=p-L$ may be replaced by the dimension $p$.

We prove the following theoretical guarantee for this procedure, for simplicity 
in the setting where $\Sigma_r$ has separated eigenvalues. Define $s:\R^k \to
\R^k$ by
\begin{equation}\label{eq:s}
s(\ba)=(s_1(\ba),\ldots,s_k(\ba)),\qquad s_r(\ba)=\frac{1}{N\sigma_r^2}
\Tr_r\Big(F(\ba)[\Id+F(\ba)]^{-1}\Big).
\end{equation}
As $F(m_0 \cdot \ba)=m_0 \cdot F(\ba)$, this
function satisfies $s(m_0(\lambda,\ba) \cdot \ba)=m_0(\lambda,\ba) \cdot
(t_1(\lambda,\ba),\ldots,t_k(\lambda,\ba))$.
To guarantee that the algorithm does not make duplicate estimates, we require
$B_1,\ldots,B_k$ to be chosen such that $s$ is injective in the following
quantitative sense.
\begin{assumption}\label{assump:estimation}
There exists a constant $c>0$ such that for any $\ba_1,\ba_2 \in \R^k$
where $\Id+F(\ba_1)$ and $\Id+F(\ba_2)$ are invertible,
\[\|s(\ba_1)-s(\ba_2)\| \geq c\,\frac{\|\ba_1-\ba_2\|}{(1+\|\ba_1\|)
(1+\|\ba_2\|)}.\]
\end{assumption}
We will verify in Section \ref{sec:balanced} that this condition holds for
balanced classification designs, where $B_1,\ldots,B_k$ are the projections
corresponding to the canonical mean-squares.

\begin{theorem}[Spike estimation]\label{thm:estimation}
Fix $\delta,\tau>0$ and $r \in \{1,\ldots,k\}$. Suppose
Assumptions \ref{assump:main} and \ref{assump:estimation} hold for
$B_1,\ldots,B_k$. Suppose furthermore that
the diagonal values $\theta_i$ of $\Theta_r$
satisfy $\theta_i \geq \tau$ and $|\theta_i-\theta_j| \geq \tau$
for all $1 \leq i \neq j \leq l_r$.
Then there exists a constant $c_0>0$ (not depending on
$\bar{C}$ in Assumption \ref{assump:main}) such that the following holds:

Let $\cM$ be the output of Algorithm
\ref{alg:spikes} with parameter $\delta$ for estimating the spikes of
$\Sigma_r$. Let
$\hcE=[\hmu:(\hmu,\hv) \in \cM]$ and $\hcV=[\hv:(\hmu,\hv) \in \cM]$
be the estimated eigenvalues and eigenvectors.
Then, for any $\eps,D>0$ and all $n \geq n_0(\delta,\tau,\eps,D)$,
\begin{enumerate}[(a)]
\item With probability at least $1-n^{-D}$,
there is a subset $\cE \subset \spec(\Sigma_r)$ containing all
eigenvalues greater than $c_0$ such that
\[\ordereddist(\hcE,\cE)<n^{-1/2+\eps}.\]
\item On the event of part (a), for any $\mu \in \cE$, let $\v$ be the unit
eigenvector where $\Sigma_r\v=\mu\v$, and let $(\hmu,\hv) \in \cM$ be such that
$\hmu-\mu<n^{-1/2+\eps}$. Then for some scalar value $\alpha \in (0,1]$
and choice of sign for $\v$,
\[\|P_\cS \hv-\alpha \v\|<n^{-1/2+\eps}.\]
\item For each $\hv \in \hcV$,
$P_{\cS^\perp}\hv/\|P_{\cS^\perp}\hv\|$ is independent of $P_\cS
\hv$ and uniformly distributed over unit vectors in $\cS^\perp$.
\end{enumerate}
\end{theorem}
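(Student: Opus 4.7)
The plan is to reduce the algorithm's defining condition to an intercept-finding problem for the function $s$ in (\ref{eq:s}), locate those intercepts in the deterministic limit using Assumption~\ref{assump:estimation}, and then pass from the deterministic picture to the random $\hSigma(\ba)$ using Theorems~\ref{thm:spikemapping} and~\ref{thm:eigenvectors}. The identity $F(c\,\ba)=c\,F(\ba)$ gives $m_0(\lambda,\ba)\,t_s(\lambda,\ba)=s_s(m_0(\lambda,\ba)\,\ba)$, so, setting $\ba'=m_0(\hl,\ba)\,\ba$, the algorithm's constraint $t_s(\hl,\ba)=0$ for $s\neq r$ is equivalent to $s_s(\ba')=0$ for $s\neq r$. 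Combining the Marcenko-Pastur equation (\ref{eq:MP}) with the representation (\ref{eq:TTheta}) of $T$, a short computation shows that under these vanishing conditions $\det T(\lambda,\ba)=0$ is equivalent to $\hmu=\lambda/t_r(\lambda,\ba)$ being an eigenvalue of $\Sigma_r$, and moreover $s_r(\ba')=-(\hmu-\sigma_r^2)^{-1}=-\theta^{-1}$ for the corresponding eigenvalue $\theta$ of $\Theta_r$. In the deterministic limit the algorithm is therefore searching for preimages under $s$ of the points $-\theta_i^{-1}\e_r$, each such preimage producing exactly the estimate $\hmu=\sigma_r^2+\theta_i$.

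\emph{Existence of deterministic ideals.} This is the main obstacle. For each $\theta_i\geq\tau$ with $\theta_i>c_0$ (threshold to be determined), I would construct $\ba_i'\in\R^k$ with $s(\ba_i')=-\theta_i^{-1}\e_r$, a compatible $\ba_i^*=\ba_i'/\|\ba_i'\|\in S^{k-1}$, and $\lambda_i\in\R\setminus\supp(\mu_0(\ba_i^*))_\delta$ satisfying $m_0(\lambda_i,\ba_i^*)\,\ba_i^*=\ba_i'$, and show that $\lambda_i$ is a simple root of $\det T(\cdot,\ba_i^*)$ separated by a constant from $\supp(\mu_0(\ba_i^*))$ and from all other roots. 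Assumption~\ref{assump:estimation} makes $s$ a local homeomorphism with controlled Lipschitz inverse; a Taylor expansion of $s$ near $\ba'=0$ shows its leading behaviour is a nondegenerate linear map, and a degree argument then shows that the image of $s$ contains a neighbourhood of the origin whose radius is bounded below in terms of the constants $c,C$ of Assumption~\ref{assump:main} but not $\bar{C}$. Choosing $c_0$ larger than the reciprocal of this radius ensures each $-\theta_i^{-1}\e_r$ lies in the image. Since $T(\lambda_i,\ba_i^*)=\lambda_i\Id-t_r(\lambda_i,\ba_i^*)\Sigma_r$ under the vanishing conditions, its unit null vector is the eigenvector $\v_i$ of $\Sigma_r$ for $\mu_i=\sigma_r^2+\theta_i$, and the separation assumption on the $\theta_i$ guarantees that the $\lambda_i$ are mutually separated simple roots.

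\emph{Random perturbation and assembly.} Theorem~\ref{thm:spikemapping} applied at $\ba=\ba_i^*$ yields an outlier $\hl_i^0\in\spec(\hSigma(\ba_i^*))$ with $|\hl_i^0-\lambda_i|<n^{-1/2+\eps}$ off a probability-$n^{-D}$ event. To enforce the algorithm's exact condition, define $\Phi:\ba\mapsto(t_s(\hl(\ba),\ba))_{s\neq r}\in\R^{k-1}$, where $\hl(\ba)$ tracks the relevant outlier of $\hSigma(\ba)$. By Theorem~\ref{thm:spikemapping} and implicit differentiation of $0=\det T(\lambda(\ba),\ba)$, $\Phi$ is $C^1$ in $\ba$ up to $O(n^{-1/2+\eps})$ fluctuations around its deterministic counterpart, whose Jacobian at $\ba_i^*$ restricted to the tangent space of $S^{k-1}$ is nondegenerate by Assumption~\ref{assump:estimation}. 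A Brouwer degree / inverse function argument then produces $\ba_i\in S^{k-1}$ within $n^{-1/2+\eps}$ of $\ba_i^*$ with $\Phi(\ba_i)=0$, and $\hmu_i=\hl_i/t_r(\hl_i,\ba_i)$ lies within $n^{-1/2+\eps}$ of $\mu_i$, giving part~(a). Part~(b) follows from Theorem~\ref{thm:eigenvectors} at $(\ba_i,\hl_i)$ because the deterministic null vector of $T(\hl_i,\ba_i)$ is $\v_i$ by the previous paragraph. Part~(c) is a consequence of the rotational invariance of the model under orthogonal transformations of $\cS^\perp$, since $\Sigma_r|_{\cS^\perp}=\sigma_r^2\Id$ for every $r$. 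Uniqueness of preimages under $s$ prevents duplicate outputs for distinct $\mu_i\neq\mu_j$, while applying Theorem~\ref{thm:spikemapping} and the reduction of the first paragraph at every pair $(\hl,\ba)$ produced by the algorithm forces every $\hmu\in\hcE$ to lie within $n^{-1/2+\eps}$ of $\spec(\Sigma_r)\cap(c_0,\infty)$.
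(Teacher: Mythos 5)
Your overall strategy matches the paper's: reduce the algorithm's condition to intercept-finding for the map $s$ of~(\ref{eq:s}), use Assumption~\ref{assump:estimation} together with a quantitative inverse function theorem to guarantee existence and local uniqueness of solutions in the deterministic picture, and then transfer to the random estimator via Theorems~\ref{thm:spikemapping} and~\ref{thm:eigenvectors}. The paper organizes this into three explicit claims (soundness of every output, uniqueness per spike, completeness for large spikes), which you address in a more compressed form, but the architecture is the same.

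The genuine gap is in the sentence asserting that $\Phi(\ba)=(t_s(\hl(\ba),\ba))_{s\neq r}$ is $C^1$ in $\ba$ "up to $O(n^{-1/2+\eps})$ fluctuations around its deterministic counterpart." Theorem~\ref{thm:spikemapping} gives only pointwise closeness $|\hl(\ba)-\lambda(\ba)|\prec n^{-1/2}$; it says nothing about $\der\hl$ vs.\ $\der\lambda$, and without that the Brouwer/inverse-function step does not go through (the Jacobian of $\Phi$ could a priori be far from that of the deterministic map even though the values are close). The paper devotes Lemma~\ref{lemma:dlambbound} to exactly this: $\|\der\lambda(\ba_0)-\der\hl(\ba_0)\|\prec n^{-1/2}$. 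Its proof does not follow from implicit differentiation of $0=\det T(\lambda(\ba),\ba)$ alone; it requires bounding $\|\partial_{a_r}K(\lambda_0,\ba_0)-\partial_{a_r}\hK(\lambda_0,\ba_0)\|$, which in turn uses the second-order resolvent approximation Lemma~\ref{lemma:secondorderapprox} (the quadruple-index fluctuation averaging in the appendix). This is one of the hardest technical ingredients in the paper, so asserting it in a clause is a real gap. Relatedly, you apply Theorem~\ref{thm:spikemapping} at individual points $\ba_i^*$ but then reason about $\hl(\ba)$ for $\ba$ ranging over a neighbourhood; making this rigorous requires the union bound over a net of $S^{k-1}$ together with Lipschitz control (Lemma~\ref{lemma:unionbounda} in the paper), which you should state explicitly. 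Finally, your uniqueness remark handles only distinct $\mu_i\neq\mu_j$: the paper's Claim~2 also rules out two pairs $(\hl_0,\ba_0)$, $(\hl_1,\ba_1)$ producing nearby estimates of the \emph{same} $\mu$, splitting into a nearby-$\ba$ case (local injectivity of $\hf$ from Lemma~\ref{lemma:tplusinjective}) and a far-$\ba$ case (global injectivity from Assumption~\ref{assump:estimation}); your proposal should cover both.
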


In the presence of high-dimensional noise, the eigenvector estimate $\hv$
remains inconsistent for $\v$. However, asymptotically as
$n,p \to \infty$, parts (b) and (c) indicate that $\hv$ is not biased in a
particular direction away from $\v$.
Note that in part (a), some lower bound $c_0$ for the size of the population
spike eigenvalue is necessary to guarantee estimation of this spike,
as otherwise it might not produce an outlier in 
any matrix $\hSigma(\ba)$. (In this case, a portion of the true locus
$\mathcal{L}$ in (\ref{eq:Ltrue}) may not be tracked by the observed locus
$\widehat{\mathcal{L}}$.)

\begin{example}
We explore in simulations the accuracy of this procedure for estimating
eigenvalues and eigenvectors of $\Sigma_1$ in two finite-sample
settings of the one-way model (\ref{eq:oneway}), corresponding to the designs
\begin{align*}
D_1&:\,n=600,\,p=300,\,I=300,\,J=2\\
D_2&:\,n=300,\,p=600,\,I=150,\,J=2
\end{align*}
In all simulations, we take $\sigma_1^2=0$ and $\sigma_2^2=1$. In particular,
$\Sigma_1$ is low-rank, as hypothesized for genetic covariances of
high-dimensional trait sets \cite{walshblows,blowsetal}.
For both designs, we fix the tuning parameter $\delta=0.5$.

We first consider a rank-one matrix $\Sigma_1=\mu \e_1\e_1'$ for various
settings of $\mu$ between 2 and 10, and
$\Sigma_2=\Id$ with no spike. The following tables display the mean and standard
error of $\hmu$ estimated by Algorithm \ref{alg:spikes}, and of the 
alignment $\hv'\e_1$ of the estimated eigenvector.
Displayed also are the corresponding quantities for the leading
eigenvalue/eigenvector of the MANOVA estimate $\hSigma_1$.
We observe in all cases that Algorithm
\ref{alg:spikes} corrects a bias in the MANOVA eigenvalue, and the alignment
$\hv'\e_1$ is approximately the same as for the MANOVA eigenvector.
Algorithm \ref{alg:spikes} never estimates more than one spike for
$\Sigma_1$ in this setting; however, if $\mu$ is small, it may sometimes
estimate 0 spikes. We display also the percentage of simulations in which a
spike was estimated. For $\mu=2$ under Design $D_2$, the
predicted outlier is less than $\delta=0.5$ away from the edge of the
spectrum, and Algorithm \ref{alg:spikes} never estimated this spike.
\begin{center}
{\bf Design $D_1$}\\*
\begin{tabular}{l|rrrrr}
  \hline
 & $\mu=2$ & $\mu=4$ & $\mu=6$ & $\mu=8$ & $\mu=10$ \\ 
  \hline
Eigenvalue, MANOVA & 2.70 (0.19) & 4.60 (0.36) & 6.56 (0.52) & 8.53 (0.69) & 10.51 (0.85) \\ 
  Alignment $\e_1$, MANOVA & 0.85 (0.02) & 0.93 (0.01) & 0.96 (0.01) & 0.97 (0.00) & 0.97 (0.00) \\ 
\hline
  Eigenvalue, estimated & 2.00 (0.20) & 3.98 (0.37) & 5.98 (0.53) & 7.97 (0.69) & 9.96 (0.85) \\ 
  Alignment $\e_1$, estimated & 0.84 (0.02) & 0.93 (0.01) & 0.95 (0.01) & 0.97 (0.00) & 0.97 (0.00) \\ 
  Percent estimated & 98 & 100 & 100 & 100 & 100 \\ 
   \hline
\end{tabular}\\
\vspace{0.1in}
{\bf Design $D_2$}\\*
\begin{tabular}{l|rrrrr}
  \hline
 & $\mu=2$ & $\mu=4$ & $\mu=6$ & $\mu=8$ & $\mu=10$ \\ 
  \hline
Eigenvalue, MANOVA & 4.65 (0.23) & 6.31 (0.49) & 8.18 (0.72) & 10.10 (0.95) & 12.04 (1.19) \\ 
  Alignment $\e_1$, MANOVA & 0.58 (0.07) & 0.78 (0.03) & 0.85 (0.02) & 0.88 (0.02) & 0.90 (0.01) \\ 
\hline
  Eigenvalue, estimated & NA & 4.02 (0.46) & 5.89 (0.75) & 7.87 (0.98) & 9.84 (1.20) \\ 
  Alignment $\e_1$, estimated & NA & 0.76 (0.03) & 0.84 (0.02) & 0.88 (0.02) & 0.90 (0.01) \\ 
  Percent estimated & 0 & 87 & 100 & 100 & 100 \\ 
   \hline
\end{tabular}
\end{center}
\vspace{0.1in}

Next, we consider $\Sigma_1=0$ and $\Sigma_2=\theta \v\v'+\Id$ for a unit
vector $\v$ and for $\mu=\theta+1 \in \{10,20,30\}$. In both designs
$D_1$ and $D_2$, this produces one positive and one negative outlier eigenvalue
in the MANOVA estimate $\hSigma_1$. The tables below show the percentages of
simulations in which a spurious spike eigenvalue is estimated by Algorithm
\ref{alg:spikes} for $\Sigma_1$. In such cases, there is enough deviation of
the observed locus $\widehat{\mathcal{L}}$ from the true locus $\mathcal{L}$
(which is the horizontal line $s_2=-1/\theta$) to produce a spurious
intercept where $t_2(\hl,\ba)=0$, and the algorithm interprets this
as an alignment of the spike in $\Sigma_2$ with a small
spike in $\Sigma_1$. We find that the spurious points $(\hl,\ba)$ where
$t_2(\hl,\ba)=0$ occur for $\hl$ close to the edges of $\supp(\mu_0(\ba))$, and
this error percentage may be reduced in finite samples by setting a more
conservative choice of $\delta$, if desired.

\begin{center}
{\bf Design $D_1$}\\*
\begin{tabular}{l|rrr}
  \hline
 & $\mu=10$ & $\mu=20$ & $\mu=30$ \\ 
  \hline
  Percent spurious & 2 & 8 & 18 \\ 
   \hline
\end{tabular}\\
\vspace{0.1in}
{\bf Design $D_2$}\\*
\begin{tabular}{l|rrr}
  \hline
 & $\mu=10$ & $\mu=20$ & $\mu=30$ \\ 
  \hline
  Percent spurious & 0 & 8 & 15 \\ 
   \hline
\end{tabular}
\end{center}
\vspace{0.1in}

Next, we consider $\Sigma_1=\mu \e_1\e_1'$ and
$\Sigma_2=29\v\v'+\Id$ for $\v=\frac{1}{2}\e_1+\frac{\sqrt{3}}{2}\e_2$, which
forms a 60-degree alignment angle with $\e_1$. Displayed are the statistics for
the largest estimated eigenvalue/eigenvector and largest MANOVA
eigenvalue/eigenvector. Displayed also are the inner-product
alignments with the direction $\e_2$ (where signs are chosen so that the
estimated eigenvectors have positive $\e_1$ coordinate). The spike
in $\Sigma_2$ causes the MANOVA eigenvector to be biased towards
$\v$, and it also increases the bias and standard error of the MANOVA
eigenvalue. In settings of small $\mu$ when Algorithm \ref{alg:spikes} does
not always estimate a spike, the values $\hmu$ and $\hv'\e_2$ have a selection
bias among the simulations where estimation occurs. For the remaining settings,
$\hmu$ and $\hv'\e_2$ are nearly unbiased for the true values $\mu$ and 0,
and the alignments $\hv'\e_1$ are similar to those of the MANOVA eigenvectors.

\begin{center}
{\bf Design $D_1$}\\*
\begin{tabular}{l|rrrrr}
  \hline
 & $\mu=2$ & $\mu=4$ & $\mu=6$ & $\mu=8$ & $\mu=10$ \\ 
  \hline
Eigenvalue, MANOVA & 4.59 (1.14) & 5.70 (1.14) & 7.28 (1.15) & 9.07 (1.22) & 10.93 (1.33) \\ 
  Alignment $\e_1$, MANOVA & 0.57 (0.07) & 0.80 (0.06) & 0.89 (0.04) & 0.93 (0.02) & 0.95 (0.01) \\ 
  Alignment $\e_2$, MANOVA & 0.47 (0.11) & 0.26 (0.16) & 0.14 (0.15) & 0.09 (0.12) & 0.06 (0.10) \\ 
  \hline
  Eigenvalue, estimated & 2.67 (1.09) & 4.18 (1.01) & 6.11 (1.07) & 8.06 (1.17) & 10.03 (1.30) \\ 
  Alignment $\e_1$, estimated & 0.63 (0.10) & 0.83 (0.04) & 0.90 (0.02) & 0.93 (0.02) & 0.95 (0.01) \\ 
  Alignment $\e_2$, estimated & 0.10 (0.25) & 0.01 (0.19) & 0.01 (0.15) & 0.00 (0.12) & 0.00 (0.10) \\ 
  Percent estimated & 70 & 100 & 100 & 100 & 100 \\ 
   \hline
\end{tabular}\\
\vspace{0.1in}
{\bf Design $D_2$}\\*
\begin{tabular}{lrrrrr}
  \hline
 & $\mu=2$ & $\mu=4$ & $\mu=6$ & $\mu=8$ & $\mu=10$ \\ 
  \hline
Eigenvalue, MANOVA & 8.79 (1.52) & 9.49 (1.64) & 10.57 (1.74) & 11.98 (1.85) & 13.59 (1.99) \\ 
  Alignment $\e_1$, MANOVA & 0.44 (0.06) & 0.58 (0.06) & 0.71 (0.05) & 0.79 (0.04) & 0.84 (0.03) \\ 
  Alignment $\e_2$, MANOVA & 0.53 (0.07) & 0.44 (0.10) & 0.33 (0.12) & 0.24 (0.12) & 0.18 (0.12) \\ 
  \hline
  Eigenvalue, estimated & 5.15 (1.37) & 4.84 (1.41) & 6.28 (1.56) & 8.21 (1.72) & 10.15 (1.91) \\ 
  Alignment $\e_1$, estimated & 0.39 (0.05) & 0.60 (0.06) & 0.72 (0.04) & 0.80 (0.03) & 0.84 (0.03) \\ 
  Alignment $\e_2$, estimated & 0.34 (0.11) & 0.09 (0.17) & 0.02 (0.16) & 0.02 (0.14) & 0.01 (0.13) \\ 
  Percent estimated & 22 & 77 & 100 & 100 & 100 \\ 
   \hline
\end{tabular}
\end{center}
\vspace{0.1in}

Finally, we consider a setting with multiple spikes. We set $\Sigma_1$ to be
of rank 5, with eigenvalues $(10,8,6,4,2)$. We set $\Sigma_2$ to have 5
eigenvalues equal to 30 and remaining eigenvalues equal to 1, with the former
5-dimensional subspace having a 60-degree alignment angle with each spike
eigenvector of $\Sigma_1$. The tables below display statistics for the
five largest estimated and MANOVA eigenvalues in this setting. We observe that
Algorithm \ref{alg:spikes} reduces the bias of the MANOVA eigenvalues,
although a positive bias persists at these sample sizes.

\begin{center}
{\bf Design $D_1$}\\*
\begin{tabular}{l|rrrrr}
  \hline
 & $\mu=10$ & $\mu=8$ & $\mu=6$ & $\mu=4$ & $\mu=2$ \\ 
  \hline
Eigenvalue, MANOVA & 12.06 (1.10) & 9.70 (1.01) & 7.60 (0.96) & 5.87 (0.74) & 4.53 (0.55) \\ 
\hline
  Eigenvalue, estimated & 11.08 (1.12) & 8.65 (1.01) & 6.38 (0.95) & 4.36 (0.76) & 2.80 (0.57) \\ 
  Percent estimated & 100 & 100 & 100 & 100 & 97 \\ 
   \hline
\end{tabular}\\
\vspace{0.1in}
{\bf Design $D_2$}\\*
\begin{tabular}{l|rrrrr}
  \hline
 & $\mu=10$ & $\mu=8$ & $\mu=6$ & $\mu=4$ & $\mu=2$ \\ 
  \hline
Eigenvalue, MANOVA & 15.79 (1.61) & 12.94 (1.15) & 11.06 (1.00) & 9.21 (0.82) & 7.80 (0.73) \\ 
\hline
  Eigenvalue, estimated & 12.07 (1.68) & 8.95 (1.19) & 6.77 (1.03) & 4.74 (0.86) & 3.94 (0.53) \\ 
  Percent estimated & 100 & 100 & 100 & 98 & 37 \\ 
   \hline
\end{tabular}
\end{center}
\end{example}

\section{Balanced classification designs}\label{sec:balanced}
We consider the special example of model (\ref{eq:mixedmodel}) corresponding
to balanced classification designs. In these designs, there is a canonical
choice of matrices $B_1,\ldots,B_k$ for Algorithm \ref{alg:spikes} by
considerations of sufficiency, and Assumption \ref{assump:estimation}
may be explicitly verified for this choice. The quantities
$t_r(z)$ and $w_{rs}(z)$ also have explicit forms, which
we record here to facilitate numerical implementations. 

To motivate the general discussion, we first give several examples.
\begin{example}\label{example:oneway}
Consider the one-way model (\ref{eq:oneway}) in the balanced setting with 
$I$ groups of equal size $J$. We assume $J \geq 2$ is a fixed constant.
The canonical mean-square matrices of this model are defined by
\[\MS_1=\frac{1}{I-1}\sum_{i=1}^I \sum_{j=1}^J
(\bar{\y}_i-\bar{\y})(\bar{\y}_i-\bar{\y})', \qquad
\MS_2=\frac{1}{n-I}\sum_{i=1}^I \sum_{j=1}^J
(\y_{i,j}-\bar{\y}_i)(\y_{i,j}-\bar{\y}_i)',\]
where $\bar{\y}_i \in \R^p$ and $\bar{\y} \in \R^p$ denote the sample means
in group $i$ and across all groups. The MANOVA estimators are
\cite{searleetal}
\[\hSigma_1=\frac{1}{J}\MS_1-\frac{1}{J}\MS_2, \qquad \hSigma_2=\MS_2.\]
Recall that the one-way model may be written in the matrix form
\[Y=\one_n\bmu'+U_1\alpha_1+\alpha_2,\]
where $U_1$ is defined in (\ref{eq:onewaymatrix}).
Defining orthogonal projections $\pi_1$ and $\pi_2$ onto
$\col(U_1)\ominus \col(\one_n)$ and $\R^n \ominus \col(U_1)$, the above
mean-squares may be written as
\[\MS_1=Y'\frac{\pi_1}{I-1}Y, \qquad \MS_2=Y'\frac{\pi_2}{n-I}Y.\]
The MANOVA estimators then take the equivalent form of (\ref{eq:MANOVAoneway}).
\end{example}

\begin{example}\label{example:twoway}
Consider a nested two-way model with $I$ groups,
each group consisting of $J$ subgroups, and each subgroup consisting of $K$
individuals. Traits for individual $k$ in group $(i,j)$ are modeled as
\begin{equation}\label{eq:nestedtwoway}
\y_{i,j,k}=\bmu+\balpha_i+\bbeta_{i,j}+\beps_{i,j,k},
\quad \balpha_i \overset{iid}{\sim} \N(0,\Sigma_1),
\quad \bbeta_{i,j} \overset{iid}{\sim} \N(0,\Sigma_2),
\quad \beps_{i,j,k} \overset{iid}{\sim} \N(0,\Sigma_3).
\end{equation}
This corresponds to the North Carolina I design of \cite{comstockrobinson},
where individuals have a half-sibling relation within groups and a full-sibling
relation within subgroups. We assume $J,K \geq 2$ are fixed constants.

This model may be written in the matrix form
\[Y=\one_n\bmu'+U_1\alpha_1+U_2\alpha_2+\alpha_3,\]
where $\y_{i,j,k}$, $\balpha_i$, $\bbeta_{i,j}$, and $\beps_{i,j,k}$ are stacked
as the rows of $Y$, $\alpha_1$, $\alpha_2$, and $\alpha_3$, and the incidence
matrices are given by
\[U_1=\Id_I \otimes \one_{JK}, \qquad U_2=\Id_{IJ} \otimes \one_K.\]
Defining orthogonal projections $\pi_1$, $\pi_2$, and $\pi_3$ onto $\col(U_1)
\ominus \col(\one_n)$, $\col(U_2) \ominus \col(U_1)$, and $\R^n \ominus
\col(U_2)$, the canonical mean-squares are given by
\[\MS_1=Y'\frac{\pi_1}{I-1}Y, \qquad \MS_2=Y'\frac{\pi_2}{IJ-I}Y,
\qquad \MS_3=Y'\frac{\pi_3}{n-IJ}Y.\]
The MANOVA estimators are defined as \cite{searleetal}
\[\hSigma_1=\frac{1}{JK}\MS_1-\frac{1}{JK}\MS_2, \qquad
\hSigma_2=\frac{1}{K}\MS_2-\frac{1}{K}\MS_3, \qquad \hSigma_3=\MS_3.\]
\end{example}

\begin{example}\label{example:crossed}
Consider a replicated crossed two-way design
\[\y_{i,j,k,l}=\bmu+\balpha_i+\bbeta_{i,j}+\bgamma_{i,k}+\bdelta_{i,j,k}
+\beps_{i,j,k,l},\]
where
\[\balpha_i \overset{iid}{\sim} \N(0,\Sigma_1),
\quad \bbeta_{i,j} \overset{iid}{\sim} \N(0,\Sigma_2),
\quad \bgamma_{i,k} \overset{iid}{\sim} \N(0,\Sigma_3),
\quad \bdelta_{i,j,k} \overset{iid}{\sim} \N(0,\Sigma_4),
\quad \bgamma_{i,j,k,l} \overset{iid}{\sim} \N(0,\Sigma_5).\]
This corresponds to the North Carolina II design of \cite{comstockrobinson},
where $I$ replicates of a cross-breeding experiment are performed, each
experiment breeding $J$ distinct fathers with $K$ distinct mothers and
measuring traits in $L$ offspring for each $(i,j,k)$. We assume $J,K,L
\geq 2$ are fixed constants.

This model may be written in the matrix form
\[Y=\one_n\bmu'+\sum_{r=1}^4 U_r\alpha_r+\alpha_5,\]
where the incidence matrices are
\[U_1=\Id_I \otimes \one_{JKL}, \qquad U_2=\Id_{IJ} \otimes \one_{KL},
\qquad U_3=\Id_I \otimes \one_J \otimes \Id_K \otimes \one_L,
\qquad U_4=\Id_{IJK} \otimes \one_L.\]
Defining orthogonal projections $\pi_1,\pi_2,\pi_3,\pi_4,\pi_5$ onto
$\oS_1=\col(U_1) \ominus \col(\one_n)$, $\oS_2=\col(U_2) \ominus \col(U_1)$,
$\oS_3=\col(U_3) \ominus \col(U_1)$,
$\oS_4=\col(U_4) \ominus (\col(U_1) \oplus \oS_2
\oplus \oS_3)$, and $\oS_5=\R^n \ominus \col(U_4)$, the canonical mean-squares
are
\[\MS_r=Y'\frac{\pi_r}{d_r}Y \qquad \text{ for } r=1,\ldots,5,\]
where $d_r=\dim(\oS_r)$. The forms of the classical MANOVA estimators may be
deduced from the general discussion below.
\end{example}

To encompass these examples, we consider a general balanced classification
design defined by the following properties, as discussed also in
\cite[Appendix A]{fanjohnstonebulk}:
\begin{enumerate}
\item For each $r$, let $c_r=n/m_r$. Then $U_r'U_r=c_r\Id_{m_r}$, and
$\Pi_r=c_r^{-1}U_rU_r'$ is an orthogonal projection onto a subspace
$S_r \subset \R^n$ of dimension $m_r$.
\item Define $S_0=\col(X)$. Then $S_0 \subset S_r \subset S_k=\R^n$
for each $r=1,\ldots,k-1$.
\item Partially
order the subspaces $S_r$ by inclusion: $s \preceq r$ if $S_s \subseteq S_r$.
Let $\oS_0=S_0$, and for $r=1,\ldots,k$ 
let $\oS_r$ denote the orthogonal complement in $S_r$ of all $S_s$
properly contained in $S_r$. Then for each $r$,
\begin{equation}\label{eq:orthosum}
S_r=\bigoplus_{s \preceq r} \oS_s.
\end{equation}
In particular, $\R^n=S_k=\oplus_{r=0}^k \oS_r$.
\end{enumerate}

\begin{figure}
\centering
\includegraphics{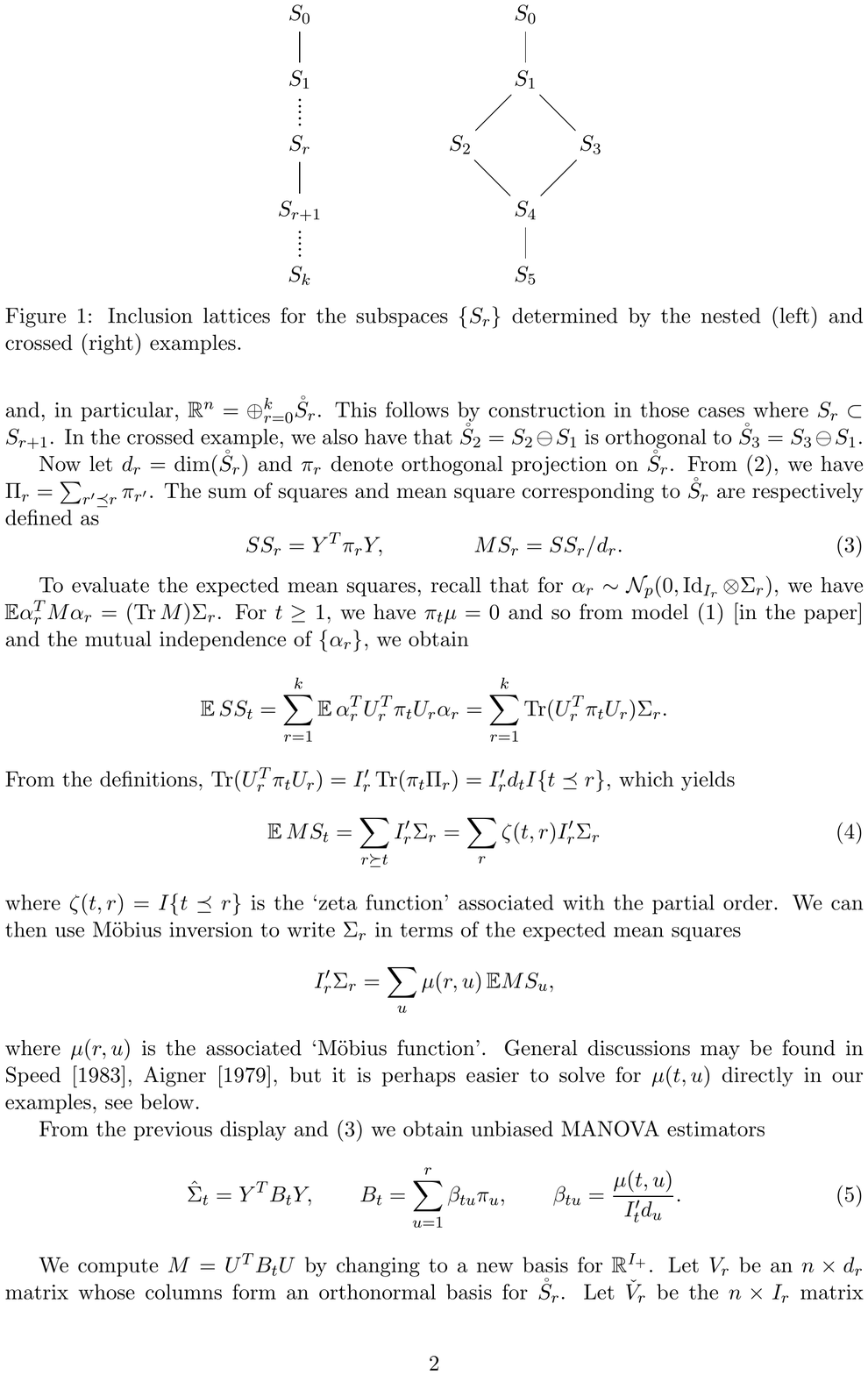}
\caption{Inclusion lattices for the subspaces $\{S_r\}$ determined by the nested
(left) and crossed (right) examples.}\label{fig:inclusionlattice}
\end{figure}

The subspace inclusion lattices for the nested designs of Examples
\ref{example:oneway} and \ref{example:twoway} and the crossed design of Example
\ref{example:crossed} are depicted in Figure \ref{fig:inclusionlattice}.

For each $r=0,\ldots,k$, let $d_r=\dim(\oS_r)$, let
$V_r \in \R^{n \times d_r}$ have orthonormal columns spanning $\oS_r$, and let
$\pi_r=V_rV_r'$
be the orthogonal projection onto $\oS_r$. (In particular, $d_0=\dim \col(X)$
is the dimensionality of fixed effects.) Then $\pi_0,\ldots,\pi_k$ are
mutually orthogonal projections summing to $\Id_n$. Note that the condition
(\ref{eq:orthosum}) implies
\[U_rU_r'=c_r\Pi_r=\sum_{s \preceq r} c_r\pi_s.\]
Then the likelihood of $Y$ in (\ref{eq:multivariatenormal}) may be written in
the form
\[f(Y) \propto \exp\left(-\frac{1}{2}\sum_{s=0}^k \Tr \left[
\left(\sum_{r \geq 1:\,r \succeq s} c_r\Sigma_r\right)^{-1}(Y-X\beta)'\pi_s
(Y-X\beta)\right]\right)\]
where $\pi_sX=0$ for $s \geq 1$. Hence the quantities
\[\pi_0Y,\quad \MS_1=Y'(\pi_1/d_1)Y, \quad \ldots, \quad \MS_k=Y'(\pi_k/d_k)Y\]
form sufficient statistics for this model.

In this setting, we restrict attention to matrices of the form
\begin{equation}\label{eq:hatSigmabalanced}
\hSigma=a_1\MS_1+\ldots+a_k\MS_k=Y'BY,
\qquad B=B(\ba)=a_1\frac{\pi_1}{d_1}+\ldots+a_k\frac{\pi_k}{d_k},
\end{equation}
and we suggest the choices $B_r=\pi_r/d_r$ for use in Algorithm
\ref{alg:spikes}. In particular, the classical MANOVA estimators are of this
form: From (\ref{eq:unbiasedestimation}), we have
\[\E[\MS_s]=\sum_{r=1}^k d_s^{-1}\Tr(U_r'\pi_sU_r)\Sigma_r
=\sum_{r \succeq s} c_r\Sigma_r, \qquad
\E[\hSigma]=\sum_{s=1}^k \sum_{r \succeq s} a_sc_r\Sigma_r.\]
The MANOVA estimate of $\Sigma_r$ is obtained by choosing
$\ba=(a_1,\ldots,a_k)$ so that $\E[\hSigma]=\Sigma_r$. Denoting
\begin{equation}\label{eq:MANOVAH}
H_{rs}=\1\{s \preceq r\}c_r, \qquad H=(H_{rs})_{r,s=1}^k \in \R^{k \times k},
\end{equation}
this is satisfied by letting $\ba$ be the $r^{\text{th}}$ column of $H^{-1}$.
(This corresponds to the procedure of M\"obius inversion over the subspace
inclusion lattice, discussed in greater detail in \cite{speed}.)

We record the following result for this class of balanced models.
In particular, (\ref{eq:MPbalanced}) below implies that $m_0(\lambda)$ may be
computed by solving a polynomial equation of degree $k+1$. For $\lambda \in
\R \setminus \supp(\mu_0)$, it may be shown that $m_0(\lambda)$ is the unique
root of this polynomial which satisfies
$z_0'(m_0(\lambda))>0$. From this, the quantities $t_r(\lambda)$ and
$w_{rs}(\lambda)$ are easily computed. The edges
of $\supp(\mu_0)$ are given by the values $z_0(m_*)$ where $m_* \in \R$
solves the equation $0=z_0'(m_*)$
(see \cite[Proposition 2.2]{fanjohnstoneedges}). By (\ref{eq:MPbalanced}), this
equation may be written as a polynomial of degree $2k$ in $m_*$.

\begin{proposition}\label{prop:balanced}
Consider a balanced classification design as
defined above. Suppose Assumptions \ref{assump:main}(a,b,d) hold for this
design, and in addition, $d_r>cn$ for each $r=1,\ldots,k$ and a constant $c>0$.
Then:
\begin{enumerate}[(a)]
\item Assumption \ref{assump:estimation} holds for the matrices $B_r=\pi_r/d_r$.
\item Let $\hSigma$ and $B$ be defined by (\ref{eq:hatSigmabalanced}),
where $\ba \in \R^k$ satisfies $\|\ba\|<C$ for a constant $C>0$. Then
the Marcenko-Pastur equation (\ref{eq:MP}) corresponding to $\hSigma$
takes the form
\begin{equation}\label{eq:MPbalanced}
z=-\frac{1}{m_0(z)}+\sum_{s=1}^k C_sb_s(z), \qquad b_s(z)=\frac{a_s}
{1+(N/d_s)a_sC_sm_0(z)}, \qquad C_s=\sum_{r \succeq s} c_r\sigma_r^2.
\end{equation}
The functions $t_r(z)$ and $w_{rs}(z)$ from (\ref{eq:TSigma}) and
(\ref{eq:wrs}) take the forms
\[t_r(z)=c_r\sum_{s \geq 1:\,s \preceq r} b_s(z), \qquad
w_{rs}(z)=c_rc_s\sum_{t \geq 1:\,t \preceq r,\,
t \preceq s} (N/d_t)b_t(z)^2.\]
\end{enumerate}
\end{proposition}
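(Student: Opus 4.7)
The plan is to use the orthogonal decomposition $\R^n = \bigoplus_{r=0}^k \oS_r$ to block-diagonalize $F(\ba)$, which reduces all the quantities in the proposition to scalar algebra on each subspace $\oS_t$. The key structural identity, following from $\Pi_r = c_r^{-1}U_rU_r' = \sum_{s\preceq r}\pi_s$, is
\begin{equation*}
V_t'\,\Pi_r\,V_{t'} \;=\; \delta_{tt'}\,\1\{t\preceq r\}\,\Id_{d_t}.
\end{equation*}
Define $\tilde Q_t = C_t^{-1/2}[\sigma_1 V_t'U_1,\ldots,\sigma_k V_t'U_k] \in \R^{d_t\times M}$ and stack the $\tilde Q_t$'s vertically into $\tilde Q \in \R^{D\times M}$ with $D=d_1+\cdots+d_k$. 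The identity above immediately yields the orthonormality $\tilde Q\tilde Q' = \Id_D$, and matching block-$(r,s)$ entries of the right-hand side to $N\sigma_r\sigma_s U_r'B(\ba)U_s$ gives the factorization
\begin{equation*}
F(\ba) = \tilde Q'\Lambda(\ba)\tilde Q, \qquad \Lambda(\ba) = \diag\!\bigl((Na_tC_t/d_t)\,\Id_{d_t}\bigr)_{t=1}^k,
\end{equation*}
with degenerate $C_t = 0$ cases handled by the identity (\ref{eq:Fidentity}).

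For part (b), the orthonormality $\tilde Q \tilde Q' = \Id_D$ gives the closed-form resolvent $F(\Id+m_0F)^{-1} = \tilde Q'\Lambda(\Id_D+m_0\Lambda)^{-1}\tilde Q$. Computing the full trace collapses $N^{-1}\Tr F(\Id+m_0F)^{-1}$ to $\sum_t C_t b_t(m_0)$, which is exactly (\ref{eq:MPbalanced}). For $t_r(z)$, the $(r,r)$ block-trace reduces to $\sum_t \lambda_t/(1+m_0\lambda_t)\,\|\tilde Q_{t,r}\|_{\HS}^2$, and the computation $\|\tilde Q_{t,r}\|_{\HS}^2 = C_t^{-1}\sigma_r^2 c_r d_t\,\1\{t\preceq r\}$ produces $t_r(z) = c_r\sum_{t\preceq r} b_t(z)$. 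For $w_{rs}(z)$, the $(r,s)$ block Hilbert--Schmidt norm expands as a double sum over $t,t'$ of traces involving $\tilde Q_{t,s}'\tilde Q_{t,r}\tilde Q_{t',r}'\tilde Q_{t',s}$; all cross terms ($t\neq t'$) vanish because $\tilde Q_{t,r}\tilde Q_{t',r}' \propto V_t'\Pi_rV_{t'} = 0$, collapsing the sum to the diagonal and yielding the claimed formula.

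For part (a), the same derivation at $m_0 = 1$ gives $s_r(\ba) = c_r\sum_{t\preceq r} b_t'(\ba)$, where $b_t'(\ba) = a_t/(1+\beta_t a_t)$ and $\beta_t = (N/d_t)C_t$. In matrix form $s(\ba) = H b'(\ba)$ with $H$ the matrix from (\ref{eq:MANOVAH}); after topologically ordering the lattice, $H$ is lower triangular with diagonal entries $c_r \in [c,C]$ bounded away from $0$ and $\infty$ under Assumption \ref{assump:main}(b), so $\|H^{-1}\|$ is bounded and $\|s(\ba_1)-s(\ba_2)\| \gtrsim \|b'(\ba_1)-b'(\ba_2)\|$. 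The scalar injectivity of $b_t'$ then follows from the Möbius identity
\begin{equation*}
b_t'(a)-b_t'(a') \;=\; \frac{a-a'}{(1+\beta_t a)(1+\beta_t a')},
\end{equation*}
combined with the uniform bound $\beta_t \leq C$ (from $N/d_t \leq C$ and $C_t$ bounded), which gives $|1+\beta_t a^{(i)}| \leq 1+C\|\ba_i\|$. Squaring componentwise and summing over $t$ delivers Assumption \ref{assump:estimation}. The only real difficulty is bookkeeping: once the identity $V_t'\Pi_rV_{t'} = \delta_{tt'}\1\{t\preceq r\}\Id_{d_t}$ (a direct consequence of (\ref{eq:orthosum})) is in hand, every remaining step is routine algebra; care is needed only in tracking the index sets $\{t\preceq r\}$, $\{t\preceq r,\,t\preceq s\}$ and the normalizations $c_r, d_t, C_t$.
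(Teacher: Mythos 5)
Your proof is correct and follows essentially the same strategy as the paper's: use the balanced-design orthogonality $U_r U_r' = c_r\sum_{s\preceq r}\pi_s$ (equivalently your identity $V_t'\Pi_r V_{t'} = \delta_{tt'}\1\{t\preceq r\}\Id_{d_t}$) to reduce $F(\ba)$ to a finite-rank form with known eigenvalues and block structure, derive (\ref{eq:MPbalanced}) and the formulas for $t_r,w_{rs}$ by Woodbury-type trace computations, and then for part (a) write $s(\ba)=Hf(\ba)$ with $H$ from (\ref{eq:MANOVAH}) triangular under a topological order and $f$ satisfying the quantitative injectivity bound componentwise. The one presentational difference is that you package the reduction as a global factorization $F(\ba)=\tilde Q'\Lambda(\ba)\tilde Q$ with $\tilde Q\tilde Q'=\Id_D$, whereas the paper carries out an explicit rotation of the data model (writing the SVD of each $U_r$ and passing to $\check F=Q'FQ$ with zero rows and columns removed, then permuting to block-diagonal form with blocks $E_s=N(a_s/d_s)R_s'R_s$); these are the same linear-algebraic fact viewed from two sides, and your version is a little leaner since it avoids introducing the rotated model. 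One small point to tighten: when some $C_t=0$ the column $\tilde Q_t$ is $0/0$-degenerate; you should simply omit that block from $\tilde Q$ and $\Lambda$ (it contributes nothing to $F$ since $\sigma_r=0$ for all $r\succeq t$), and then note that the claimed formulas, with $b_t(z)=a_t$ in that case, follow for those $r$ by the continuity convention built into the definitions of $t_r,w_{rs}$ via (\ref{eq:Fidentity}). The paper glosses over the same point, so this is a matter of completeness rather than a gap.
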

\begin{proof}
We rotate coordinates. Fix $r \in \{1,\ldots,k\}$ and write
$\{s:s \preceq r\}=\{s_0,\ldots,s_j\}$ where $s_0=0$.
We may write the singular value decomposition of $U_r$ as
\[U_r=\sqrt{c_r}\sum_{s \preceq r} V_sW_{r,s}',\]
where the columns of $V_s$ form an orthonormal basis for $\oS_s$ and
where $W_r=[W_{r,s_0} \mid \ldots \mid W_{r,s_j}]$ is orthogonal
$m_r \times m_r$. Denote $\vn=n-d_0$, $\vm_r=m_r-d_0$,
$V=[V_1 \mid \ldots \mid V_k] \in \R^{n \times \vn}$, and
\[\valpha_r=\begin{pmatrix} W_{r,s_1}'\alpha_r \\ \vdots \\
W_{r,s_j}'\alpha_r \end{pmatrix} \in \R^{\vm_r \times p}, \qquad
\vU_r=\sqrt{c_r}V'\begin{pmatrix} V_{s_1} & \cdots &
V_{s_j} \end{pmatrix} \in \R^{\vn \times \vm_r}.\]
By rotational invariance, $\valpha_r$ still has independent rows with
distribution $\N(0,\Sigma_r)$. Also,
$\vU_r$ has a simple form---each $V'V_{s_i} \in \R^{\vn \times
d_{s_i}}$ has a single block equal to $\Id_{d_{s_i}}$ and remaining blocks 0.
For any matrices $\hSigma$ and $B$ given by (\ref{eq:hatSigmabalanced}),
defining
\[\vY=V'Y \in \R^{\vn \times p}, \qquad \vB=V'BV=
\begin{pmatrix} (a_1/d_1)\Id_{d_1} & & \\
& \ddots & \\ & & (a_k/d_k)\Id_{d_k} \end{pmatrix} \in \R^{\vn \times \vn}\]
and applying $V'$ to (\ref{eq:mixedmodel}), we obtain the rotated model
\begin{equation}\label{eq:mixedmodelrotated}
\vY=\sum_{r=1}^k \vU_r\valpha_r, \qquad \hSigma=\vY'\vB\vY.
\end{equation}

Let $F$ be the matrix (\ref{eq:F}) in the model (\ref{eq:mixedmodel}).
Let $\vM=\vm_1+\ldots+\vm_k$, and 
denote by $\vF \in \R^{\vM \times \vM}$ this matrix in the rotated model
(\ref{eq:mixedmodelrotated}), with $(r,s)$ block equal to
$N\sigma_r\sigma_s \vU_r'\vB\vU_s$.
Let $Q=\diag(W_1,\ldots,W_k)$, where $W_r$ is the matrix of right singular
vectors of $U_r$ as above. Then observe that $\vF$ is the matrix
$Q'FQ$ with $d_0$ rows and $d_0$ columns of 0's removed from each block. Thus,
the law $\mu_0$ and the functions $m_0(z)$, $s_r(\ba)$,
$t_r(z)$, and $w_{rs}(z)$ do
not change upon replacing $F$ by $\vF$ in their definitions.

Let us further decompose $\vm_r=\sum_{s \geq 1:\,s \preceq r} d_s$, and consider
$\vF$ in the expanded block decomposition corresponding to
\[\vM=\sum_{r=1}^k \sum_{s \geq 1:\,s\preceq r} d_s.\]
Index a row or column of this decomposition by the pair $(r,s)$ where
$s \preceq r$. Then from the forms of $\vU_r$ and $\vB$, we have
\[\vF_{(r,s),(r',s')}=\1\{s=s'\}N\sqrt{c_rc_{r'}}\sigma_r\sigma_{r'}
\frac{a_s}{d_s}\Id_{d_s}.\]
For each $s \in \{1,\ldots,k\}$, let $E_s$ be the submatrix formed by the
blocks $((r,s),(r',s))$ where $r \succeq s$ and $r' \succeq s$.
Note that $\vF$ is (upon permuting rows and columns) block-diagonal with blocks
$E_1,\ldots,E_k$. We may write
$E_s=N(a_s/d_s)R_s'R_s$ where $R_s=(\sqrt{c_r}\sigma_r \Id_{d_s}\;:\;r
\succeq s)$. Then $E_s$ has rank $d_s$, with $d_s$ identical non-zero
eigenvalues equal to $N(a_s/d_s)C_s$ where $C_s$ is defined in
(\ref{eq:MPbalanced}). As the eigenvalues of $\vF$ are the union of
those of $E_1,\ldots,E_k$, writing (\ref{eq:MP}) in spectral form
establishes (\ref{eq:MPbalanced}).

To verify Assumption \ref{assump:estimation},
note that $R_sR_s'=C_s\Id_{d_s}$, so the Woodbury identity yields
\[E_s(\Id+E_s)^{-1}=\frac{Na_s}{d_s}R_s'R_s\left(\Id
-\frac{Na_s}{d_s(1+N(a_s/d_s)C_s)}R_s'R_s\right)
=\frac{N(a_s/d_s)}{1+N(a_s/d_s)C_s}R_s'R_s.\]
Then for all $s \preceq r$ and $s' \preceq r'$,
\begin{equation}\label{eq:Fbalanced}
\Big(F(\Id+F)^{-1}\Big)_{(r,s),(r',s')}=\1\{s=s'\}
\sqrt{c_rc_r'}\sigma_r\sigma_{r'}\frac{N(a_s/d_s)}{1+N(a_s/d_s)C_s}\Id_{d_s}.
\end{equation}
The $r^{\text{th}}$ diagonal block trace in the collapsed decomposition
$\vM=\vm_1+\ldots+\vm_k$ is the sum of the trace of the above over
$s \preceq r$, $s=s'$, and $r=r'$. Thus
\[s_r(\ba)=c_r\sum_{s \preceq r} \frac{a_s}{1+(N/d_s)a_sC_s}
=Hf(\ba),\]
where $H$ is defined in (\ref{eq:MANOVAH}) and
\[f(\ba)=\left(\frac{a_1}{1+(N/d_1)a_1C_1},
\ldots, \frac{a_k}{1+(N/d_k)a_kC_k}\right).\]
As $C_1,\ldots,C_k$ and $N/d_1,\ldots,N/d_k$ are bounded above by a constant,
we have
\[\|f(\ba_1)-f(\ba_2)\| \geq
\frac{c\,\|\ba_1-\ba_2\|}{(1+\|\ba_1\|)(1+\|\ba_2\|)}\]
for a constant $c>0$. Under a suitable permutation of $1,\ldots,k$,
the matrix $H$ is lower-triangular, with all entries bounded above, and with
all diagonal entries $c_r$ bounded away from 0. Thus the
least singular value of $H$ is bounded away from 0, so
Assumption \ref{assump:estimation} holds.

Finally, substituting $m_0a_s$ for $a_s$ in (\ref{eq:Fbalanced}), we also have
\[\Big(F(\Id+m_0F)^{-1}\Big)_{(r,s),(r',s')}=\1\{s=s'\}
\sqrt{c_rc_r'}\sigma_r\sigma_{r'}\frac{N(a_s/d_s)}{1+N(a_s/d_s)C_sm_0}
\Id_{d_s}.\]
Taking block traces and Hilbert-Schmidt norms in the collapsed decomposition
$\vM=\vm_1+\ldots+\vm_k$ yields the expressions for $t_r$ and $w_{rs}$.
\end{proof}

\section{Proofs}\label{sec:proofs}
This section contains the proofs of our main results.
Section \ref{subsec:LLN} proves Theorems \ref{thm:spikemapping} and
\ref{thm:eigenvectors}, which describe the first-order behavior of outlier
eigenvalues and eigenvectors. Section \ref{subsec:CLT} proves Theorem
\ref{thm:CLT} on Gaussian fluctuations. Section \ref{subsec:estimation}
proves Theorem \ref{thm:estimation} providing
theoretical guarantees for Algorithm \ref{alg:spikes}.

The proofs of Theorems \ref{thm:spikemapping}--\ref{thm:CLT} will apply a
matrix perturbation approach developed in \cite{paul}.
Without loss of generality, we may rotate coordinates in $\R^p$ so that $\cS$
corresponds to the first $L$ coordinates. Hence for every $r=1,\ldots,k$,
\[V_r=\begin{pmatrix} \oV_r \\ 0 \end{pmatrix}\]
where $\oV_r \in \R^{L \times l_r}$. Recalling $N=p-L$ and assuming momentarily
that $\sigma_r^2>0$, we may write
\begin{equation}\label{eq:Gammar}
\Sigma_r=\sigma_r^2 \begin{pmatrix} \Gamma_r & 0 \\ 0 & \Id_N
\end{pmatrix},\qquad \Gamma_r=\Id_L+\sigma_r^{-2} \oV_r \Theta_r
\oV_r'.
\end{equation}
Letting $\oX_r \in \R^{m_r \times L}$ and $X_r \in \R^{m_r \times N}$
be independent with i.i.d.\ $\N(0,1/N)$ entries,
and setting $\Xi_r=\oX_r \Gamma_r^{1/2}$, we may represent $\alpha_r$ as
\[\alpha_r=\sqrt{N}\begin{pmatrix} \oX_r & X_r \end{pmatrix}\Sigma_r^{1/2}
=\sqrt{N} \sigma_r \begin{pmatrix} \Xi_r & X_r \end{pmatrix}.\]
Recalling $F_{rs}=N\sigma_r\sigma_sU_r'BU_s$ from (\ref{eq:F}), we then have
\begin{equation}\label{eq:hSigmablock}
\hSigma=Y'BY=\sum_{r,s=1}^k \alpha_r'U_r'BU_s\alpha_s
=\sum_{r,s=1}^k \begin{pmatrix} \Xi_r' \\ X_r' \end{pmatrix}
F_{rs} \begin{pmatrix} \Xi_s & X_s \end{pmatrix}
=\begin{pmatrix} S_{11} & S_{12} \\ S_{21} & S_{22} \end{pmatrix},
\end{equation}
where
\[\begin{pmatrix} S_{11} & S_{12} \\ S_{21} & S_{22} \end{pmatrix}
=\begin{pmatrix} \Xi' F \Xi & \Xi'FX \\ X'F\Xi & X'FX \end{pmatrix}, \qquad
\begin{pmatrix} \Xi & X \end{pmatrix}=\begin{pmatrix} \Xi_1 & X_1 \\
\vdots & \vdots \\ \Xi_k & X_k \end{pmatrix}.\]
(Throughout this section, $X$ refers to this matrix and not the design
matrix of (\ref{eq:mixedmodel}).)
Note that $\sigma_r^2\Gamma_r$, $\sigma_r\Xi_r$, and $F_{rs}/(\sigma_r\sigma_s)$
are well-defined by continuity even when $\sigma_r^2=0$ and/or $\sigma_s^2=0$.
The above definitions are understood in this sense if $\sigma_r^2=0$ for any
$r$.

For any $z \notin \spec(X'FX)$, the Schur complement of the
lower-right block of $\hSigma-z\Id$ is
\begin{align}
\hK(z)&=(S_{11}-z\Id)-S_{12}(S_{22}-z\Id)^{-1}S_{21}
=-\Xi'G_M(z)\Xi-z\Id_L\label{eq:hK}
\end{align}
where
\begin{equation}\label{eq:GM}
G_M(z)=FXG_N(z)X'F-F, \qquad G_N(z)=(X'FX-z\Id_N)^{-1}.
\end{equation}
If $\hl$ is an eigenvalue of $\hSigma$ separated from $\supp(\mu_0)$, then
we expect from Theorem \ref{thm:sticktobulk} that $\hl \notin \spec(X'FX)$, so
we should have $0=\det \hK(\hl)$. Defining the complex spectral domain
\[U_\delta=\{z \in \C:\dist(z,\supp(\mu_0)) \geq \delta\},\]
we will show that on $U_\delta$, the matrix $\hK(z)$ is close to
the deterministic approximation
\begin{equation}\label{eq:K}
K(z)=\sum_{r=1}^k t_r(z)(\sigma_r^2\Gamma_r)-z\Id_L.
\end{equation}
Recalling (\ref{eq:Gammar}) and comparing (\ref{eq:K}) with (\ref{eq:TSigma}),
we observe that $K(z)$ is the upper $L \times L$ submatrix of $-T(z)$.
This will yield Theorems \ref{thm:spikemapping} and \ref{thm:eigenvectors}.
Studying further the fluctuations of $\hK(z)$ about $K(z)$, we will establish
Theorem \ref{thm:CLT}.

We show in Appendix \ref{sec:resolventapprox} that $G_M(z)$ and $G_N(z)$ are
blocks of a linearized resolvent matrix for $X'FX$.
Our proof establishes deterministic approximations for linear and quadratic
functions of the entries of $G_M(z)$, which we may state as follows:
Recall (\ref{eq:Fidentity}), and define a deterministic approximation of
$G_M$ as
\[\Pi_M=-F(\Id+m_0F)^{-1}=m_0F(\Id+m_0F)^{-1}F-F.\]
Define
\begin{equation}\label{eq:Delta}
\Delta(z)=XG_N(z)X'-m_0(z)(\Id+m_0(z)F)^{-1}.
\end{equation}
Then, omitting the spectral argument $z$ for brevity, we have
\begin{equation}\label{eq:GMPiM}
G_M=\Pi_M+F\Delta F, \qquad t_r=(N\sigma_r^2)^{-1}\Tr_r(-G_M+F\Delta F).
\end{equation}
We prove the following lemmas in Appendix \ref{sec:resolventapprox}.
\begin{lemma}\label{lemma:matrixlocallaw}
Fix $\delta,\eps,D>0$.
For any $z \in U_\delta$ and any deterministic matrix $V \in \C^{M \times M}$,
\[\P\Big[|\Tr \Delta V|>n^{-1/2+\eps}\|V\|_\HS\Big]<n^{-D}.\]
\end{lemma}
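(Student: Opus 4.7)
The plan is to reduce the statement to an averaged-local-law / fluctuation-averaging bound for the linearized resolvent. Recall from the setup that $\Delta(z) = XG_N(z)X' - m_0(z)(\Id+m_0(z)F)^{-1}$. The first input is an entrywise local law on $U_\delta$, giving $|\Delta_{ab}(z)| \prec n^{-1/2}$ uniformly in $a,b$ and $z \in U_\delta$. This can be read off from the linearized resolvent representation developed in Appendix \ref{sec:resolventapprox}, using the entrywise local laws of \cite{bloemendaletal,knowlesyin} for generalized sample covariance/Marcenko--Pastur ensembles, combined with the fact that on $U_\delta$ the distance to $\supp(\mu_0)$ is bounded below. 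This alone is not sharp enough: summing naively yields $|\Tr \Delta V| \leq n^{-1/2+\eps} \sum_{a,b}|V_{ab}| \leq n^{1/2+\eps}\|V\|_\HS$, which is off by a factor of $n$.

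To upgrade to the $n^{-1/2+\eps}\|V\|_\HS$ bound, I would carry out a fluctuation-averaging argument in the spirit of \cite{erdosbernoulli,erdosyauyin,erdoslocalSC,erdosER}, exploiting the Gaussianity of $X$. Concretely, write
\[\Tr \Delta V = \sum_{a,b,i,j} V_{ba} X_{ai}(G_N)_{ij} X_{bj} - m_0 \Tr\big((\Id+m_0 F)^{-1}V\big),\]
and bound the high moment $\E|\Tr\Delta V|^{2p}$. Inside the expectation, apply Gaussian integration by parts to each factor $X_{ai}$, using $\partial_{X_{ai}}(X'FX)_{kl} = \delta_{ki}(FX)_{al}+\delta_{li}(FX)_{ak}$ and the standard resolvent derivative $\partial_{X_{ai}}G_N = -G_N(\partial_{X_{ai}} X'FX)G_N$. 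Each IBP step either produces a factor $N^{-1}$ with an additional resolvent entry, or pairs two Gaussians producing a factor $N^{-1}$. Resumming the "leading" terms reproduces the self-consistent Marcenko--Pastur identity (\ref{eq:MP}) defining $m_0$ and $\Pi_N=m_0(\Id+m_0F)^{-1}$, and hence exactly cancels the deterministic subtraction. The remaining terms contain either an extra $\Delta$-factor (which is small by the entrywise law) or a normalized trace $N^{-1}\Tr(G_N M)$ for deterministic $M$ (which is small by the scalar local law).

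The main obstacle is organizing this expansion so the estimates close. One has to track, at each IBP step, a hierarchy in which a $(2p)$-th moment of a type-$V$ observable is bounded by combinations of $(2p-2)$-th moments of the same type, each accompanied by an explicit $N^{-1}\|V\|_\HS^2$ weight, with the combinatorial profusion of index contractions absorbed into an $n^\eps$ factor via the entrywise a priori bound. Once such closure is established, one obtains $\E|\Tr\Delta V|^{2p} \leq n^{(-1+C\eps)p}\|V\|_\HS^{2p}$, and Markov's inequality with $p = \lceil D/\eps \rceil$ yields the claimed probability bound $n^{-D}$. The argument is uniform in $z \in U_\delta$ since all estimates depend on $z$ only through $\|G_N(z)\|$ and $\dist(z,\supp(\mu_0))$, both bounded on $U_\delta$ by Theorem \ref{thm:sticktobulk} and analyticity of $m_0$ off $\supp(\mu_0)$.
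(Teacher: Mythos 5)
Your overall strategy is the right one, and it matches the paper's at the level of ``entrywise local law plus fluctuation averaging.'' The concrete mechanism you propose (Gaussian integration by parts / cumulant expansion applied directly to $\E|\Tr\Delta V|^{2p}$) is, however, a different route from the paper's, which first reduces to a diagonal $T$, decomposes $\Tr\Delta V$ into diagonal ($\sum_\a \Delta_{\a\a}V_{\a\a}$) and off-diagonal ($\sum_{\a\neq\b}\Delta_{\a\b}V_{\a\b}$) contributions, expresses each $\Delta$-entry in terms of the centered quantities $\cZ_\a$ and $\cZ_{\a\b}$ via Lemma \ref{lemma:Gaa} and the resolvent identities, and then invokes an abstract fluctuation-averaging lemma (Lemma \ref{lemma:fluctuationavg}) proved with the $\E_S/\Q_S$ operator decomposition and a combinatorial lemma (Lemma \ref{lemma:combinatorial}).

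The genuine gap is precisely the point you defer: you write ``The main obstacle is organizing this expansion so the estimates close'' and then assume it done (``Once such closure is established\ldots''). But the closure \emph{with the $\|V\|_\HS$ weight} is the entire content of the lemma, and it is not a routine application of the fluctuation averaging of \cite{erdosbernoulli,erdosyauyin,erdoslocalSC,erdosER}. Those arguments control $\sum_i u_i\cY_i$ by $\max_i|u_i|$; here you need control by $(\sum_i|u_i|^2)^{1/2}$, which for a Hilbert--Schmidt-normalized $V$ can be smaller by a factor of $\sqrt{n}$. The paper explicitly flags this as a new variant, proves it in Lemma \ref{lemma:fluctuationavg}(a,b), and the place the $\|V\|_\HS$ scaling actually materializes is the combinatorial estimate of Lemma \ref{lemma:combinatorial} (the sums over index patterns with $t$ free indices are bounded by $n^{t/2}$ when the weights are matrices of unit Hilbert--Schmidt norm). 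Your sketch gives no indication of where this refinement would come from in an IBP expansion; without it you would recover at best an $\ell^1$ or $\ell^\infty$ bound, which as you note yourself is off by a factor of $n$. Two smaller omissions: (i) IBP naturally produces the empirical Stieltjes transform $m_N=N^{-1}\Tr G_N$, not $m_0$, so the ``exact cancellation'' you invoke requires separately establishing $m_N-m_0\prec N^{-1}$ on $U_\delta$ (the paper's Lemma \ref{lemma:mNapprox}); (ii) the diagonal and off-diagonal parts of $\Delta$ have different structure (single- vs.\ double-indexed centered variables $\cZ_\a$, $\cZ_{\a\b}$) and need to be treated separately, which is why the paper proves both parts (a) and (b) of Lemma \ref{lemma:fluctuationavg}.
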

\begin{lemma}\label{lemma:secondorderapprox}
Fix $\delta,\eps,D>0$. For any $z \in U_\delta$ and any deterministic
matrices $V,W \in \C^{M \times M}$,
\[\P\Big[\big|\Tr \Delta V\Delta W-N^{-1}(\partial_z m_0)
\Tr\big[V(\Id+m_0 F)^{-2}\big]
\Tr\big[W(\Id+m_0F)^{-2}\big]\big|>n^{1/2+\eps}\|V\|\,\|W\|\Big]<n^{-D}.\]
\end{lemma}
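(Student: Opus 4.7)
The plan is to reduce the second-order trace to a quadratic expression in the raw resolvent $R = XG_N X'$, and then combine a Gaussian integration-by-parts computation of the mean with a Poincar\'e-type variance bound. Write $\Delta = R - \Pi$ with $\Pi = m_0(\Id+m_0F)^{-1}$ and expand
\[\Tr[\Delta V \Delta W] = \Tr[RVRW] - \Tr[RV\Pi W] - \Tr[\Pi V R W] + \Tr[\Pi V \Pi W].\]
Each of the two cross terms equals $\Tr[\Pi V \Pi W] + \Tr[\Delta(V\Pi W)]$ after cyclic rearrangement, and Lemma~\ref{lemma:matrixlocallaw} applied to $V\Pi W$ (whose Hilbert--Schmidt norm is $O(\sqrt{n}\,\|V\|\,\|W\|)$ on $U_\delta$) shows the residual is $\O(n^{\epsilon}\|V\|\|W\|)$, well within the target. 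Thus it suffices to show
\[\Tr[RVRW] = \Tr[\Pi V \Pi W] + N^{-1}(\partial_z m_0)\,\Tr[V\tilde\Pi]\,\Tr[W\tilde\Pi] + \O(n^{1/2+\epsilon}\|V\|\|W\|),\]
where $\tilde\Pi = (\Id+m_0F)^{-2}$.

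For the mean, expand $\E[\Tr[RVRW]]$ as an index sum in the four i.i.d.~$\N(0,1/N)$ entries of $X$ and apply Stein's identity. Wick's theorem, with the $X$-dependence of $G_N$ handled via the product rule, yields three leading pair contractions. The two \emph{consecutive} pairings collapse, after using the Marcenko--Pastur equation~(\ref{eq:MP}) to replace the resulting resolvent traces by their deterministic limits, to the ladder contribution $\Tr[\Pi V \Pi W]$. The \emph{crossing} pairing factorizes into a product of two independent traces and produces $N^{-1}(\partial_z m_0)\Tr[V\tilde\Pi]\Tr[W\tilde\Pi]$; the coefficient $\partial_z m_0$ enters through $\partial_z[m_0(\Id+m_0F)^{-1}] = (\partial_z m_0)(\Id+m_0F)^{-2}$ together with the $z$-differentiated form of (\ref{eq:MP}). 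Non-Wick corrections arising from the $X$-derivative of $G_N$, namely $\partial_{X_{ai}}G_N = -G_N\bigl(\e_i (FX)_a' + (FX)_a \e_i'\bigr)G_N$, generate lower-order traces absorbed by a second application of Lemma~\ref{lemma:matrixlocallaw}.

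For concentration, the Gaussian Poincar\'e inequality applied to $\mathrm{vec}(X)$ gives
\[\Var\bigl(\Tr[RVRW]\bigr) \le N^{-1}\sum_{a,i}\E\bigl|\partial_{X_{ai}}\Tr[RVRW]\bigr|^2.\]
Each partial derivative is a finite sum of traces of products involving $G_N$, $V$, $W$, and $F$, and each such trace is controlled by Lemma~\ref{lemma:matrixlocallaw} against an appropriate auxiliary matrix. This produces a variance bound of order $n^{1+2\epsilon}\|V\|^2\|W\|^2$; a standard moment-pumping argument (or higher-order Gaussian concentration via log-Sobolev) upgrades it to the required high-probability bound at rate $n^{-D}$.

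The principal difficulty lies in the mean computation: the cumulant expansion generates a proliferating cascade of resolvent-derivative terms, and isolating precisely the two leading diagrams while showing all others are sub-leading requires delicate bookkeeping. The fluctuation-averaging technique of \cite{erdosbernoulli,erdosyauyin} embedded in Lemma~\ref{lemma:matrixlocallaw} is essential here: without the $n^{-1/2}$ improvement it provides, $O(1/\sqrt{n})$ fluctuations summed over the $N^2$ relevant index pairs would yield error of order $n^{1+\epsilon}$, missing the target scale by a factor of $\sqrt{n}$. The Marcenko--Pastur identity and its $z$-derivative then compact the estimates into the single formula appearing in the statement.
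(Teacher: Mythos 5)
Your initial reduction is sound: writing $\Delta = R - \Pi$ with $R = XG_NX'$ and $\Pi = m_0(\Id+m_0F)^{-1}$, the cross terms $\Tr[\Delta(V\Pi W)]$ and $\Tr[\Delta(W\Pi V)]$ are indeed $\O(\|V\|\|W\|)$ by Lemma~\ref{lemma:matrixlocallaw}, since $\|V\Pi W\|_\HS \le C\sqrt{n}\|V\|\|W\|$, so the target reduces to showing $\Tr[RVRW] - \Tr[\Pi V\Pi W]$ equals the product term plus $\O(n^{1/2+\eps}\|V\|\|W\|)$. This is a genuinely different decomposition than the paper's, which works entry-wise from the start.

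However, the heart of your argument — the Stein/Wick mean computation and the Poincar\'e concentration — contains two gaps that the paper's machinery is specifically designed to fill, and which your proposal does not close.

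First, the claim that the non-Wick corrections are ``absorbed by a second application of Lemma~\ref{lemma:matrixlocallaw}'' does not hold as stated. The cumulant expansion of $\E[\Tr RVRW]$, with the $X$-derivative $\partial_{X_{\a i}}G_N = -G_N(\e_i\e_\a'FX + X'F\e_\a\e_i')G_N$, generates traces of products such as $\Tr[G_N\e_i(FX)_\a'G_NX'VXG_N\e_i(FX)_\a'G_NX'W]$, summed over $\a,i$. These are not of the form $\Tr\Delta V$ for any deterministic $V$, so Lemma~\ref{lemma:matrixlocallaw} does not apply to them directly. Bounding these sums with enough precision — namely, with a factor $n^{-1/2}$ of cancellation coming from the sum over $\a$ or over pairs $(\a,\b)$ — is precisely why the paper develops Lemma~\ref{lemma:fluctuationavg}(b) and (c) (fluctuation averaging for double- and quadruple-indexed arrays) together with the moment and quiescence estimates of Lemma~\ref{lemma:Zprodgood}. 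The remark at the end of your proposal that ``the fluctuation-averaging technique embedded in Lemma~\ref{lemma:matrixlocallaw}'' supplies the needed $n^{-1/2}$ gain is therefore too optimistic: the single-trace lemma does not propagate to these higher resolvent products, and you would in effect have to re-derive the paper's Lemmas~\ref{lemma:fluctuationavg}(b,c), \ref{lemma:combinatorial}, and \ref{lemma:Zprodgood}.

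Second, the concentration step is not complete. The Gaussian Poincar\'e inequality controls only the variance, whereas the statement requires a tail bound $n^{-D}$ for every $D$. Iterating Poincar\'e (``moment-pumping'') or invoking log-Sobolev both require a deterministic Lipschitz control on $\Tr RVRW$ in $X$, but the Lipschitz constant involves $\|G_N\|$, which is only $1/|\Im z|$ in the worst case. The paper's reduction to $z \in U_\delta^\C$ gives a polynomial a priori bound $\|G_N\|\le N^2$, which is enough to verify the moment hypothesis of Lemma~\ref{lemma:expectation} but not enough for a direct Lipschitz concentration at the correct scale $n^{1/2+\eps}$. One would have to truncate to the high-probability event where $\spec(X'FX)\subset\supp(\mu_0)_{\delta/2}$ and then handle the truncation error separately; this is workable but substantially more delicate than your sketch indicates, and the paper avoids it altogether by computing arbitrarily high moments of the fluctuation-averaged sums via Lemma~\ref{lemma:fluctuationavg}.

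In summary, your strategy (reduce to $\Tr RVRW$, compute the mean by Stein's lemma, concentrate by Poincar\'e) is a recognizable alternative to the paper's entry-wise expansion, but as written it has two real gaps: the cumulant-expansion errors are not covered by Lemma~\ref{lemma:matrixlocallaw}, and the variance bound is not an $n^{-D}$ tail bound. Both gaps would be closed by the same fluctuation-averaging and high-moment apparatus the paper builds in Appendix~\ref{sec:resolventapprox}, at which point the two approaches largely converge.
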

We will use Lemma \ref{lemma:matrixlocallaw} to approximate linear functions in
$G_M(z)$, and then use Lemma \ref{lemma:secondorderapprox} to approximate
quadratic functions in $G_M(z)$.
Note that if $V=\w\v'$ is of rank one, then Lemma \ref{lemma:matrixlocallaw}
is an anisotropic local law of the form established in \cite{knowlesyin} for
spectral arguments $z$ separated from $\supp(\mu_0)$. For
general $V$, the statement above is stronger than that obtained by expressing
$V$ as a sum of rank-one matrices and applying the triangle inequality to the
Hilbert-Schmidt norm. We will
require this stronger form for the proof of Theorem \ref{thm:CLT}.

We record here also the following basic results regarding $\supp(\mu_0)$ and
the Stieltjes transform $m_0(z)$ for spectral arguments $z$ separated
from this support, proven in \cite[Propositions A.3 and B.1]{fanjohnstoneedges}.
\begin{proposition}\label{prop:boundedsupportrestate}
Suppose Assumption \ref{assump:main} holds, and let $\mu_0$ be the law
defined by Theorem \ref{thm:MP}.
For a constant $C>0$, $\supp(\mu_0) \subset [-C,C]$.
\end{proposition}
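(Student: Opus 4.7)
The plan is to attack the Marcenko-Pastur equation (\ref{eq:MP}) directly and show that the Stieltjes transform $m_0$ extends real-analytically to the set $\{z \in \R : |z| > C\}$ for a universal constant $C$; by Stieltjes inversion this forces $\mu_0$ to place no mass outside $[-C,C]$. The one preliminary input is a universal bound $\|F\| \leq C_F$, which follows directly from Assumption \ref{assump:main}: each block $F_{rs} = N\sigma_r\sigma_s U_r'BU_s$ satisfies $\|F_{rs}\| \leq N\cdot C\cdot C\cdot (C/n)$ from $\|U_r\|, \sigma_r^2 \leq C$ and $\|B\| \leq C/n$, together with $N \leq p \leq Cn$; and since there are only $k^2 = O(1)$ blocks, we get $\|F\| \leq C_F$ for a constant $C_F$ depending solely on the absolute constants in Assumption \ref{assump:main}.

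Next I would rewrite (\ref{eq:MP}) as the fixed-point equation
\[
m = \Phi_z(m) := -\bigl(z - N^{-1}\Tr\bigl[F(\Id+mF)^{-1}\bigr]\bigr)^{-1}
\]
and argue by Banach contraction on the closed disc $D = \{m \in \C : |m| \leq 1/(2C_F)\}$. On $D$ one has $\|(\Id+mF)^{-1}\| \leq 2$, so the trace term is bounded in modulus by $2(M/N)C_F \leq C_1$, and direct differentiation using $\partial_m(\Id+mF)^{-1} = -(\Id+mF)^{-1}F(\Id+mF)^{-1}$ shows $|\Phi_z'(m)| = O\bigl(C_F^2/(|z|-C_1)^2\bigr)$. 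Hence for $|z|$ larger than a universal threshold $C_0$ depending only on $C_F$ and on $M/N \leq C$, the map $\Phi_z$ sends $D$ strictly into itself and is a contraction, yielding a unique fixed point $m_0(z) \in D$ that is analytic in $z$ for $|z| > C_0$ by the analytic implicit function theorem.

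For $z \in \C^+$ with $|z| > C_0$ this fixed point must coincide with the Stieltjes transform of $\mu_0$, by uniqueness of the $\C^+$-valued solution asserted in Theorem \ref{thm:MP}. Since the fixed point also extends real-analytically across $\{|z| > C_0\} \cap \R$ with $\Im m_0(z) = 0$ there, the Stieltjes inversion formula gives $\mu_0(\R \setminus [-C_0,C_0]) = 0$, whence $\supp(\mu_0) \subset [-C_0,C_0]$ with $C := C_0$. The main obstacle is the bookkeeping for the contraction step: one must track $\Phi_z'$ carefully enough to choose $C_0$ depending only on the absolute constants of Assumption \ref{assump:main}, and not on $n$ or on any finer spectral structure of $F$ beyond the uniform bound $\|F\| \leq C_F$. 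Once that is done, the analytic continuation and Stieltjes inversion are completely standard.
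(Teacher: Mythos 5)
Your overall strategy (bound $\|F\|$, set up the Marcenko--Pastur equation as a fixed point $m=\Phi_z(m)$, show $\Phi_z$ is a contraction on the small disc $D=\{|m|\le 1/(2C_F)\}$ for $|z|>C_0$, deduce a real-analytic extension of $m_0$ past $[-C_0,C_0]$, and conclude by Stieltjes inversion) is sound, and the contraction estimates you sketch do check out: $\|F\|\le C_F$ follows exactly as you say, $\|(\Id+mF)^{-1}\|\le 2$ on $D$ by Neumann series, $|\psi(m)|\le 2(M/N)C_F$, and $|\Phi_z'(m)|\le 4(M/N)C_F^2/(|z|-C_1)^2$. The paper itself delegates this proposition to an external reference which (judging from how it is used in Proposition~\ref{prop:balanced}, where the edges of $\supp(\mu_0)$ are read off from critical points of $z_0(m)$) appears to follow the Silverstein--Choi route of analyzing the inverse function $z_0(m)=-1/m+N^{-1}\Tr[F(\Id+mF)^{-1}]$ on the real line; your fixed-point route is a genuinely different and arguably more self-contained way to get just the support bound.

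There is one gap in your identification step. You conclude that the Banach fixed point $\tilde m(z)\in D$ ``must coincide with the Stieltjes transform of $\mu_0$, by uniqueness of the $\C^+$-valued solution,'' but that uniqueness only forces $\tilde m(z)=m_0(z)$ once you know $\tilde m(z)\in\C^+$ (or, equivalently, once you know $m_0(z)\in D$), and you verify neither. As written the logic does not close. The cleanest fix is the second option: for $z\in\C^+$ the Stieltjes transform satisfies $|m_0(z)|\le 1/\Im z$, so for $\Im z>\max(C_0,2C_F)$ one has $m_0(z)\in D$, hence $m_0(z)$ is a fixed point of $\Phi_z$ in $D$ and equals $\tilde m(z)$ by uniqueness of the Banach fixed point; both functions are analytic on the connected open set $\{z\in\C^+:|z|>C_0\}$ and agree on the nonempty open subset $\{\Im z>\max(C_0,2C_F)\}$, so they agree on all of it by the identity theorem. (Alternatively, you can show $\Phi_z(\C^+)\subset\C^+$ directly for $z\in\C^+$ using $\Im\,\Tr[F(\Id+mF)^{-1}]<0$ for $\Im m>0$, and observe the iterates starting from $0$ stay in $\C^+$; but the $\Im z$-large argument is shorter.) With that one addition, the analytic continuation across $\{|z|>C_0\}\cap\R$ and the Stieltjes inversion conclusion are correct as you wrote them.
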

\begin{proposition}\label{prop:m0regularoutsiderestate}
Suppose Assumption \ref{assump:main} holds, and let $m_0(z)$ be the Stieltjes
transform of the law $\mu_0$.
Fix any constant $\delta>0$. Then for some constant $c>0$, all $z \in U_\delta$,
and each eigenvalue $t_\a$ of $F$,
\[|1+t_\a m_0(z)|>c.\]
\end{proposition}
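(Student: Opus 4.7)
The plan is to bound $|1+t_\a m_0(z)|$ from below uniformly on $U_\delta$ by separately handling the ``tail'' regime of large $|z|$, where $m_0(z)$ is small, and the compact portion of $U_\delta$, where one can argue by a compactness-and-contradiction scheme combined with the Silverstein--Choi-type inverse characterization of $\supp(\mu_0)$.

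First I would collect basic a priori bounds. Under Assumption \ref{assump:main}, $\|F\|$ is uniformly bounded in terms of $\sigma_r$, $\|U_r\|$, and $\|B\|$, so each eigenvalue $t_\a$ of $F$ lies in $[-C',C']$ for some $C'$. Proposition \ref{prop:boundedsupportrestate} gives $\supp(\mu_0)\subset[-C,C]$, so from $m_0(z)=\int(x-z)^{-1}\mu_0(\der x)$ one has $|m_0(z)|\le \dist(z,\supp(\mu_0))^{-1}\le \delta^{-1}$ on $U_\delta$. For $|z|\ge R_0:=2C+2C'$ the sharper estimate $|m_0(z)|\le(|z|-C)^{-1}$ forces $|t_\a m_0(z)|\le 1/2$, hence $|1+t_\a m_0(z)|\ge 1/2$; the claim is trivial on this tail, and it remains to treat the compact region $R_\delta:=U_\delta\cap\{|z|\le R_0\}$ with uniformity in $n$.

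On $R_\delta$ I would argue by contradiction. Suppose no universal $c>0$ exists; then along a sequence of models indexed by $n$ there are $z_n\in R_\delta$ and eigenvalues $t_{\a_n,n}$ of $F_n$ with $|1+t_{\a_n,n}m_{0,n}(z_n)|\to 0$. After passing to subsequences, extract $z_n\to z_*$, $t_{\a_n,n}\to t_*$, and weak convergence of the spectral measure of $F_n$ to some compactly supported $\nu_*$. A Montel/Vitali argument then upgrades this to local uniform convergence of $m_{0,n}$ on $U_{\delta/2}$ to the Stieltjes transform $m_{0,*}$ of the limiting Marcenko--Pastur law $\mu_{0,*}$ associated to $\nu_*$, yielding $m_{0,*}(z_*)=-1/t_*$ with $z_*\notin\supp(\mu_{0,*})$.

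The main obstacle, and the actual analytic content, is ruling out this final scenario. Here I would invoke the Silverstein--Choi inversion (extended in \cite{fanjohnstoneedges} to possibly sign-indefinite $F$): $\R\setminus\supp(\mu_{0,*})$ is the image under the inverse map
\[
z_{0,*}(m)=-\frac{1}{m}+\int\frac{t}{1+mt}\,\nu_*(\der t)
\]
of the set $\{m\in\R:z_{0,*}'(m)>0\}$, and $m_{0,*}$ is its inverse on this set. Since $t_*\ne 0$ (otherwise $1+t_*m_{0,*}(z_*)=1$), $m=-1/t_*$ is a pole of $z_{0,*}$, and $z_{0,*}(m)\to\pm\infty$ as $m\to-1/t_*$; the monotonicity structure of $z_{0,*}$ near this pole forces any $m$ sufficiently close to $-1/t_*$ either to violate $z_{0,*}'(m)>0$ or to map to $z$ with $|z|$ large, contradicting $z_*\in R_\delta$. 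Quantifying this monotonicity analysis on a fixed neighborhood of $\{-1/t:t\in[-C',C']\setminus\{0\}\}$ delivers the uniform constant $c$ and closes the argument.
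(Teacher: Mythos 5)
The paper does not itself prove this proposition --- it is cited from \cite[Proposition B.1]{fanjohnstoneedges} --- so I assess your argument on its own. The tail estimate for $|z|$ large is fine, but the compactness-and-contradiction step has a genuine gap: you assert that $m=-1/t_*$ is a pole of $z_{0,*}(m)=-1/m+\int t/(1+mt)\,\nu_*(\der t)$, but $t_*$ arises as a limit of \emph{individual} eigenvalues $t_{\a_n,n}$ of $F_n$, each carrying weight only $1/M_n\to 0$ in the spectral measure $\nu_n$, so the weak limit $\nu_*$ need not have an atom at $t_*$, and $-1/t_*$ can be a regular point of $z_{0,*}$. Concretely, let $F_n$ have $M_n-1$ eigenvalues equal to $1$ and a single eigenvalue equal to $2$, with $M_n/N_n\to\gamma\in(0,1)$. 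Then $\nu_*=\delta_1$, $z_{0,*}(m)=-1/m+\gamma/(1+m)$, $\supp(\mu_{0,*})=[(1-\sqrt{\gamma})^2,(1+\sqrt{\gamma})^2]$, and $z_*:=z_{0,*}(-1/2)=2+2\gamma$ lies a distance $(1-\sqrt{\gamma})^2$ outside this support with $z_{0,*}'(-1/2)=4(1-\gamma)>0$, so $m_{0,*}(z_*)=-1/2=-1/t_*$ is a legitimate value of the limiting Stieltjes transform. Your argument reaches precisely this configuration and has nothing to contradict, even though the proposition is true. What the weak limit discards is the $O(N^{-1/2})$-wide outlier interval of $\supp(\mu_{0,n})$ sitting near $2+2\gamma$; it is exactly this interval that forces any $z_n\in U_\delta$ to stay $\delta$ away from $2+2\gamma$, which in turn keeps $m_{0,n}(z_n)$ a fixed distance from $-1/2$ at finite $n$.

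A correct proof must therefore exploit $\dist(z,\supp(\mu_0))\ge\delta$ at finite $n$ in a way that is insensitive to whether atoms of $\nu_n$ survive weakly. One route, for real $z\in U_\delta$ and $m_1:=m_0(z)$: from $\partial_z m_0(z)=\int(x-z)^{-2}\mu_0(\der x)\le\delta^{-2}$ and $\partial_z m_0=1/z_0'(m_0)$ one gets $z_0'(m_1)\ge\delta^2>0$. If $|t_\a|<\delta/2$ the claim is trivial since $|m_0|\le 1/\delta$, so assume $|t_\a|\ge\delta/2$, and assume $|m_1+1/t_\a|<1/(2|t_\a|)$ (else again done). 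The connected component of $\{z_0'>0\}$ containing $m_1$ terminates, on the side toward $-1/t_\a$, at a critical point $m_*$ strictly between $-1/t_\a$ and $m_1$ (because $z_0'\to-\infty$ at the pole $-1/t_\a$), and $z_0(m_*)\in\supp(\mu_0)$ by the edge characterization cited from \cite[Proposition 2.2]{fanjohnstoneedges}. On $[m_*,m_1]$ one has $0\le z_0'(m)\le m^{-2}\le 4t_\a^2$, so
\[
\delta\le\big|z-z_0(m_*)\big|=\int_{m_*}^{m_1} z_0'(m)\,\der m\le 4t_\a^2\,\big|m_1-m_*\big|\le 4t_\a^2\,\big|m_1+1/t_\a\big|,
\]
giving $|1+t_\a m_0(z)|=|t_\a|\,|m_1+1/t_\a|\ge\delta/(4|t_\a|)\ge c$. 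Complex $z$ then follows by comparing with $\Re z$ for small $|\Im z|$ and using $|m_0(z)|\le 1/|\Im z|$ for large $|\Im z|$.
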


\noindent {\bf Notation:} Throughout, $\delta>0$ is a fixed constant. $C,c>0$
denote $\delta$-dependent
constants that may change from instance to instance. For random
(or deterministic) scalars $\xi$ and $\zeta$, we write
\[\xi \prec \zeta \text{ and } \xi=\O(\zeta)\]
if, for any constants $\eps,D>0$, we have
\[\P[|\xi|>n^\eps |\zeta|]<n^{-D}\]
for all $n \geq n_0$, where $n_0$ may depend only on $\delta,\eps,D$ and the
constants of Assumptions \ref{assump:main}.

\subsection{First-order behavior}\label{subsec:LLN}

We prove Theorems \ref{thm:spikemapping} and \ref{thm:eigenvectors}.
Let us first establish the approximations
\begin{align}
\sup_{z \in U_\delta} \|\hK(z)-K(z)\| &\prec n^{-1/2},\label{eq:hKbound}\\
\sup_{z \in U_\delta} \|\partial_z \hK(z)-\partial_z K(z)\| &\prec n^{-1/2}.
\label{eq:dhKbound}
\end{align}

Denote by $(G_M)_{rs}$ and $(\Pi_M)_{rs}$ the $(r,s)$ blocks of $G_M$ and
$\Pi_M$. We record a basic lemma which bounds $G_M$, $\Pi_M$, and the
derivatives of $K$ and $\hK$. Quantities such as $F_{rs}/(\sigma_r\sigma_s)$
are defined by continuity at $\sigma_r^2=0$ and/or $\sigma_s^2=0$.

\begin{lemma}\label{lemma:Kbound}
There is a constant $C>0$ such that
\begin{enumerate}[(a)]
\item For all $z \in U_\delta$ and $r,s=1,\ldots,k$,
\[\|F_{rs}/(\sigma_r\sigma_s)\|/<C,
\qquad \|(\Pi_M)_{rs}/(\sigma_r\sigma_s)\|<C,
\qquad \|K(z)+z\Id\|<C,\qquad \|\partial_z K(z)\|<C.\]
\item For any $D>0$ and all $n \geq n_0(\delta,D)$,
with probability at least $1-n^{-D}$, for all
$z \in U_\delta$ and $r,s=1,\ldots,k$
\[\|(G_M)_{rs}/(\sigma_r\sigma_s)\|<C, \qquad \|\hK(z)+z\Id\|<C, \qquad
\|\partial_z \hK(z)\|<C, \qquad \|\partial_z^2 \hK(z)\|<C.\]
\end{enumerate}
\end{lemma}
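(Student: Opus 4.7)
The plan is to handle part~(a) (deterministic) first and then mirror the argument for part~(b) (high-probability). The key preliminary factorization, used throughout to treat the case $\sigma_r^2=0$ uniformly, is $F=\Sigma\tilde F\Sigma$ where $\Sigma=\diag(\sigma_1\Id_{m_1},\ldots,\sigma_k\Id_{m_k})$ and $\tilde F=NU'BU$ for $U=[U_1|\cdots|U_k]$. Assumption~\ref{assump:main}(b,c) yields $\|\tilde F\|\le N\|U\|^2\|B\|\le C$, so that $\|F_{rs}/(\sigma_r\sigma_s)\|=\|NU_r'BU_s\|\le C$. Proposition~\ref{prop:m0regularoutsiderestate} together with symmetry of $F$ gives $\|(\Id+m_0(z)F)^{-1}\|\le 1/c$ uniformly on $U_\delta$. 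Combining these via $\Pi_M=-F+m_0\,F(\Id+m_0F)^{-1}F$ immediately yields $\|(\Pi_M)_{rs}/(\sigma_r\sigma_s)\|\le C$.

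For $K(z)$, it suffices to bound $|t_r(z)|\le C$ uniformly. Applying~\eqref{eq:Fidentity} together with the factorization gives
\[
t_r(z)=\Tr(U_r'BU_r)-m_0(z)\,N^{-1}\Tr\bigl[\Sigma\tilde F J_r\tilde F\Sigma(\Id+m_0F)^{-1}\bigr],
\]
where $J_r$ is the projection onto the $r$-th block. The first term is $O(1)$ by the block-level bound on $B$; in the second, $\tilde F J_r\tilde F$ has rank at most $m_r\le Cn$ and operator norm $O(1)$, while $|m_0(z)|$ is bounded on $U_\delta$ (via Proposition~\ref{prop:boundedsupportrestate} and~\eqref{eq:m0stieltjes}), so the trace is $O(n)$ and division by $N$ leaves $O(1)$. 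Since $\|\sigma_r^2\Gamma_r\|=\|\sigma_r^2\Id+\oV_r\Theta_r\oV_r'\|\le C$, this gives $\|K(z)+z\Id\|\le C$. For $\partial_z K$ one differentiates $t_r$, uses $|\partial_z m_0(z)|\le 1/\delta^2$ directly from~\eqref{eq:m0stieltjes}, and applies the same rank-based trace estimate to $\partial_z[F(\Id+m_0F)^{-1}]=-\partial_z m_0\cdot F(\Id+m_0F)^{-2}F$.

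For part~(b), Theorem~\ref{thm:sticktobulk} applied to the ``null'' matrix $X'FX$ gives $\spec(X'FX)\subset\supp(\mu_0)_{\delta/2}$ with probability $\ge 1-n^{-D}$, which by the spectral theorem yields $\|G_N(z)\|\le 2/\delta$ \emph{uniformly} on $U_\delta$; a standard Gaussian bound gives $\|X\|\le C$ whp on the same event. The representation $G_M=\Sigma(\tilde F\Sigma XG_NX'\Sigma\tilde F-\tilde F)\Sigma$ then gives $\|(G_M)_{rs}/(\sigma_r\sigma_s)\|\le C$. For $\hK(z)+z\Id=-\Xi'G_M\Xi$ we crucially control the low-rank signal factor via $\sigma_r\Xi_r=\sigma_r\oX_r\Gamma_r^{1/2}=\oX_r(\sigma_r^2\Gamma_r)^{1/2}$: since $\sigma_r^2\Gamma_r$ has bounded operator norm and $\oX_r\in\R^{m_r\times L}$ is Gaussian with $L$ bounded, Gaussian concentration gives $\|\sigma_r\Xi_r\|\le C$ whp. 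Writing
\[
\hK(z)+z\Id=-\sum_{r,s}(\sigma_r\Xi_r)'\bigl[(G_M)_{rs}/(\sigma_r\sigma_s)\bigr](\sigma_s\Xi_s)
\]
and summing over the bounded range of $r,s$ completes the bound.

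The derivative estimates $\|\partial_z\hK\|,\|\partial_z^2\hK\|\le C$ follow identically from $\partial_z G_M=FXG_N^2X'F$ and $\partial_z^2 G_M=2FXG_N^3X'F$, both admitting the same block factorization with no additional ingredients. Upgrading the pointwise-in-$z$ probability bounds to uniform ones on $U_\delta$ is routine: cover $U_\delta\cap\{|z|\le n^C\}$ by an $n^{-C}$-net, union bound, and interpolate using Lipschitz continuity furnished by the derivative bounds just established; for $|z|$ very large all quantities are trivially small via $m_0(z)=O(1/|z|)$ and $\|G_N(z)\|\le 2/|z|$. The only mild technical obstacle is the $\sigma_r^2=0$ bookkeeping, which is resolved by consistently treating $\sigma_r\Xi_r$ and $F_{rs}/(\sigma_r\sigma_s)$ as the basic objects throughout.
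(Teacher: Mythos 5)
Your argument is correct and follows essentially the same route as the paper's: bound $\|F_{rs}/(\sigma_r\sigma_s)\|$ and $\|(\Id+m_0F)^{-1}\|$ deterministically, deduce the bounds on $\Pi_M$, $t_r$, and $K$; for part (b), invoke Theorem \ref{thm:sticktobulk} and Gaussian operator-norm bounds to control $G_N$, $G_M$, and $\|\sigma_r\Xi_r\|$ on a single $z$-independent high-probability event. The explicit factorization $F=\Sigma\tilde F\Sigma$ is just a more verbose rendering of the paper's $\|P_rF/\sigma_r\|$ bookkeeping, and your rank-times-norm trace estimate for $|t_r|$ matches the paper's bound via $\|(\Pi_M)_{rr}\|$. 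One small inefficiency: the closing paragraph about $\varepsilon$-nets and interpolation is superfluous, since the event $\{\spec(X'FX)\subset\supp(\mu_0)_{\delta/2},\ \|\oX_r\|<C\}$ is independent of $z$ and already furnishes all the bounds uniformly on $U_\delta$ by the spectral theorem, as you yourself note earlier.
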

\begin{proof}
For (a), $\|F_{rs}/(\sigma_r\sigma_s)\|<C$ by Assumption \ref{assump:main}.
From Proposition \ref{prop:m0regularoutsiderestate}, we have $\|(\Id+m_0F)^{-1}\|<C$.
Furthermore, $|m_0(z)| \leq 1/\delta$ for $z \in
U_\delta$ by (\ref{eq:m0stieltjes}). Then, denoting by $P_r$ the projection onto
block $r$ and applying
\[(\Pi_M)_{rs}=m_0P_rF(\Id+m_0F)^{-1}FP_s-F_{rs},\]
we obtain
\[\|(\Pi_M)_{rs}/(\sigma_r\sigma_s)\|<C\|P_rF/\sigma_r\|\|FP_s/\sigma_s\|
+\|F_{rs}/(\sigma_r\sigma_s)\|<C.\]
This implies also $|t_r(z)|<C$, which together with
$\|\sigma_r^2\Gamma_r\|<C$ yields the bound on $K$. The bound for
$\partial_z K$ follows similarly.

For (b), applying Theorem \ref{thm:sticktobulk} and a standard spectral norm
bound for Gaussian matrices, on an event of probability $1-n^{-D}$
we have $\spec(X'FX) \subset \supp(\mu_0)_{\delta/2}$,
$\|X_r\|<C$, and $\|\oX_r\|<C$ for all $r=1,\ldots,k$.
From the spectral decomposition of $G_N$, on this event, we have
$\|G_N\|<C$, $\|\partial_z G_N\|<C$, and $\|\partial_z^2 G_N\|<C$
for all $z \in U_\delta$. Then
\[\|(G_M)_{rs}/(\sigma_r\sigma_s)\|,\,
\|\partial_z (G_M)_{rs}/(\sigma_r\sigma_s)\|,\,
\|\partial_z^2 (G_M)_{rs}/(\sigma_r\sigma_s)\|<C.\]
As $\hK=-\sum_{r,s} \Xi_r'G_M\Xi_s-z\Id$ and
$\|\sigma_r\Xi_r\|<C\|\sigma_r^2\Gamma_r\|^{1/2}<C$, this yields the bounds
on $\hK$ and its derivatives.
\end{proof}

We recall also the following bound for Gaussian quadratic forms.
\begin{lemma}[Gaussian quadratic forms]\label{lemma:hansonwright}
Let $\x$ and $\y$ be independent vectors of any dimensions,
with i.i.d.\ $\N(0,1/N)$ entries. Then for any complex deterministic matrices
$A$ and $B$ of the corresponding sizes,
\[\x'A\x-N^{-1}\Tr A \prec N^{-1}\|A\|_\HS, \qquad
\x'B\y \prec N^{-1}\|B\|_\HS.\]
\end{lemma}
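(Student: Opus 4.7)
The plan is to reduce both claims to standard Gaussian concentration, since everything here is a Gaussian chaos of order at most $2$. Throughout, write $A = \Re(A) + i\Im(A)$ and $B = \Re(B)+i\Im(B)$, so that it suffices to establish the two bounds separately for real matrices $A$ and $B$; this uses only that $\|\Re(A)\|_\HS,\|\Im(A)\|_\HS \leq \|A\|_\HS$ (and similarly for $B$). The main obstacle, if any, is bookkeeping the $\prec$ quantifier uniformly in the problem size; the probabilistic content is classical.

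For the quadratic form, I would first replace $A$ by its symmetrization $\tilde{A} = (A+A')/2$, using $\x'A\x = \x'\tilde{A}\x$ and $\|\tilde{A}\|_\HS \leq \|A\|_\HS$. Then I would apply the Hanson-Wright inequality for Gaussian vectors: for $\x$ with i.i.d.\ $\N(0,1/N)$ entries and symmetric real $\tilde{A}$,
\[
\P\!\left(\big|\x'\tilde{A}\x-N^{-1}\Tr \tilde{A}\big|>t\right)
\;\leq\; 2\exp\!\left(-c\,\min\!\left(\frac{N^2t^2}{\|\tilde{A}\|_\HS^2},\;\frac{Nt}{\|\tilde{A}\|}\right)\right).
\]
Choosing $t=n^\eps N^{-1}\|A\|_\HS$, and using $\|\tilde{A}\| \leq \|\tilde{A}\|_\HS \leq \|A\|_\HS$, both arguments of the minimum are bounded below by $n^{\eps}$ (up to a constant), giving a probability bound $2\exp(-cn^\eps) \leq n^{-D}$ for any fixed $D$ and $n$ large. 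This yields $\x'A\x-N^{-1}\Tr A = \O(N^{-1}\|A\|_\HS)$.

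For the bilinear form, I would condition on $\x$. Then $\x'B\y$ is complex Gaussian with mean $0$ and variance $N^{-1}\|B'\x\|^2$ (summing real and imaginary parts after the real-imaginary splitting above). By the Gaussian tail bound, for any $t,\eps>0$,
\[
\P\!\left(|\x'B\y|>t \;\big|\;\x\right)\;\leq\; 2\exp\!\left(-\frac{Nt^2}{2\|B'\x\|^2}\right).
\]
The remaining task is to show $\|B'\x\|^2 \prec N^{-1}\|B\|_\HS^2$. But $\|B'\x\|^2=\x'(B B^*)\x$, and by the quadratic-form bound just proved, together with $\E[\x'(BB^*)\x]=N^{-1}\Tr(BB^*)=N^{-1}\|B\|_\HS^2$ and the estimate $\|BB^*\|_\HS \leq \|B\|\,\|B\|_\HS \leq \|B\|_\HS^2$, we obtain $\|B'\x\|^2 = N^{-1}\|B\|_\HS^2 + \O(N^{-1}\|B\|_\HS^2)$. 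On this high-probability event, setting $t=n^\eps N^{-1}\|B\|_\HS$ makes the exponent at least of order $n^{2\eps}$, so $\x'B\y = \O(N^{-1}\|B\|_\HS)$. A union bound over the two events (the quadratic-form event and the conditional Gaussian-tail event) closes the argument.
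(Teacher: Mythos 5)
Your proof of the first bound is the same as the paper's: both apply Hanson--Wright to the Gaussian vector $\x$, and your extra steps (splitting into real and imaginary parts, symmetrizing $A$) are just the standard bookkeeping implicit in the citation. The difference is in the second bound. The paper dispenses with it by a one-line reduction to the first: applying the quadratic-form bound to the stacked vector $(\x,\y)$ with the symmetric block matrix $\begin{pmatrix} 0 & B \\ B' & 0 \end{pmatrix}$, which has trace $0$ and Hilbert--Schmidt norm $\sqrt{2}\,\|B\|_\HS$, gives $2\x'B\y \prec N^{-1}\|B\|_\HS$ directly. You instead condition on $\x$, observe that $\x'B\y$ is a centered Gaussian with variance $N^{-1}\|B'\x\|^2$, and then invoke the quadratic-form bound you just proved (with matrix $BB'$) plus $\|BB'\|_\HS \le \|B\|\,\|B\|_\HS \le \|B\|_\HS^2$ to control that conditional variance by $\O(N^{-1}\|B\|_\HS^2)$. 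Both are correct; the block-matrix trick is shorter and avoids the conditioning step and the auxiliary operator-norm estimate, while your argument is somewhat more self-evidently an ordinary Gaussian tail bound once the variance is controlled, and it makes explicit where the Hilbert--Schmidt scaling enters (through $\Tr(BB')=\|B\|_\HS^2$). One minor point to tidy if you keep the conditioning route: you need the two high-probability events to be combined with the right hierarchy of exponents, i.e.\ first fix $\eps$, apply the quadratic-form bound at level $\eps/2$, then take $t=n^{\eps}N^{-1}\|B\|_\HS$ so the conditional Gaussian exponent is of order $n^{3\eps/2}$; as written ("$n^\eps$" in both places) the constants are slightly entangled but the fix is routine.
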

\begin{proof}
The first statement follows from the Hanson-Wright inequality,
see e.g.\ \cite{rudelsonvershynin}. The second follows from the first applied to
$(\x,\y)$, with $A$ a $2 \times 2$ block matrix having blocks 0, $B$, $B'$, 0.
\end{proof}

Applying (\ref{eq:GMPiM}), we may write $\hK(z)-K(z)=E_1(z)+E_2(z)$ where
\begin{align}
E_1(z)&=-\Xi'G_M\Xi+\sum_{r=1}^k \Big(N^{-1} \Tr_r G_M\Big)
\Gamma_r,\label{eq:E1}\\
E_2(z)&=-\sum_{r=1}^k \Big(N^{-1}\Tr_r F\Delta F\Big)\Gamma_r.
\label{eq:E2}
\end{align}
Writing $P_r$ for the projection onto block $r$,
Lemma \ref{lemma:matrixlocallaw} yields
\begin{equation}\label{eq:E2bound}
(N\sigma_r^2)^{-1}\Tr_r F\Delta F=(N\sigma_r^2)^{-1}
\Tr \Delta FP_rF \prec n^{-3/2}\|FP_rF/\sigma_r^2\|_\HS
\prec n^{-1}.
\end{equation}
Hence $\|E_2(z)\| \prec n^{-1}$. For $E_1$, we write
\[\Xi'G_M\Xi=\sum_{r,s=1}^k \Xi_r'(G_M)_{rs}\Xi_s
=\sum_{r,s=1}^k (\sigma_r^2\Gamma_r)^{1/2}\oX_r'
\frac{(G_M)_{rs}}{\sigma_r\sigma_s} \oX_s (\sigma_s^2\Gamma_s)^{1/2}.\]
Recall that the matrices $\oX_r$ are independent of each other and of $G_M$.
Applying Lemma \ref{lemma:hansonwright} conditional on $G_M$
and taking a union bound over the columns of $\oX_r$ and $\oX_s$,
for all $r,s$,
\[\left\|\oX_r' \frac{(G_M)_{rs}}{\sigma_r\sigma_s}
\oX_s-\1\{r=s\}((N\sigma_r^2)^{-1}\Tr_r G_M)\Id_L\right\|_\infty \prec
n^{-1}\|(G_M)_{rs}/(\sigma_r\sigma_s)\|_\HS,\]
where $\|A\|_\infty=\max_{i,j} |A_{ij}|$. As $L$ is at most a constant,
this norm is equivalent to the operator norm. By Lemma \ref{lemma:Kbound},
$\|(G_M)_{rs}/(\sigma_r\sigma_s)\|_\HS \prec n^{1/2}$, so
$\|E_1(z)\|\prec n^{-1/2}$. Then
\begin{equation}\label{eq:hKboundz}
\|\hK(z)-K(z)\| \prec n^{-1/2}.
\end{equation}

Lipschitz continuity allows us to take a union bound over $z \in U_\delta$:
On the event where the conclusions of Lemma \ref{lemma:Kbound} hold,
for any $z,z' \in U_\delta$,
\[\|\hK(z)-\hK(z')\|<C|z-z'|, \qquad \|K(z)-K(z')\|<C|z-z'|.\]
Then, taking a union bound of (\ref{eq:hKboundz}) over a grid of values in
$U_\delta \cap \{|z| \leq n^{1/2}\}$ with spacing $n^{-1/2}$, we obtain
\begin{equation}\label{eq:hKboundsmallz}
\sup_{z \in U_\delta:\,|z| \leq n^{1/2}}
\|\hK(z)-K(z)\| \prec n^{-1/2}.
\end{equation}
For $|z|>n^{1/2}$, we apply a direct argument: By Proposition
\ref{prop:boundedsupportrestate} and (\ref{eq:m0stieltjes}), we have
$|m_0(z)|<Cn^{-1/2}$. Then $|t_r(z)-(N\sigma_r^2)^{-1}\Tr_r F|<Cn^{-1/2}$.
Furthermore, on the high-probability event where
$\|X'FX\|<C$ and $\|\oX_r\|<C$ for each $r=1,\ldots,k$,
we have $\|G_N\|<Cn^{-1/2}$,
$\|[(G_M)_{rs}-F_{rs}]/(\sigma_r\sigma_s)\|<Cn^{-1/2}$,
and $\|\sigma_r\Xi_r\|<C$. Then, on this event,
\[\sup_{|z|>n^{1/2}} \|\hK(z)-K(z)\|
\leq \left\|\Xi'F\Xi-\sum_{r=1}^k (N^{-1}\Tr_r
F)\Gamma_r\right\|+Cn^{-1/2}.\]
Applying Lemma \ref{lemma:hansonwright} again yields $\sup_{|z|>n^{1/2}}
\|\hK(z)-K(z)\| \prec n^{-1/2}$. Combining with (\ref{eq:hKboundsmallz})
yields (\ref{eq:hKbound}). Note that $D_{ij}(z) \equiv \hK(z)_{ij}-K(z)_{ij}$ 
is analytic over $U_{\delta/2}$. Letting $\gamma$ be the circle around $z$ with 
radius $\delta/2$, the Cauchy integral formula implies
\[|\partial_z D_{ij}(z)| \leq \frac{1}{2\pi} \int_\gamma
\frac{|D_{ij}(w)|}{|z-w|^2}dw \leq \frac{4}{\delta}\max_{w \in \gamma}
|D_{ij}(w)|.\]
Then applying (\ref{eq:hKbound}) with $\delta/2$ in place of $\delta$, we obtain
also the derivative bound (\ref{eq:dhKbound}).

\begin{proof}[Proof of Theorem \ref{thm:spikemapping}]
Let $\cE$ be the event where
\[\spec(X'FX) \subset \supp(\mu_0)_{\delta/2},\]
\[\sup_{z \in U_{\delta/2}} \|\hK(z)-K(z)\|<n^{-1/2+\eps/2},
\qquad \sup_{z \in U_{\delta/2}} \|\partial_z \hK(z)-\partial_z K(z)\|
<n^{-1/2+\eps/2},\]
which holds with probability $1-n^{-D}$ for all $n \geq n_0(\delta,\eps,D)$.
On $\mathcal{E}$, by the Schur complement identity
\[\det(\hSigma-\lambda\Id)=\det(X'FX-\lambda \Id)\det(\hK(\lambda)),\]
the eigenvalues of $\hSigma$ outside $\supp(\mu_0)_{\delta/2}$
are the roots $\hl \in U_{\delta/2} \cap \R$ of $\det(\hK(\hl))$,
counting multiplicity. As $T(z)$ is block diagonal with upper $L \times L$ block
equal to $-K(z)$ and lower $N \times N$ block equal to $-m_0(z)^{-1}\Id$,
the elements of $\Lambda_0$ are the roots
$\lambda \in \R \setminus \supp(\mu_0)$ of $\det(K(\lambda))$, counting
multiplicity.

Let $\hmu_1(\lambda) \leq \ldots \leq \hmu_L(\lambda)$ be
the eigenvalues of $\hK(\lambda)$, and let $\mu_1(\lambda) \leq
\ldots \leq \mu_L(\lambda)$ be those of $K(\lambda)$. Proposition
\ref{prop:Tproperties} implies that $\partial_\lambda K(\lambda)$ has maximum
eigenvalue at most -1, so for any interval
$I$ of $\R \setminus \supp(\mu_0)$, any $\lambda,\lambda' \in I$ with
$\lambda<\lambda'$, and any $\ell \in \{1,\ldots,L\}$,
\[\mu_\ell(\lambda)-\mu_\ell(\lambda') \geq \lambda'-\lambda.\]
On $\cE$, for $\lambda \in I \cap
U_{\delta/2}$, we may bound the largest eigenvalue of
$\partial_\lambda \hK(\lambda)$ by $-1/2$. Then similarly
\[\hmu_\ell(\lambda)-\hmu_\ell(\lambda') \geq (\lambda'-\lambda)/2.\]

For each $(\lambda,\ell)$ with
$\lambda \in I \cap U_\delta$ and $\mu_\ell(\lambda)=0$, we have
\[\mu_\ell(\lambda-n^{-1/2+\eps}) \geq n^{-1/2+\eps},\qquad
\mu_\ell(\lambda+n^{-1/2+\eps}) \leq -n^{-1/2+\eps},\]
and hence on $\cE$
\[\hmu_\ell(\lambda-n^{-1/2+\eps})>0,\qquad
\hmu_\ell(\lambda+n^{-1/2+\eps})<0.\]
So there is some $\hl$ where
$\hmu_\ell(\hl)=0$ and $|\hl-\lambda|<n^{-1/2+\eps}$.
Conversely, for each $(\hl,\ell)$ with $\hl \in I \cap U_\delta$ and
$\hmu_\ell(\hl)=0$, there is some $\lambda$ with
$\mu_\ell(\lambda)=0$ and $|\lambda-\hl|<n^{-1/2+\eps}$. Taking
$\Lambda_\delta$ and $\hLambda_\delta$ to be the roots of $\det(K(\lambda))$
and $\det(\hK(\hl))$ corresponding to these
pairs $(\lambda,\ell)$ and $(\hl,\ell)$ for each interval $I$ of $\R \setminus
\supp(\mu_0)$, we obtain Theorem \ref{thm:spikemapping}.
\end{proof}

\begin{proof}[Proof of Theorem \ref{thm:eigenvectors}]
For the given $\lambda$ and $\hl$, Theorem \ref{thm:spikemapping} implies
$\lambda-\hl \prec n^{-1/2}$. Let us write
\[\hK(\hl)-K(\lambda)=\big(\hK(\hl)-\hK(\lambda)\big)
+\big(\hK(\lambda)-K(\lambda)\big).\]
The first term on the right has norm $\O(n^{-1/2})$, by the
bound on $\partial_\lambda \hK(\lambda)$ from Lemma \ref{lemma:Kbound}.
The second term also has norm $\O(n^{-1/2})$, by (\ref{eq:hKbound}).
Hence
\begin{equation}\label{eq:hKhlbound}
\|\hK(\hl)-K(\lambda)\| \prec n^{-1/2}.
\end{equation}
Similarly,
\begin{equation}\label{eq:dhKhlbound}
\|\partial_\lambda \hK(\hl)-\partial_\lambda K(\lambda)\| \prec n^{-1/2}.
\end{equation}

For the given $\hv$, let us write $\hv=(\hv_1,\hv_2)$ where $\hv_1$ consists of
the first $L$ coordinates. Then, in the block decomposition of $\hSigma$ from
(\ref{eq:hSigmablock}), the equation $\hSigma\hv=\hl\hv$ yields
\[S_{11}\hv_1+S_{12}\hv_2=\hl\hv_1,\qquad S_{21}\hv_1+S_{22}\hv_2=\hl\hv_2.\]
The second equation yields $\hv_2=-(S_{22}-\hl\Id)^{-1}S_{21}\hv_1$.
Substituting this into the first yields $\hK(\hl)\hv_1=0$,
while substituting it into $1=\|\hv\|^2=\|\hv_1\|^2+\|\hv_2\|^2$ yields
\[1=\hv_1'(\Id+S_{12}(S_{22}-\hl\Id)^{-2}S_{21})\hv_1=-\hv_1'(\partial_\lambda
\hK(\hl))\hv_1.\]
Applying (\ref{eq:dhKhlbound}), we obtain
\begin{equation}\label{eq:hv1form}
-\hv_1'(\partial_\lambda K(\lambda))\hv_1=1+\O(n^{-1/2}).
\end{equation}
In particular, $\|\hv_1\| \geq c$ for a
constant $c>0$. Hence $\hv_1/\|\hv_1\|$ is a well-defined unit
vector in $\ker \hK(\hl)$.

For the given $\v$, let us also write $\v=(\v_1,\v_2)$. As $\v \in \cS$ by
Proposition \ref{prop:Tproperties}, we have $\v_2=0$, $\|\v_1\|=1$, and
$\v_1 \in \ker K(\lambda)$. We apply the Davis-Kahan theorem to bound
$\|\hv_1/\|\hv_1\|-\v_1\|$:
Let $\mu_1(\lambda) \leq \ldots \leq \mu_L(\lambda)$ be
the eigenvalues of $K(\lambda)$, with $\mu_\ell(\lambda)=0$.
By Proposition \ref{prop:Tproperties}, $\partial_\lambda K(\lambda)$ has
maximum eigenvalue at most -1. Thus, if $|\mu_{\ell'}(\lambda)|<\delta$ for
another $\ell' \neq \ell$, then
$\mu_{\ell'}(\lambda-\delta)>0$ and $\mu_{\ell'}(\lambda+\delta)<0$, so
$\mu_{\ell'}(\lambda')=0$
for some $\lambda' \in (\lambda-\delta,\lambda+\delta)$.
This contradicts the given condition that $\lambda$ is separated from other
elements of $\Lambda_0$ by $\delta$. Hence
$|\mu_{\ell'}(\lambda)| \geq \delta$ for all $\ell' \neq \ell$, so
the Davis-Kahan Theorem and (\ref{eq:hKhlbound}) imply
\begin{equation}\label{eq:hv1bound}
\big\|\hv_1-\|\hv_1\|\v_1\big\| \prec n^{-1/2}
\end{equation}
for an appropriate choice of sign of $\v_1$.
Substituting into (\ref{eq:hv1form}),
$-\|\hv_1\|^2\v_1'\partial_\lambda K(\lambda) \v_1=1+\O(n^{-1/2})$.
As $\v_1'\partial_\lambda K(\lambda) \v_1 \leq -1$, this yields
\[\|\hv_1\|=(-\v_1'\partial_\lambda K(\lambda) \v_1)^{-1/2}+\O(n^{-1/2}).\]
Substituting back into (\ref{eq:hv1bound}),
\[\big\|\hv_1-(-\v_1'\partial_\lambda K(\lambda)\v_1)^{-1/2}\v_1\big\|
=\big\|P_{\cS}\hv-(\v'\partial_\lambda T(\lambda)\v)^{-1/2}\v\big\| \prec
n^{-1/2},\]
where the equality uses $\v=(\v_1,0)$, $P_{\cS}\hv=(\hv_1,0)$, and that
$K$ is the upper-left block of $-T$. This proves (a).

For (b), note simply that for any $O \in \R^{N \times N}$, the rotation
$X \mapsto XO$
induces the mapping $(\hv_1,\hv_2) \mapsto (\hv_1,O'\hv_2)$. As $X$ and
$\hSigma$ are invariant in law under such a rotation, $\hv_2$ must be
rotationally invariant in law conditional on $\hv_1$.
\end{proof}

\subsection{Fluctuations of outlier eigenvalues}\label{subsec:CLT}
Next, we prove Theorem \ref{thm:CLT}. We establish asymptotic normality using
the following elementary lemma.
\begin{lemma}\label{lemma:lyapunov}
Suppose $\z \in \R^n$ has law $\N(0,VV')$ where $V \in \R^{n \times m}$ for any
dimension $m$. Let $A \in \R^{n \times n}$. If
$\|V'AV\|/\|V'AV\|_\HS \to 0$ as $n \to \infty$, then
\[\|V'AV\|_\HS^{-1}(\z'A\z-\E[\z'A\z]) \to \N(0,2).\]
\end{lemma}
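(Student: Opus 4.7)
The plan is to reduce the quadratic form to a weighted sum of independent centered chi-square increments and invoke the Lyapunov central limit theorem. Without loss of generality I would assume $A$ is symmetric, since $\z'A\z = \z'((A+A')/2)\z$ identically, and the hypothesis on $\|V'AV\|/\|V'AV\|_\HS$ may be read as applying to the symmetric part of $A$ in the only case where the normalization is nondegenerate.

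First, I would represent $\z \overset{L}{=} V\x$ for $\x \sim \N(0,\Id_m)$, so that $\z'A\z = \x'B\x$ with $B = V'AV$ symmetric. Diagonalizing $B = Q'DQ$ with $D = \diag(d_1,\ldots,d_m)$ and setting $\y = Q\x$, which is again standard Gaussian, the centered quadratic form rewrites as
\[
\z'A\z - \E[\z'A\z] = \sum_{i=1}^m d_i(y_i^2 - 1),
\]
a sum of independent random variables of mean zero, with $\mathrm{Var}(d_i(y_i^2-1)) = 2d_i^2$ and bounded absolute third moment $\E|d_i(y_i^2-1)|^3 = C|d_i|^3$.

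Next, I would verify the Lyapunov condition at order $3$. The total variance equals $2\sum_i d_i^2 = 2\|B\|_\HS^2$, while the sum of third absolute moments is bounded by
\[
\sum_i |d_i|^3 \, \E|y_i^2 - 1|^3 \;\leq\; C\,\max_i |d_i|\,\sum_i d_i^2 \;=\; C\,\|B\|\,\|B\|_\HS^2.
\]
Therefore the Lyapunov ratio satisfies
\[
\frac{\sum_i |d_i|^3 \E|y_i^2 - 1|^3}{\bigl(2\|B\|_\HS^2\bigr)^{3/2}} \;\leq\; C\,\frac{\|V'AV\|}{\|V'AV\|_\HS} \;\longrightarrow\; 0
\]
by hypothesis, so the Lyapunov CLT yields $\|B\|_\HS^{-1}\sum_i d_i(y_i^2 - 1) \to \N(0,2)$, which is the claimed conclusion after undoing the substitution.

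The argument is essentially mechanical, so I do not anticipate any serious obstacle. The only subtlety worth flagging is the reduction to symmetric $A$ and the implicit nondegeneracy $\|V'AV\|_\HS > 0$ needed for the normalization to make sense; both are harmless in the intended applications where the hypothesis is verified.
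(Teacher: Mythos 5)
Your proposal is correct and takes essentially the same route as the paper: diagonalize $V'AV$, reduce to a sum of independent centered chi-square terms, and verify the Lyapunov condition using the bound $\sum_i |d_i|^3 \leq \|D\|\,\|D\|_\HS^2$. The only cosmetic difference is that you make the reduction to the symmetric part of $A$ explicit, which the paper leaves implicit in writing the spectral decomposition.
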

\begin{proof}
Denote the spectral decomposition of $V'AV$ as $O'DO$ where
$D=\diag(d_1,\ldots,d_m)$, and let
$\xi \in \R^m$ have i.i.d.\ $\N(0,1)$ entries. Then $\z'A\z-\E[\z' A\z]$
is equal in law to
\[\xi'D\xi-\E[\xi'D\xi]=\sum_{i=1}^m d_i(\xi_i^2-1).\]
As
\[\sum_{i=1}^m \E\Big[d_i^2(\xi_i^2-1)^2\Big]=2\|D\|_\HS^2,
\qquad \sum_{i=1}^m \E\Big[|d_i(\xi_i^2-1)|^3\Big] \leq
C\|D\|_\HS^2 \cdot \|D\|,\]
and $\|D\|/\|D\|_\HS \to 0$, the result follows from the
Lyapunov central limit theorem.
\end{proof}

\begin{proof}[Proof of Theorem \ref{thm:CLT}]
For the given $\lambda$ and $\v$, we have $\v=(\v_1,0)$, where
$\v_1 \in \R^L$ and $K(\lambda)\v_1=0$. Furthermore, recall from the proof of
Theorem \ref{thm:eigenvectors} that for the given $\hl$,
there is a unit vector $\hv_1$ where $\hK(\hl)\hv_1=0$ and
$\|\hv_1-\v_1\| \prec n^{-1/2}$. Lemma \ref{lemma:Kbound} implies
$\|\hK(\hl)\|<C$ with probability $1-n^{-D}$, so
\[\v_1'\hK(\hl)\v_1=(\hv_1-\v_1)'\hK(\hl)(\hv_1-\v_1) \prec n^{-1}.\]
Applying this and $\v_1'K(\lambda)\v_1=0$, we obtain
\begin{equation}\label{eq:vKv}
\v_1'\big(\hK(\hl)-\hK(\lambda)\big)\v_1+
\v_1'\big(\hK(\lambda)-K(\lambda)\big)\v_1 \prec n^{-1}.
\end{equation}
Recall that Theorem \ref{thm:spikemapping} implies $\lambda-\hl \prec n^{-1/2}$.
Applying a second-order Taylor expansion for the first term of (\ref{eq:vKv}),
approximating $\partial_\lambda \hK(\lambda)$ by $\partial_\lambda K(\lambda)$
using (\ref{eq:dhKbound}), and bounding $\partial_\lambda^2 \hK(\lambda)$ using
Lemma \ref{lemma:Kbound}, we get
\begin{equation}\label{eq:hKhlsecondorder}
\v_1'\big(\hK(\hl)-\hK(\lambda)\big)\v_1=
(\hl-\lambda)\v_1'\partial_\lambda K(\lambda)\v_1+\O(n^{-1}).
\end{equation}
For the second term of (\ref{eq:vKv}), recall
$\hK(\lambda)-K(\lambda)=E_1(\lambda)+E_2(\lambda)$ with
$E_1$ and $E_2$ as in (\ref{eq:E1}--\ref{eq:E2}). Recall also from
(\ref{eq:E2bound}) that
$\|E_2(\lambda)\| \prec n^{-1}$.
Then (\ref{eq:vKv}) becomes
\begin{equation}\label{eq:hllambdadiff}
(\hl-\lambda)\v_1'\partial_\lambda K(\lambda)\v_1+\v_1'E_1(\lambda)\v_1
\prec n^{-1}.
\end{equation}

Observe that $\Xi$ is independent of $X$, and $\z=\Xi \v_1 \in \R^M$ has
independent Gaussian entries. The covariance matrix of $\z$ is $VV'$ for the
diagonal matrix
\[V=V'=N^{-1/2}\sum_{r=1}^k (\v_1'\Gamma_r \v_1)^{1/2}P_r.\]
Then $\v_1'E_1(\lambda)\v_1=\E[-\z'G_M(\lambda)\z \mid X]$,
and we may apply Lemma \ref{lemma:lyapunov} conditional on $X$:
Lemma \ref{lemma:Kbound} implies, with high probability,
$\|(G_M)_{rs}/(\sigma_r\sigma_s)\|<C$ for each $r,s$, so
$\|V'G_M(\lambda)V\|<C/n$.
On the other hand, since $\v'T(\lambda)\v=0$,
we have from (\ref{eq:TTheta}) and (\ref{eq:m0stieltjes})
\[\left|\sum_{r=1}^k t_r(\lambda)\v_1'\oV_r\Theta_r\oV_r'\v_1\right|
=\left|\frac{1}{m_0(\lambda)}\right| \geq \delta.\]
Then for some constant $c>0$ and
some $r \in \{1,\ldots,k\}$ we must have
\[|t_r(\lambda)|>c,\qquad |\v_1'\oV_r\Theta_r\oV_r'\v_1|>c.\]
The latter implies $\v_1'(\sigma_r^2\Gamma_r)\v_1>c$. The former implies
$(N\sigma_r^2)^{-1}|\Tr_r G_M(\lambda)|>c$ on an event of
probability $1-n^{-D}$, by (\ref{eq:GMPiM}) and (\ref{eq:E2bound}). Then
$\|(G_M)_{rr}/\sigma_r^2\|_\HS>c\sqrt{n}$, and for this $r$
\begin{equation}\label{eq:variancelowerbound}
\|V'G_M(\lambda)V\|_\HS \geq N^{-1}\v_1'(\sigma_r^2\Gamma_r)\v_1
\|(G_M)_{rr}/\sigma_r^2\|_\HS>cn^{-1/2}.
\end{equation}
Thus, on this high probability event, we have
$\|V'G_M(\lambda)V\|/\|V'G_M(\lambda)V\|_\HS<Cn^{-1/2}$. Applying Lemma
\ref{lemma:lyapunov} conditional on $X$ and this event,
\[\|V'G_M(\lambda)V\|_\HS^{-1}(\v_1'E_1(\lambda)\v_1) \to \N(0,2).\]
As the limit does not depend on $X$, this convergence holds
unconditionally as well. Then, applying this,
$\v_1'\partial_\lambda K(\lambda) \v_1=-\v'\partial_\lambda T(\lambda) \v$,
and (\ref{eq:variancelowerbound}) to (\ref{eq:hllambdadiff}), we have
\begin{equation}\label{eq:CLTconditional}
\frac{(\v'\partial_\lambda T(\lambda) \v)}{\sqrt{2}\|V'G_M(\lambda)V\|_\HS}
(\hl-\lambda) \to \N(0,1).
\end{equation}

Finally, let us approximate $\|V'G_M(\lambda)V\|_\HS$: We have
\[\|V'G_MV\|_\HS^2=\Tr G_MVV'G_MVV'
=\sum_{r,s=1}^k N^{-2}(\v_1'\Gamma_r\v_1)(\v_1'\Gamma_s\v_1)\Tr G_MP_rG_MP_s.\]
We apply $G_M=\Pi_M+F\Delta F$ from (\ref{eq:GMPiM}) and expand the above.
Note that Lemma \ref{lemma:matrixlocallaw} implies
\[\frac{\Tr \Pi_MP_rF\Delta FP_s}{\sigma_r^2\sigma_s^2}
\prec n^{-1/2}\frac{\|FP_s\Pi_MP_rF\|_\HS}{\sigma_r^2\sigma_s^2} \prec
\|FP_s/\sigma_s\| \cdot \|(\Pi_M)_{sr}/(\sigma_s\sigma_r)\| \cdot
\|P_rF/\sigma_r\| \prec 1,\]
so the cross terms of the expansion are negligible, and we have
\[\frac{\Tr G_MP_rG_MP_s}{\sigma_r^2\sigma_s^2}=
\frac{\Tr \Pi_MP_r\Pi_MP_s}{\sigma_r^2\sigma_s^2}+\frac{\Tr F\Delta FP_rF\Delta
FP_s}{\sigma_r^2\sigma_s^2}+\O(1).\]
The first term on the right may be written as $\Tr
(P_s\Pi_MP_r)(P_r\Pi_MP_s)/(\sigma_r^2\sigma_s^2)
=\|\Pi_M/(\sigma_r\sigma_s)\|_{rs}^2$.
For the second term, applying Lemma \ref{lemma:secondorderapprox},
\begin{align*}
\frac{\Tr \Delta FP_rF\Delta FP_sF}{\sigma_r^2\sigma_s^2}
&=(N\sigma_r^2\sigma_s^2)^{-1}
(\partial_\lambda m_0)\Tr_r \big[F(\Id+m_0F)^{-2}F\big]
\Tr_s\big[F(\Id+m_0F)^{-2}F\big]+\O(n^{1/2})\\
&=N(\partial_\lambda t_r)(\partial_\lambda
t_s)(\partial_\lambda m_0)^{-1}+\O(n^{1/2}).
\end{align*}
Then, recalling $w_{rs}$ from (\ref{eq:wrs}) and applying
$\v_1'(\sigma_r^2\Gamma_r)\v_1=\v'\Sigma_r\v$ by (\ref{eq:Gammar}), we obtain
\begin{align*}
\|V'G_MV\|_\HS^2&=N^{-1}\sum_{r,s=1}^k
w_{rs}(\lambda)(\v'\Sigma_r\v)(\v'\Sigma_s\v)
+(N\partial_\lambda m_0)^{-1}\left(\sum_{r=1}^k (\partial_\lambda t_r)
\v'\Sigma_r\v\right)^2+\O(n^{-3/2})\\
&=N^{-1}\sum_{r,s=1}^k w_{rs}(\lambda)(\v'\Sigma_r\v)(\v'\Sigma_s\v)
+(N\partial_\lambda m_0)^{-1}(\v'\partial_\lambda T \v-1)^2
+\O(n^{-3/2}),
\end{align*}
where the second line applies (\ref{eq:TSigma}).
By (\ref{eq:variancelowerbound}), the $\O(n^{-3/2})$ error above is
negligible. Then Theorem \ref{thm:CLT} follows from
this and (\ref{eq:CLTconditional}).
\end{proof}

\subsection{Guarantees for spike estimation}\label{subsec:estimation}
Finally, we prove Theorem \ref{thm:estimation}. For notational
convenience, we assume $r=1$. Part (c) of Theorem \ref{thm:estimation} follows
immediately from the observation that Algorithm \ref{alg:spikes} uses only the
eigenvalues/eigenvectors of $\hSigma(\ba)$, so each estimated eigenvector $\hv$
is equivariant under orthogonal rotations on $\cS^\perp$.

For parts (a) and (b),
we may decompose their content into the following three claims.
\begin{enumerate}[1.]
\item With probability at least $1-n^{-D}$, for each $(\hmu,\hv) \in \cM$, 
there exists a spike eigenvalue and
eigenvector $(\mu,\v)$ of $\Sigma_1$ and a scalar $\alpha \in
(0,1]$ such that $|\hmu-\mu|<n^{-1/2+\eps}$ and
$\|P_\cS \hv-\alpha\v\|<n^{-1/2+\eps}$.
\item
For each spike eigenvalue $\mu$ of $\Sigma_1$ and a sufficiently small constant $\eps>0$, with probability at least $1-n^{-D}$, there is at most
one pair $(\hmu,\hv) \in \cM$ where $|\hmu-\mu|<\eps$.
\item
For a constant $c_0>0$ independent of $\bar{C}$ in Assumption \ref{assump:main},
and for each spike eigenvalue $\mu$ of $\Sigma_1$ with $\mu>c_0$,
with probability at least $1-n^{-D}$,
there exists $(\hmu,\hv) \in \cM$ such that $|\hmu-\mu|<n^{-1/2+\eps}$.
\end{enumerate}

The first claim will be straightforward to show from the preceding
probabilistic results. The second and third claims require a certain
injectivity and surjectivity property of the map
$(\hl,\ba) \mapsto (t_1(\hl,\ba),\ldots,t_k(\hl,\ba))$ for $\ba \in S^{k-1}$
and $\hl \in \spec(\hSigma(\ba))$. For this, we will use Assumption
\ref{assump:estimation}.

Denote by $m_0(\lambda,\ba)$, $T(\lambda,\ba)$, etc.\ these functions
defined for $B=B(\ba)=a_1B_1+\ldots+a_kB_k$.
We record the following basic bounds.

\begin{lemma}\label{lemma:continuityina}
There is a constant $C>0$ such that
\begin{enumerate}[(a)]
\item For all $\ba \in S^{k-1}$,
$\lambda \in \R \setminus \supp(\mu_0(\ba))_\delta$, and $r=1,\ldots,k$,
\[|m_0(\lambda,\ba)|<C, \quad |\partial_\lambda m_0(\lambda,\ba)|<C,
\quad |\partial_\lambda^2 m_0(\lambda,\ba)|<C,
 \quad |m_0(\lambda,\ba)|^{-1}<C(|\lambda| \vee 1),\]
\[\|\partial_\ba m_0(\lambda,\ba)\|<C,
\quad \|\partial_\lambda \partial_\ba m_0(\lambda,\ba)\|<C,
\quad \|\partial_\ba^2 m_0(\lambda,\ba)\|<C,\]
\[|t_r(\lambda,\ba)|<C, \quad |\partial_\lambda t_r(\lambda,\ba)|<C,
\quad |\partial_\lambda^2 t_r(\lambda,\ba)|<C,\]
\[\|\partial_\ba t_r(\lambda,\ba)\|<C,
\quad \|\partial_\lambda \partial_\ba t_r(\lambda,\ba)\|<C,
\quad \|\partial_\ba^2 t_r(\lambda,\ba)\|<C.\]
\item For all $\ba \in S^{k-1}$, the roots $\lambda$
of $0=\det T(\lambda,\ba)$ are contained in $[-C,C]$.
\item For any $D>0$ and all $n \geq n_0(\delta,D)$,
with probability at least $1-n^{-D}$,
\[\sup_{\ba \in S^{k-1}} \|\hSigma(\ba)\|<C, \qquad
\sup_{\ba \in \R^k} \max_{r=1}^k \|\partial_{a_r} \hSigma(\ba)\|<C.\]
\end{enumerate}
\end{lemma}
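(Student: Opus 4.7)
The plan is to reduce every bound to a consequence of Propositions \ref{prop:boundedsupportrestate} and \ref{prop:m0regularoutsiderestate} applied uniformly in $\ba\in S^{k-1}$, combined with implicit differentiation of the Marcenko-Pastur equation (\ref{eq:MP}) and standard Gaussian tail bounds. First I would observe that $\|B(\ba)\|\le\sqrt{k}\max_r\|B_r\|<C/n$ on $S^{k-1}$, so both propositions apply with constants independent of $\ba$: $\supp(\mu_0(\ba))\subset[-C,C]$ is uniformly bounded, and $|1+t_\a m_0(\lambda,\ba)|>c$ for every eigenvalue $t_\a$ of $F(\ba)$ whenever $\lambda\notin\supp(\mu_0(\ba))_\delta$. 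The main subtle point will be carrying the $\sigma_r^2$-degeneration through the estimates of (a), which forces the use of (\ref{eq:Fidentity}) rather than direct manipulation of $F_{rs}/(\sigma_r\sigma_s)$.

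For (a), the $\lambda$-derivative bounds on $m_0$ come from the Stieltjes representation $m_0(\lambda,\ba)=\int(x-\lambda)^{-1}\mu_0(\ba)(dx)$, split into the regimes $|\lambda|\le 2C$ (where $|x-\lambda|\ge\delta$) and $|\lambda|>2C$ (where $|x-\lambda|\ge|\lambda|/2$). The lower bound $|m_0|^{-1}\le C(|\lambda|\vee 1)$ I would obtain by solving (\ref{eq:MP}) for $1/m_0$ and using that $|N^{-1}\Tr(F[\Id+m_0F]^{-1})|\le C$ uniformly. Writing $F(\ba)=N\bD'B(\ba)\bD$ with $\bD=(\sigma_1U_1\,|\,\cdots\,|\,\sigma_kU_k)$, so that $\partial_{a_r}F=N\bD'B_r\bD$, implicit differentiation of (\ref{eq:MP}) yields
\[\partial_{a_r}m_0 \;=\; -(\partial_\lambda m_0)\cdot N^{-1}\Tr\!\bigl(\partial_{a_r}F\,[\Id+m_0F]^{-2}\bigr);\]
the trace equals $\Tr\bigl(B_r\bD[\Id+m_0F]^{-2}\bD'\bigr)$, which I would bound by $\|B_r\|_1\cdot\|\bD[\Id+m_0F]^{-2}\bD'\|\le (n\|B_r\|)(\|\bD\|^2\|[\Id+m_0F]^{-2}\|)\le C$, giving $|\partial_{a_r}m_0|\le C\partial_\lambda m_0\le C$. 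Iterated differentiation and the product rule supply the higher and mixed $m_0$-derivative bounds. For $t_r$, applying (\ref{eq:Fidentity}) lets me write $N\sigma_r^2\,t_r = \Tr_r F - m_0\,\Tr\bigl(P_rF[\Id+m_0F]^{-1}FP_r\bigr)$; dividing by $N\sigma_r^2$, the first piece equals $\Tr(BU_rU_r')$ and is $O(1)$ by $\|B\|_1\le n\|B\|\le C$, while the second is at most $|m_0|\cdot\|[\Id+m_0F]^{-1}\|\cdot\|FP_r\|_\HS^2/(N\sigma_r^2)=O(1)$ since $\|FP_r\|_\HS^2=\sum_s\|F_{sr}\|_\HS^2\le CN\sigma_r^2$ by a direct computation from the $F_{rs}$-definition. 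Derivatives of $t_r$ then follow by combining these identities with the $m_0$-derivative bounds already obtained.

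For (b), by the form (\ref{eq:TTheta}) we have $T(\lambda,\ba)=-m_0^{-1}\Id-\sum_r t_r V_r\Theta_r V_r'$, and $\ker T\subseteq\cS$ by Proposition \ref{prop:Tproperties}(b); so any root of $\det T$ arises from a singular eigenvalue of the $L\times L$ restriction to $\cS$. The bound $|m_0^{-1}|\ge|\lambda|-C'$ from part (a), combined with $\|\sum_r t_r V_r\Theta_r V_r'\|\le Ck\bar{C}$, forces this restriction to be invertible once $|\lambda|$ lies outside a bounded interval $[-C,C]$ determined in an $\ba$-uniform way.

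For (c), linearity of $B(\ba)$ gives $\partial_{a_r}\hSigma(\ba)=Y'B_rY$, which is $\ba$-independent, so the second bound reduces to showing $\|Y'B_rY\|<C$ with probability $1-n^{-D}$. Expanding via $B_rX=0$ gives $Y'B_rY=\sum_{s,t}\alpha_s'U_s'B_rU_t\alpha_t$; standard Gaussian concentration yields $\|\alpha_s\|\le C\sqrt{n}$ on an event of probability $1-n^{-D}$, and combined with $\|U_s\|<C$, $\|B_r\|<C/n$, and the fact that $k$ is a fixed constant, this gives $\|Y'B_rY\|\le C$ on that event. The triangle inequality $\|\hSigma(\ba)\|\le\sum_r|a_r|\|Y'B_rY\|<C$ for $\ba\in S^{k-1}$ then yields the first bound.
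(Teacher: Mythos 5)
Your proposal is correct and follows essentially the same route as the paper: Stieltjes-transform bounds plus Proposition \ref{prop:m0regularoutsiderestate} for the $\lambda$-derivatives of $m_0$ and for $m_0^{-1}$; implicit differentiation of (\ref{eq:MP}) giving $\partial_{a_r}m_0=-(\partial_\lambda m_0)N^{-1}\Tr[(\partial_{a_r}F)(\Id+m_0F)^{-2}]$, which is exactly the paper's (\ref{eq:darm0}); the rewriting of $N\sigma_r^2 t_r$ via (\ref{eq:Fidentity}) to tame the $\sigma_r^2\to 0$ degeneracy; and for (c), linearity $\partial_{a_r}\hSigma=Y'B_rY$ plus Gaussian concentration. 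The one point where you diverge slightly is (b): you argue directly from the form (\ref{eq:TTheta}), using $|m_0^{-1}|\geq|\lambda|-C'$ from (\ref{eq:MP}) and $\|\sum_r t_r V_r\Theta_r V_r'\|\leq C$ to force nonsingularity for $|\lambda|$ large, whereas the paper instead combines the boundedness of $T$ near the support with the monotonicity $\partial_\lambda T\succeq\Id$ from Proposition \ref{prop:Tproperties}(c). Both are sound and yield the same conclusion; the monotonicity argument is marginally cleaner because it avoids needing a separate lower bound on $|m_0^{-1}|$, but your version has the advantage of making the dependence on $\bar C$ explicit, which is used in the statement of Theorem \ref{thm:estimation}.
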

\begin{proof}
For (a), the upper bounds on $m_0$, $\partial_\lambda m_0$, and
$\partial_\lambda^2 m_0$ follow from (\ref{eq:m0stieltjes}) and the condition
$|x-\lambda| \geq \delta$ for all $x \in \supp(\mu_0(\ba))_\delta$.
The upper bound on
$m_0^{-1}$ follows from (\ref{eq:MP}) and the bounds $\|F(\ba)\|<C$ and
$\|(\Id+m_0F(\ba))^{-1}\|<C$, the latter holding by
Proposition \ref{prop:m0regularoutsiderestate}. For the derivatives in $\ba$,
fix $r$ and denote $m_0=m_0(\lambda,\ba)$, $F=F(\ba)$,
and $\partial=\partial_{a_r}$. We have
\begin{align}
\partial \Big(F(\Id+m_0F)^{-1}\Big)
&=(\partial F)(\Id+m_0F)^{-1}-F(\Id+m_0F)^{-1}\Big((\partial m_0)F+
m_0(\partial F)\Big)(\Id+m_0F)^{-1}\nonumber\\
&=(\Id+m_0F)^{-1}(\partial F)(\Id+m_0F)^{-1}-(\partial m_0)F^2(\Id+m_0F)^{-2}.
\label{eq:derFJ}
\end{align}
Then, differentiating (\ref{eq:MP}) with respect to $a_r$ and also with respect
to $z=\lambda$, we obtain the equations
\begin{align*}
0&=(\partial m_0)(m_0^{-2}-N^{-1}\Tr[F^2(\Id+m_0F)^{-2}])
+N^{-1}\Tr[(\partial F)(\Id+m_0F)^{-2}],\\
1&=(\partial_\lambda m_0)(m_0^{-2}-N^{-1}\Tr[F^2(\Id+m_0F)^{-2}]).
\end{align*}
Applying the second equation to the first,
\begin{equation}\label{eq:darm0}
\partial m_0=-(\partial_\lambda m_0)N^{-1}\Tr[(\partial F)(\Id+m_0F)^{-2}].
\end{equation}
The bound $\|\partial_\ba m_0\|<C$ then follows from
$|\partial_\lambda m_0|<C$, $\|F_r\|<C$, and $\|(\Id+m_0F)^{-1}\|<C$.
The bounds $\|\partial_\lambda \partial_\ba m_0\|<C$ and
$\|\partial_\ba^2 m_0\|<C$ follow from the chain rule. For $t_r(\lambda,\ba)$,
recall from (\ref{eq:GMPiM}) that
\[-t_r=(N\sigma_r^2)^{-1} \Tr_r \Pi_M=
(N\sigma_r^2)^{-1} \Tr_r (m_0F(\Id+m_0F)^{-1}F-F).\]
The bound $|t_r|<C$ then follows from $\|P_rFP_r\|<C\sigma_r^2$ and
$\|P_rF\|<C\sigma_r$, where $P_r$ is the projection onto block $r$.
The bounds on the derivatives of $t_r$ follow from the
chain rule and those on $m_0$.

Part (a) implies $\|T(\lambda,\ba)\|<C$ for all $\lambda \in \R \setminus
\supp(\mu_0(\ba))_\delta$. As Proposition \ref{prop:Tproperties}(c)
shows $\partial_\lambda T(\lambda,\ba)$ has smallest
eigenvalue at least 1, $T(\lambda,\ba)$ must be non-singular for all $\lambda$
outside $[-C,C]$ for some $C>0$, implying part (b). Finally, part (c) follows
from $\hSigma(\ba)=\sum_r a_r Y'B_rY$ and the observation that
$\|Y'B_rY\|<C$ for all $r=1,\ldots,k$ with probability $1-n^{-D}$.
\end{proof}

Next, we verify that for the conclusions of Theorems
\ref{thm:spikemapping} and \ref{thm:eigenvectors}(a), we may take a union bound
over $\ba \in S^{k-1}$.

\begin{lemma}\label{lemma:unionbounda}
Under the conditions of Theorem \ref{thm:estimation}, for all $n \geq
n_0(\delta,\eps,D)$, with probability $1-n^{-D}$ the conclusions of Theorems
\ref{thm:spikemapping} and \ref{thm:eigenvectors}(a) hold simultaneously for all
$\ba \in S^{k-1}$.
\end{lemma}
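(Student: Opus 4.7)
The plan is to prove this by a standard net argument in $\ba$, combined with Lipschitz continuity in $\ba$ of all the relevant objects. The key input is that Theorems \ref{thm:spikemapping} and \ref{thm:eigenvectors}(a) hold at any \emph{fixed} $\ba$ with probability $1-n^{-D'}$ for arbitrary $D'$, while the sphere $S^{k-1}$ is compact of fixed dimension and so admits a net of polynomial size.

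First I would fix a net $\mathcal{N}\subset S^{k-1}$ of mesh $n^{-1}$; since $k$ is fixed, $|\mathcal{N}|\leq n^{Ck}$. For each $\ba_0\in\mathcal{N}$ apply Theorem \ref{thm:spikemapping} and Theorem \ref{thm:eigenvectors}(a) with parameter $\eps/2$ and with slightly perturbed tolerances $\delta\pm Cn^{-1}$, each with individual failure probability $n^{-(D+Ck+1)}$. A union bound produces a single event $\cE$ of probability at least $1-n^{-D}$ on which the pointwise conclusions hold simultaneously at every $\ba_0\in\mathcal{N}$.

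On $\cE$, for arbitrary $\ba\in S^{k-1}$ let $\ba_0\in\mathcal{N}$ be nearest, so $\|\ba-\ba_0\|\leq n^{-1}$. I would then transfer each conclusion from $\ba_0$ to $\ba$ using three Lipschitz estimates, all of order $n^{-1}\ll n^{-1/2+\eps}$:
\begin{enumerate}[(i)]
\item Eigenvalues of $\hSigma(\ba)$: Lemma \ref{lemma:continuityina}(c) combined with Weyl's inequality gives $|\lambda_j(\hSigma(\ba))-\lambda_j(\hSigma(\ba_0))|\leq Cn^{-1}$.
\item Elements of $\Lambda_0(\ba)$: By Proposition \ref{prop:Tproperties}(c), $\partial_\lambda T(\lambda,\ba)-\Id$ is positive semidefinite, so simple roots persist under small perturbations of $\ba$ and the implicit function theorem (using the $\ba$-derivative bounds of Lemma \ref{lemma:continuityina}(a)) gives Lipschitz continuity of the roots with constant $C$; more generally, the argument used in the proof of Proposition \ref{prop:Tproperties}(d) bounds how multiple roots can split.
\item Eigenvectors: on the isolated-outlier event of Theorem \ref{thm:eigenvectors}(a), the spectral gap around the outlier $\hl(\ba_0)$ of $\hSigma(\ba_0)$ is bounded below by $\delta/2$ (via Theorem \ref{thm:spikemapping} at $\ba_0$), so Davis--Kahan applied with $\|\hSigma(\ba)-\hSigma(\ba_0)\|\leq Cn^{-1}$ gives a matching eigenvector of $\hSigma(\ba)$ within $Cn^{-1}/\delta$. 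The corresponding null vector of $T(\lambda(\ba),\ba)$ is similarly Lipschitz via Davis--Kahan applied to $T$, using the separation of $\lambda$ from the other elements of $\Lambda_0$.
\end{enumerate}

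The main subtlety is that the matched sets $\Lambda_\delta$ and $\hLambda_\delta$ in Theorem \ref{thm:spikemapping} can change discretely as $\ba$ varies across configurations where an element of $\Lambda_0(\ba)$ crosses the boundary of $\supp(\mu_0(\ba))_\delta$, or where two roots coalesce and violate the multiplicity-one hypothesis needed in Theorem \ref{thm:eigenvectors}(a). The fix is to apply the pointwise theorems at $\ba_0$ with two slightly perturbed tolerances $\delta-Cn^{-1}$ and $\delta+Cn^{-1}$ (and, for the separation hypothesis of Theorem \ref{thm:eigenvectors}(a), with $\delta/2$ in place of $\delta$), so that the matching produced at $\ba_0$ is compatible with the required matching at any nearby $\ba$, regardless of which side of such a transition the element lies on. These buffered parameters only worsen the constants in the final error, and the conclusion at $\ba$ with the stated $\delta$ and $\eps$ follows.
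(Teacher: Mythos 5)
Your proposal is correct and follows essentially the same route as the paper's proof: a polynomial-size net on $S^{k-1}$, a union bound over the net, and Lipschitz transfer (via Lemma \ref{lemma:continuityina} and Davis--Kahan) from the nearest net point, with tolerance buffering to handle elements that migrate across the boundary of $\supp(\mu_0(\ba))_\delta$ or lose their separation as $\ba$ varies. Two small remarks. First, the paper uses mesh $n^{-1/2}$ rather than $n^{-1}$; both suffice since the target error is $n^{-1/2+\eps}$ and all Lipschitz constants are $O(1)$. Second, your buffering by $\delta\pm Cn^{-1}$ has a mild circularity: the threshold $n_0(\delta,\eps,D)$ in Theorems \ref{thm:spikemapping} and \ref{thm:eigenvectors} depends on $\delta$, so letting $\delta$ vary with $n$ is not literally licensed; the paper avoids this by buffering with the fixed constant $\delta/2$, which suffices because the conclusion of Theorem \ref{thm:spikemapping} is monotone in $\delta$ (smaller $\delta$ gives a stronger containment requirement), and you should phrase it that way. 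With that substitution your argument matches the paper's.
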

\begin{proof}
Consider a covering net $\N \subset S^{k-1}$ with $|\N| \leq n^C$ for some
$C=C(k)>0$, such that for all $\ba \in S^{k-1}$ there exists $\ba_0 \in \N$
where $\|\ba_0-\ba\|<n^{-1/2}$. With probability
$1-n^{-D}$, the conclusions of Theorems \ref{thm:spikemapping}
and \ref{thm:eigenvectors} hold with constants $\delta/2$ and $\eps/2$
simultaneously over $\ba_0 \in \N$ by a union bound. Furthermore, by Lemma
\ref{lemma:continuityina}, with probability at least $1-n^{-D}$ we have
$\|\hSigma(\ba)-\hSigma(\ba_0)\|<Cn^{-1/2}$ for all $\ba \in S^{k-1}$, where
$\ba_0$ is the closest point to $\ba$ in $\N$. Note that by
Theorem \ref{thm:sticktobulk}, this implies also
$\supp(\mu_0(\ba)) \subseteq \supp(\mu_0(\ba_0))_{\delta/4}$ and
$\supp(\mu_0(\ba_0)) \subseteq \supp(\mu_0(\ba))_{\delta/4}$ for all large $n$.

On the above event, consider any $\ba \in S^{k-1}$ and nearest point
$\ba_0 \in \N$. Let $\Lambda_{\delta/2}(\ba_0)$ and $\hLambda_{\delta/2}(\ba_0)$
be the sets guaranteed by Theorem \ref{thm:spikemapping} at $\ba_0$, so
\[\ordereddist(\Lambda_{\delta/2}(\ba_0),\hLambda_{\delta/2}(\ba_0))
<n^{-1/2+\eps/2}.\]
The condition $\|\hSigma(\ba)-\hSigma(\ba_0)\|<Cn^{-1/2}$
implies there is $\hLambda_\delta(\ba) \subset \spec(\hSigma(\ba))$ such that
\[\ordereddist(\hLambda_\delta(\ba),\hLambda_{\delta/2}(\ba_0))
<Cn^{-1/2}.\]
Since $\hLambda_{\delta/2}(\ba_0)$ contains all eigenvalues of $\hSigma(\ba_0)$
outside $\supp(\mu_0(\ba_0))_{\delta/2}$, we have that $\hLambda_\delta(\ba)$
contains all eigenvalues of $\hSigma(\ba)$ outside $\supp(\mu_0(\ba))_\delta$.
On the other hand, $\Lambda_{\delta/2}(\ba_0)$ is a subset of roots of
$0=\det(K(\lambda,\ba_0))$, where $K(\lambda,\ba_0)$ is defined
by (\ref{eq:K}) at $B=B(\ba_0)$. Letting $\mu_1(\lambda,\ba_0) \leq \ldots \leq
\mu_L(\lambda,\ba_0)$ denote the eigenvalues of $K(\lambda,\ba_0)$, the multiset
$\Lambda_0(\ba_0)$ is in 1-to-1 correspondence with
pairs $(\ell,\lambda_0)$ where $\mu_\ell(\lambda_0,\ba_0)=0$. For each such
$(\ell,\lambda_0)$, Lemma \ref{lemma:continuityina} implies
$\|K(\lambda_0,\ba)-K(\lambda_0,\ba_0)\|<Cn^{-1/2}$, so
$|\mu_\ell(\lambda_0,\ba)|<Cn^{-1/2}$. As $-K$ is the upper $L \times L$
submatrix of $T$, Proposition \ref{prop:Tproperties}(c)
implies $\mu_\ell$ decreases in $\lambda$ at a rate of at least 1, so
 $\mu_\ell(\lambda,\ba)=0$ for some $\lambda$ with
$|\lambda-\lambda_0|<Cn^{-1/2}$. Thus there exists $\Lambda_\delta(\ba)
\subseteq \Lambda_0(\ba)$ where
\[\ordereddist(\Lambda_\delta(\ba),\Lambda_{\delta/2}(\ba_0))<Cn^{-1/2},\]
and similarly $\Lambda_\delta(\ba)$ contains all elements of $\Lambda_0(\ba)$
outside $\supp(\mu_0(\ba))_\delta$. Then
\[\ordereddist(\Lambda_\delta(\ba),\hLambda_\delta(\ba))<2Cn^{-1/2}+n^{-1/2+\eps/2}<n^{-1/2+\eps},\]
so the conclusion of Theorem \ref{thm:spikemapping} holds at
each $\ba \in S^{k-1}$.

For Theorem \ref{thm:eigenvectors}(a), let $\lambda \in \Lambda_0(\ba)$ be
separated from other elements of $\Lambda_0(\ba)$
by $\delta$. Then Proposition \ref{prop:Tproperties}(c) implies 0 is separated
from other eigenvalues of $T(\lambda,\ba)$ by $\delta$.
Letting $\lambda_0 \in \Lambda_0(\ba_0)$
be such that $|\lambda_0-\lambda|<Cn^{-1/2}$, as identified above,
Lemma \ref{lemma:continuityina} implies
$\|T(\lambda_0,\ba_0)-T(\lambda,\ba)\|<Cn^{-1/2}$. Thus
if $\v$ and $\v_0$ are the null unit eigenvectors of
$T(\lambda,\ba)$ and $T(\lambda_0,\ba_0)$, then $\|\v-\v_0\|<Cn^{-1/2}$ for an
appropriate choice of sign.
Similarly, if $\hl \in \spec(\hSigma(\ba))$ and $\hl_0 \in
\spec(\hSigma(\ba_0))$ are such that
$|\hl-\lambda|<n^{-1/2+\eps}$ and $|\hl_0-\lambda_0|<n^{-1/2+\eps}$, then
$\hl$ is separated from other eigenvalues of $\hSigma(\ba)$ by
$\delta-Cn^{-1/2+\eps}$, and the bound
$\|\hSigma(\ba)-\hSigma(\ba_0)\|<Cn^{-1/2}$ implies that
the corresponding eigenvectors $\hv$ and $\hv_0$ satisfy
$\|\hv-\hv_0\|<Cn^{-1/2}$. Lemma \ref{lemma:continuityina} finally implies
$\|\partial_\lambda T(\lambda,\ba)-\partial_\lambda
T(\lambda_0,\ba_0)\|<Cn^{-1/2}$, so the conclusion of Theorem
\ref{thm:eigenvectors}(a) at $\ba$ follows from that at $\ba_0$.
\end{proof}

We may now establish the first of the above three claims
for Theorem \ref{thm:estimation}.

\begin{proof}[Proof of Claim 1]
Consider the event of probability $1-n^{-D}$ on which the conclusions of
Theorems \ref{thm:spikemapping}(a) and \ref{thm:eigenvectors} hold
simultaneously over $\ba \in S^{k-1}$.

For each $(\hmu,\hv) \in \cM$, there are $\ba \in S^{k-1}$ and $\hl \in
\spec(\hSigma(\ba)) \cap \I_\delta(\ba)$ with $\hmu=\hl/t_1(\hl,\ba)$ and
$t_2(\hl,\ba)=\ldots=t_k(\hl,\ba)=0$. On the above event, for each
such $(\hl,\ba)$, there exists $\lambda$ with $|\hl-\lambda|<n^{-1/2+\eps}$
and $0=\det T(\lambda,\ba)$. Then Lemma \ref{lemma:continuityina} implies
\begin{equation}\label{eq:ThTbound}
\|T(\hl,\ba)-T(\lambda,\ba)\|<Cn^{-1/2+\eps}.
\end{equation}
An eigenvalue of $T(\lambda,\ba)$ is 0, so an eigenvalue of $T(\hl,\ba)$ has
magnitude at most $Cn^{-1/2+\eps}$. From the two
equivalent forms (\ref{eq:TSigma}) and (\ref{eq:TTheta}) of $T$
and the condition
$t_2(\hl,\ba)=\ldots=t_k(\hl,\ba)=0$,
\begin{equation}\label{eq:Tform}
T(\hl,\ba)=\hl\Id-t_1(\hl,\ba)\Sigma_1
=-\frac{1}{m_0(\hl,\ba)}\Id-t_1(\hl,\ba)V_1'\Theta_1V_1.
\end{equation}
Since $|m_0(\lambda,\ba)|<C$, the second form above implies that the
$O(n^{-1/2+\eps})$ eigenvalue of $T(\hl,\ba)$ must be
$\hl-t_1(\hl,\ba)\mu=-1/m_0(\hl,\ba)-t_1(\hl,\ba)\theta$ for a spike
eigenvalue $\mu=\theta+\sigma_1^2$ of $\Sigma_1$. As $\theta$ is bounded, the
condition $|-1/m_0(\hl,\ba)-t_1(\hl,\ba)\theta|<Cn^{-1/2+\eps}$ implies in
particular that $|t_1(\hl,\ba)|>c$ for a constant $c>0$.
Then dividing $|\hl-t_1(\hl,\ba)\mu|<Cn^{-1/2+\eps}$ by $t_1(\hl,\ba)$,
$|\hmu-\mu|<Cn^{-1/2+\eps}$ for a different constant $C>0$.
Furthermore, on the above event, $\|P_\cS\hv-\alpha\w\|<n^{-1/2+\eps}$ for
the null vector $\w$ of $T(\lambda,\ba)$ and for
$\alpha=(\w'\partial_\lambda T(\lambda,\ba)\w)^{-1/2}$. By the second form in
(\ref{eq:Tform}), the separation of values of $\Theta_1$ by $\tau$, and the
above lower bound on $t_1(\hl,\ba)$, the null eigenvalue of $T(\lambda,\ba)$ is
separated from other eigenvalues by a constant $c>0$. Then
(\ref{eq:ThTbound}) implies $\|\w-\v\|<Cn^{-1/2+\eps}$
where $\v$ is the (appropriately signed) eigenvector of $T(\hl,\ba)$
corresponding to the eigenvalue $\hl-t_1(\hl,\ba)\mu$. This is exactly the
eigenvector of $\Sigma_1$ corresponding to $\mu$, and thus
$\|P_\cS\hv-\alpha\v\|<Cn^{-1/2+\eps}$.
\end{proof}

For the remaining two claims, let us first sketch the argument at a high level:
Suppose $\mu$ is a spike eigenvalue of
$\Sigma_1$, and $\ba_0 \in S^{k-1}$ and $\hl_0 \in
\spec(\hSigma(\ba_0))$ are such that
\[\hl_0/t_1(\hl_0,\ba_0) \approx \mu, \qquad t_r(\hl_0,\ba_0)
\approx 0 \text{ for all } r=2,\ldots,k.\]
We will show that under Assumption \ref{assump:estimation},
this holds for some $(\hl_0,\ba_0)$ whenever $\mu$ is
sufficiently large. The separation of $\mu$ from other eigenvalues of
$\Sigma_1$ will
imply that $\hl_0$ is separated from other eigenvalues of $\hSigma(\ba_0)$.
Then for all $\ba \in S^{k-1}$ in a neighborhood of
$\ba_0$, we may identify an eigenvalue $\hl(\ba)$ of $\hSigma(\ba)$ such that
$\hl(\ba_0)=\hl_0$ and $\hl(\ba)$ varies analytically in $\ba$. Applying a
version of the inverse function theorem, we will show that the mapping
\[\ba \mapsto (t_2(\hl(\ba),\ba),\ldots,t_k(\hl(\ba),\ba))\]
is injective in this neighborhood of $\ba_0$, and its image contains 0. This
local injectivity, together with Assumption \ref{assump:estimation}, will
imply Claim 2. The image containing 0 will imply Claim 3.

We use the following quantitative version of the inverse function theorem to
carry out this argument.
\begin{lemma}\label{lemma:inversefunction}
Fix constants $C,c_0,c_1,m>0$. Let $\x_0 \in \R^m$, let
$U=\{\x \in \R^m:\|\x-\x_0\|<c_0\}$, and let $f:U \to \R^m$ be twice
continuously differentiable. Denote by $\der f \in \R^{m \times m}$ the
derivative of $f$, and suppose for all $\v \in \R^m$,
$i,j,k \in \{1,\ldots,m\}$, and $\x \in U$ that
\[\|(\der f(\x_0))\v\| \geq c_1\|\v\|,
\qquad |\partial_{x_i}\partial_{x_j} f_k(\x)|<C.\]
Then there are constants $\eps_0,\eps_1,c>0$ such that
$f$ is injective on $U_0=\{\x \in \R^m:\|\x-\x_0\|<\eps_0\}$,
$\|f(\x_1)-f(\x_2)\| \geq c\|\x_1-\x_2\|$ for all $\x_1,\x_2 \in U_0$,
and the image $f(U_0)$ contains $\{\y \in \R^m:\|\y-f(\x_0)\|<\eps_1\}$.
\end{lemma}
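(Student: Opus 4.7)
The plan is a quantitative version of the standard inverse function theorem proof, with constants tracked explicitly in terms of $C$, $c_0$, $c_1$, $m$.

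First I would upgrade the bound on second partials to Lipschitz continuity of $df$. Since $|\partial_i\partial_j f_k(\x)|<C$ uniformly on $U$, a componentwise application of the mean value theorem gives $\|df(\x)-df(\x')\|\le Lm\|\x-\x'\|$ on $U$ for some $L=L(C,m)$ (bounding operator norms by Hilbert–Schmidt norms introduces only a constant factor of $m$). Next, I would pick $\eps_0 \in (0, c_0]$ small enough that $\|df(\x)-df(\x_0)\|\le c_1/2$ for all $\x\in\bar U_0 := \bar B(\x_0,\eps_0)$; concretely, $\eps_0 \le c_1/(2Lm)$ suffices.

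For the bi-Lipschitz lower bound, for any $\x_1,\x_2\in U_0$ I would write
\[
f(\x_1)-f(\x_2)=\int_0^1 df\bigl(\x_2+t(\x_1-\x_2)\bigr)(\x_1-\x_2)\,\der t
=df(\x_0)(\x_1-\x_2)+R(\x_1,\x_2),
\]
where the remainder satisfies $\|R(\x_1,\x_2)\|\le (c_1/2)\|\x_1-\x_2\|$ by the previous step. Combined with the assumed lower bound $\|df(\x_0)\v\|\ge c_1\|\v\|$, the reverse triangle inequality yields $\|f(\x_1)-f(\x_2)\|\ge (c_1/2)\|\x_1-\x_2\|$. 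This establishes injectivity on $U_0$ and the bi-Lipschitz bound with $c=c_1/2$.

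For the surjectivity onto a ball around $f(\x_0)$, I would use a Newton-type contraction. Let $A=df(\x_0)$, which is invertible with $\|A^{-1}\|\le 1/c_1$, and for a target $\y$ define $T_\y(\x)=\x-A^{-1}(f(\x)-\y)$, whose fixed points coincide with preimages of $\y$ under $f$. Differentiating, $dT_\y(\x)=\Id-A^{-1}df(\x)=A^{-1}(A-df(\x))$, so $\|dT_\y(\x)\|\le 1/2$ on $U_0$ by the choice of $\eps_0$; in particular $T_\y$ is a $1/2$-contraction there. Since $\|T_\y(\x_0)-\x_0\|=\|A^{-1}(\y-f(\x_0))\|\le \|\y-f(\x_0)\|/c_1$, choosing $\eps_1=c_1\eps_0/2$ ensures $T_\y$ maps $\bar U_0$ into itself for every $\y\in B(f(\x_0),\eps_1)$. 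The Banach fixed-point theorem then produces a (unique) $\x^*\in \bar U_0$ with $f(\x^*)=\y$, so $f(U_0)\supset B(f(\x_0),\eps_1)$.

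There is no genuinely hard step here; the whole argument is a careful bookkeeping exercise. The only point requiring any attention is verifying that the same small radius $\eps_0$ simultaneously gives (i) $\|df-df(\x_0)\|\le c_1/2$, (ii) the bi-Lipschitz bound, and (iii) the self-mapping property of $T_\y$ for the prescribed $\eps_1$; all three follow by choosing $\eps_0$ inversely proportional to $Lm/c_1$ and then $\eps_1$ proportional to $c_1\eps_0$.
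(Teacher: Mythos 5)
Your proof is correct. The injectivity and bi-Lipschitz part follows the same line as the paper: Taylor-expand $\der f$ around $\x_0$ using the second-derivative bound, choose $\eps_0$ so that $\der f$ stays within $c_1/2$ of $\der f(\x_0)$ on $U_0$, and conclude via the reverse triangle inequality. The surjectivity argument, however, takes a genuinely different route. The paper avoids inverting $\der f(\x_0)$ and instead uses a compactness argument: it fixes a closed ball $K \subset U_0$, shows $\|f(\x)-f(\x_0)\|>2\eps_1$ on $\partial K$, minimizes $h(\x)=\|f(\x)-\y\|^2$ over $K$, deduces the minimizer $\x_*$ is interior, and then uses $0=\der h(\x_*)=2(f(\x_*)-\y)'\der f(\x_*)$ together with invertibility of $\der f(\x_*)$ to conclude $f(\x_*)=\y$. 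You instead run the classical Banach fixed-point / Newton iteration argument with the auxiliary map $T_\y(\x)=\x-(\der f(\x_0))^{-1}(f(\x)-\y)$, verify it is a $1/2$-contraction on $\bar U_0$ and maps $\bar U_0$ into itself for $\|\y-f(\x_0)\|<\eps_1=c_1\eps_0/2$, and extract the preimage as the fixed point. Both are standard quantitative inverse-function-theorem proofs; your contraction-mapping route has the small advantage of delivering uniqueness of the preimage in $\bar U_0$ for free, while the paper's optimization argument sidesteps any explicit use of $(\der f(\x_0))^{-1}$. One cosmetic point: take $\eps_0$ strictly less than $c_0$ (e.g. $\eps_0=\min(c_0/2,\,c_1/(2Lm))$) so that the closed ball $\bar U_0$ lies inside the open domain $U$ on which $f$ is defined; as stated, $\eps_0\in(0,c_0]$ permits $\bar U_0\not\subset U$.
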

\begin{proof}
Assume without loss of generality $\x_0=0$ and $f(\x_0)=0$.
By Taylor's theorem and the given second derivative bound, for all $\x \in U$
and a constant $C>0$, $\|\der f(\x)-\der f(0)\| \leq C\|\x\|$.
Then for sufficiently small $\eps_0>0$, all
$\x \in U_0=\{\x \in \R^m:\|\x\|<\eps_0\}$, and all $\v \in \R^m$,
\[\|(\der f(\x))\v\| \geq \|(\der f(0))\v\|-C\|\x\|\|\v\|
\geq (c_1-C\eps_0)\|\v\| \geq (c_1/2)\|\v\|.\]
Furthermore, for all $\x_1,\x_2 \in U$,
\[\|f(\x_2)-f(\x_1)-(\der f(\x_1))(\x_2-\x_1)\| \leq C\|\x_2-\x_1\|^2.\]
Then for sufficiently small $\eps_0>0$ and all $\x_1,\x_2 \in U_0$,
\begin{equation}\label{eq:fx1x2bound}
\|f(\x_2)-f(\x_1)\| \geq (c_1/2)\|\x_2-\x_1\|-C\|\x_2-\x_1\|^2
\geq c\|\x_2-\x_1\|
\end{equation}
for a constant $c>0$. In particular, $f$ is injective on $U_0$.

To prove the surjectivity claim, let $K=\{\x \in \R^m:\|\x\| \leq
\eps_0/2\} \subset U_0$. For a sufficiently small constant $\eps_1>0$, the
above applied with $\x_2=\x$ on the boundary of $K$ and $\x_1=0$ implies
\[\|f(\x)\|>2\eps_1 \text{ for all } \x  \text{ on the boundary of } K.\]
Fix any $\y \in \R^m$ with $\|\y\|<\eps_1$, and define
$h(\x)=\|f(\x)-\y\|^2$ over $\x \in K$. As $K$ is compact, there is $\x_* \in K$
that minimizes $h$. Since $h(0)=\|\y\|^2<\eps_1^2$ while
$h(\x)>\eps_1^2$ for $\x$ on the boundary of $K$ by the above, $\x_*$ is in
the interior of $K$. Then
\[0=\der h(\x_*)=2(f(\x_*)-\y)'(\der f(\x_*)).\]
Since $\der f(\x_*)$ is invertible by (\ref{eq:fx1x2bound}), this implies
$f(\x_*)=\y$. So $f(U_0)$ contains any such $\y$.
\end{proof}

We now make the above proof sketch for Claims 2 and 3 precise.
\begin{lemma}\label{lemma:tplusinjective}
Let $\mu=\theta+\sigma_1^2$ be the $\ell^{\text{th}}$ largest
spike eigenvalue of $\Sigma_1$. Define
\[t_+(\lambda,\ba)=(t_2(\lambda,\ba),\ldots,t_k(\lambda,\ba)).\]
Then there exist constants $c,\eps_0,\eps_1>0$ such that for any $D>0$ and all
$n \geq n_0(\delta,D)$, under the conditions of Theorem \ref{thm:estimation},
the following holds with probability at least $1-n^{-D}$: For all
$\ba_0 \in S^{k-1}$, if there exists
$\hl_0 \in \spec(\hSigma(\ba_0)) \cap \I_\delta(\ba_0)$ which satisfies
\begin{equation}\label{eq:tnearaxis}
\left|-\frac{1}{m_0(\hl_0,\ba_0)}-t_1(\hl_0,\ba_0)\,\theta\right|<\eps_1,
\qquad \|t_+(\hl_0,\ba_0)\|<\eps_1,
\end{equation}
then:
\begin{itemize}
\item $\hl_0$ is the $\ell^{\text{th}}$ largest eigenvalue of $\hSigma(\ba_0)$.
\item The $\ell^{\text{th}}$ largest eigenvalue $\hl(\ba)$ of
$\hSigma(\ba)$ is simple over $O=\{\ba \in \R^k:\|\ba-\ba_0\|<\eps_0\}$.
\item The map $\hf(\ba)=t_+(\hl(\ba),\ba)$ is injective on $U=O \cap S^{k-1}$
and satisfies $\|\hf(\ba_1)-\hf(\ba_2)\| \geq c\|\ba_1-\ba_2\|$
for all $\ba_1,\ba_2 \in U$. Furthermore,
its image $\hf(U)$ contains $\{\bt \in \R^{k-1}:\|\bt\|<\eps_1\}$.
\end{itemize}
\end{lemma}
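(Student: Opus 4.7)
The plan is to first establish the structural claims ($\hl_0$ being the $\ell^{\text{th}}$ largest eigenvalue and simplicity of $\hl(\ba)$ on a neighborhood $O$) using Lemma \ref{lemma:unionbounda}, then prove the injectivity and surjectivity by applying the quantitative inverse function theorem (Lemma \ref{lemma:inversefunction}) to the deterministic analogue $f(\ba) := t_+(\lambda(\ba), \ba)$, where $\lambda(\ba) \in \Lambda_0(\ba)$ is the local analytic root of $0 = \det T(\lambda,\ba)$ near $\hl(\ba)$, and finally transferring to $\hf$ via the bound $|\hl(\ba) - \lambda(\ba)| \prec n^{-1/2}$ from Lemma \ref{lemma:unionbounda}.

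\textbf{Step 1 (structural claims).} Writing $\lambda_0 := \lambda(\ba_0)$, the form (\ref{eq:TTheta}) and condition (\ref{eq:tnearaxis}) show that, up to an $O(\eps_1 + n^{-1/2})$ perturbation, $T(\lambda_0, \ba_0)$ is block diagonal on $\cS$ in the eigenbasis of $\oV_1 \Theta_1 \oV_1'$, with diagonal entries $-1/m_0(\lambda_0, \ba_0) - t_1(\lambda_0, \ba_0)\theta_i$. The near-axis condition forces the entry at $\theta_i = \theta$ to vanish, so the null vector of $T(\lambda_0, \ba_0)$ is close to the unit eigenvector $\v$ of $\Sigma_1$ for $\mu$. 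Lemma \ref{lemma:continuityina}(a) yields $|1/m_0(\lambda_0, \ba_0)| \geq c$, which combined with $\theta \leq \bar{C}$ gives $|t_1(\lambda_0, \ba_0)| \geq c > 0$; the separation $|\theta_i - \theta_j| \geq \tau$ then forces the remaining $l_1 - 1$ diagonal entries to be separated from 0 by a constant, so 0 is a simple eigenvalue of $T(\lambda_0, \ba_0)$. By Proposition \ref{prop:Tproperties}(c,d), the other spikes produce $l_1 - 1$ additional roots of $\det T(\cdot, \ba_0) = 0$ separated from $\lambda_0$ by a constant, ordered monotonically according to $\theta_i \approx -1/(m_0 t_1)$. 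Hence $\lambda_0$ is the root corresponding to the $\ell^{\text{th}}$ largest $\theta_i$, and via Lemma \ref{lemma:unionbounda}, $\hl_0$ is the $\ell^{\text{th}}$ largest eigenvalue of $\hSigma(\ba_0)$ with a constant-size spectral gap. Simplicity and analyticity of $\hl(\ba)$ on a neighborhood $O$ of $\ba_0$, with uniformly bounded derivatives, then follow from standard perturbation theory for symmetric matrices and Lemma \ref{lemma:continuityina}(c).

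\textbf{Step 2 (derivative lower bound).} I establish $\|(\der f(\ba_0)) \u\| \geq c\|\u\|$ for $\u \in T_{\ba_0} S^{k-1}$ via the composition $\ba \mapsto \psi(\ba) \mapsto g(\ba) := s(\psi(\ba)) \mapsto \pi(g(\ba)) \mapsto \pi(g(\ba))/m_0 = f(\ba)$, where $\psi(\ba) := m_0(\lambda(\ba), \ba)\ba$ and $\pi$ is projection onto the last $k-1$ coordinates. Linearity of $F(\cdot)$ in $\ba$ gives $s(m_0 \ba) = m_0 \cdot t(\lambda,\ba)$, so $g = (m_0 t_1, \ldots, m_0 t_k)$. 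For $\u \perp \ba_0$, $(\der \psi)\u = (\partial_\u m_0)\ba_0 + m_0 \u$ has norm $\geq |m_0|\|\u\| \geq c\|\u\|$. Letting $\ba_2 \to \ba_1$ in Assumption \ref{assump:estimation} and using $\|\psi(\ba_0)\| \leq C$ yields $\|(\der s)\y\| \geq c\|\y\|$ at $\psi(\ba_0)$, hence $\|(\der g)\u\| \geq c\|\u\|$. The image of $\der g$ is tangent to the locus $\mathcal{L}$ from (\ref{eq:Ltrue}) at $g(\ba_0) = (-1/\theta, 0, \ldots, 0) + O(\eps_1)$; since $\partial_{s_1} \det(\Id + \sum_r s_r \oV_r \Theta_r \oV_r')$ at $(-1/\theta, 0, \ldots, 0)$ equals $\theta \prod_{i \neq \ell}(1 - \theta_i/\theta)$, bounded below by a positive constant depending on $\tau$ and $\bar{C}$, $\mathcal{L}$ is locally a smooth graph of $s_1$ over $(s_2, \ldots, s_k)$. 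Thus $\pi|_{T\mathcal{L}}$ is a quantitative bijection with bounded inverse, giving $\|\pi(\der g)\u\| \geq c\|\u\|$. Finally $f_r = g_{r+1}/m_0$, so $\der f_r = (\der g_{r+1})/m_0 - g_{r+1}(\der m_0)/m_0^2$; since $g_{r+1}(\ba_0) = m_0 t_{r+1}(\lambda_0, \ba_0) = O(\eps_1)$ by (\ref{eq:tnearaxis}), the second term is $O(\eps_1)\|\u\|$, and shrinking $\eps_1$ yields $\|(\der f(\ba_0))\u\| \geq c\|\u\|$.

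\textbf{Step 3 (inverse function theorem and transfer).} Lemma \ref{lemma:continuityina}(a) provides uniform bounds on the second derivatives of $f$. Applying Lemma \ref{lemma:inversefunction} to $f$ in a smooth local chart for $S^{k-1}$ near $\ba_0$ yields $\eps_0, \eps_1', c > 0$ such that $f$ is Lipschitz-injective on $U = O \cap S^{k-1}$ with $\|f(\ba_1) - f(\ba_2)\| \geq c\|\ba_1 - \ba_2\|$, and $f(U)$ contains the ball of radius $\eps_1'$ about $f(\ba_0)$. Since $\|f(\ba_0)\| < \eps_1 + O(n^{-1/2})$, taking $\eps_1 < \eps_1'/2$ ensures this ball covers $\{\bt : \|\bt\| < \eps_1\}$ for large $n$. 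By Cauchy estimates on the analytic difference $\hf - f$, the uniform bound $\|\hf - f\|_{C^2} \prec n^{-1/2}$ on $U$ transfers both Lipschitz injectivity and surjectivity to $\hf$, with slightly weakened constants absorbed into the final $c$ and $\eps_1$. The main obstacle is Step 2: translating the lower bound on $s: \R^k \to \R^k$ from Assumption \ref{assump:estimation} into a quantitative lower bound on $\der f : T_{\ba_0} S^{k-1} \to \R^{k-1}$, which requires composing through the implicitly-defined map $\psi$, through $s$, through the tangent space of the locus $\mathcal{L}$, and finally projecting off the first coordinate using the non-degeneracy of $\partial_{s_1} \det$ at the relevant axis point.
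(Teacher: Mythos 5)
Your Steps 1 and 2 track the paper's argument closely. Step 2 is phrased differently but encodes the same geometry: you argue that $\der g$ maps tangentially into the locus $\mathcal{L}$ of (\ref{eq:Ltrue}), which near the axis point $(-1/\theta,0,\ldots,0)$ is graphical in $s_1$ with slope bounded by $|\partial_{s_1}\det|^{-1}$. The paper instead differentiates $0=\v(\ba)'T(\lambda(\ba),\ba)\v(\ba)$ to obtain the normal-vector identity $0=\y'\der s(\bb_0)\der\bb(\ba_0)$ with $\y_r=\v_0'V_r\Theta_rV_r'\v_0$, and then isolates $\der s_+$. Since the normal direction to $\mathcal{L}$ is proportional to $\y$ (via the adjugate of $\Id+\sum_r s_rV_r\Theta_rV_r'$ collapsing to $c\,\v_0\v_0'$ at the zero set), these are the same fact. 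Your version is arguably more geometric; the paper's version avoids having to introduce and control the implicit-function parametrization of $\mathcal{L}$.

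The gap is in Step 3. You assert $\|\hf-f\|_{C^2}\prec n^{-1/2}$ ``by Cauchy estimates on the analytic difference $\hf-f$.'' This does not follow from the $C^0$ bound $|\hl(\ba)-\lambda(\ba)|\prec n^{-1/2}$ of Lemma \ref{lemma:unionbounda}. The functions $\hl(\ba)$ and $\lambda(\ba)$ are real-analytic in $\ba$, but to run a Cauchy estimate you would need the $C^0$ bound to hold on a complex polydisk around $\ba_0$. The local laws underlying Theorem \ref{thm:spikemapping}, and hence Lemma \ref{lemma:unionbounda}, are established only for real $\ba \in S^{k-1}$ (the matrix $B(\ba)$ must be real symmetric for Assumption \ref{assump:main}(c), and $m_0(\lambda,\ba)$ is selected as the branch with $\Im m_0>0$, a choice that does not obviously continue to complex $\ba$). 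So there is no complex neighborhood on which the sup-norm bound is available, and the Cauchy inequality cannot be invoked. The paper closes exactly this gap with Lemma \ref{lemma:dlambbound}, which proves $\|\der\lambda(\ba_0)-\der\hl(\ba_0)\|\prec n^{-1/2}$ directly: one shows $\|\partial_{a_r} K(\lambda_0,\ba_0)-\partial_{a_r}\hK(\lambda_0,\ba_0)\|\prec n^{-1/2}$ via (\ref{eq:dGM}), (\ref{eq:darm0}), and the local laws, and then compares the implicit-function formulas (\ref{eq:derlamb}). This is a substantive resolvent calculation, not a formal consequence of the zeroth-order bound. Relatedly, once you do have the $C^1$ comparison at $\ba_0$, the cleanest route is to verify the derivative lower bound for $\hf$ itself and apply Lemma \ref{lemma:inversefunction} directly to $\hf\circ\varphi$ (as the paper does), rather than to apply it to $f$ and then ``transfer'' injectivity and surjectivity: transferring surjectivity from $f(U)$ to $\hf(U)$ via sup-norm closeness alone requires re-running the compactness/minimization argument inside Lemma \ref{lemma:inversefunction} anyway.
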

\begin{proof}
Throughout the proof, we use the convention that constants $C,c>0$ do not
depend on $\eps_0,\eps_1$. 

Let $\N \subset S^{k-1}$ be a covering net with $|\N| \leq n^C$, such that for
each $\ba \in S^{k-1}$ there is $\ba_0 \in \N$ with $\|\ba-\ba_0\|<n^{-1/2}$.
It suffices to establish the result for each fixed
$\ba_0 \in \N$ with probability $1-n^{-D}$. The result then
holds simultaneously for all $\ba_0 \in S^{k-1}$ by a union bound over $\N$ and
the Lipschitz continuity of $m_0^{-1}$, $t_1$, and $t_+$ as established in
Lemma \ref{lemma:continuityina}.

Thus, let us fix $\ba_0 \in \N$. Consider the good
event where the conclusion of Theorem \ref{thm:spikemapping} holds for
$B=B(\ba_0)$, and also $\|\hSigma(\ba)-\hSigma(\ba_0)\| \leq
C\|\ba-\ba_0\|$ and $\|\partial_{a_r} \hSigma(\ba)\|<C$ for all $r=1,\ldots,k$
and $\ba \in \R^k$. Consider $m_0$, $t_r$, $T$ defined at $\ba_0$,
and (for notational convenience) suppress their dependence on $\ba_0$.
On this good event, for each $\hl_0$ satisfying (\ref{eq:tnearaxis}),
there exists $\lambda_0$ with $|\lambda_0-\hl_0|<n^{-1/2+\eps}$
and $0=\det T(\lambda_0)$. Lemma \ref{lemma:continuityina} implies
$|m_0(\lambda_0)^{-1}-m_0(\hl_0)^{-1}|<Cn^{-1/2+\eps}$ and
$|t_r(\lambda_0)-t_r(\hl_0)|<Cn^{-1/2+\eps}$ for
each $r$. Then (\ref{eq:TTheta}), (\ref{eq:tnearaxis}), and the condition
$\theta \geq \tau$ in Theorem \ref{thm:estimation} imply
\begin{equation}\label{eq:TapproxSigma1}
\|T(\lambda_0)+m_0(\lambda_0)^{-1}(\Id-\theta^{-1}
V_1\Theta_1V_1')\|<C\eps_1.
\end{equation}

Since $\lambda_0 \in \I_\delta(\ba_0)$ is greater than $\supp(\mu_0)$, we have
$m_0(\lambda_0)<0$ by (\ref{eq:m0stieltjes}). As $\theta$ is the
$\ell^{\text{th}}$ largest value of $\Theta_1$, this implies the
$\ell^{\text{th}}$
smallest eigenvalue of $-m_0(\lambda_0)^{-1}(\Id-\theta^{-1}V_1\Theta_1V_1)$ is
0. Then, denoting by $\mu_1(\lambda) \leq \ldots \leq \mu_p(\lambda)$
the eigenvalues of $T(\lambda)$, (\ref{eq:TapproxSigma1}) yields
$|\mu_\ell(\lambda_0)|<C\eps_1$.
The separation of values of $\Theta_1$ by $\tau$ further implies
$\mu_{\ell-1}(\lambda_0)<-|m_0(\lambda_0)\theta|^{-1}
\tau+C\eps_1$ and
$\mu_{\ell+1}(\lambda_0)>|m_0(\lambda_0)\theta|^{-1}\tau-C\eps_1$.
As $\theta<C$ and $|m_0(\lambda_0)|<C$, for sufficiently small $\eps_1$ this
yields
\[\mu_{\ell+1}(\lambda_0)>c, \qquad \mu_{\ell-1}(\lambda_0)<-c, \qquad
\mu_\ell(\lambda_0)=0\]
for a constant $c>0$, where the third statement must hold
because $0=\det T(\lambda_0)$.
For each $j=1,\ldots,p$ and all $\lambda<\lambda'$ in $\I_\delta(\ba_0)$, note
that 
\[\lambda'-\lambda \leq \mu_j(\lambda')-\mu_j(\lambda) <C(\lambda'-\lambda),\]
where the lower bound follows from Proposition \ref{prop:Tproperties}(c) and the
upper bound follows from $\|\partial_\lambda T(\lambda)\|<C$.
Then $\lambda_0$ is separated from all other roots of $0=\det T(\lambda)$ by a
constant $c>0$. Furthermore, there are exactly $\ell-1$ roots of $0=\det
T(\lambda)$ which are greater than $\lambda_0$,
one corresponding to each $\mu_j$ for $j=1,\ldots,\ell-1$. Then on the above
good event, there can only be one such $\hl_0$ satisfying (\ref{eq:tnearaxis}),
which is the $\ell^{\text{th}}$ largest eigenvalue
of $\hSigma(\ba_0)$. Furthermore, it is separated from all
other eigenvalues of $\hSigma(\ba_0)$ by a constant $c>0$, and for a
sufficiently small constant $\eps_0>0$, the $\ell^{\text{th}}$ largest
eigenvalue $\hl(\ba)$ is simple and analytic on $O=\{\ba \in
\R^k:\|\ba-\ba_0\|<\eps_0\}$. This verifies the first two statements.

To verify the third statement, consider a chart
$(V,\varphi)$ where $V=\{\v \in \R^{k-1}:\|\v\|<\eps_0\}$,
$\varphi:V \to U$ is a smooth, bijective map 
with bounded first- and second-order derivatives,
$\varphi(0)=\ba_0$, and $\|\varphi(\v_1)-\varphi(\v_2)\| \geq \|\v_1-\v_2\|/2$
for all $\v_1,\v_2 \in V$.
We apply Lemma \ref{lemma:inversefunction} to the map $\hg=\hf \circ \varphi$.
To verify the second-derivative bounds for $\hg$,
note that for $\ba \in O$, letting $\hv(\ba)$
be the unit eigenvector where $\hSigma(\ba)\hv(\ba)=\hl(\ba)\hv(\ba)$, we have
\begin{align}
\partial_{a_r} \hl(\ba)&=\hv(\ba)'(\partial_{a_r} \hSigma(\ba))\hv(\ba),
\label{eq:derhl}\\
\partial_{a_r}\partial_{a_s} \hl(\ba)
&=(\partial_{a_s} \hv(\ba))'(\partial_{a_r} \hSigma(\ba))\hv(\ba)
+\hv(\ba)'(\partial_{a_r} \hSigma(\ba))(\partial_{a_s}\hv(\ba))\nonumber\\
&=2\hv(\ba)'(\partial_{a_r} \hSigma(\ba))
(\hl(\ba)\Id-\hSigma(\ba))^\dagger (\partial_{a_s} \hSigma(\ba))\hv(\ba),
\nonumber
\end{align}
where $(\hl(\ba)\Id-\hSigma(\ba))^\dagger$ is the Moore-Penrose pseudo-inverse.
Since $\hl(\ba)$ is separated from other eigenvalues of $\hSigma(\ba)$ by a
constant, $\|(\hl(\ba)\Id-\hSigma(\ba))^\dagger\|<C$.
Then Lemma \ref{lemma:continuityina} and the chain rule imply that on the above
good event, $\hg$ has
all second-order derivatives bounded on $V$. It remains to check the condition
$\|(\der \hg(0))\v\| \geq c\|\v\|$ for a constant $c>0$ and
all $\v \in \R^{k-1}$. Since $\der \hg(0)=\der \hf(\ba_0) \cdot
\der \varphi(0)$, and $\der \varphi(0)\v$ is orthogonal to $\ba_0$ with $\|\der
\varphi(0)\v\| \geq \|\v\|/2$, we must check
\begin{equation}\label{eq:dhfwcondition}
\|(\der \hf(\ba_0))\w\| \geq c\|\w\|
\end{equation}
for a constant $c>0$ and all $\w$ orthogonal to $\ba_0$, where $\der \hf$ is the
derivative of $\hf:O \to \R^{k-1}$.

For this, let $\lambda_0=\hl_0+O(n^{-1/2+\eps})$ be the root of
$0=\det T(\lambda,\ba_0)$, and let $\v_0 \in \ker T(\lambda_0,\ba_0)$.
As $\lambda_0$ is a simple root, the implicit
function theorem implies we may define
$\lambda(\ba)$ analytically on a neighborhood of $\ba_0$ such that
$\lambda(\ba_0)=\lambda_0$ and $0=\det T(\lambda(\ba),\ba)$.
As $T(\lambda(\ba),\ba)$ is analytic in $\ba$ and
0 is a simple eigenvalue of
this matrix at $\ba_0$, we may also define the null eigenvector
$\v(\ba)$ analytically on a neighborhood of $\ba_0$, so that $\v(\ba_0)=\v_0$,
$T(\lambda(\ba),\ba)\v(\ba)=0$, and $\|\v(\ba)\|^2=1$. We show in Lemma
\ref{lemma:dlambbound} below that on an event of probability $1-n^{-D}$, we
have
\begin{equation}\label{eq:derlambbound}
\|\der \lambda(\ba_0)-\der \hl(\ba_0)\|<n^{-1/2+\eps}.
\end{equation}

Assuming (\ref{eq:derlambbound}) holds, let us first show that the analogue
of (\ref{eq:dhfwcondition}) holds for the function
\[f(\ba)=t_+(\lambda(\ba),\ba).\]
Denote $m(\ba)=m_0(\lambda(\ba),\ba)$, $\bb(\ba)=m(\ba)\ba$, and
$s_+(\bb)=(s_2(\bb),\ldots,s_k(\bb))$ where $s$ is as in (\ref{eq:s}). Then
$m(\ba)f(\ba)=s_+(\bb(\ba))$.
Denote $\bb_0=\bb(\ba_0)$ and differentiate this with respect to
$\ba$ at $\ba_0$ to get
\[f(\ba_0)(\der m(\ba_0))'+m(\ba_0)\der f(\ba_0)=
\der s_+(\bb_0)\der \bb(\ba_0).\]
Hence for any $\w \in \R^k$,
\[\der f(\ba_0)\w=\frac{1}{m(\ba_0)}\Big(\der s_+(\bb_0)\der \bb(\ba_0)
\w-f(\ba_0)(\der m(\ba_0))'\w\Big).\]
Applying $\|f(\ba_0)\|=\|t_+(\lambda_0,\ba_0)\|<\eps_1$ from
(\ref{eq:tnearaxis}), and $\|\der m(\ba_0)\|<C$ and $c<|m(\ba_0)|<C$ from
the chain rule, (\ref{eq:derlambbound}), (\ref{eq:derhl}),
and Lemma \ref{lemma:continuityina}, we have
\begin{equation}\label{eq:derfwbound}
\|\der f(\ba_0)\w\| \geq c\|\der s_+(\bb_0)\der \bb(\ba_0)\w\|
-C\eps_1\|\w\|.
\end{equation}
To bound the first term on the right, recall (\ref{eq:TTheta}) and
multiply the condition $0=\v(\ba)'T(\lambda(\ba),\ba)\v(\ba)$ by $m(\ba)$ to get
\[0=\v(\ba)'\left(-\Id+\sum_{r=1}^k s_r(\bb(\ba))V_r\Theta_rV_r'\right)
\v(\ba).\]
Differentiate this with respect to $\ba$ at $\ba_0$, and set
$y_r=\v_0'V_r\Theta_rV_r'\v_0$ and $\y=(y_1,\ldots,y_k)$, to get
\[0=\sum_{r=1}^k \der s_r(\bb_0)\der \bb(\ba_0)
\cdot \v_0'V_r\Theta_rV_r'\v_0
=\y'\der s(\bb_0)\der \bb(\ba_0).\]
For any $\w \in \R^k$, letting $\y_+=(y_2,\ldots,y_k)$, this yields
\begin{align*}
\|\y_+\| \cdot \|\der s_+(\bb_0)\der \bb(\ba_0)\w\| &\geq
|\y_+'\der s_+(\bb_0)\der \bb(\ba_0)\w|\\
&=|y_1 \der s_1(\bb_0)\der \bb(\ba_0)\w|\\
&\geq |y_1| \cdot \|\der s(\bb_0)\der \bb(\ba_0)\w\|
-|y_1| \cdot \|\der s_+(\bb_0)\der \bb(\ba_0)\w\|.
\end{align*}
So
\begin{equation}\label{eq:dersplusbound}
\|\der s_+(\bb_0)\der \bb(\ba_0)\w\|
\geq \frac{|y_1| \cdot \|\der s(\bb_0)\der
\bb(\ba_0)\w\|}{|y_1|+\|\y_+\|}.
\end{equation}
Note that $|y_1|+\|\y_+\|<C$. Applying $\v_0'T(\lambda_0,\ba_0)\v_0=0$ to
(\ref{eq:TapproxSigma1}), we have also $|y_1-\theta|<C\eps_1$, so
$|y_1|>\theta-C\eps_1>c$ for sufficiently small $\eps_1>0$.
Finally, recall $\bb(\ba)=m(\ba)\ba$, so
$\der \bb(\ba_0)=m(\ba_0)\Id+\ba_0(\der m(\ba_0))'$.
If $\w$ is orthogonal to $\ba_0$, then
\[\|\der \bb(\ba_0)\w\|
=\|m(\ba_0)\w+\ba_0(\der m(\ba_0))'\w\|
\geq \|m(\ba_0)\w\| \geq c\|\w\|.\]
As $\|\bb_0\|<C$, Assumption \ref{assump:estimation} implies
$\|\der s(\bb_0)\v\| \geq c\|\v\|$ for any $\v \in \R^k$, so combining
these observations with (\ref{eq:dersplusbound}) and (\ref{eq:derfwbound})
yields finally $\|(\der f(\ba_0))\w\| \geq c\|\w\|$ for $\w$ orthogonal to
$\ba_0$.

To conclude the proof, recall $f(\ba)=t_+(\lambda(\ba),\ba)$ while
$\hf(\ba)=t_+(\hl(\ba),\ba)$. Applying (\ref{eq:derlambbound}),
Lemma \ref{lemma:continuityina}, and the chain
rule, we obtain $\|\der f(\ba_0)-\der \hf(\ba_0)\|<Cn^{-1/2+\eps}$. Hence
(\ref{eq:dhfwcondition}) holds, and we may apply
Lemma \ref{lemma:inversefunction} to the function $\hg=\hf \circ \varphi$.
This shows, for some constants $c,\tilde{\eps}_0,\tilde{\eps}_1>0$, that $\hf$
is injective on $\tilde{U}=\{\ba \in S^{k-1}:\|\ba-\ba_0\|<\tilde{\eps}_0\}$,
$\hf(\tilde{U})$ contains
$\{\bt \in \R^{k-1}:\|\bt-\hf(\ba_0)\|<\tilde{\eps}_1\}$,
and $\|\hf(\ba_1)-\hf(\ba_2)\|\geq c\|\ba_1-\ba_2\|$ for $\ba_1,\ba_2 \in
\tilde{U}$. Observe that if $\|\bt\|<\eps_1$, then
$\|\bt-\hf(\ba_0)\|<2\eps_1$ by (\ref{eq:tnearaxis}). Reducing $\eps_0$ and
$\eps_1$ to $\tilde{\eps}_0$ and $\tilde{\eps}_1/2$ concludes the proof.
\end{proof}

\begin{lemma}\label{lemma:dlambbound}
Let $\ba_0 \in S^{k-1}$, let $U \subset \R^k$ be a neighborhood of $\ba_0$,
and let $\lambda(\ba)$ and $\hl(\ba)$ be analytic functions on $U$ such that
$0=\det T(\lambda(\ba),\ba)$ and $\hl(\ba) \in \spec(\hSigma(\ba))$ for each
$\ba \in U$. Suppose $\lambda(\ba_0)-\hl(\ba_0) \prec n^{-1/2}$, and 
$\lambda(\ba_0)$ is separated from all other roots of $0=\det T(\lambda,\ba_0)$
by a constant $c>0$. Then
\[\|\der \lambda(\ba_0)-\der \hl(\ba_0)\| \prec n^{-1/2}.\]
\end{lemma}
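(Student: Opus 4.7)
The plan is to write down explicit formulas for $\partial_{a_r}\lambda(\ba_0)$ and $\partial_{a_r}\hl(\ba_0)$ by implicit differentiation, and compare them using the resolvent approximations already produced in the proofs of Theorems \ref{thm:spikemapping}--\ref{thm:eigenvectors}, extended to cover derivatives in $\ba$. By the separation hypothesis, $\lambda_0:=\lambda(\ba_0)$ is a simple zero of $\det T(\cdot,\ba_0)$, so one may pick a unit null vector $\v(\ba)\in\ker T(\lambda(\ba),\ba)$ analytically near $\ba_0$ with $\v(\ba_0)=\v_0$. Differentiating $\v(\ba)'T(\lambda(\ba),\ba)\v(\ba)=0$ at $\ba_0$, the terms involving derivatives of $\v$ drop by $T\v_0=0$, giving
\[\partial_{a_r}\lambda(\ba_0)=-\frac{\v_0'(\partial_{a_r}T(\lambda_0,\ba_0))\v_0}{\v_0'(\partial_\lambda T(\lambda_0,\ba_0))\v_0}.\]
By the proof of Theorem \ref{thm:spikemapping}, $\hl_0:=\hl(\ba_0)$ is likewise a simple zero of $\det\hK(\cdot,\ba_0)$, separated from its other zeros by $\Omega(1)$; picking an analytic unit null vector $\hv_1(\ba)$ of the symmetric Schur complement $\hK(\hl(\ba),\ba)$ yields analogously
\[\partial_{a_r}\hl(\ba_0)=-\frac{\hv_1(\ba_0)'(\partial_{a_r}\hK(\hl_0,\ba_0))\hv_1(\ba_0)}{\hv_1(\ba_0)'(\partial_\lambda\hK(\hl_0,\ba_0))\hv_1(\ba_0)}.\]
Since $\v_0\in\cS$ by Proposition \ref{prop:Tproperties}(b) and $-K$ is the upper $L\times L$ block of $T$, writing $\v_0=(\v_1,0)$ the first formula is equivalently expressed with $K,\v_1$ in place of $T,\v_0$.

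Three ingredients then combine. The proof of Theorem \ref{thm:eigenvectors} already supplies, for appropriate signs, (i) $\|\hv_1(\ba_0)-\v_1\|\prec n^{-1/2}$ and (ii) $\|\partial_\lambda\hK(\hl_0,\ba_0)-\partial_\lambda K(\lambda_0,\ba_0)\|\prec n^{-1/2}$; together with the bound $-\v_1'\partial_\lambda K(\lambda_0,\ba_0)\v_1\geq 1$ from Proposition \ref{prop:Tproperties}(c), both denominators are bounded away from zero and agree up to $O_\prec(n^{-1/2})$. The crucial remaining ingredient is the analogous derivative-in-$\ba$ bound
\[\|\partial_{a_r}\hK(\hl_0,\ba_0)-\partial_{a_r}K(\lambda_0,\ba_0)\|\prec n^{-1/2}.\]

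The main obstacle is establishing this last bound. Since $B(\ba)=\sum_r a_r B_r$ is linear in $\ba$, the quantities $F(\ba),m_0(z,\ba),\Pi_M(z,\ba),K(z,\ba),\hK(z,\ba)$ all extend to holomorphic functions of $\ba$ on a small complex polydisc $P$ around $\ba_0$, and Proposition \ref{prop:m0regularoutsiderestate} together with continuity keeps $\Id+m_0(z,\ba)F(\ba)$ invertible throughout $P$. The derivation of (\ref{eq:hKbound}) used only Lemma \ref{lemma:matrixlocallaw}, Lemma \ref{lemma:hansonwright}, and the pointwise bounds of Lemma \ref{lemma:Kbound}, all of which survive uniformly on $P$ (the Gaussian concentration involves only $\Xi$ and $X$, and is unaffected by complexifying $\ba$). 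Replicating the $z$-net argument used to derive (\ref{eq:hKbound}) over a joint net in $(z,\ba)$, together with Lipschitz continuity of $\hK$ and $K$ in $\ba$ (implied by the linearity of $F$ in $\ba$ and the resolvent bounds of Lemma \ref{lemma:Kbound}), gives $\sup_{\ba\in P,\,z\in U_\delta(\ba_0)}\|\hK(z,\ba)-K(z,\ba)\|\prec n^{-1/2}$. A Cauchy integral in the complex variable $a_r$ around $\ba_0$ then yields the desired derivative estimate at $(\lambda_0,\ba_0)$, and a Taylor expansion in $\lambda$ using $\hl_0-\lambda_0\prec n^{-1/2}$ combined with a further Cauchy integral in $z$ bounding $\partial_{a_r}\partial_\lambda\hK$ transfers it to $(\hl_0,\ba_0)$. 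Substituting (i), (ii), and this bound into the two explicit formulas yields $|\partial_{a_r}\lambda(\ba_0)-\partial_{a_r}\hl(\ba_0)|\prec n^{-1/2}$ for each $r$, completing the proof.
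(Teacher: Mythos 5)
Your overall decomposition matches the paper's: you arrive at the same implicit-differentiation formulas (the paper writes them via $\partial\mu_\ell=\v_0'\partial K\v_0$ and $\partial\hmu_\ell=\hv_0'\partial\hK\hv_0$, which is equivalent to your Rayleigh-quotient form), and the three ingredients you list are exactly what the paper combines. Ingredients (i) and (ii) are available from the earlier results, as you say. The problem is ingredient (iii), which is the entire content of this lemma, and your proposed route to it does not work.

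The Cauchy-integral-in-$a_r$ argument requires the estimate $\sup_{\ba\in P}\|\hK(z,\ba)-K(z,\ba)\|\prec n^{-1/2}$ over a \emph{complex} polydisc $P$ of \emph{constant} radius (a vanishing radius yields a vanishing gain, since Cauchy divides by the radius). But the anisotropic local law (Lemma~\ref{lemma:matrixlocallaw}) that underlies (\ref{eq:hKbound}) is proved for real symmetric $F$: Lemma~\ref{lemma:reduction} reduces to diagonal $T$ via an orthogonal conjugation that preserves the law of the Gaussian $X$, the edge rigidity of Theorem~\ref{thm:sticktobulk} controls $\spec(X'FX)\subset\R$, and the deterministic target $\Pi_M=-F(\Id+m_0F)^{-1}$ is built from the Stieltjes transform $m_0$ of a genuine probability measure $\mu_0$ on $\R$. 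When $a_r$ is allowed an order-one imaginary part, $F(\ba)$ becomes complex symmetric but not Hermitian: the orthogonal diagonalization and rotation invariance of $X$ are lost, $X'F(\ba)X$ no longer has real spectrum sticking to a real interval, and there is no distinguished Stieltjes-transform root $m_0(z,\ba)$ with $\partial_z m_0>0$ to serve as the deterministic approximant. None of the Appendix~\ref{sec:resolventapprox} machinery is established, or obviously true, in that regime, so the uniform bound over $P$ does not follow. A near-real complexification (imaginary part $o(1)$) avoids these problems but then the Cauchy integral produces no $n^{-1/2}$ gain for the derivative.

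The paper's route is instead a direct resolvent computation that stays entirely at real $\ba$: it writes $\partial_{a_r}G_M=-(FXG_NX'-\Id)(\partial F)(XG_NX'F-\Id)$, applies Lemma~\ref{lemma:hansonwright} conditionally on $X$ to reduce $\partial_{a_r}\hK$ to $-\sum_r(N^{-1}\Tr_r\partial_{a_r}G_M)\Gamma_r$, and then expands $XG_NX'=\Delta+m_0(\Id+m_0F)^{-1}$. After the expansion one faces a term of the form $\Tr\,\Delta F(\partial F)\Delta F P_r$, which cannot be handled by Lemma~\ref{lemma:matrixlocallaw} alone; one needs the \emph{quadratic} local law of Lemma~\ref{lemma:secondorderapprox}. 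That lemma is precisely the extra analytic input this step requires, and your proposal never invokes it. So the gap is not cosmetic: the claim that (iii) follows from the same tools used for (\ref{eq:hKbound}) is false, and the complexification device does not repair it.
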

\begin{proof}
Let $\lambda_0=\lambda(\ba_0)$ and $\hl_0=\hl(\ba_0)$.
Denote by $\hK(\lambda,\ba)$ and $K(\lambda,\ba)$ the functions (\ref{eq:hK})
and (\ref{eq:K}) for $F=F(\ba)$.
Let us first establish, for each $r=1,\ldots,k$,
\begin{equation}\label{eq:daKhKbound}
\|\partial_{a_r} K(\lambda_0,\ba_0)-\partial_{a_r} \hK(\lambda_0,\ba_0)\|
\prec n^{-1/2}.
\end{equation}
The proof is similar to that of (\ref{eq:hKbound}), and we will be brief.
For notational convenience, we omit all arguments $(\lambda_0,\ba_0)$ and denote
$\partial=\partial_{a_r}$. Recalling $G_M$ from (\ref{eq:GM}),
\begin{align}
\partial G_M&=(\partial F)XG_NX'F+FX(\partial G_N)X'F+FXG_NX'(\partial
F)-\partial F\nonumber\\
&=(\partial F)XG_NX'F-FXG_NX'(\partial F)XG_NX'F+FXG_NX'(\partial F)
-\partial F\nonumber\\
&=-(FXG_NX'-\Id)(\partial F)(XG_NX'F-\Id).\label{eq:dGM}
\end{align}
Denoting by $(\partial G_M)_{rs}$ the $(r,s)$ block, (\ref{eq:dGM}) and
Lemma \ref{lemma:Kbound} imply
$\|(\partial G_M)_{rs}/(\sigma_r\sigma_s)\|_\HS \prec n^{1/2}$, so Lemma
\ref{lemma:hansonwright} applied conditionally on $X$ yields
\begin{equation}\label{eq:dhK}
\left\|\partial \hK+\sum_{r=1}^k \Big(N^{-1}\Tr_r(\partial
G_M)\Big)\Gamma_r\right\| \prec n^{-1/2}.
\end{equation}

Now recall $XG_NX'=\Delta+m_0(\Id+m_0F)^{-1}$ from (\ref{eq:Delta}), so
$XG_NX'F-\Id=\Delta F-(\Id+m_0F)^{-1}$. Substituting this into (\ref{eq:dGM})
and applying Lemmas \ref{lemma:matrixlocallaw} and
\ref{lemma:secondorderapprox}, we obtain after some simplification
\begin{align*}
\sigma_r^{-2}\Tr_r (\partial G_M)&=-\sigma_r^{-2}
\Tr_r \big[(\Id+m_0F)^{-1}(\partial F)(\Id+m_0F)^{-1}\big]\\
&\hspace{0.5in}-N^{-1}(\partial_\lambda m_0)
\Tr\big[(\partial F)(\Id+m_0F)^{-1}\big]
\sigma_r^{-2}\Tr_r\big[F^2(\Id+m_0F)^{-2}\big]+\O(n^{1/2}).
\end{align*}
Applying (\ref{eq:derFJ}) and (\ref{eq:darm0}),
\[(N\sigma_r^2)^{-1}\Tr_r (\partial G_M)=-(N\sigma_r^2)^{-1}
\Tr_r \Big[\partial\Big(F(\Id+m_0F)^{-1}\Big)\Big]
+\O(n^{-1/2})=-\partial t_r+\O(n^{-1/2}).\]
Applying this to (\ref{eq:dhK}) and recalling the definition (\ref{eq:K}) of
$K$, we obtain (\ref{eq:daKhKbound}) as desired.

Note that (\ref{eq:hKbound}), (\ref{eq:dhKbound}), $\lambda_0-\hl_0 \prec
n^{-1/2}$, and Lemma \ref{lemma:Kbound} imply
\begin{align}
\|K(\lambda_0,\ba_0)-\hK(\hl_0,\ba_0)\| \prec n^{-1/2},\label{eq:K0hK0}\\
\|\partial_\lambda K(\lambda_0,\ba_0)-\partial_\lambda \hK(\hl_0,\ba_0)\|
\prec n^{-1/2}.\label{eq:dlK0hK0}
\end{align}
From (\ref{eq:dhK}), we verify that on the high-probability event
where $\spec(X'F(\ba_0)X) \subset \supp(\mu_0(\ba_0))_{\delta/2}$,
$\|X_r\|<C$, and $\|\oX_r\|<C$ for all $r=1,\ldots,k$, we have
\[\sup_{\lambda \in \R \setminus \supp(\mu_0(\ba_0))_\delta}
\|\partial_{a_r} \hK(\lambda,\ba_0)\|<C, \qquad
\sup_{\lambda \in \R \setminus \supp(\mu_0(\ba_0))_\delta}
\|\partial_\lambda \partial_{a_r} \hK(\lambda,\ba_0)\|<C.\]
Then this and (\ref{eq:daKhKbound}) yield similarly
\begin{equation}
\|\partial_{a_r} K(\lambda_0,\ba_0)-\partial_{a_r} \hK(\hl_0,\ba_0)\|
\prec n^{-1/2}.\label{eq:daK0hK0}
\end{equation}

Let $\mu_1(\lambda,\ba) \leq \ldots \leq \mu_L(\lambda,\ba)$ and
$\hmu_1(\lambda,\ba) \leq \ldots \leq \hmu_L(\lambda,\ba)$ be the eigenvalues
of $K(\lambda,\ba)$ and $\hK(\lambda,\ba)$. Then (\ref{eq:K0hK0}) implies
$\mu_\ell(\lambda_0,\ba_0)-\hmu_\ell(\hl_0,\ba_0) \prec n^{-1/2}$
for each $\ell$. Note that $0=\det K(\lambda(\ba),\ba)$ and $0=\det
\hK(\hl(\ba),\ba)$ for all $\ba \in U$. In particular,
$\mu_\ell(\lambda_0,\ba_0)=0$ for some $\ell$. As $\lambda_0$ is separated
from other roots of $0=\det T(\lambda,\ba_0)$ by $c>0$, Proposition
\ref{prop:Tproperties}(c) implies 0 is separated from other eigenvalues of
$T(\lambda_0,\ba_0)$ by $c$. Assuming $U$ is sufficiently small, this implies
that $\mu_\ell(\lambda(\ba),\ba)=0$ and
$\hmu_\ell(\lambda(\ba),\ba)=0$ for the same $\ell$ and
all $\ba \in U$. Differentiating these identities in $\ba$ at $\ba_0$, we obtain
\begin{equation}\label{eq:derlamb}
\der \lambda(\ba_0)=-(\partial_\lambda \mu_\ell(\lambda_0,\ba_0))^{-1}
\partial_\ba \mu_\ell(\lambda_0,\ba_0), \qquad
\der \hl(\ba_0)=-(\partial_\lambda \hmu_\ell(\hl_0,\ba_0))^{-1}
\partial_\ba \hmu_\ell(\hl_0,\ba_0).
\end{equation}
Letting $\v_0 \in \ker K(\lambda_0,\ba_0)$ and $\hv_0 \in \ker \hK(\hl_0,\ba_0)$
be the unit eigenvectors, we have for both $\partial=\partial_\lambda$ and
$\partial=\partial_{a_r}$ that
\[\partial \mu_\ell(\lambda_0,\ba_0)=\v_0'\partial K(\lambda_0,\ba_0) \v_0,
\qquad \partial \hmu_\ell(\hl_0,\ba_0)=\hv_0'\partial \hK(\hl_0,\ba_0) \hv_0.\]
The Davis-Kahan theorem yields
$\|\v_0-\hv_0\| \prec n^{-1/2}$, so (\ref{eq:dlK0hK0}), (\ref{eq:daK0hK0}),
and the bounds $\|\partial K\|,\|\partial \hK\| \prec 1$ imply
\[\partial \mu_\ell(\lambda_0,\ba_0)-\partial
\hmu_\ell(\hl_0,\ba_0) \prec n^{-1/2}, \qquad
\partial \mu_\ell(\lambda_0,\ba_0) \prec 1, \qquad
\partial \hmu_\ell(\hl_0,\ba_0) \prec 1.\]
Applying this and $\partial_\lambda \mu_\ell(\lambda_0,\ba_0) \leq -1$
to (\ref{eq:derlamb}),
we obtain $\|\der\lambda(\ba_0)-\der \hl(\ba_0)\| \prec n^{-1/2}$.
\end{proof}

We now conclude the proofs of the remaining two claims for Theorem
\ref{thm:estimation}.

\begin{proof}[Proof of Claim 2]
Suppose $\mu=\theta+\sigma_1^2$ is a spike eigenvalue of $\Sigma_1$.
Each estimated $\hat{\mu}$ where $|\hat{\mu}-\mu|<\eps$ corresponds to
a pair $(\hl,\ba)$ where $\ba \in S^{k-1}$,
$\hl \in \spec(\hSigma(\ba)) \cap \I_\delta(\ba)$, and
\[|\hl/t_1(\hl,\ba)-\mu|<\eps, \qquad
t_2(\hl,\ba)=\ldots=t_k(\hl,\ba)=0.\]
Then $\hl=-1/m_0(\hl,\ba)+\sigma_1^2t_1(\hl,\ba)$ by (\ref{eq:MP}). Applying
this and $|t_1(\hl,\ba)|<C$ to the above, $(\hl,\ba)$ satisfies
\begin{equation}\label{eq:estimationcondition}
\left|-\frac{1}{m_0(\hl,\ba)}-t_1(\hl,\ba)\theta\right|<C\eps,
\qquad t_2(\hl,\ba)=\ldots=t_k(\hl,\ba)=0.
\end{equation}

By Lemma \ref{lemma:tplusinjective}, there exist constants $\eps_0,\eps_1>0$
such that if $C\eps<\eps_1$, then with probability $1-n^{-D}$,
(\ref{eq:estimationcondition}) cannot
hold for two different pairs $(\hl_0,\ba_0)$ and $(\hl_1,\ba_1)$ with
$\|\ba_1-\ba_0\|<\eps_0$. On the other hand, on the event where the conclusion
of Theorem \ref{thm:spikemapping} holds for all $\ba \in S^{k-1}$, we have
$-C<m_0(\hl_0,\ba_0)<-c$ and $-C<m_0(\hl_1,\ba_1)<-c$ for constants $C,c>0$
by Lemma \ref{lemma:continuityina}. On this event, if
(\ref{eq:estimationcondition}) holds 
for $(\hl_0,\ba_0)$ and $(\hl_1,\ba_1)$ with $\|\ba_1-\ba_0\| \geq \eps_0$,
then $\|m_0(\hl_0,\ba_0)\ba_0-m_0(\hl_1,\ba_1)\ba_1\|>c\eps_0$
for some $c>0$ because both $\ba_0$ and $\ba_1$ belong to the sphere. Recalling
$s:\R^k \to \R^k$ from (\ref{eq:s}), note that
$m_0(\hl,\ba)t(\hl,\ba)=s(m_0(\hl,\ba)\ba)$.
Assumption \ref{assump:estimation} then implies
$\|m_0(\hl_0,\ba_0)t(\hl_0,\ba_0)-m_0(\hl_1,\ba_1)t(\hl_1,\ba_1)\|>c\eps_0$
for a different $c>0$. But the first condition of (\ref{eq:estimationcondition})
implies $|m_0(\hl_0,\ba_0)t(\hl_0,\ba_0)+1/\theta|<C\eps$ and similarly for
$(\hl_1,\ba_1)$, for some $C>0$. This is a contradiction for $\eps$
sufficiently small, so with probability $1-n^{-D}$,
at most one pair $(\hl,\ba)$ satisfies (\ref{eq:estimationcondition}).
\end{proof}

\begin{proof}[Proof of Claim 3]
We first show that for a constant $c_0>0$ (independent of $\bar{C}$)
and any value $\theta>c_0$,
there exist $\ba_0 \in S^{k-1}$ and $\lambda_0 \in \I_\delta(\ba_0)$ where
\begin{equation}\label{eq:theoreticalsolution}
-\frac{1}{m_0(\lambda_0,\ba_0)}-t_1(\lambda_0,\ba_0)\theta=0, \qquad
t_2(\lambda_0,\ba_0)=\ldots=t_k(\lambda_0,\ba_0)=0.
\end{equation}
Indeed, Proposition \ref{prop:boundedsupportrestate} shows $\supp(\mu_0(\ba)) \in
[-C_1,C_1]$ for a constant $C_1>0$ and all $\ba \in S^{k-1}$. Then for each $\ba
\in S^{k-1}$, at the left endpoint $\lambda_+$ of $\I_\delta(\ba)$ we have
\begin{equation}\label{eq:m0lambdaplus}
m_0(\lambda_+,\ba)=\int \frac{1}{x-\lambda_+}\,\mu_0(\ba)(dx) \leq 
-(2C_1+\delta)^{-1},
\end{equation}
and $m_0(\lambda,\ba)$ increases to 0 as $\lambda$ increases from $\lambda_+$ to
$\infty$. We apply Lemma \ref{lemma:inversefunction} to the map
$s$ from (\ref{eq:s}): Note that
$s(0)=0$, and Assumption \ref{assump:estimation} guarantees
$\|(\der s(0))\v\| \geq c\|\v\|$. Setting $U=\{\bb:\|\bb\|<\eps\}$ for a
sufficiently small constant $\eps>0$, we have
$|\partial_{b_i}\partial_{b_j} s_r(\bb)|<C$ for all $i,j,r$ and $\bb \in U$.
We may take $\eps<(2C_1+\delta)^{-1}$. Then
applying Lemma \ref{lemma:inversefunction}, for some constant
$c_0>0$ and any $\theta>c_0$, there
exists $\bb_0 \in U$ such that $s(\bb_0)=(-1/\theta,0,\ldots,0)$.
Now let $\ba_0=-\bb_0/\|\bb_0\| \in S^{k-1}$. As $\|\bb_0\|<(2C_1+\delta)^{-1}$,
(\ref{eq:m0lambdaplus}) implies there exists $\lambda \in
\I_\delta(\ba_0)$ with $m_0(\lambda,\ba_0)=-\|\bb_0\|$, and hence
$\bb_0=m_0(\lambda_0,\ba_0)\ba_0$. Noting that
$m_0(\lambda_0,\ba_0)t(\lambda_0,\ba_0)=s(\bb_0)=(-1/\theta,0,\ldots,0)$,
this yields (\ref{eq:theoreticalsolution}).

Now let $\mu=\theta+\sigma_1^2$ be a spike eigenvalue of $\Sigma_1$, where
$\theta>c_0$, and let $(\lambda_0,\ba_0)$ be as above.
By Theorem \ref{thm:spikemapping}, there exists $\hl_0 \in
\spec(\hSigma(\ba_0)) \cap \I_\delta(\ba_0)$ with $\hl_0-\lambda_0 \prec
n^{-1/2}$. Applying Lemma \ref{lemma:continuityina},
\[-\frac{1}{m_0(\hl_0,\ba_0)}-t_1(\hl_0,\ba_0)\theta \prec n^{-1/2},
\qquad t_r(\hl_0,\ba_0) \prec n^{-1/2} \text{ for all }
r=2,\ldots,k.\]
Lemma \ref{lemma:tplusinjective} implies there exist $\ba \in
S^{k-1}$ and $\hl \in \spec(\hSigma(\ba))$ with $t_+(\hl,\ba)=0$
and $c\|\ba-\ba_0\| \leq \|t_+(\hl_0,\ba_0)\|$. The latter
condition implies $\|\ba-\ba_0\| \prec n^{-1/2}$, so also
$\|\hSigma(\ba)-\hSigma(\ba_0)\| \prec n^{-1/2}$,
$\hl-\hl_0 \prec n^{-1/2}$, and $\hl \in \I_\delta(\ba)$ with probability
$1-n^{-D}$. Applying Lemma \ref{lemma:continuityina} again, we obtain
\[-\frac{1}{m_0(\hl,\ba)}-t_1(\hl,\ba)\theta \prec n^{-1/2},
\qquad t_2(\hl,\ba)=\ldots=t_k(\hl,\ba)=0.\]
This and (\ref{eq:MP}) imply $\hl/t_1(\hl,\ba)-\mu \prec n^{-1/2}$,
so with probability $1-n^{-D}$, there is an estimated eigenvalue $\hmu$ with
$|\hmu-\mu|<n^{-1/2+\eps}$.
\end{proof}

\appendix

\section{Resolvent approximations}\label{sec:resolventapprox}
We prove in this appendix Lemmas \ref{lemma:matrixlocallaw}
and \ref{lemma:secondorderapprox}.
Both statements rely on a ``fluctuation averaging'' idea, similar to that in
\cite{erdosbernoulli,erdosyauyin,erdosER,erdoslocalSC},
to control a weighted average of weakly dependent random variables. We introduce
a variant of this idea which controls the size of the weighted average
by the squared-sum of the weights, rather than the size of the largest weight,
and also develop it for sums over double-indexed and quadruple-indexed arrays.
We present this abstract result in Section
\ref{subsec:fluctuationavg}, and then apply it to combinations of resolvent
entries and their products in the remainder of the section.

\subsection{Fluctuation averaging}\label{subsec:fluctuationavg}
Let $\x_1,\ldots,\x_n$ be independent random variables in some
probability space. For $\cY$ a scalar-valued function of $\x_1,\ldots,\x_n$,
denote by $\E_i[\cY]$ its expectation with respect to only $\x_i$, i.e.\
\[\E_i[\cY]=\E[\cY \mid \x_1,\ldots,\x_{i-1},\x_{i+1},\ldots,\x_n].\]
Define
\[\Q_i[\cY]=\cY-\E_i[\cY].\]
Note that the operators $\{\E_i,\Q_i:i=1,\ldots,n\}$ all commute. For
$S \subset \{1,\ldots,n\}$, define
\[\E_S=\prod_{i \in S} \E_i, \qquad \Q_S=\prod_{i \in S} \Q_i\]
where the products denote operator composition.

We will consider subsets $S \subset \{1,\ldots,n\}$ of size at most a constant
$\ell>0$. For quantities $\xi$ and $\zeta$ possibly depending on $S$, we write
\[\xi \prec_\ell \zeta\]
to mean $\P[|\xi|>n^{\eps}|\zeta|]<n^{-D}$ for all $|S| \leq \ell$ and all
$n \geq n_0(\ell,\eps,D)$, where the constant $n_0$ is allowed to depend
on $\ell$ (in addition to $\eps$ and $D$).

We will require $\cY$ to satisfy the moment condition of the following lemma.
\begin{lemma}\label{lemma:expectation}
For constants $\tau,C_1,C_2,\ldots>0$, suppose $\cY \prec n^{-\tau}$ and
$\E[|\cY|^\ell] \leq n^{C_\ell}$ for each integer $\ell>0$.
Then for any sub-$\sigma$-algebra $\mathcal{G}$, $\E[\cY \mid
\mathcal{G}] \prec n^{-\tau}$.
\end{lemma}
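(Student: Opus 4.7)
The plan is to convert the stochastic domination hypothesis into a polynomial moment bound and then push it through the conditional expectation via Jensen's inequality. First, by Jensen applied to $x \mapsto |x|^k$ together with the tower property, for every integer $k \geq 1$ one has $\E[|\E[\cY \mid \mathcal{G}]|^k] \leq \E[|\cY|^k]$. Combined with Markov's inequality this yields, for any $\eps > 0$,
\[
\P\Big[|\E[\cY \mid \mathcal{G}]| > n^{-\tau+\eps}\Big] \leq n^{k(\tau-\eps)} \E[|\cY|^k].
\]
It therefore suffices to produce, for every integer $k$, a bound of the form $\E[|\cY|^k] \leq 2\, n^{k(-\tau+\eps/2)}$ for all sufficiently large $n$; then choosing any integer $k > 2D/\eps$ delivers $\P[|\E[\cY\mid \mathcal{G}]| > n^{-\tau+\eps}] \leq 2\, n^{-k\eps/2} < n^{-D}$.

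To obtain the required moment bound, I would split on the event $A = \{|\cY| \leq n^{-\tau+\eps/2}\}$. On $A$, the integrand is at most $n^{k(-\tau+\eps/2)}$ deterministically, contributing at most $n^{k(-\tau+\eps/2)}$ to $\E[|\cY|^k]$. On $A^c$, Cauchy--Schwarz combined with the polynomial moment hypothesis gives
\[
\E\big[|\cY|^k \1_{A^c}\big] \leq \E\big[|\cY|^{2k}\big]^{1/2}\, \P[A^c]^{1/2} \leq n^{C_{2k}/2}\, \P\big[|\cY| > n^{-\tau+\eps/2}\big]^{1/2}.
\]
Applying the hypothesis $\cY \prec n^{-\tau}$ with exponent $\eps/2$ and an arbitrary tail exponent $D'$, this tail probability is bounded by $n^{-D'}$ for $n$ sufficiently large. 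Choosing $D'$ large enough, depending on $k$, $C_{2k}$, $\tau$, $\eps$, and $D$, makes the $A^c$ contribution negligible compared to $n^{k(-\tau+\eps/2)}$, yielding the desired moment bound.

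There is no serious obstacle here; this is the standard equivalence between stochastic domination and polynomial moment decay, adapted to operate under conditioning. The only bookkeeping to perform is to check that, once $\eps$ and $D$ are fixed, the choices of $k$ and then $D'$ depend only on $\eps$, $D$, $\tau$, and finitely many of the constants $C_\ell$, which is consistent with the dependence on auxiliary constants allowed in the paper's definition of $\prec$.
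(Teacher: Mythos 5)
Your proof is correct. The paper itself offers no argument for this lemma, deferring instead to \cite[Lemma 3.2]{fanjohnstoneedges}; your write-up supplies the standard argument one would expect there: convert the stochastic domination into a high-moment bound via the event split $A=\{|\cY|\leq n^{-\tau+\eps/2}\}$, Cauchy--Schwarz, and the polynomial a priori moment control, then transfer the moment bound through the conditional expectation by Jensen and finish with Markov. The dependence bookkeeping you flag at the end is precisely the point that makes the argument work: once $\eps$ and $D$ are fixed, $k$ is chosen from $\eps,D$ alone, and then $D'$ from $k,C_{2k},\tau,\eps$, so the resulting $n_0$ depends only on the admissible parameters, consistent with the paper's definition of $\prec$.
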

\begin{proof}
See \cite[Lemma 3.2]{fanjohnstoneedges}.
\end{proof}

A variable $\cY_i$ is centered with respect to $\x_i$ if $\E_i[\cY_i]=0$.
If it is independent of $\x_j$, then $\Q_j[\cY_i]=0$. We quantify weak
dependence of $\cY_i$ on $\x_j$ by requiring $\Q_j[\cY_i]$ to be typically
smaller than $\cY_i$ by a factor of $n^{-1/2}$. The following is an abstract
fluctuation averaging result for variables that are weakly dependent in this
sense.
\begin{lemma}\label{lemma:fluctuationavg}
Let $\tau,C_1,C_2,\ldots>0$ be fixed constants, and let each $\cY_* \in
\{\cY_i,\cY_{ij},\cY_{ijkl}\}$ below be a
scalar-valued function of $\x_1,\ldots,\x_n$ that satisfies $\cY_*
\prec n^{-\tau}$ and $\E[|\cY_*|^\ell] \leq n^{C_\ell}$ for each $\ell>0$.
\begin{enumerate}[(a)]
\item Suppose $(\cY_i:i=1,\ldots,n)$ satisfy
$\E_i[\cY_i]=0$ and, for all $S \subset \{1,\ldots,n\}$ with $i
\notin S$ and $|S| \leq \ell$,
\begin{equation}\label{eq:QSYi}
Q_S[\cY_i] \prec_\ell n^{-\tau-|S|/2}.
\end{equation}
Then for any deterministic $(u_i \in \C:i=1,\ldots,n)$,
\[\sum_i u_i\cY_i \prec n^{-\tau}\left(\sum_i |u_i|^2\right)^{1/2}.\]
\item Suppose $(\cY_{ij}:i,j=1,\ldots,n,\;i \neq j)$ satisfy
$\E_i[\cY_{ij}]=\E_j[\cY_{ij}]=0$ and, for all
$S \subset \{1,\ldots,n\}$ with $i,j \notin S$ and $|S| \leq \ell$,
\[Q_S[\cY_{ij}] \prec_\ell n^{-\tau-|S|/2}.\]
Then for any deterministic $(u_{ij} \in \C:i,j=1,\ldots,n,\;i \neq j)$,
\[\sum_{i \neq j} u_{ij}\cY_{ij} \prec n^{-\tau}\left(\sum_{i \neq j}
|u_{ij}|^2\right)^{1/2}.\]
\item Suppose $(\cY_{ijkl}:i,j,k,l=1,\ldots,n \text{ all distinct})$ satisfy
$\E_i[\cY_{ijkl}]=\E_j[\cY_{ijkl}]=\E_k[\cY_{ijkl}]=\E_l[\cY_{ijkl}]=0$ and,
for all $S \subset
\{1,\ldots,n\}$ with $i,j,k,l \notin S$ and $|S| \leq \ell$,
\[Q_S[\cY_{ijkl}] \prec_\ell n^{-\tau-|S|/2}.\]
Then for any deterministic $(u_{ij} \in \C:i,j=1,\ldots,n,\;i \neq j)$ and
$(v_{kl} \in \C:k,l=1,\ldots,n,\;k \neq l)$,
\[\mathop{\sum_{i,j,k,l}}_{\text{all distinct}} u_{ij}v_{kl}\cY_{ijkl}
\prec n^{-\tau}\left(\sum_{i \neq j} |u_{ij}|^2\right)^{1/2}
\left(\sum_{k \neq l} |v_{kl}|^2\right)^{1/2}.\]
\end{enumerate}
\end{lemma}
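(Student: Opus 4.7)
The plan is to prove each of (a), (b), (c) by estimating a high moment $\E|\cZ|^{2p}$ for arbitrary integer $p\geq 1$ and then invoking Markov's inequality together with the definition of $\prec$. Since $p$ can be taken arbitrarily large, it suffices to show $\E|\cZ|^{2p}\leq n^{\epsilon}(n^{-\tau})^{2p}\,(\text{weight norm})^{2p}$ for every $\epsilon>0$ and all sufficiently large $n$, which reduces each statement to a combinatorial moment bound.

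For part (a), expand
\begin{equation*}
\E\Big|\sum_i u_i\cY_i\Big|^{2p}=\sum_{i_1,\ldots,i_{2p}}\prod_s u_{i_s}^{\sharp_s}\,\E\prod_s \cY_{i_s}^{\sharp_s},
\end{equation*}
with $\sharp_s\in\{1,*\}$ tracking complex conjugation, and sort the multi-indices by the partition $\pi$ of $\{1,\ldots,2p\}$ where $s\sim t\iff i_s=i_t$. When $\pi$ has no singleton block, the crude bound $|\cY_{i_s}|\prec n^{-\tau}$ (with Lemma \ref{lemma:expectation} used to pass to conditional expectations) yields $|\E\prod_s\cY_{i_s}^{\sharp_s}|\prec n^{-2p\tau}$, and monotonicity of $\ell^q$-norms ($\|u\|_{m_k}\leq\|u\|_2$ for $m_k\geq 2$) gives
\begin{equation*}
\sum_{j_1,\ldots,j_r\text{ distinct}}\prod_k|u_{j_k}|^{m_k}\leq\prod_k\|u\|_{m_k}^{m_k}\leq\|u\|_2^{2p}
\end{equation*}
whenever the block sizes satisfy $m_k\geq 2$ and $\sum_k m_k=2p$. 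Since the number of such partitions depends only on $p$, the no-singleton contribution is already bounded by a constant times $n^{-2p\tau}\|u\|_2^{2p}$.

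The main obstacle is handling partitions with singleton blocks, where centering must be converted into quantitative smallness. Setting $S=\{i_1,\ldots,i_{2p}\}$, I expand each factor via the telescoping
\begin{equation*}
\cY_{i_t}=\Q_{i_t}\cY_{i_t}=\sum_{A\subset S:\,i_t\in A}\E_{S\setminus A}\Q_A\cY_{i_t}.
\end{equation*}
The resulting expectation $\E\prod_s\E_{S\setminus A_s}\Q_{A_s}\cY_{i_s}^{\sharp_s}$ vanishes unless every $j\in S$ appearing in any $A_s$ appears in at least two of them, since $\E_j\Q_j=0$ would otherwise kill the factor indexed by the sole $A_s$ containing $j$. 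In particular, the index $i_{s_0}$ of each singleton slot $s_0$ must be contained in some $A_s$ with $s\neq s_0$ and $i_s\neq i_{s_0}$, forcing $|A_s|\geq 2$ there. The weak-dependence hypothesis $\Q_A\cY_{i_t}\prec n^{-\tau-|A|/2}$ then contributes an extra $n^{-1/2}$ for each such enlargement, which exactly compensates the cost $n^{1/2}$ per singleton in the weight sum (which arises from the crude bound $\|u\|_1\leq\sqrt{n}\,\|u\|_2$). Applying Lemma \ref{lemma:expectation} to convert these bounds to control of conditional expectations yields $\E|\cZ|^{2p}\prec n^{-2p\tau}\|u\|_2^{2p}$, completing part (a).

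Parts (b) and (c) follow the same template with more elaborate bookkeeping. For (b) there are $4p$ ``half-slots'' $\{(i_s,j_s)\}_{s=1}^{2p}$ partitioned by equality of values, and the double centering $\E_i\cY_{ij}=\E_j\cY_{ij}=0$ allows the singleton-pairing argument to run on each half-slot independently; the no-singleton bound uses the two-index $\ell^q$-monotonicity $\sum_{i\neq j}|u_{ij}|^{m_k}\leq(\sum_{i\neq j}|u_{ij}|^2)^{m_k/2}$ applied to the joint index. For (c) there are $8p$ half-slots split between the $u$- and $v$-factors with fourfold centering, and $\ell^q$-monotonicity is applied separately to the $u$- and $v$-weight families, producing the product bound $(\sum|u|^2)^p(\sum|v|^2)^p$. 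Throughout, the essential mechanism is the combinatorial matching of each singleton value with a forced enlargement of some $\Q$-set, the $n^{-1/2}$ gain from weak dependence exactly balancing the cost of an extra free summation index; verifying that this matching can always be carried out, and that the combinatorial factors from choosing the $A_s$'s remain bounded by constants depending only on $p$, is the main step of the argument.
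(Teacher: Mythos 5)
Your approach is essentially the same as the paper's for part (a): moment method, $\E_j+\Q_j$ telescoping (you expand each factor over the full index set $S$ whereas the paper expands only over the singleton set $\cT$, but this is a harmless variation), the vanishing-unless-every-active-index-is-doubly-covered observation, and the resulting $n^{-1/2}$ gain per singleton balancing the $n^{1/2}$ cost from $\|u\|_1 \leq \sqrt{n}\,\|u\|_2$. The accounting you give for part (a) is correct and follows the paper's Lemma \ref{lemma:expectation} in the same way.

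Where your sketch has a genuine gap is in parts (b) and (c), specifically in the combinatorial weight bound for the no-singleton (and more generally, fixed-singleton-count) configurations. You assert that ``two-index $\ell^q$-monotonicity $\sum_{i\neq j}|u_{ij}|^{m_k} \leq (\sum_{i\neq j}|u_{ij}|^2)^{m_k/2}$ applied to the joint index'' suffices, but this only handles the case where the $4p$ positions pair up within the slots $(i_s,j_s)$, i.e.\ where two pairs coincide as ordered or unordered pairs. The actual non-singleton condition is on the $4p$ individual positions, and it permits configurations in which, say, $j_s = i_t$ for $s\neq t$ while $i_s, j_t$ are matched elsewhere, producing chains and cycles. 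In such cases $\prod_s |u_{i_sj_s}|$ does not factor into powers of $|u_{ij}|$ over independent index groups, and $\ell^q$-monotonicity alone gives no bound. For example, the cyclic configuration $j_1=i_2, j_2=i_3, \ldots, j_{2p}=i_1$ contributes $\Tr|U|^{2p}$, which requires a trace/Hilbert--Schmidt argument, not $\ell^q$ interpolation. The paper handles this with a separate and non-trivial combinatorial lemma (Lemma~\ref{lemma:combinatorial}): the index tuple is encoded as a multigraph, even-degree components are bounded via Eulerian-cycle trace decompositions, odd-degree vertices with degree $\geq 2$ are eliminated by path-contraction, and the degree-1 (singleton) vertices are peeled off by Cauchy--Schwarz to produce the $n^{s/2}$ factor. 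Without an argument of this kind, the estimate $\sum_{\i,\j:\,|\cT(\i,\j)|=t}|u_{\i,\j}| \prec n^{t/2}$ that your proof needs is not established, and the claimed bound for parts (b) and (c) does not follow. You do acknowledge that verifying the matching ``is the main step of the argument,'' but the mechanism you propose for carrying it out is not the right one.
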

\begin{proof}
The proof is similar to the ``Alternative proof of Theorem 4.7'' presented in
\cite[Appendix B]{erdoslocalSC}.
Fix any constants $\eps,D>0$, and choose an even integer $\ell$ such that
$(\ell-1)\eps>D$. For part (a), let us normalize so that $\sum_i |u_i|^2=1$.
We apply the moment method and bound the quantity
\begin{equation}\label{eq:fluctuationmoment}
\E\left[\left|\sum_i u_i \cY_i\right|^\ell\right]
=\sum_\i u_\i\E[\cY_\i],
\end{equation}
where we denote as shorthand
\[\i=(i_1,\ldots,i_\ell),\qquad
\sum_\i=\sum_{i_1,\ldots,i_\ell=1}^n,
\qquad u_\i=\prod_{a=1}^{\ell/2} u_{i_a} \prod_{a=\ell/2+1}^{\ell}
\overline{u_{i_a}},
\qquad \cY_\i=\prod_{a=1}^{\ell/2} \cY_{i_a}
\prod_{a=\ell/2+1}^\ell \overline{\cY_{i_a}}.\]
Fix $\i$, and let $\cT=\cT(\i) \subset \{1,\ldots,n\}$ be the indices
that appear exactly once in $\i$. Applying the identity
\[\cY=\left(\prod_{j \in \cT}(\E_j+\Q_j)\right)\cY
=\sum_{S \subseteq \cT} \E_{\cT \setminus S}\Q_S\cY\]
to each $\cY_{i_a}$ and expanding the product of the sums,
\[\cY_{\i}=\sum_{S_1,\ldots,S_\ell \subseteq \cT} \cY(S_1,\ldots,S_\ell),
\qquad
\cY(S_1,\ldots,S_\ell)=\prod_{a=1}^\ell
\E_{\cT \setminus S_a}\Q_{S_a} \tilde{\cY}_{i_a},\qquad
\tilde{\cY}_{i_a}=\begin{cases} \cY_{i_a} & a \leq \ell/2 \\
\overline{\cY_{i_a}} & a \geq \ell/2+1.\end{cases}\]
Note that $Q_{i_a}\tilde{\cY}_{i_a}=\tilde{\cY}_{i_a}$, so
(\ref{eq:QSYi}) and Lemma \ref{lemma:expectation} yield
\[\E_{\cT \setminus S_a}\Q_{S_a} \tilde{\cY}_{i_a} \prec_\ell
\Q_{S_a} \tilde{\cY}_{i_a} \prec_\ell
n^{-\tau-(|S_a \setminus \{i_a\}|)/2}.\]
Then, taking the product over all $a=1,\ldots,\ell$ and applying
$\sum_a |S_a \setminus \{i_a\}| \geq -|\cT|+\sum_a |S_a|$,
\begin{equation}\label{eq:YS1Slbound}
\cY(S_1,\ldots,S_\ell) \prec_\ell n^{-\ell \tau+\frac{|\cT|}{2}-\sum_{a=1}^\ell
\frac{|S_a|}{2}}.
\end{equation}
Next, note that if $i_a \in \cT$, then $\Q_{S_a}\cY_{i_a}=0$ and
$\cY(S_1,\ldots,S_\ell)=0$ unless $i_a \in S_a$.
Furthermore, if $i_a \in S_a$ but $i_a \notin S_b$ for all $b \neq a$, then
$\E_{\cT \setminus S_b}\Q_{S_b}\tilde{\cY}_{i_b}$ does not depend on
$\x_{i_a}$ for all $b \neq a$, so we have
\[\E_{i_a}[\cY(S_1,\ldots,S_\ell)]=\E_{i_a}\left[\E_{\cT \setminus S_a}
\Q_{S_a}\tilde{\cY}_{i_a}\right] \prod_{b:b \neq a} (\E_{\cT \setminus
S_b}\Q_{S_b}\tilde{\cY}_{i_b})=0.\]
Thus if $\E[\cY(S_1,\ldots,S_\ell)] \neq 0$,
then each $i_a \in \cT$ must belong to both $S_a$ and at least one other
$S_b$, so $\sum_a |S_a| \geq 2|\cT|$. Then
(\ref{eq:YS1Slbound}) and Lemma \ref{lemma:expectation} yield
$\E[\cY(S_1,\ldots,S_\ell)] \prec_\ell n^{-\ell\tau-|\cT|/2}$.
As the number of choices of subsets
$S_1,\ldots,S_\ell \subseteq \cT$ is an
$\ell$-dependent constant, we arrive at
\[\E[\cY_\i] \prec_\ell n^{-\ell\tau-|\cT|/2}.\]

Returning to (\ref{eq:fluctuationmoment}), we obtain
\[\E\left[\left|\sum_i u_i\cY_i\right|^\ell\right]
\prec_\ell \sum_{t=0}^\ell n^{-\ell\tau-t/2} \sum_{\i:|\cT(\i)|=t} |u_{\i}|.\]
We may separate the sum over $\{\i:|\cT(\i)|=t\}$ as a sum first over
groupings of the indices $i_1,\ldots,i_\ell$ that coincide, followed by a sum
over distinct values of those indices. Under our normalization,
\[\sum_i |u_i| \leq n^{1/2}, \qquad \sum_i |u_i|^k \leq 1
\text{ for all } k \geq 2.\]
Furthermore, the number of groupings is an $\ell$-dependent constant, so
\[\sum_{\i:|\cT(\i)|=t} |u_{\i}| \prec_\ell n^{t/2},\qquad
\E\left[\left|\sum_i u_i\cY_i\right|^\ell\right] \prec_\ell n^{-\ell\tau}.\]
The latter statement 
means that the expectation is (deterministically) at most $n^{-\ell\tau+\eps}$
for all $n \geq n_0(\ell,\eps)$. Then, as $\ell$ depends only on $\eps$ and
$D$, and we chose $(\ell-1)\eps>D$,
Markov's inequality yields for all $n \geq n_0(\eps,D)$
\[\P\left[\left|\sum_i u_i\cY_i\right|>n^{-\tau+\eps}\right]
\leq \frac{n^{-\ell\tau+\eps}}{(n^{-\tau+\eps})^\ell}<n^{-D}.\]
As $\eps,D>0$ were arbitrary, this concludes the proof of part (a).

Parts (b) and (c) are similar, except for an additional combinatorial argument
encapsulated in Lemma \ref{lemma:combinatorial} below: For (b), normalize so
that $\sum_{i \neq j} |u_{ij}|^2=1$ and write
\[\E\left[\left|\sum_{i \neq j} u_{ij} \cY_{ij}\right|^\ell\right]
=\sum_{\i,\j} u_{\i,\j}\E[\cY_{\i,\j}]\]
where
\[\sum_{\i,\j}=\sum_{i_1 \neq j_1} \ldots \sum_{i_\ell \neq j_\ell},
\qquad u_{\i,\j}=\prod_{a=1}^{\ell/2} u_{i_aj_a}
\prod_{a=\ell/2+1}^\ell \overline{u_{i_aj_a}},
\qquad \cY_{\i,\j}=\prod_{a=1}^{\ell/2} \cY_{i_aj_a}
\prod_{a=\ell/2+1}^\ell \overline{\cY_{i_aj_a}}.\]
Fixing $\i,\j$ and letting $\cT=\cT(\i,\j)$ be the 
indices that appear exactly once in the combined vector $(\i,\j)$, the same
argument yields $\E[\cY_{\i,\j}] \prec_\ell n^{-\ell\tau-|\cT|/2}$.
Applying Lemma \ref{lemma:combinatorial} with $B_a[i,j]=|u_{ij}|$
and $B_a[i,i]=0$ for all $a=1,\ldots,\ell$ and $i \neq j$, we get
\[\sum_{\i,\j:|\cT(\i,\j)|=t} |u_{\i,\j}| \prec_\ell n^{t/2},\]
which concludes the proof in the same way as part (a). For part (c),
normalize so that $\sum_{i \neq j} |u_{ij}|^2=\sum_{k \neq l} |v_{kl}|^2=1$,
and write analogously
\[\E\left[\left|\mathop{\sum_{i,j,k,l}}_{\text{distinct}}
u_{ij}v_{kl} \cY_{ijkl}\right|^\ell\right]
=\sum_{\i,\j,\mathbf{k},\mathbf{l}} u_{\i,\j}v_{\mathbf{k},\mathbf{l}}
\E[\cY_{\i,\j,\mathbf{k},\mathbf{l}}].\]
Letting $\cT$ be the indices appearing exactly
once in the combined vector $(\i,\j,\mathbf{k},\mathbf{l})$,
the bound $\E[\cY_{\i,\j,\mathbf{k},\mathbf{l}}] \prec
n^{-\ell\tau-|\cT|/2}$ follows as before, and the bound
\[\sum_{\i,\j,\mathbf{k},\mathbf{l}:|\cT|=t}
|u_{\i,\j}v_{\mathbf{k},\mathbf{l}}| \prec n^{t/2}\]
follows from Lemma \ref{lemma:combinatorial} applied with
$B_a[i,j]=|u_{ij}|$ and $B_a[i,i]=0$ for $a=1,\ldots,\ell$ and
$B_a[i,j]=|v_{ij}|$ and $B_a[i,i]=0$ for $a=\ell+1,\ldots,2\ell$.
\end{proof}

\begin{lemma}\label{lemma:combinatorial}
Fix $\ell \geq 1$. For each $a=1,\ldots,\ell$, let $B_a=(B_a[i,j]) \in \R^{n
\times n}$ satisfy
\[B_a[i,j] \geq 0, \qquad B_a[i,i]=0, \qquad \|B_a\|_\HS \leq 1\]
for all $i,j \in \{1,\ldots,n\}$.
For $(\i,\j)=(i_1,\ldots,i_\ell,j_1,\ldots,j_\ell) \in
\{1,\ldots,n\}^{2\ell}$, denote by $s(\i,\j)$ the number of elements of
$\{1,\ldots,n\}$ that appear exactly once in $(\i,\j)$. Then for a
constant $C_\ell>0$ and all $s \in \{0,\ldots,2\ell\}$,
\[\mathop{\sum_{\i,\j \in \{1,\ldots,n\}^{2\ell}}}_{s(\i,\j)=s}
\prod_{a=1}^\ell B_a[i_a,j_a] \leq C_\ell n^{s/2}.\]
\end{lemma}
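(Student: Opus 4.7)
The plan is to organize the sum by the ``coincidence partition'' $P$ of the $2\ell$ positions in $(\i,\j)$, and then bound each partition's contribution separately. Two positions lie in the same block of $P$ iff they are assigned the same value in $\{1,\ldots,n\}$. Since the number of partitions of a $2\ell$-element set is a constant $B_{2\ell}$ depending only on $\ell$, and our sum is a disjoint sum over partitions $P$ with exactly $s$ singleton blocks, it suffices to show that for every such $P$,
\[
S(P):=\sum_{(v_1,\ldots,v_{c(P)})\in [n]^{c(P)}}\prod_{a=1}^\ell
B_a[v_{p(a)},v_{q(a)}]\;\le\;C'_\ell\,n^{s/2},
\]
where $c(P)$ is the number of blocks, $p(a),q(a)\in\{1,\ldots,c(P)\}$ are the blocks containing positions $a$ and $a+\ell$, and I have dropped the distinctness constraint on $v_1,\ldots,v_{c(P)}$ (an upper bound). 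Blocks containing both $a$ and $a+\ell$ for some $a$ contribute zero via $B_a[i,i]=0$ and can be ignored.

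Next I would view $P$ as a multigraph $G_P$ on $c(P)$ vertices whose $\ell$ edges are labeled by $B_1,\ldots,B_\ell$; singleton blocks correspond exactly to degree-one vertices. The bound will be proved by iteratively peeling leaves. If a leaf $v$ is attached to $u$ by edge $B_a$:
\emph{(i)} if $u$ is also a leaf (isolated edge), then $\sum_{v,u}B_a[v,u]\le n\|B_a\|_{\HS}\le n$ by Cauchy--Schwarz, accounting for two singletons;
\emph{(ii)} if $u$ is not a leaf, then $\sum_v B_a[v,u]\le \sqrt{n}\,C_a[u]$ where $C_a[u]=\bigl(\sum_v B_a[v,u]^2\bigr)^{1/2}$ satisfies $\sum_u C_a[u]^2\le\|B_a\|_{\HS}^2\le 1$.
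Each singleton thus contributes a factor of $n^{1/2}$, producing the prefactor $n^{s/2}$, and the remaining sum is a tensor contraction over the non-leaf vertices in $G_P$ whose factors are the original $B_a$ on type-(iii) edges together with the one-index weights $C_a$ from type-(ii) edges; all of these factors have $\ell^2$-norm at most $1$ in their arguments.

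The remaining obstacle---the real heart of the argument---is to show that this residual contraction is bounded by a constant independent of $n$. The favorable structural fact is that every surviving vertex had degree $\ge 2$ in $G_P$, so the reduced hypergraph still has minimum (weighted) degree $\ge 2$. I would induct on the number of surviving vertices, at each step picking a vertex $v$ and applying Cauchy--Schwarz to split the product of factors touching $v$ into two groups, summing the square of each over $v$; because $v$ has degree $\ge 2$, this ``absorbs'' $v$ entirely and replaces the factors that touched $v$ with new $\ell^2$-unit tensors on the remaining vertices. The extremal case is a union of pure cycles, where one reduces to $|\Tr(B_{a_1}\cdots B_{a_k})|\le\|B_{a_1}\|_{\HS}\|B_{a_2}\|_{\HS}\prod_{j\ge 3}\|B_{a_j}\|_{\mathrm{op}}\le 1$, using that $\|\cdot\|_{\mathrm{op}}\le\|\cdot\|_{\HS}\le 1$.

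The main difficulty I expect is bookkeeping in this peeling argument: one must verify that after each Cauchy--Schwarz step the induced tensors still satisfy the $\ell^2$-unit-norm property on their remaining indices, and that the minimum-degree-$\ge 2$ property is preserved (or that one can always select a vertex where the splitting still works). Once this is in place, the constant $C_\ell$ in the statement arises by summing the contributions $S(P)\le C'_\ell n^{s/2}$ over the $B_{2\ell}$ partitions $P$ with $s(P)=s$.
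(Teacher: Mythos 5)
Your overall reduction---group $(\i,\j)$ by the coincidence partition, drop the distinctness constraint using nonnegativity of the $B_a$, view the partition as an edge-labeled multigraph, and pull out $n^{1/2}$ per degree-one vertex---mirrors the paper's argument; the per-leaf peeling in your steps (i)--(ii) is a workable variant of the paper's single global Cauchy--Schwarz over the degree-one indices (which squares-and-doubles the remaining subgraph rather than leaving behind one-index weights $C_a[u]$). The genuine gap is exactly what you flag as the ``real heart'': bounding the residual contraction over the degree-$\geq 2$ vertices by a constant. Two concrete problems with the sketch. First, a multigraph with all degrees $\geq 2$ is not in general a union of pure cycles---it may have vertices of odd degree $\geq 3$ (a theta graph, or $K_4$), so the ``extremal case'' $\Tr(B_{a_1}\cdots B_{a_k})$ does not describe what the proposed induction must terminate on, and it is not clear the vertex-absorption reaches it. Second, the assertion that absorbing a degree-$d$ vertex by Cauchy--Schwarz ``replaces the factors touching $v$ with new $\ell^2$-unit tensors'' is not automatic for $d \geq 3$: a quantity such as $\bigl(\sum_v g(v,w_2)^2 h(v,w_3)^2\bigr)^{1/2}$ has $\ell^2$-norm controlled only after invoking the uniform bound on row marginals $\sum_j f(i,j)^2 \leq 1$ (a consequence of $\|f\|_\HS \leq 1$ that you do not mention), and one would then have to track this pointwise-marginal property through the recursion on top of the unit-$\ell^2$ property---precisely the bookkeeping you defer. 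The paper avoids all of this with two structural moves: when every vertex has even degree, each connected component admits an Eulerian cycle and the contraction is a product of traces, bounded via $\Tr(A_1 \cdots A_k) \leq \|A_1\|_\HS \|A_2\|_\HS \cdots \|A_k\|_\HS \leq 1$; when some vertex has odd degree $\geq 3$, one finds a path between two odd-degree vertices whose interior vertices all have degree exactly $2$, contracts it to the single $[u,v]$ entry of the corresponding matrix product (which is $\leq 1$ since the product has Hilbert--Schmidt norm $\leq 1$), and iterates until all degrees are even. You should replace the generic vertex-absorption induction with this Eulerian-cycle-plus-path-contraction argument, or else carry out the unit-norm propagation in full.
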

\begin{proof}
Define an equivalence relation $(\i,\j) \sim (\i',\j')$ if a permutation of
$\{1,\ldots,n\}$ maps $(\i,\j)$ to $(\i',\j')$. For an equivalence class $E$,
define $s(E)=s(\i,\j)$ for any $(\i,\j) \in E$. Let $\cE$ be the set of
equivalence classes where $i_a \neq j_a$ for all $a=1,\ldots,\ell$.
Then, as $B_a$ has zero diagonal,
\[\mathop{\sum_{\i,\j \in \{1,\ldots,n\}^{2\ell}}}_{s(\i,\j)=s}
\prod_{a=1}^\ell B_a[i_a,j_a]
=\sum_{E \in \cE:\,s(E)=s} B(E),\qquad
B(E)=\sum_{(\i,\j) \in E} \prod_{a=1}^\ell B_a[i_a,j_a].\]

For $E \in \cE$, if $(\i,\j) \in E$ has $m$ distinct values, then
let $(\u,\v)=(u_1,\ldots,u_\ell,v_1,\ldots,v_\ell) \in \{1,\ldots,m\}^{2\ell}$
be the canonical element of $E$ where these values are
$\{1,\ldots,m\}$ in sequential order. Identify $E$ with the
directed multi-graph on the vertex set $\{1,\ldots,m\}$ with the $\ell$ edges
$\{(u_a,v_a):a=1,\ldots,\ell\}$. Writing the summation defining $B(E)$ as a
summation over the $m$ possible distinct index values,
\[B(E)=\mathop{\sum_{i(1),\ldots,i(m)=1}}_{\text{distinct}}^n \;
\prod_{a=1}^\ell B_a[i(u_a),i(v_a)].\]
As $B_a$ has nonnegative entries, we may drop the distinctness condition in the
sum to obtain the upper bound
\[B(E) \leq U(E)=\sum_{i(1),\ldots,i(m)=1}^n  \; \prod_{a=1}^\ell
B_a[i(u_a),i(v_a)].\]
The number of equivalence classes in $\cE$ is a constant $C_\ell>0$, so it
suffices to show for all $E \in \cE$
\begin{equation}\label{eq:UEbound}
U(E) \leq n^{s(E)/2}.
\end{equation}

Let the degree of a vertex be the total number of its in-edges and out-edges.
Then $s(E)$ is the number of degree-1 vertices.
Consider first a class $E$ where every
vertex has even degree. Then each connected component of
the multi-graph may be traversed as an Eulerian cycle, where each edge is
traversed exactly once in either its forward or backward direction.
Letting $\cC$ be the set of connected components, this yields
\[U(E) \leq \prod_{C \in \cC} \Tr\left(\prod_{a:(u_a,v_a) \in C}
\tilde{B}_a\right),\]
where the second product over edges is taken in the order of the Eulerian
cycle of $C$, and $\tilde{B}_a=B_a$ if $(u_a,v_a)$ is traversed in the forward
direction and
$\tilde{B}_a=B_a'$ if it is traversed in the backward direction. (This holds
because each term of $U(E)$ appears on the right upon expanding the traces,
and the extra terms on the right are nonnegative.)
Note that for any $k \geq 2$ and any matrices $A_1,\ldots,A_k$,
\[\Tr A_1\ldots A_k \leq \|A_1\|_\HS \cdot \|A_2\ldots A_k\|_\HS
\leq \|A_1\|_\HS \|A_2\|_\HS \ldots \|A_k\|_\HS.\]
The multi-graph has no self-loops, so each $C \in \cC$ has at least 2 edges.
Applying this and $\|\tilde{B}_a\|_\HS \leq 1$ for each $a$, we obtain $U(E)
\leq 1$.

Next, consider $E$ where every vertex has degree at least 2, and there
is some vertex $u$ of odd degree. Then there is another
vertex $v$ of odd degree in the same connected component as $u$, because the sum
of vertex degrees in a connected component is even. We may pick $v$ such that
there is a path $P$ from $u$ to $v$, traversing edges either forwards or
backwards, where every intermediary vertex between $u$ and $v$ has degree 2.
(Otherwise, replace $v$ by the first such vertex along any path from $u$ to
$v$.) Let us remove the path by summing over the intermediary vertex labels:
For notational convenience, suppose the intermediary vertices are
$p+1,\ldots,m$. Then, since only edges in the path $P$ touch the vertices
$p+1,\ldots,m$, we have
\[U(E)=\sum_{i(1),\ldots,i(p)} \prod_{a:(u_a,v_a) \notin P}
B_a[i(u_a),i(v_a)]
\left(\sum_{i(p+1),\ldots,i(m)} \prod_{a:(u_a,v_a) \in P}
B_a[i(u_a),i(v_a)]\right).\]
Note that the quantity in parentheses is element $[u,v]$ of the matrix
\[\prod_{a:(u_a,v_a) \in P} \tilde{B}_a,\]
where the product is taken in the order of traversal of $P$, and
$\tilde{B}_a=B_a$ or $B_a'$ depending on the direction of traversal of edge $a$.
As the Hilbert-Schmidt norm of this product is at most 1, we obtain
\[U(E) \leq U(E'), \qquad
U(E')=\sum_{i(1),\ldots,i(p)} \prod_{a:(u_a,v_a) \notin P} B_a[i(u_a),i(v_a)].\]
Here $E'$ corresponds to the multi-graph with
path $P$ and intermediary vertices $p+1,\ldots,m$ removed.
Each vertex of this new multi-graph still has degree at least
2---hence we may iteratively apply this procedure until the resulting graph has
no vertices of odd degree. Then $U(E) \leq 1$ follows from the first case
above.

Finally, consider $E$ where $s(E)=s>0$. For notational convenience, let
$1,\ldots,s$ be the vertices of degree 1. Then, applying the general inequality
$\sum_{i=1}^N w_i \leq N^{1/2}(\sum_{i=1}^N w_i^2)^{1/2}$ with $N=n^s$, we have
\[U(E) \leq n^{s/2}U(E')^{1/2},\qquad U(E')=
\sum_{i_1,\ldots,i_s=1}^n \left(\sum_{i_{s+1},\ldots,i_m=1}^n
\prod_{a=1}^\ell B_a[i(u_a),i(v_a)]\right)^2.\]
The quantity $U(E')$ corresponds to a multi-graph with $s+2(m-s)$ vertices and
$2\ell$ edges, where each vertex $s+1,\ldots,m$ is duplicated into two
copies. Each of the original vertices $1,\ldots,s$ now has degree 2, and each
copy of $s+1,\ldots,m$ continues to have degree at least 2. Then $U(E') \leq 1$
from the above, so $U(E) \leq n^{s/2}$. This establishes (\ref{eq:UEbound}) in
all cases, concluding the proof.
\end{proof}

\subsection{Preliminaries}

We first reduce the proofs of Lemmas \ref{lemma:matrixlocallaw} and
\ref{lemma:secondorderapprox} to the case where $F$ is diagonal and invertible,
and $z$ belongs to
\[U_\delta^\C=\{z \in U_\delta:\,|\Im z| \geq N^{-2}\}.\]
This latter reduction is for convenience of verifying the moment
condition of Lemma \ref{lemma:expectation}.

\begin{lemma}\label{lemma:reduction}
Suppose Lemmas \ref{lemma:matrixlocallaw} and \ref{lemma:secondorderapprox}
hold for $z \in U_\delta^\C$ and when $F$ is replaced by any invertible diagonal
matrix $T$ satisfying $\|T\| \leq C$. Then they hold also for the given
matrix $F$ and any $z \in U_\delta$.
\end{lemma}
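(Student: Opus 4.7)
The plan is to perform three reductions in sequence: (1) reduce the general symmetric $F$ to a diagonal matrix by a rotational invariance argument, (2) reduce a diagonal matrix with possibly zero eigenvalues to an invertible one by a small perturbation, and (3) extend the spectral parameter from $U_\delta^\C$ to all of $U_\delta$ by a continuity-in-$z$ argument.

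For step (1), I would use that $F$ is symmetric (since $B=B'$), write $F=Q'DQ$ with $Q \in O(M)$ and $D$ diagonal, and set $\tilde{X}=QX$. Since $X$ has i.i.d.\ Gaussian entries, $\tilde{X} \stackrel{L}{=} X$, and $X'FX=\tilde{X}'D\tilde{X}$. Then $\Delta(z)=Q'\tilde{\Delta}(z)Q$, where $\tilde{\Delta}$ is defined from $\tilde{X}$ and $D$. Setting $\tilde{V}=QVQ'$ and $\tilde{W}=QWQ'$ (which preserve both $\|\cdot\|$ and $\|\cdot\|_\HS$), one has $\Tr \Delta V=\Tr \tilde{\Delta}\tilde{V}$ and $\Tr \Delta V \Delta W=\Tr \tilde{\Delta}\tilde{V}\tilde{\Delta}\tilde{W}$, while the deterministic approximants in Lemma \ref{lemma:secondorderapprox} are similarly invariant since $(\Id+m_0F)^{-2}=Q'(\Id+m_0D)^{-2}Q$. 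For step (2), if $D$ has zero eigenvalues, I would replace $D$ by $D_\eta=D+\eta \Id$ and note that $m_0$, $\supp(\mu_0)$, $\Delta(z)$, and the traces $\Tr \Delta V$, $\Tr \Delta V \Delta W$ all vary continuously in $\eta$ for any fixed realization of $X$ and any $z \in U_{\delta/2}^\C$ (using stability of the resolvent $G_N$ and of $(\Id+m_0 D_\eta)^{-1}$). The high-probability bounds for $D_\eta$ are uniform in small $\eta$, so taking $\eta \to 0$ yields the bound for $D$.

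For step (3), fix $z \in U_\delta$ and set $z'=z+iN^{-2}$, which lies in $U_{\delta/2}^\C$ for large $n$. On the high-probability event that $\spec(X'FX) \subset \supp(\mu_0)_{\delta/2}$ and $\|X\|<C$ (provided by Theorem \ref{thm:sticktobulk} and standard Gaussian operator norm estimates), both $G_N(z)$ and $G_N(z')$ have norm $O(1)$, so the resolvent identity gives $\|\Delta(z)-\Delta(z')\| \leq C|z-z'|=CN^{-2}$, and hence $\|\Delta(z)-\Delta(z')\|_\HS \leq \sqrt{M}\cdot CN^{-2}=O(N^{-3/2})$. By Cauchy--Schwarz $|\Tr(\Delta(z)-\Delta(z'))V| \leq \|\Delta(z)-\Delta(z')\|_\HS\|V\|_\HS=O(N^{-3/2})\|V\|_\HS$, which is negligible compared to $n^{-1/2+\eps}\|V\|_\HS$; likewise $m_0(z)(\Id+m_0(z)F)^{-1}$ is Lipschitz in $z$ with the same scale. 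For Lemma \ref{lemma:secondorderapprox}, the analogous identity $\Tr \Delta V \Delta W-\Tr \Delta' V \Delta' W=\Tr(\Delta-\Delta')V\Delta W+\Tr \Delta' V(\Delta-\Delta')W$ combined with $|\Tr AB| \leq \|A\|_\HS\|B\|_\HS$ and $\|X Y\|_\HS \leq \|X\|\|Y\|_\HS$ yields an error of order $\|\Delta-\Delta'\|_\HS \cdot \max(\|\Delta\|,\|\Delta'\|)\|V\|\|W\|=O(N^{-3/2})\|V\|\|W\|$, again much smaller than $n^{1/2+\eps}\|V\|\|W\|$; the deterministic approximant is differentiable in $z$ with bounded derivative on $U_{\delta/2}^\C$, so its change under $z\to z'$ is $O(N^{-2})\|V\|\|W\|$ and also negligible.

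The one place requiring minor care is step (3), where the Lipschitz estimates must be combined with the conversion $\|\cdot\|_\HS \leq \sqrt{M}\|\cdot\|$; as long as $M=O(n)$, the margin of $N^{-3/2}$ versus the target $n^{-1/2+\eps}$ leaves ample room. Steps (1) and (2) are essentially algebraic and qualitative, and so pose no real obstacle.
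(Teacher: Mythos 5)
Your proposal takes the same approach as the paper: diagonalize $F$ by rotational invariance, perturb to an invertible $T$ and a spectral parameter in $U_\delta^\C$, and control the resulting error by resolvent/Lipschitz estimates on the high-probability event furnished by Theorem \ref{thm:sticktobulk}. The one genuine difference is that the paper takes a single \emph{finite} perturbation of both $T$ and $z$ of size $N^{-2}$ and shows directly that the resulting change in the quantities of Lemmas \ref{lemma:matrixlocallaw}--\ref{lemma:secondorderapprox} is well within the target tolerance, whereas you take $\eta\to 0$; the limit route works but requires some care (one must argue via a Fatou-type argument or fix a specific small $\eta$, since the bad events are $\eta$-dependent), so the paper's finite-perturbation phrasing is slightly cleaner. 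A minor technical slip: in your step (3) the error for $\Tr\Delta V\Delta W$ should be $O(N^{-1})\|V\|\|W\|$ rather than $O(N^{-3/2})\|V\|\|W\|$ because converting $\|W\|_\HS$ to $\|W\|$ costs a $\sqrt{M}$ factor, but this is still negligible against $n^{1/2+\eps}$ and does not affect the conclusion.
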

\begin{proof}
Applying rotational invariance in law of $X$ and
the transformations $F \mapsto O'FO$, $X \mapsto O'X$, $V \mapsto O'VO$, and
$W \mapsto O'WO$, we may reduce from $F$ to the diagonal matrix $T$ of its
eigenvalues.

If $T$ is not invertible and/or $z \notin U_\delta^\C$, consider an invertible
matrix $\tilde{T}$ with $\|T-\tilde{T}\| \leq N^{-2}$ and
$\tilde{z} \in U_\delta^\C$ with $|z-\tilde{z}| \leq N^{-2}$.
Then, denoting by $\tilde{m}_0$ and $\tilde{\Delta}$
these quantities defined with $\tilde{T}$ and $\tilde{z}$,
on the high-probability event where
$\spec(X'TX) \subset \supp(\mu_0)_{\delta/2}$ and $\|X\|<C$, we have
\[|m_0-\tilde{m}_0|<CN^{-2},\qquad
|\partial_z m_0-\partial_z \tilde{m}_0|<CN^{-2},\qquad
\|\Delta-\tilde{\Delta}\|<CN^{-2}.\]
Here, the first two statements follow from (\ref{eq:m0stieltjes}) and the
condition $z \in U_\delta$, and the
third applies Proposition \ref{prop:m0regularoutsiderestate} and the identity
$A^{-1}-B^{-1}=A^{-1}(B-A)B^{-1}$. Bounding the trace by $M$ times the operator
norm, one may then verify $\Tr \Delta V-\Tr \tilde{\Delta}V
\prec N^{-1/2}\|V\|_\HS$. Similarly, the quantity on the left of Lemma
\ref{lemma:secondorderapprox} changes by $\O(N^{1/2}\|V\|\|W\|)$ upon
replacing $m_0$ and $\Delta$ by $\tilde{m}_0$ and $\tilde{\Delta}$. So
it suffices to establish Lemmas \ref{lemma:matrixlocallaw} and
\ref{lemma:secondorderapprox} for $\tilde{T}$ and $\tilde{z}$.
\end{proof}

Thus, in the remainder of this section, we consider a diagonal matrix 
\[T=\diag(t_1,\ldots,t_M) \in \R^{M \times M},
\qquad t_\a \neq 0 \text{ for all } t_\a.\]
Define the $(N+M) \times (N+M)$ linearized resolvent
\[G(z)=\begin{pmatrix} -z\Id_N & X' \\ X & -T^{-1} \end{pmatrix}^{-1}
=\begin{pmatrix} G_N(z) & G_o(z)' \\ G_o(z) & G_M(z) \end{pmatrix},\]
where the Schur complement identity yields
\begin{equation}\label{eq:G}
G_N(z)=(X'TX-z\Id_N)^{-1},\qquad G_o(z)=TXG_N(z),\qquad G_M(z)=TXG_N(z)X'T-T.
\end{equation}
From (\ref{eq:GMPiM}), we have
\begin{equation}\label{eq:Deltarewrite}
\Delta(z)=T^{-1}(G_M(z)-\Pi_M(z))T^{-1},\qquad
\Pi_M(z)=-T(\Id+m_0(z)T)^{-1}.
\end{equation}
Note that $G,G_N,G_M$ are symmetric, and $\Pi_M$ is diagonal. 
We omit the spectral argument $z$ when the meaning is clear.\\

\noindent {\bf Notation:}
Define $\I_N=\{1,\ldots,N\}$, $\I_M=\{1,\ldots,M\}$, and the disjoint union
$\I=\I_N \sqcup \I_M$. We index $G_N$ by
$\I_N$, $G_M$ by $\I_M$, and $G$ by $\I$. We use Roman letters
$i,j,k$ for indices in $\I_N$, Greek letters $\a,\b,\g$ for indices in $\I_M$,
and capital letters $A,B,C$ for general indices in $\I$.
We denote by $\x_i \in \R^M$ and $\x_\a \in \R^N$ the $i^{\text{th}}$ column and
$\a^{\text{th}}$ row of $X$, both regarded as column vectors.
For any subset $\mathcal{S} \subset \I$, $X^{(\mathcal{S})}$ denotes $X$
with rows in $\mathcal{S} \cap \I_M$ and columns in $\mathcal{S} \cap \I_N$
removed, $T^{(\mathcal{S})}$ denotes $T$ with rows and columns in $\mathcal{S}
\cap \I_M$ removed, and $G^{(\mathcal{S})},G_N^{(\mathcal{S})}$ etc.\ denote
these quantities defined with $X^{(\mathcal{S})}$ and $T^{(\mathcal{S})}$ in
place of $X$ and $T$. We index these matrices by
$\I_N \setminus \mathcal{S}$ and $\I_M \setminus \mathcal{S}$.

\begin{lemma}[Resolvent identities]\label{lemma:schurcomplement}
\hspace{1in}
\begin{enumerate}[(a)]
\item For all $i \in \I_N$ and $\a \in \I_M$,
\[G_{ii}=-\frac{1}{z+\x_i'G_M^{(i)}\x_i},\qquad
G_{\a\a}=-\frac{t_\a}{1+t_\a \x_\a'G_N^{(\a)}\x_\a}.\]
\item For all $i \neq j \in \I_N$ and $\a \neq \b \in \I_M$, denoting by
$\e_i \in \R^N$ and $\e_\a \in \R^M$ the standard basis vectors for
coordinates $i$ and $\a$,
\begin{align*}
G_{ij}&=-G_{ii}\x_i'G_o^{(i)}\e_j=G_{ii}G_{jj}^{(i)}\x_i'G_M^{(ij)}\x_j,\\
G_{i\a}&=-G_{ii}\x_i'G_M^{(i)}\e_\a=-G_{\a\a}\e_i'G_N^{(\a)}\x_\a,\\
G_{\a\b}&=-G_{\a\a}\e_\b'G_o^{(\a)}\x_\a=G_{\a\a}G_{\b\b}^{(\a)}\x_\a'G_N^{(\a\b)}\x_\b.
\end{align*}
\item For all $C \in \I$ and $A,B \in \I \setminus \{C\}$,
\[G_{AB}^{(C)}=G_{AB}-\frac{G_{AC}G_{CB}}{G_{CC}}.\]
\end{enumerate}
\end{lemma}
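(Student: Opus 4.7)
The plan is to derive all three parts uniformly from the $2\times 2$ block matrix inversion formula applied to $G = H^{-1}$ after permuting the distinguished index $A \in \I$ to the top-left corner of $H = \begin{pmatrix} -z\Id_N & X' \\ X & -T^{-1} \end{pmatrix}$. The one structural observation that makes every inner product collapse is that $T^{-1}$ is diagonal: with the entry $H_{AA}$ removed, the rest of the $A$-th column of $H$ equals $(\mathbf{0}_{N-1},\x_i)$ when $A = i \in \I_N$, and equals $(\x_\a,\mathbf{0}_{M-1})$ when $A = \a \in \I_M$. Consequently every quadratic form or inner product produced by the block-inverse identities pairs a single $X$-row or $X$-column against exactly one block of $G^{(A)}$.

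For part (a), I would invoke the scalar Schur complement $G_{AA} = \bigl(H_{AA} - \w_A'\,G^{(A)}\,\w_A\bigr)^{-1}$, where $\w_A$ is the sparse off-diagonal column identified above. By this sparsity, the quadratic form collapses to $\x_i' G_M^{(i)} \x_i$ when $A = i$, and to $\x_\a' G_N^{(\a)} \x_\a$ when $A = \a$. Combined with $H_{ii} = -z$ or $H_{\a\a} = -1/t_\a$, a one-line rearrangement yields the two stated formulas.

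For part (b), I would use the off-diagonal block formula, which after the same permutation reads $G_{BA} = -G_{AA}\,(G^{(A)}\w_A)_B$. Expanding the inner product against the sparse $\w_A$ isolates a single block of $G^{(A)}$ and a single $X$-factor, giving the first stated form in each case; the two alternative first-form expressions for $G_{i\a}$ and for $G_{\a\b}$ correspond to whether one permutes $i$ vs.\ $\a$, or $\a$ vs.\ $\b$, to the front of $H$. The second forms I would then obtain by a single iteration of the same identity applied to $G^{(A)}$: for instance, the factor $\x_i' G_o^{(i)} \e_j$ is the pairing of $\x_i$ with the $(\cdot,j)$-column of the off-diagonal block of $G^{(i)}$, and applying the first-form identity to the entry $(G^{(i)})_{j\a}$ pulls out the factor $G^{(i)}_{jj}$ and produces $\x_j' G_M^{(ij)}$. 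The $G_{\a\b}$ case is parallel.

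For part (c), I would read off the lower-right block $(M^{-1})_\text{rest} = (M^{(A)})^{-1} + (M^{(A)})^{-1}\w_A\,(M^{-1})_{AA}\,\w_A'(M^{(A)})^{-1}$. Under the identifications $(M^{(A)})^{-1} = G^{(A)}$, $(M^{-1})_{AA} = G_{AA}$, and $(G^{(A)}\w_A)_B = -G_{BA}/G_{AA}$ from part (b), this rearranges to $G_{BC} = G^{(A)}_{BC} + G_{BA}G_{AC}/G_{AA}$, which after relabeling $A \to C$, $B \to A$, $C \to B$ is exactly the identity claimed. The whole argument is purely algebraic, with no probability and no analysis. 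The only real obstacle is bookkeeping: keeping track of which rows and columns of $X$, $T$, and $G$ have been removed at each reduction, and verifying that the sparsity of the columns of $H$ (which comes directly from the diagonality of $T$) is preserved as one passes from $G$ to its successive minors $G^{(\mathcal{S})}$.
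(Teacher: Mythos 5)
Your proof is correct, and it is precisely the standard Schur-complement derivation: the paper itself does not reproduce the argument but simply cites Lemma 4.4 of Knowles and Yin for these identities, and that reference proceeds exactly as you do, by permuting the distinguished index to the corner, reading off the scalar Schur complement for the diagonal entry, the $-G_{AA}(G^{(A)}\w_A)_B$ formula for the off-diagonal entry, and the rank-one update for the remaining block. The one observation you correctly emphasize, namely that the off-diagonal part of column $A$ of $H$ is supported entirely in the opposite block of $\I$ because both $-z\Id_N$ and $-T^{-1}$ are diagonal, is exactly what makes each quadratic form collapse to a single $\x_i' G_M^{(i)} \x_i$ or $\x_\a' G_N^{(\a)} \x_\a$ and what lets the second forms in (b) arise by one further application of the off-diagonal identity inside the minor $G^{(A)}$. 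Your checks of the sparsity propagation and the relabeling in (c) are sound, so the proposal is a complete and faithful expansion of the argument the paper delegates to its reference.
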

\begin{proof}
See \cite[Lemma 4.4]{knowlesyin}.
\end{proof}

For $i \neq j \in \I_N$ and $\a \neq \b \in \I_M$, define
\[\cZ_i=\x_i'G_M^{(i)}\x_i-N^{-1}\Tr G_M^{(i)},
\qquad \cZ_\a=\x_\a'G_N^{(\a)}\x_\a-N^{-1}\Tr G_N^{(\a)},\]
\[\cZ_{ij}=\x_i'G_M^{(ij)}\x_j,\qquad
\cZ_{\a\b}=\x_\a'G_N^{(\a\b)}\x_\b.\]
We will use the following bounds implicitly throughout the remainder of this
section. Note that $(|z| \vee 1)^{-1} \leq 1$ and $|t_\a| \leq C$, so we will
omit these factors in the bounds in certain applications.

\begin{lemma}\label{lemma:entrywisebounds}
For all $z \in U_\delta$,
\begin{enumerate}[(a)]
\item (Norm bounds)
\[\|G_N\| \prec (|z| \vee 1)^{-1}, \qquad \|G_o\| \prec (|z| \vee 1)^{-1},
\qquad \|G_M\| \prec 1.\]
\item (Diagonal bounds) For all $i \in \I_N$ and $\a \in \I_M$,
\[G_{ii} \prec (|z| \vee 1)^{-1}, \qquad G_{ii}^{-1} \prec |z| \vee 1,\qquad
G_{\a\a} \prec t_\a, \qquad G_{\a\a}^{-1} \prec t_\a^{-1}.\]
\item ($\cZ$ bounds) For all $i \neq j \in \I_N$ and $\a \neq \b \in \I_M$,
\[\cZ_i \prec N^{-1/2},
\qquad \cZ_\a \prec (|z| \vee 1)^{-1}N^{-1/2},
\qquad \cZ_{ij} \prec N^{-1/2},
\qquad \cZ_{\a\b} \prec (|z| \vee 1)^{-1}N^{-1/2},\]
\[\e_i'G_N^{(\a)}\x_\a \prec (|z| \vee 1)^{-1}N^{-1/2},\qquad
\x_i'G_M^{(i)}\e_\a \prec t_\a N^{-1/2},\]
\[\x_i'G_o^{(i)}\e_j \prec (|z| \vee 1)^{-1}N^{-1/2},\qquad
\e_\b'G_o^{(\a)}\x_\a \prec t_\b(|z| \vee 1)^{-1}N^{-1/2}.\]
\item (Off-diagonal bounds) For all $i \neq j \in \I_N$ and
$\a \neq \b \in \I_M$,
\[G_{ij} \prec (|z| \vee 1)^{-2}N^{-1/2},
\qquad G_{i\a} \prec t_\a(|z| \vee 1)^{-1}N^{-1/2},
\qquad G_{\a\b} \prec t_\a t_\b (|z| \vee 1)^{-1}N^{-1/2}.\]
\end{enumerate}
\end{lemma}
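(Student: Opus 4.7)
The plan is to prove the four families of bounds (a)--(d) in sequence, since each rests on the earlier ones. Throughout I would work on the high-probability event on which $\|X\| \prec 1$ (standard Gaussian concentration for a matrix of i.i.d.\ $\mathcal{N}(0,1/N)$ entries) and $\spec(X'TX) \subset \supp(\mu_0)_{\delta/2}$ (Theorem \ref{thm:sticktobulk}); combined with the uniform boundedness of $\supp(\mu_0)$ from Proposition \ref{prop:boundedsupportrestate}, this event alone delivers $\dist(z,\spec(X'TX)) \geq c(|z|\vee 1)$ for $z \in U_\delta$, and hence by the spectral representation $\|G_N(z)\| \prec (|z|\vee 1)^{-1}$. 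The bounds on $\|G_o\|$ and $\|G_M\|$ in part (a) then follow directly from the formulas in \eqref{eq:G}, using $\|T\|<C$ and $\|X\|\prec 1$.

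For the quadratic-form bounds (c), I would apply Lemma \ref{lemma:hansonwright} conditionally on $X^{(i)}$ or $X^{(\a)}$, exploiting that $\x_i$ is independent of $G_M^{(i)}$ and $\x_\a$ is independent of $G_N^{(\a)}$. For example, $\cZ_i \prec N^{-1}\|G_M^{(i)}\|_\HS \prec N^{-1}\sqrt{M}\,\|G_M^{(i)}\| \prec N^{-1/2}$ from part (a) applied to the submatrix. The remaining bounds on $\cZ_\a,\cZ_{ij},\cZ_{\a\b}$ and the off-diagonal ``half-quadratic'' forms $\e_i'G_N^{(\a)}\x_\a$, $\x_i'G_M^{(i)}\e_\a$, etc., follow by the same conditioning argument, with the factors $t_\a,t_\b,(|z|\vee 1)^{-1}$ inherited directly from the operator-norm bounds on the conditional submatrix.

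For the diagonal bounds (b), I would invoke the Schur complement identities of Lemma \ref{lemma:schurcomplement}(a). The bound $G_{ii} \prec (|z|\vee 1)^{-1}$ is immediate from part (a) since $|G_{ii}| \leq \|G_N\|$, and $G_{ii}^{-1} \prec |z|\vee 1$ follows from $|G_{ii}^{-1}|=|z+\x_i'G_M^{(i)}\x_i| \leq |z|+\|G_M^{(i)}\|\|\x_i\|^2 \prec |z|\vee 1$. The delicate bounds are $G_{\a\a}\prec t_\a$ and $G_{\a\a}^{-1} \prec t_\a^{-1}$, which reduce to a lower bound of order $1$ on $|1+t_\a \x_\a'G_N^{(\a)}\x_\a|$. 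Writing $\x_\a'G_N^{(\a)}\x_\a = \cZ_\a + N^{-1}\Tr G_N^{(\a)}$, the first summand is $\O(N^{-1/2})$ by part (c), and the second must be approximated by $m_0(z)$ so that Proposition \ref{prop:m0regularoutsiderestate}, which asserts $|1+t_\a m_0(z)|>c$, can be applied. This is the main obstacle: establishing a qualitative local law $N^{-1}\Tr G_N(z) = m_0(z) + o(1)$ for $z \in U_\delta$. I would derive it by noting that on the event from Theorem \ref{thm:sticktobulk}, $N^{-1}\Tr G_N(z) = \int (x-z)^{-1} \mu_{X'TX}(dx)$ where the integrand is uniformly Lipschitz in $x$ on $\supp(\mu_0)_\delta \subset [-C,C]$, and then invoking the weak convergence $\mu_{X'TX} \to \mu_0$ from Theorem \ref{thm:MP} together with a standard rank-one perturbation bound $|N^{-1}\Tr G_N - N^{-1}\Tr G_N^{(\a)}| \leq N^{-1}\|G_N\|\prec N^{-1}$. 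Once the denominator is bounded below, both claims in (b) for $G_{\a\a}$ follow.

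The off-diagonal bounds (d) are then an immediate consequence of the multiplicative identities in Lemma \ref{lemma:schurcomplement}(b): each of $G_{ij},G_{i\a},G_{\a\b}$ factors into two diagonal resolvent entries of the appropriate submatrix (bounded by (b)) times one of the centered quadratic or linear forms already controlled in (c), and multiplication of the resulting estimates reproduces the stated bounds. The only bookkeeping subtlety is that the Schur identities involve $G^{(i)},G^{(\a)},G^{(ij)},G^{(\a\b)}$ rather than $G$ itself, but parts (a)--(c) apply to these submatrices verbatim with the same constants, and a union bound over the $O(1)$ removed indices preserves the $\prec$ estimates.
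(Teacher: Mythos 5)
Parts (a), (c), and (d) of your proposal follow the paper's route: norm bounds from Theorem~\ref{thm:sticktobulk} and $\|X\|\prec 1$, quadratic-form bounds via Lemma~\ref{lemma:hansonwright} conditioned on the minor, and the off-diagonal estimates via Lemma~\ref{lemma:schurcomplement}(b). The problem is in part (b), where you introduce a circular dependency that is not in the paper and is not actually needed.

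You claim that both $G_{\a\a}\prec t_\a$ and $G_{\a\a}^{-1}\prec t_\a^{-1}$ ``reduce to a lower bound of order $1$ on $|1+t_\a\x_\a'G_N^{(\a)}\x_\a|$.'' This is wrong in two ways. First, $G_{\a\a}^{-1}=-t_\a^{-1}-\x_\a'G_N^{(\a)}\x_\a$ only needs an \emph{upper} bound on $|\x_\a'G_N^{(\a)}\x_\a|$, which is immediate from (a) and $\|\x_\a\|\prec 1$. Second, while $G_{\a\a}=-t_\a/(1+t_\a\x_\a'G_N^{(\a)}\x_\a)$ would indeed need a lower bound on the denominator, the paper never takes that route: from~\eqref{eq:G}, $G_{\a\a}=t_\a^2\,\x_\a'G_N\x_\a-t_\a$, so $|G_{\a\a}|\leq t_\a^2\|G_N\|\|\x_\a\|^2+|t_\a|\prec |t_\a|$ follows directly from part (a) with no Schur identity and no control of the denominator. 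Your alternative detour is not merely inefficient; it is circular. The ingredient you propose, $N^{-1}\Tr G_N(z)\approx m_0(z)$ in a quantitative $\prec$-sense for $z\in U_\delta$, is exactly the content of Lemma~\ref{lemma:mNapprox}, whose proof in the paper uses Lemma~\ref{lemma:entrywisebounds} as an input (see e.g.\ \eqref{eq:Giiinv}, \eqref{eq:taGaa}). Moreover, the appeal to Theorem~\ref{thm:MP} does not repair this: the theorem asserts $\mu_{\hSigma}-\mu_0\to 0$ weakly almost surely, where $\mu_0$ is itself $n$-dependent, and such an asymptotic statement carries no rate and no high-probability quantitative control, so it does not produce the $\P[|\cdot|>n^\eps|\cdot|]<n^{-D}$ estimate that the $\prec$ notation requires. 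Replace your part (b) with the direct computation of $G_{\a\a}$ and $G_{\a\a}^{-1}$ from~\eqref{eq:G} and Lemma~\ref{lemma:schurcomplement}(a), using only part (a) and $\|\x_\a\|\prec 1$, and the rest of your argument stands.
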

\begin{proof}
By Theorem \ref{thm:sticktobulk}, $\spec(X'TX) \subset
\supp(\mu_0)_{\delta/2}$ holds with high probability. On this event,
$\|G_N\| \leq C\min(1/\delta,1/|z|)$. As $\|X\| \prec 1$, part (a)
follows from (\ref{eq:G}).

For (b), the bounds on $G_{ii}$ and $G_{\a\a}$ follow from (\ref{eq:G}) and
part (a). The bounds on $G_{ii}^{-1}$ and
$G_{\a\a}^{-1}$ follow from Lemma \ref{lemma:schurcomplement}(a), 
$\|G_N^{(\a)}\| \prec 1$ and $\|G_M^{(i)}\| \prec 1$ in part (a),
and $\|\x_i\| \prec 1$ and $\|\x_\a\| \prec 1$.

For (c), note that part (a) implies $\|G_M^{(i)}\|_\HS \prec N^{1/2}$
and $\|G_N^{(\a)}\|_\HS \prec (|z| \vee 1)^{-1}N^{1/2}$. Then
the bounds for $\cZ_i$ and $\cZ_\a$ follow from Lemma \ref{lemma:hansonwright}
applied conditionally on $X^{(i)}$ and $X^{(\a)}$. For $\cZ_{ij}$, as $\x_i$ is
independent of $G_M^{(ij)}\x_j$, we have that $\x_i'G_M^{(ij)}\x_j$ is Gaussian
conditional on $X^{(i)}$ and $|\x_i'G_M^{(ij)}\x_j| \prec
N^{-1/2}\|G_M^{(ij)}\x_j\| \prec N^{-1/2}\|G_M^{(ij)}\| \prec N^{-1/2}$. The
remaining five bounds are similar.

Finally, (d) follows from Lemma \ref{lemma:schurcomplement}(b) and parts (b)
and (c).
\end{proof}

\subsection{Linear functions of the resolvent}
We prove Lemma \ref{lemma:matrixlocallaw}.
Let $\E_i$ and $\E_\a$ be the partial expectations over column $\x_i$ and row
$\x_\a$ of $X$. For $S \subset \I_N$ or $S \subset \I_M$, let
$\E_S$, $\Q_S$, and $\prec_\ell$ be as in Section \ref{subsec:fluctuationavg}.
Note that $\E_i[\cZ_i]=0$, $\E_\a[\cZ_\a]=0$, and
$\E_\a[\cZ_{\a\b}]=\E_\b[\cZ_{\a\b}]=0$. We verify that these quantities
satisfy the conditions of Lemma \ref{lemma:fluctuationavg} (with $N$ or with $M
\asymp N$ in place of $n$).

\begin{lemma}\label{lemma:Zgood}
For $z \in U_\delta^\C$, each $\cZ_* \in \{\cZ_i,\cZ_\a,\cZ_{\a\b}\}$, and some
constants $C_1,C_2,\ldots>0$, we have $\E[|\cZ_*|^\ell] \leq N^{C_\ell}$ for
each $\ell>0$. Furthermore, for any constant $\ell>0$,
\begin{enumerate}[(a)]
\item For $S \subset \I_N$ with $i \notin S$ and $|S| \leq \ell$,
$\Q_S\cZ_i \prec_\ell N^{-1/2-|S|/2}$.
\item For $S \subset \I_M$ with $\a \notin S$ and $|S| \leq \ell$,
$\Q_S\cZ_\a \prec_\ell N^{-1/2-|S|/2}$.
\item For $S \subset \I_M$ with $\a,\b \notin S$ and $|S| \leq \ell$,
$\Q_S\cZ_{\a\b} \prec_\ell N^{-1/2-|S|/2}$.
\end{enumerate}
\end{lemma}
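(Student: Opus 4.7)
The plan is to handle the polynomial moment bound first, then establish the fluctuation estimates (a)--(c) by induction on $|S|$ using the resolvent identities of Lemma \ref{lemma:schurcomplement}.

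For the moment bound, the restriction $z \in U_\delta^\C$ forces $|\Im z| \geq N^{-2}$, so deterministically $\|G_N\| \leq |\Im z|^{-1} \leq N^2$. From (\ref{eq:G}) we then obtain $\|G_o\| \leq C\|X\|\,N^2$ and $\|G_M\| \leq C + C\|X\|^2 N^2$, with identical bounds for $G_N^{(\a)}$, $G_M^{(i)}$, and $G_N^{(\a\b)}$. Since the entries of $X$ are Gaussian with variance $1/N$, every moment of $\|X\|$ is bounded polynomially in $N$, and hence each of $\cZ_i$, $\cZ_\a$, $\cZ_{\a\b}$---being a Gaussian quadratic (or bilinear) form against a matrix of polynomially bounded operator norm---has moments of all orders bounded polynomially in $N$.

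For the fluctuation estimates I focus on part (a); parts (b) and (c) are structurally identical. The argument is by induction on $s=|S|$, with base case $s=0$ given directly by Lemma \ref{lemma:entrywisebounds}(c). For the inductive step, pick $j \in S$ and set $S' = S \setminus \{j\}$. Applying Lemma \ref{lemma:schurcomplement}(c) with $C=j$ to the already-restricted resolvent $G^{(i)}$ yields
\[
(G_M^{(i)})_{\a\b} = (G_M^{(i,j)})_{\a\b} + \frac{(G_o^{(i)})_{\a j}\,(G_o^{(i)})_{\b j}}{G_{jj}^{(i)}}.
\]
Substituting this into the definition of $\cZ_i$ produces the decomposition $\cZ_i = \cZ_i^{(j)} + \cR_{ij}$, where
\[
\cZ_i^{(j)} = \x_i'G_M^{(i,j)}\x_i - N^{-1}\Tr G_M^{(i,j)}
\]
is independent of $\x_j$ (so $Q_j\cZ_i^{(j)}=0$) and the rank-one remainder
\[
\cR_{ij} = \frac{(\x_i'G_o^{(i)}\e_j)^2 - N^{-1}\sum_\g (G_o^{(i)})_{\g j}^2}{G_{jj}^{(i)}}
\]
satisfies $\cR_{ij} \prec N^{-1}$ by the off-diagonal bounds of Lemma \ref{lemma:entrywisebounds}(d) together with the diagonal lower bound from part (b). This already settles the case $s=1$. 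For $s \geq 2$, the same identity is applied to each resolvent block appearing in $\cR_{ij}$ to successively remove the indices in $S'$, with each removal contributing an additional factor of $N^{-1/2}$ through the size of the off-diagonal entries. After $|S'|$ iterations, combined with the initial $N^{-1}$ from $\cR_{ij}$, this yields $Q_S\cZ_i \prec_\ell N^{-1/2 - |S|/2}$.

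The principal obstacle will be uniformly tracking the factors of $N^{-1/2}$ through the polynomial-in-$N$ number of rank-one correction terms produced by repeated application of the resolvent identity, while controlling the denominators $G_{kk}^{(\cdot)}$ from below via part (b) of Lemma \ref{lemma:entrywisebounds}. For part (b) one uses the analogous identity to remove a column index $\g \in \I_M$ from $G_N^{(\a)}$, and for part (c) the initial step splits the bilinear form $\x_\a'G_N^{(\a\b)}\x_\b$ by making both the $\x_\a$- and $\x_\b$-dependences explicit via two applications of Lemma \ref{lemma:schurcomplement}(c); the inductive scheme for removing the remaining indices of $S$ then proceeds identically.
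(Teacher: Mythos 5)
Your approach is essentially the same as the paper's: the moment bound is handled by the deterministic resolvent estimate $\|G_N\| \leq |\Im z|^{-1} \leq N^2$ valid on $U_\delta^\C$ together with polynomial Gaussian moment bounds, and the fluctuation estimates are obtained by recursively peeling off the indices in $S$ using Lemma~\ref{lemma:schurcomplement}(c). Two remarks, however.

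First, there is a bookkeeping simplification you missed that the paper uses: since $N^{-1}\Tr G_M^{(i)}$ is independent of $\x_i$, one has $\cZ_i=\Q_i[\x_i'G_M^{(i)}\x_i]$ and hence $\Q_S\cZ_i=\Q_{S\cup\{i\}}[\x_i'G_M^{(i)}\x_i]$. This lets one expand only the \emph{quadratic form} $\x_i'G_M^{(i)}\x_i$ and never track the trace corrections separately; the centering is absorbed into the extra operator $\Q_i$. In your version, $\cR_{ij}$ carries the piece $N^{-1}\sum_\g (G_o^{(i)})_{\g j}^2/G_{jj}^{(i)}$, and to continue the recursion you would have to expand this sum (or its reformulation via $\|G_o^{(i)}\e_j\|^2=(G_{jj}^{(i)})^2\,\x_j'(G_M^{(ij)})^2\x_j$) with respect to each remaining $\x_k$. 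This is doable but creates exactly the kind of clutter the trick avoids.

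Second, your worry about ``uniformly tracking the factors of $N^{-1/2}$ through the polynomial-in-$N$ number of rank-one correction terms'' is a misconception. The recursion is carried out at the level of \emph{scalars} such as $\x_i'G_o^{(i)}\e_j$, $G_{jk}^{(\cT)}$, and $(G_{jj}^{(\cT)})^{-1}$, and each application of the resolvent identity replaces one such scalar by a bounded number of new ones. Since $|S|\leq\ell$ is a fixed constant, the total number of summands produced is an $\ell$-dependent constant $C_\ell$, not a polynomial in $N$; this is precisely what the paper's remainder $R(S)$ encodes, and it is what makes the argument close. One final point: passing from a bound on $R(S)$ (or your $\cR$-type remainder) to a bound on $\Q_S$ of it requires controlling partial expectations $\E_\cT$, which is exactly the role of Lemma~\ref{lemma:expectation} and the reason the polynomial moment bound is established first; you should cite it at that step.
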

\begin{proof}
Let $C_\ell>0$ denote an $\ell$-dependent constant that may change from instance
to instance. Taking the expectation first over $\x_i$, we have
$\E[|\x_i'G_M^{(i)}\x_i|^\ell] \leq \E[\|G_M^{(i)}\|^\ell\|\x_i\|^{2\ell}] \leq
C_\ell\E[\|G_M^{(i)}\|^\ell]$.
Note that $\|G_N^{(i)}\| \leq 1/|\Im z| \leq N^2$ for $z \in U_\delta^\C$,
so $\|G_M^{(i)}\| \leq C(N^2\|X^{(i)}\|^2+1)$ by (\ref{eq:G}). Then
$\E[|\x_i'G_M^{(i)}\x_i|^\ell] \leq N^{C_\ell}$ follows.
Also $\E[|N^{-1}\Tr G_M^{(i)}|^\ell]
\leq C_\ell\E[\|G_M^{(i)}\|^\ell] \leq N^{C_\ell}$,
so $\E[|\cZ_i|^\ell] \leq N^{C_\ell}$.
Similar arguments show $\E[|\cZ_\a|^\ell] \leq N^{C_\ell}$ and
$\E[|\cZ_{\a\b}|^\ell] \leq N^{C_\ell}$.

For the remaining statements, the argument is similar to the type of resolvent
expansion performed in \cite{bloemendaletal}. We begin with (a):
For $S=\emptyset$, this follows from Lemma \ref{lemma:entrywisebounds}.
For $|S| \geq 1$, observe that
$\cZ_i=\Q_i[\x_i'G_M^{(i)}\x_i]$, so $\Q_S[\cZ_i]=\Q_{S \cup
\{i\}}[\x_i'G_M^{(i)}\x_i]$. Define
\[G_{\x j}^{(i)}=\sum_{\a \in \I_M} X_{i \a}G_{\a j}^{(i)}=\x_i'G_o^{(i)}\e_j.\]
Suppose $j \in S$. We may
apply Lemma \ref{lemma:schurcomplement}(c) to write
\[\x_i'G_M^{(i)}\x_i=\sum_{\a,\b} X_{\a i}G_{\a\b}^{(i)}X_{\b
i}=L(\{j\})+R(\{j\})\]
where
\[L(\{j\})=\sum_{\a,\b} X_{\a i}G_{\a\b}^{(ij)}X_{\b i}=\x_i'G_M^{(ij)}\x_i,
\qquad R(\{j\})=\sum_{\a,\b} X_{\a i}\frac{G_{\a j}^{(i)}
G_{\b j}^{(i)}}{G_{jj}^{(i)}}X_{\b i}
=\frac{(G_{\x j}^{(i)})^2}{G_{jj}^{(i)}}.\]
Here, $L(\{j\})$ no longer depends on $\x_j$.
Note that Lemma \ref{lemma:schurcomplement}(c) yields, for $j \neq k$,
\begin{equation}\label{eq:Gijkidentities}
G_{\x j}=G_{\x j}^{(k)}+\frac{G_{\x k}G_{jk}}{G_{kk}},\qquad
\frac{1}{G_{jj}}=\frac{1}{G_{jj}^{(k)}}-\frac{G_{jk}^2}
{G_{jj}G_{jj}^{(k)}G_{kk}}.
\end{equation}
Then if $|S| \geq 2$ and $k \in S$, let us apply these identities
to the numerator and denominator of $R(\{j\})$ to further write
\[\x_i'G_M^{(i)}\x_i=L(\{j,k\})+R(\{j,k\}),\]
where
\[L(\{j,k\})=L(\{j\})+\frac{(G_{\x j}^{(ik)})^2}{G_{jj}^{(ik)}}\]
collects terms which no longer depend on at least one of $\x_j$ or $\x_k$,
and the remainder is
\[R(\{j,k\})=-(G_{\x j}^{(ik)})^2
\frac{(G_{jk}^{(i)})^2}{G_{jj}^{(i)}G_{jj}^{(ik)}G_{kk}^{(i)}}
+G_{\x j}^{(ik)}\frac{G_{\x k}^{(i)}G_{jk}^{(i)}}{G_{kk}^{(i)}}
\frac{1}{G_{jj}^{(i)}}+\frac{G_{\x k}^{(i)}G_{jk}^{(i)}}{G_{kk}^{(i)}}
G_{\x j}^{(i)}\frac{1}{G_{jj}^{(i)}}.\]
Recursively using (\ref{eq:Gijkidentities}) to apply this procedure for each
index in $S$, we obtain
\[\x_i'G_M^{(i)}\x_i=L(S)+R(S),\]
where
\begin{itemize}
\item Each term of $L(S)$ does not depend on at least one of the columns
$(\x_j:j \in S)$.
\item $R(S)$ is a sum of at most $C_\ell$ summands, each summand a product of
at most $C_\ell$ terms, for a constant $C_\ell$ depending only on $\ell$ (the
maximum size of $S$).
\item Each summand of $R(S)$ is a product of 2 terms of the
form $G_{\x j}^{(\cT)}$, some number $m \geq |S|-1$ of terms of the
form $G_{jk}^{(\cT)}$ for $j \neq k$, and $m+1$ terms of the form
$(G_{jj}^{(\cT)})^{-1}$. Here $i \in \cT \subseteq S$.
\end{itemize}
We observe that $\Q_{S \cup \{i\}}[L(S)]=0$. Applying
$G_{\x j}^{(\cT)} \prec_\ell (|z| \vee 1)^{-1}N^{-1/2}$,
$G_{jk}^{(\cT)} \prec_\ell (|z| \vee 1)^{-1}N^{-1/2}$, and
$(G_{jj}^{(\cT)})^{-1} \prec_\ell |z| \vee 1$, we have
$R(S) \prec_\ell N^{-(|S|+1)/2}$.
Then part (a) follows from Lemma \ref{lemma:expectation} and the bound
\[\Q_{S \cup \{i\}}[R(S)]=\left(\prod_{j \in S \cup \{i\}}
(1-\E_j)\right)[R(S)]
 \leq \sum_{\cT:\cT \subseteq S \cup \{i\}} |\E_\cT[R(S)]| \prec_\ell R(S).\]

The proof of part (b) is similar: Define
\[\vG_{\a\b}=\frac{G_{\a\b}}{|t_\a t_\b|^{1/2}},\qquad
\vG_{\b\x}^{(\a)}=\sum_{i \in \I_N} \frac{G_{\b i}}{|t_\b|^{1/2}}
X_{i\a}=|t_\b|^{-1/2}\e_\b'G_o^{(\a)}\x_\a.\]
We apply Lemma \ref{lemma:schurcomplement}(c) in the forms
\begin{equation}\label{eq:vGidentities}
\x_\a'G^{(\a)}_N\x_\a=\x_\a'G_N^{(\a\b)}\x_\a
+\frac{(\vG_{\b\x}^{(\a)})^2}{\vG_{\b\b}^{(\a)}},\qquad
\vG_{\a\b}=\vG_{\a\b}^{(\g)}+\frac{\vG_{\a\g}\vG_{\b\g}}{\vG_{\g\g}},
\qquad \frac{1}{\vG_{\b\b}}=\frac{1}{\vG_{\b\b}^{(\g)}}
-\frac{(\vG_{\b\g})^2}{\vG_{\b\b}\vG_{\b\b}^{(\g)}\vG_{\g\g}}.
\end{equation}
This allows us to write, for each $S \subset \I_M$ with $|S| \geq 1$ and $\a
\notin S$,
\[\x_\a'G_N^{(\a)}\x_\a=L(S)+R(S),\]
where $L(S)$ contains terms not depending on at least one row $(\x_\b:\b \in
S)$, and each summand of $R(S)$ is a product of 2 terms of the form $\vG_{\b
\x}^{(\cT)}$, $m \geq |S|-1$ terms of the form $\vG_{\b\g}^{(\cT)}$ for $\b \neq
\g$, and $m+1$ terms of the form $(\vG_{\b\b}^{(\cT)})^{-1}$. Applying
$\vG_{\b\x}^{(\cT)} \prec N^{-1/2}$, $\vG_{\b\g}^{(\cT)} \prec N^{-1/2}$, and
$(\vG_{\b\b}^{(\cT)})^{-1} \prec 1$, we obtain part (b). The argument for
part (c) is similar and omitted for brevity.
\end{proof}

Define the empirical Stieltjes transform
\[m_N(z)=N^{-1} \Tr G_N(z)=N^{-1}\Tr (X'TX-z\Id)^{-1}.\]
We next establish a bound on $m_N-m_0$ for $z$ separated from $\supp(\mu_0)$.
We follow \cite{bloemendaletal,knowlesyin}, although for simplicity we will
use the result of Theorem \ref{thm:sticktobulk} to establish ``stability''
of the Marcenko-Pastur equation, rather than proving this directly using the
stochastic continuity argument of \cite{bloemendaletal}.
\begin{lemma}\label{lemma:mNapprox}
Let $z \in U_\delta^\C$. Then $m_N(z)-m_0(z) \prec N^{-1}$.
\end{lemma}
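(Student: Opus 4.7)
The plan is to derive an approximate Marcenko--Pastur equation for $m_N(z)$ with error $\O(N^{-1})$, then invert it by a stability argument. By Lemma \ref{lemma:reduction} I may assume $T$ is diagonal and $z \in U_\delta^\C$. The starting point is the Schur complement identities from Lemma \ref{lemma:schurcomplement}(a):
$$G_{ii}=-\frac{1}{z+\x_i'G_M^{(i)}\x_i},\qquad G_{\a\a}=-\frac{t_\a}{1+t_\a\x_\a'G_N^{(\a)}\x_\a}.$$
I will decompose each quadratic form as its conditional mean plus the centered fluctuation $\cZ_i$ or $\cZ_\a$. Lemma \ref{lemma:schurcomplement}(c) together with the off-diagonal estimates of Lemma \ref{lemma:entrywisebounds}(d) then show that the cosmetic differences $N^{-1}\Tr G_M^{(i)}-N^{-1}\Tr G_M$ and $N^{-1}\Tr G_N^{(\a)}-m_N$ are $\prec N^{-1}$, and can be absorbed into the error at the end.

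I will then average the two identities over $i$ and over $\a$. A Taylor expansion, valid because Lemma \ref{lemma:entrywisebounds}(b) gives lower bounds on $|z+\x_i'G_M^{(i)}\x_i|$ and $|1+t_\a\x_\a'G_N^{(\a)}\x_\a|$, produces linear terms proportional to $N^{-1}\sum_i\cZ_i$ and $N^{-1}\sum_\a u_\a\cZ_\a$ with deterministic weights $u_\a=t_\a^2/(1+t_\a m_N)^2$ of bounded magnitude. The key ingredient is fluctuation averaging: Lemma \ref{lemma:Zgood}(a,b) verifies the hypotheses of Lemma \ref{lemma:fluctuationavg}(a) for $\cZ_i$ and $\cZ_\a$, which improves the naive bound $\prec N^{-1/2}$ by an extra factor of $N^{-1/2}$, giving $N^{-1}\sum_i\cZ_i\prec N^{-1}$ and $N^{-1}\sum_\a u_\a\cZ_\a\prec N^{-1}$. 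The quadratic Taylor remainders are also $\prec N^{-1}$ since $|\cZ_i|^2,|\cZ_\a|^2\prec N^{-1}$ pointwise. Combining the two averaged identities (the first relating $m_N$ to $N^{-1}\Tr G_M$, the second relating $N^{-1}\Tr G_M$ to $N^{-1}\Tr T(\Id+m_N T)^{-1}$) produces the perturbed Marcenko--Pastur equation
$$z+\frac{1}{m_N(z)}-\frac{1}{N}\Tr\Big[T\bigl(\Id+m_N(z)T\bigr)^{-1}\Big]=\O(N^{-1}).$$

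Finally, I will subtract the exact equation (\ref{eq:MP}) for $m_0$ and apply the mean-value theorem to the left-hand side viewed as a function of $m$. Its derivative at $m=m_0$ equals $m_0^{-2}-N^{-1}\Tr T^2(\Id+m_0T)^{-2}=1/\partial_z m_0(z)$, which by Lemma \ref{lemma:continuityina}(a) is bounded with reciprocal bounded away from zero on $U_\delta$. An a priori qualitative closeness $m_N-m_0=o(1)$ on a high-probability event, derived by combining Theorem \ref{thm:sticktobulk} with the Stieltjes transform representation of the empirical spectral measure of $X'TX$, guarantees that the mean-value point lies in a neighborhood of $m_0$ on which the derivative remains nonvanishing; solving the linearized equation then gives $m_N(z)-m_0(z)\prec N^{-1}$. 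The main obstacle is this stability step: when $\Im z$ is as small as $N^{-2}$, one must carefully propagate the qualitative a priori control of $m_N$ before extracting the sharp $N^{-1}$ bound. The averaging steps, by contrast, are largely mechanical once Lemmas \ref{lemma:fluctuationavg} and \ref{lemma:Zgood} are in place.
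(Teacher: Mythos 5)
Your overall roadmap — Schur complement identities, Taylor expansion around $m_N$, fluctuation averaging, and then a stability/inversion step — is exactly the paper's, but there is one genuine gap in the fluctuation averaging step and a smaller soft spot in the stability argument.

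The gap: you claim the linear term $N^{-1}\sum_\a u_\a \cZ_\a$ with $u_\a = t_\a^2/(1+t_\a m_N)^2$ has \emph{deterministic} weights, and you apply Lemma \ref{lemma:fluctuationavg}(a) directly. But $m_N$ is random, so the weights $u_\a$ are random, and Lemma \ref{lemma:fluctuationavg}(a) is proved only for deterministic $u$ (the moment calculation there factors the products $u_\i = \prod_a u_{i_a}$ out of the expectation, which is simply false for random weights). As written, this step does not give the extra $N^{-1/2}$ gain. The paper resolves this by a bootstrap: first use only the pointwise bounds $\cZ_i, \cZ_\a \prec N^{-1/2}$ and $(1+t_\a m_N)^{-1} \prec 1$ to get $z - z_0(m_N) \prec N^{-1/2}$, hence (via the stability step) $m_N - m_0 \prec N^{-1/2}$. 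This permits replacing $m_N$ by $m_0$ in the weights at the cost of an $\O(N^{-1})$ error in the averaged sum, after which the weights $t_\a^2/(1+t_\a m_0)^2$ really are deterministic (and bounded by Proposition \ref{prop:m0regularoutsiderestate}), and Lemma \ref{lemma:fluctuationavg}(a) applies. Your argument needs this intermediate weak-bound pass.

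Secondary point: your stability step appeals to an a priori ``qualitative closeness $m_N-m_0=o(1)$'' to control where the mean-value point lies, citing Theorem \ref{thm:sticktobulk} plus the Stieltjes transform representation, but Theorem \ref{thm:sticktobulk} only confines the spectrum to $\supp(\mu_0)_\delta$; it does not by itself give $m_N \approx m_0$ quantitatively. The paper sidesteps the need for a priori closeness entirely: from $|z - z_0(m_N)|$ small and $\Im z \geq N^{-\tau+\eps}$ it shows $\Im z_0(m_N) > 0$, so by the \emph{uniqueness} of the $\C^+$ solution of the Marcenko--Pastur equation (Theorem \ref{thm:MP}) one has $m_0(z_0(m_N)) = m_N$ exactly, and then integrates the bound $|\partial_z m_0| \leq C$ along the short path from $z$ to $z_0(m_N)$; for tiny $\Im z$ (down to $N^{-2}$) one first shifts $z$ up to imaginary part $N^{-\tau+\eps}$ and uses Lipschitz continuity of both $m_0$ and $m_N$ on the high-probability event $\spec(X'TX) \subset \supp(\mu_0)_{\delta/2}$. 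Also note only the lower bound $z_0'(m_0) = 1/\partial_z m_0 \geq 1/C$ is needed for inversion; the claim that $1/\partial_z m_0$ is \emph{bounded above} on $U_\delta$ is false for large $|z|$ (where $\partial_z m_0 \to 0$), but this does not affect the argument.
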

\begin{proof}
Recall the function
\[z_0(m)=-\frac{1}{m}+\frac{1}{N}\sum_\a \frac{t_\a}{1+t_\a m}.\]
We first establish the following claim: If for all $z \in U_\delta^\C$ and a
constant $\tau>0$ we have
\begin{equation}\label{eq:zdiff}
z-z_0(m_N(z)) \prec N^{-\tau},
\end{equation}
then also for all $z \in U_\delta^\C$ we have
\begin{equation}\label{eq:mdiff}
m_0(z)-m_N(z) \prec N^{-\tau}.
\end{equation}
Indeed, fix any constants $\eps,D>0$. Suppose first
that $\Im z \geq N^{-\tau+\eps}$. Let $\cE$ be the event where
$|z-z_0(m_N)|<N^{-\tau+\eps/2}$, which holds with probability at least
$1-N^{-D}$ by (\ref{eq:zdiff}). On $\cE$ we have $\Im z_0(m_N)>0$, so
Theorem \ref{thm:MP} guarantees that $m_0(z_0(m_N))$ is the
unique root $m \in \C^+$ to the equation $z_0(m_N)=z_0(m)$. Thus
$m_0(z_0(m_N))=m_N$. Applying $|\partial_z m_0| \leq C$
for all $z \in U_\delta$ and
integrating this bound along a path from $z$ to $z_0(m_N)$, we obtain
\[|m_0(z)-m_N(z)|=|m_0(z)-m_0(z_0(m_N))|<CN^{-\tau+\eps/2}.\]
Now suppose $\Im z \in (0,N^{-\tau+\eps})$.
Let $\tilde{z}$ be such that $\Re \tilde{z}=\Re z$ and $\Im
\tilde{z}=N^{-\tau+\eps}$. By the preceding argument,
$|m_0(\tilde{z})-m_N(\tilde{z})|<CN^{-\tau+\eps/2}$ with probability
at least $1-N^{-D}$. Apply again $|\partial_z m_0| \leq C$, and also
$|\partial_z m_N| \leq C$ on the event $\spec(X'TX) \subset
\supp(\mu_0)_{\delta/2}$, which holds with probability $1-N^{-D}$ by Theorem
\ref{thm:sticktobulk}. Then
\[|m_0(z)-m_N(z)| \leq |m_0(z)-m_0(\tilde{z})|+|m_0(\tilde{z})-m_N(\tilde{z})|
+|m_N(\tilde{z})-m_N(z)|<CN^{-\tau+\eps/2}\]
with probability $1-2N^{-D}$. The same arguments hold by conjugation symmetry
for $\Im z<0$, and hence in all cases we obtain (\ref{eq:mdiff}).

It remains to establish (\ref{eq:zdiff}) for $\tau=1$.
Applying Lemma \ref{lemma:schurcomplement}(a),
\begin{equation}\label{eq:Gii}
G_{ii}^{-1}=-z-\x_i'G_M^{(i)}\x_i=-z-N^{-1}\Tr
G_M^{(i)}-\cZ_i.
\end{equation}
Next, applying Lemma \ref{lemma:schurcomplement}(c),
\[N^{-1}\Tr G_M^{(i)}=N^{-1}\sum_\a G_{\a\a}^{(i)}=N^{-1}\sum_\a\left(G_{\a\a}
-\frac{G_{i\a}^2}{G_{ii}}\right)=N^{-1}\Tr G_M-G_{ii}^{-1}N^{-1}\sum_\a
G_{i\a}^2.\]
Then applying the bounds $G_{ii}^{-1} \prec |z| \vee 1$ and
$G_{i\a} \prec (|z| \vee 1)^{-1/2}N^{-1/2}$,
\begin{equation}\label{eq:Giiinv}
G_{ii}^{-1}=-z-N^{-1}\Tr G_M-\cZ_i+\O(N^{-1}).
\end{equation}
Applying $\cZ_i \prec N^{-1/2}$ and $G_{jj} \prec (|z| \vee 1)^{-1}$,
this yields $G_{jj}/G_{ii}-1=G_{jj}(G_{ii}^{-1}-G_{jj}^{-1}) \prec
(|z| \vee 1)^{-1}N^{-1/2}$ for all $i,j \in \I_N$. Then for all $i \in
\I_N$, we have $m_N/G_{ii}-1 \prec (|z| \vee 1)^{-1}N^{-1/2}$, and hence also
\begin{equation}\label{eq:GiimN}
G_{ii}/m_N-1 \prec (|z| \vee 1)^{-1}N^{-1/2}.
\end{equation}
Expanding $G_{ii}^{-1}$ around $m_N^{-1}$,
\begin{align*}
N^{-1}\sum_i G_{ii}^{-1}&=N^{-1}\sum_i \left(m_N^{-1}-m_N^{-2}(G_{ii}-m_N)
+m_N^{-2}G_{ii}^{-1}(G_{ii}-m_N)^2\right)\\
&=m_N^{-1}+N^{-1}m_N^{-2}\sum_i G_{ii}^{-1}(G_{ii}-m_N)^2.
\end{align*}
Thus
\[m_N^{-1}=N^{-1}\sum_i G_{ii}^{-1}\left(1-(G_{ii}/m_N-1)^2\right).\]
Applying (\ref{eq:GiimN}), $G_{ii}^{-1} \prec |z| \vee 1$, and
(\ref{eq:Giiinv}), we obtain
\begin{align}
m_N^{-1}=N^{-1}\sum_i G_{ii}^{-1}+\O(N^{-1})
=-z-N^{-1}\Tr G_M-N^{-1}\sum_i \cZ_i+\O(N^{-1}).
\label{eq:mNexpand}
\end{align}

Next, applying Lemma \ref{lemma:schurcomplement} and the bounds
$G_{\a\a}^{-1} \prec t_\a^{-1}$ and
$G_{i\a} \prec t_\a N^{-1/2}$, we obtain analogously to (\ref{eq:Giiinv})
\begin{equation}\label{eq:taGaa}
t_\a G_{\a\a}^{-1}=-1-t_\a\x_\a'G_N^{(\a)}\x_\a
    =-1-t_\a m_N-t_\a \cZ_\a+\O(t_\a N^{-1}).
\end{equation}
Since $G_{\a\a}/t_\a \prec 1$ and $\cZ_\a \prec N^{-1/2}$, the above implies
in particular $(1+t_\a m_N)^{-1} \prec 1$ and
$(1+t_\a m_N+t_\a \cZ_\a)^{-1} \prec 1$. Then multiplying the above by
$G_{\a\a}(1+t_\a m_N+t_\a \cZ_\a)^{-1}$, we obtain
\begin{align}
G_{\a\a}&=-\frac{t_\a}{1+t_\a m_N+t_\a \cZ_\a}+\O(t_\a^2N^{-1})\nonumber\\
&=-\frac{t_\a}{1+t_\a m_N}+\frac{t_\a^2 \cZ_\a}{(1+t_\a m_N)^2}
+\O(t_\a^2N^{-1}).\label{eq:Gaa}
\end{align}
As $\Tr G_M=\sum_\a G_{\a\a}$, combining with (\ref{eq:mNexpand}) and recalling
the definition of $z_0(m)$, we have
\[z-z_0(m_N)=-N^{-1}\sum_i \cZ_i-N^{-1}\sum_\a \frac{t_\a^2}{(1+t_\a
m_N)^2}\cZ_\a+\O(N^{-1}).\]

Applying first the bounds $\cZ_i \prec N^{-1/2}$, $\cZ_\a \prec N^{-1/2}$, and
$(1+t_\a m_N)^{-1} \prec 1$ to the above, we obtain
$z-z_0(m_N) \prec N^{-1/2}$. Then (\ref{eq:mdiff}) yields $m_0-m_N \prec
N^{-1/2}$. This allows us to replace $m_N$ by $m_0$ with an additional
$\O(N^{-1})$ error, yielding
\[z-z_0(m_N)=-N^{-1}\sum_i \cZ_i-N^{-1}\sum_\a \frac{t_\a^2}{(1+t_\a
m_0)^2}\cZ_\a+\O(N^{-1}).\]
By Proposition \ref{prop:m0regularoutsiderestate},
$|t_\a|^2/|1+t_\a m_0|^2 \leq C$ for a constant 
$C>0$. Then Lemmas \ref{lemma:fluctuationavg}(a) and
\ref{lemma:Zgood}(a--b) imply that both sums above are $\O(N^{-1})$.
So (\ref{eq:zdiff}) holds with $\tau=1$.
\end{proof}

We record an estimate from the above proof for future use:
\begin{lemma}\label{lemma:Gaa}
For $z \in U_\delta^\C$ and each $\a \in \I_M$,
\[\frac{G_{\a\a}-\Pi_{\a\a}}{t_\a^2}=\frac{1}{(1+t_\a
m_0)^2}\cZ_\a+\O(N^{-1}).\]
\end{lemma}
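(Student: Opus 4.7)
The plan is to extract the desired estimate directly from the computation already carried out inside the proof of Lemma \ref{lemma:mNapprox}, and then swap $m_N$ for $m_0$ using the bound $m_N-m_0 \prec N^{-1}$ that lemma provides. No new resolvent expansion is needed.

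First, I would recall equation (\ref{eq:Gaa}) from the proof of Lemma \ref{lemma:mNapprox}, which says
\[G_{\a\a}=-\frac{t_\a}{1+t_\a m_N}+\frac{t_\a^2\cZ_\a}{(1+t_\a m_N)^2}+\O(t_\a^2N^{-1}).\]
The target is the analogous identity with $m_N$ replaced by $m_0$, since $\Pi_{\a\a}=-t_\a/(1+t_\a m_0)$ by the definition (\ref{eq:Deltarewrite}) of $\Pi_M$ for diagonal $T$. Since $|t_\a|$ is bounded and, by Proposition \ref{prop:m0regularoutsiderestate}, $|1+t_\a m_0|>c$ for $z\in U_\delta$, I would argue that the same lower bound holds for $1+t_\a m_N$ with high probability by combining Lemma \ref{lemma:mNapprox} and $|t_\a|<C$.

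Next, I would perform the replacement in each of the two explicit summands. For the first,
\[\frac{t_\a}{1+t_\a m_0}-\frac{t_\a}{1+t_\a m_N}=\frac{t_\a^2(m_N-m_0)}{(1+t_\a m_0)(1+t_\a m_N)}=\O(t_\a^2 N^{-1}),\]
using $m_N-m_0\prec N^{-1}$ and the bounds on the denominators. For the second,
\[\frac{t_\a^2\cZ_\a}{(1+t_\a m_N)^2}-\frac{t_\a^2\cZ_\a}{(1+t_\a m_0)^2}
=t_\a^2\cZ_\a\cdot\frac{(1+t_\a m_0)^2-(1+t_\a m_N)^2}{(1+t_\a m_0)^2(1+t_\a m_N)^2},\]
and since $\cZ_\a\prec N^{-1/2}$ by Lemma \ref{lemma:entrywisebounds}(c) and the numerator difference is $\O(t_\a N^{-1})$, this error is $\O(t_\a^2 N^{-3/2})=\O(t_\a^2 N^{-1})$. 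Combining, dividing by $t_\a^2$, and using $\Pi_{\a\a}=-t_\a/(1+t_\a m_0)$ yields the claim.

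There is no real obstacle here; the only care required is to ensure that the invocations of Proposition \ref{prop:m0regularoutsiderestate} and Lemma \ref{lemma:mNapprox} are valid on $U_\delta^\C$ (to which we have already reduced by Lemma \ref{lemma:reduction}), so that $1+t_\a m_N$ stays bounded away from zero with $\prec$-probability and the $\O$ error arithmetic goes through.
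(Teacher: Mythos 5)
Your proposal is correct and follows essentially the same route as the paper's own proof: both start from (\ref{eq:Gaa}), invoke $m_N-m_0\prec N^{-1}$ from Lemma~\ref{lemma:mNapprox} together with the lower bound $|1+t_\a m_0|\geq c$ from Proposition~\ref{prop:m0regularoutsiderestate}, and perform the $m_N\to m_0$ replacement term by term before dividing by $t_\a^2$. You merely spell out the error arithmetic more explicitly than the paper does.
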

\begin{proof}
This follows from (\ref{eq:Gaa}), upon applying $m_N-m_0 \prec N^{-1}$
and $|1+t_\a m_0| \geq c$ from Proposition \ref{prop:m0regularoutsiderestate} to yield
$-t_\a/(1+t_\a m_N)=\Pi_{\a\a}+\O(t_\a^2 N^{-1})$ and
$t_\a^2\cZ_\a/(1+t_\a m_N)^2=t_\a^2\cZ_\a/(1+t_\a m_0)^2+\O(t_\a^2N^{-1})$.
\end{proof}

We now conclude the proof of Lemma \ref{lemma:matrixlocallaw}: By Lemma
\ref{lemma:reduction}, we may consider the case where $F=T$ is diagonal and
invertible, and $z \in U_\delta^\C$. We write
\[\Tr \Delta V=\sum_\a \Delta_{\a\a}V_{\a\a}+\sum_{\a \neq \b}
\Delta_{\a\b}V_{\a\b}.\]
Applying (\ref{eq:Deltarewrite}) and Lemma \ref{lemma:Gaa},
\[\sum_\a \Delta_{\a\a}V_{\a\a}
=\sum_\a \frac{G_{\a\a}-\Pi_{\a\a}}{t_\a^2}V_{\a\a}
=\sum_\a \frac{1}{(1+t_\a m_0)^2}V_{\a\a}\cZ_\a+\O(N^{-1/2}\|V\|_\HS).\]
As $|1+t_\a m_0|>c$ by Proposition \ref{prop:m0regularoutsiderestate}, we may apply
Lemmas \ref{lemma:fluctuationavg}(a) and \ref{lemma:Zgood}(b) to yield
\[\sum_\a \Delta_{\a\a}V_{\a\a} \prec N^{-1/2}\|V\|_\HS.\]

For the off-diagonal contribution, by (\ref{eq:Deltarewrite}) and Lemma
\ref{lemma:schurcomplement}(b) we have
\[\sum_{\a \neq \b} \Delta_{\a\b}V_{\a\b}
=\sum_{\a \neq \b}\frac{G_{\a\b}}{t_\a t_\b}V_{\a\b}
=\sum_{\a \neq \b} \frac{G_{\a\a}G_{\b\b}^{(\a)}}{t_\a t_\b}V_{\a\b}
\cZ_{\a\b}.\]
As $\cZ_{\a\b} \prec N^{-1/2}$ and $\sum_{\a \neq \b} |V_{\a\b}| \leq M
(\sum_{\a \neq \b} |V_{\a\b}|^2)^{1/2} \prec N\|V\|_\HS$, we may make
$\O(N^{-1})$ adjustments of the coefficients of $V_{\a\b}\cZ_{\a\b}$ while
incurring an $\O(N^{-1/2}\|V\|_\HS)$ error in the sum. Then, applying
$G_{\b\b}^{(\a)}/t_\b=G_{\b\b}/t_\b+\O(N^{-1})$ by Lemma
\ref{lemma:schurcomplement}(c), followed by Lemma \ref{lemma:Gaa}, we have
\[\sum_{\a \neq \b} \Delta_{\a\b}V_{\a\b}=\mathrm{I}+\mathrm{II}+\mathrm{III}
+\mathrm{IV}+\O(N^{-1/2}\|V\|_\HS)\]
where
\begin{align*}
\mathrm{I}&=\sum_{\a \neq \b}
\frac{\Pi_{\a\a}\Pi_{\b\b}}{t_\a t_\b}V_{\a\b}\cZ_{\a\b},\\
\mathrm{II}&=\sum_{\a \neq \b} \frac{t_\a}{(1+t_\a m_0)^2} \cZ_\a
\frac{\Pi_{\b\b}}{t_\b}V_{\a\b}\cZ_{\a\b},\\
\mathrm{III}&=\sum_{\a \neq \b} \frac{\Pi_{\a\a}}{t_\a}
\frac{t_\b}{(1+t_\b m_0)^2} \cZ_\b V_{\a\b}\cZ_{\a\b},\\
\mathrm{IV}&=\sum_{\a \neq \b} \frac{t_\a}{(1+t_\a m_0)^2} \cZ_\a
\frac{t_\b}{(1+t_\b m_0)^2} \cZ_\b V_{\a\b}\cZ_{\a\b}.
\end{align*}

Lemmas \ref{lemma:fluctuationavg}(b) and \ref{lemma:Zgood}(c) yield
$\mathrm{I} \prec N^{-1/2}\|V\|_\HS$. For $\mathrm{II}$, first fixing $\a$ and
summing over $\beta$, Lemmas \ref{lemma:fluctuationavg}(a) and
\ref{lemma:Zgood}(c) yield
\[\sum_{\b \notin \{\a\}} \frac{\Pi_{\b\b}}{t_\b} V_{\a\b} \cZ_{\a\b}
\prec N^{-1/2}\|\v_\a\|\]
where $\v_\a$ is row $\a$ of $V$. Then, applying $\cZ_\a \prec N^{-1/2}$,
\[\mathrm{II} \prec \sum_\a N^{-1}\|\v_\a\| \prec N^{-1/2}\|V\|_\HS.\]
Similarly $\mathrm{III} \prec N^{-1/2}\|V\|_\HS$. Finally,
the direct bounds $\cZ_\a,\cZ_\b,\cZ_{\a\b} \prec N^{-1/2}$ and
$\sum_{\a \neq \b} |V_{\a\b}| \prec N\|V\|_\HS$ yield $\mathrm{IV} \prec
N^{-1/2}\|V\|_\HS$. Thus $\Tr \Delta V \prec N^{-1/2}\|V\|_\HS$ as desired.

\subsection{Quadratic functions of the resolvent}
We now prove Lemma \ref{lemma:secondorderapprox}.
We will apply the fluctuation averaging mechanism, Lemma
\ref{lemma:fluctuationavg}, to the quantities
\[\cY_{\a\b\g\r}=(\x_\a'G_N^{(\a\b\g\r)}\x_\b)(\x_\g'G_N^{(\a\b\g\r)}\x_\r),
\qquad \cY_{\a\b\g}=(\x_\a'G_N^{(\a\b\g)}\x_\b)(\x_\a'G_N^{(\a\b\g)}\x_\g),\]
\[\tilde{\cY}_{\a\b\g}=G_{\a\a}^{(\b\g)}
(\x_\a'G_N^{(\a\b\g)}\x_\b)(\x_\a'G_N^{(\a\b\g)}\x_\g),\]
\[\cY_{\a\b,1}=\cZ_{\a\b}^2-N^{-1}\x_\a'(G_N^{(\a\b)})^2\x_\a, \qquad
\cY_{\a\b,2}=N^{-1}\x_\a'(G_N^{(\a\b)})^2\x_\a-N^{-2}\Tr[(G_N^{(\a\b)})^2]\]
where $\a,\b,\g,\r$ above are distinct.
Note that each $\cY_*$ above satisfies $\cY_* \prec N^{-1}$, and furthermore
\[\E_\a[\cY_{\a\b\g\r}]=\E_\b[\cY_{\a\b\g\r}]
=\E_\g[\cY_{\a\b\g\r}]=\E_\r[\cY_{\a\b\g\r}]=0,\]
\[\E_\b[\cY_{\a\b\g}]=\E_\g[\cY_{\a\b\g}]=0,\qquad
\E_\b[\tilde{\cY}_{\a\b\g}]=\E_\g[\tilde{\cY}_{\a\b\g}]=0,
\qquad \E_\b[\cY_{\a\b,1}]=0, \qquad \E_\a[\cY_{\a\b,2}]=0.\]
The following verifies the conditions of Lemma \ref{lemma:fluctuationavg} (with
$N$ or with $M \asymp N$ in place of $n$).

\begin{lemma}\label{lemma:Zprodgood}
For $z \in U_\delta^\C$, each $\cY_* \in
\{\cY_{\a\b\g\r},\cY_{\a\b\g},\tilde{\cY}_{\a\b\g},
\cY_{\a\b,1},\cY_{\a\b,2}\}$, and some constants $C_1,C_2,\ldots>0$,
we have $\E[|\cY_*|^\ell] \leq N^{C_\ell}$ for all $\ell>0$. Furthermore,
for any constant $\ell>0$,
\begin{enumerate}[(a)]
\item For $S \subset \I_M$ with $\a,\b,\g,\r \notin S$ and $|S| \leq \ell$,
$\Q_S \cY_{\a\b\g\r} \prec_\ell N^{-1-|S|/2}$.
\item For $S \subset \I_M$ with $\a,\b,\g \notin S$ and $|S| \leq \ell$,
$\Q_S \cY_{\a\b\g} \prec_\ell N^{-1-|S|/2}$.
\item For $S \subset \I_M$ with $\a,\b,\g \notin S$ and $|S| \leq \ell$,
$\Q_S \tilde{\cY}_{\a\b\g} \prec_\ell N^{-1-|S|/2}$.
\item For $S \subset \I_M$ with $\a,\b \notin S$ and $|S| \leq \ell$,
$\Q_S \cY_{\a\b,1} \prec_\ell N^{-1-|S|/2}$.
\item For $S \subset \I_M$ with $\a,\b \notin S$ and $|S| \leq \ell$,
$\Q_S \cY_{\a\b,2} \prec_\ell N^{-1-|S|/2}$.
\end{enumerate}
\end{lemma}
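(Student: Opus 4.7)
The plan is to follow the strategy used to prove Lemma \ref{lemma:Zgood}, adapting its resolvent-expansion machinery to quadratic (rather than linear) functionals of the resolvent entries. The polynomial moment bounds $\E[|\cY_*|^\ell] \leq N^{C_\ell}$ follow from the crude estimate $\|G_N^{(\cT)}\| \leq 1/|\Im z| \leq N^2$ on $U_\delta^\C$ together with polynomial moment bounds on $\|\x_\a\|$ and $\|\x_i\|$, exactly as in the opening of the proof of Lemma \ref{lemma:Zgood}.

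Next I would establish the base case $S=\emptyset$, i.e.\ $\cY_* \prec N^{-1}$. For the doubly centered $\cY_{\a\b\g\r}$, conditioning on $X^{(\a\b\g\r)}$ makes $\x_\a'G_N^{(\a\b\g\r)}\x_\b$ a bilinear Gaussian form in two independent vectors, hence of size $\prec N^{-1}\|G_N^{(\a\b\g\r)}\|_\HS \prec N^{-1/2}$ by Lemma \ref{lemma:hansonwright}; the factor $\x_\g'G_N^{(\a\b\g\r)}\x_\r$ is of the same size and independent of the first after this conditioning, giving $\cY_{\a\b\g\r} \prec N^{-1}$. The bounds for $\cY_{\a\b\g}$ and $\tilde{\cY}_{\a\b\g}$ are analogous, using that the two factors are each centered in $\x_\b$ and $\x_\g$ (conditional on all else). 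For $\cY_{\a\b,1}$, the identity $\E_\b[\cZ_{\a\b}^2] = N^{-1}\x_\a'(G_N^{(\a\b)})^2\x_\a$ makes $\cY_{\a\b,1}$ precisely the $\x_\b$-centering of a Gaussian quadratic form, and Hanson-Wright conditional on $X^{(\b)}$ gives $\cY_{\a\b,1} \prec N^{-1}\|G_N^{(\a\b)}\x_\a\|^2 \prec N^{-1}$. For $\cY_{\a\b,2}$, Hanson-Wright applied directly to $\x_\a$ yields the same $N^{-1}$ scale.

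For general $S \subset \I_M$ with $|S| \leq \ell$, I would apply the recursive resolvent expansion built in the proof of Lemma \ref{lemma:Zgood}(b), using identity (\ref{eq:vGidentities}), to each factor of the form $\x_\a'G_N^{(\cdots)}\x_\b$ or $(G_N^{(\cdots)})_{\a\b}$ appearing in $\cY_*$. This writes $\cY_* = L(S)+R(S)$, where every summand of $L(S)$ is independent of at least one $\x_\d$ with $\d \in S$ (hence $\Q_S L(S) = 0$), and $R(S)$ is a sum of at most $C_\ell$ summands, each carrying the two baseline $\prec N^{-1/2}$ factors plus, for every $\d \in S$, one additional factor of the form $\vG^{(\cT)}_{\d\x}$, $\vG^{(\cT)}_{\d\g}$, or $(\vG^{(\cT)}_{\d\d})^{-1}$ generated by (\ref{eq:vGidentities}). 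The first two are $\prec N^{-1/2}$ while the last is $\prec 1$ and always appears in balanced pairs with two of the former, yielding $R(S) \prec N^{-1-|S|/2}$. Lemma \ref{lemma:expectation} then gives $\Q_S \cY_* = \Q_S R(S) \prec_\ell N^{-1-|S|/2}$, as required.

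The main obstacle is the combinatorial bookkeeping for the quadratic factors: after $|S|$ applications of (\ref{eq:vGidentities}) to each factor of $\cY_*$ and then expanding the product, many cross terms arise, and each must be correctly classified either as belonging to $L(S)$ (vanishing under the appropriate conditional expectation) or to $R(S)$ (carrying two baseline small factors and $|S|$ additional $N^{-1/2}$ factors). The subtlest case is $\cY_{\a\b,1}$, where the $N^{-1}$ baseline comes not from multiplying two small factors but from Gaussian concentration of a single squared form around its $\x_\b$-conditional variance, so the expansion must track the $\d$-dependence of both $G_N^{(\a\b)}$ and $(G_N^{(\a\b)})^2$; writing $(G_N^{(\a\b)})^2 = -\partial_z G_N^{(\a\b)}$ reduces the second-power expansion to a single-power one, after which the same bookkeeping applies.
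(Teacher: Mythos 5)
Your overall strategy---moment bounds via the crude estimate $\|G_N^{(\cT)}\| \leq 1/|\Im z| \leq N^2$ on $U_\delta^\C$, base case via Hanson--Wright, and the recursive Schur-complement expansion (\ref{eq:vGidentities}) for general $S$---matches the paper's proof and correctly handles parts (a), (b), and (c). The genuine deviation, and the place where your proposal has a gap, is the treatment of the squared resolvent $(G_N^{(\a\b)})^2$ in parts (d) and (e).

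The paper does not differentiate in $z$. It uses the elementary algebraic identities $\x_\a'(G_N^{(\a\b)})^2\x_\a = \sum_i(\x_\a'G_N^{(\a\b)}\e_i)^2$ and $\Tr[(G_N^{(\a\b)})^2] = \sum_{i,j}(\e_i'G_N^{(\a\b)}\e_j)^2$, which write the second resolvent power as a sum of squares of \emph{single}-resolvent entries $\x_\a'G_N^{(\a\b)}\e_i$ and $\e_i'G_N^{(\a\b)}\e_j$, each of size $\O(N^{-1/2})$; the exact same expansion then applies term by term, producing the new numerator factors the paper lists. Your alternative $(G_N^{(\a\b)})^2 = -\partial_z G_N^{(\a\b)}$ first has a sign error: from $\partial_z(A-z\Id)^{-1} = (A-z\Id)^{-2}$ the correct identity is $(G_N^{(\a\b)})^2 = +\partial_z G_N^{(\a\b)}$. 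More substantively, the assertion that ``the same bookkeeping applies'' after passing to the $z$-derivative is not immediate. You would either have to differentiate every term of the $L(S)+R(S)$ decomposition and establish fresh bounds for objects like $\partial_z \vG^{(\cT)}_{\eta\x_\a}$ and $\partial_z(\vG^{(\cT)}_{\eta\eta})^{-1}$, or invoke Cauchy's integral formula and prove the $\Q_S$ bound uniformly over a small contour, which interacts awkwardly with the $|\Im z| \geq N^{-2}$ cutoff that defines $U_\delta^\C$ and requires a net-and-continuity argument you have not supplied. You also discuss the $\partial_z$ reduction only for $\cY_{\a\b,1}$, but the same $(G_N^{(\a\b)})^2$ appears (twice) in $\cY_{\a\b,2}$ via $\Tr[(G_N^{(\a\b)})^2]$, which would need the same treatment. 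I would replace the $\partial_z$ step with the basis-vector identities above: they keep the whole argument purely algebraic and make the induction on $|S|$ go through with no extra machinery. One minor imprecision elsewhere: each $\d \in S$ can produce \emph{several} new factors in the expansion, not just one; the invariant actually tracked is that the remainder has $m \geq |S|+2$ numerator factors (each $\O(N^{-1/2})$) and $m-2$ denominator factors (each $\O(1)$), which is what yields $R(S) \prec_\ell N^{-1-|S|/2}$.
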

\begin{proof}
The bound $\E[|\cY_*|^\ell] \leq N^{C_\ell}$ follows from
$\|G_N^{(*)}\| \leq 1/|\Im z| \leq N^2$ for $z \in U_\delta^\C$ and the same
arguments as in Lemma \ref{lemma:Zgood}.

The remainder of the proof is also similar to Lemma \ref{lemma:Zgood}(b--c):
For (a), define
\[\vG_{\a\b}=\frac{G_{\a\b}}{|t_\a t_\b|^{1/2}}, \qquad
\vG_{\eta\x_\a}^{(\a\b\g\r)}=\e_\eta'G_o^{(\a\b\g\r)}\x_\a/|t_\eta|^{1/2}=
\sum_i
\frac{G_{\eta i}^{(\a\b\g\r)}}{|t_\eta|^{1/2}} X_{i\a}.\]
We iterate through $S$ and expand both of the terms
$\x_\a'G_N^{(\a\b\g\r)}\x_\b$ and $\x_\g'G_N^{(\a\b\g\r)}\x_\r$ simultaneously,
using Lemma \ref{lemma:schurcomplement}(c) in the form
\[\x_\a'G_N^{(\a\b\g\r)}\x_\b=\x_\a'G_N^{(\a\b\g\r\eta)}\x_\b
+\frac{\vG_{\eta \x_\a}^{(\a\b\g\r)}\vG_{\eta
\x_\b}^{(\a\b\g\r)}}{\vG_{\eta\eta}^{(\a\b\g\r)}}\]
together with the latter two identities of (\ref{eq:vGidentities}). This yields,
for each $S \subset \I_M$ with $|S| \geq 1$ and $\a,\b,\g,\r \notin S$, a
decomposition
\[\cY_{\a\b\g\r}=L(S)+R(S)\]
where $L(S)$ collects terms not depending on at least one row $(\x_\eta:\eta \in
S)$, and each summand of $R(S)$ is a product of $m \geq |S|+2$ ``numerator''
terms of the form $\x_\a'G_N^{(\cT)}\x_\b$, $\vG_{\eta\x_\a}^{(\cT)}$, or
$\vG_{\eta\nu}^{(\cT)}$ and $m-2$ ``denominator'' terms of the form
$(\vG_{\eta\eta}^{(\cT)})^{-1}$.
Each numerator term is $\O(N^{-1/2})$ and each
denominator term is $\O(1)$, so $R(S) \prec_\ell N^{-1-|S|/2}$. Then
$\Q_S[\cY_{\a\b\g\r}]=\Q_S[R(S)] \prec_\ell N^{-1-|S|/2}$.

The same argument holds for parts (b--e). For (c), we expand also the term
$G_{\a\a}^{(\b\g)}$ together with the other two terms,
using the second identity of (\ref{eq:vGidentities}).
For (d) and (e) we apply this argument separately to
\[\cZ_{\a\b}^2=(\x_\a'G_N^{(\a\b)}\x_\b)^2, \qquad
\x_\a'(G_N^{(\a\b)})^2\x_\a=\sum_i (\x_\a'G_N^{(\a\b)}\e_i)^2, \qquad
\Tr [(G_N^{(\a\b)})^2]=\sum_{i,j} (\e_i'G_N^{(\a\b)}\e_j)^2\]
and to each of the above summands.
We obtain the additional numerator terms
$\x_\a'G_N^{(\a\b)}\e_i$, $\e_i'G_N^{(\a\b)}\e_j$, and
$\vG_{i\eta}^{(\cT)}=G_{i\eta}^{(\cT)}/|t_\eta|^{1/2}$ in the expansions,
which are still $\O(N^{-1/2})$.
\end{proof}

Using this, we prove Lemma \ref{lemma:secondorderapprox}. By Lemma
\ref{lemma:reduction}, we may consider $F=T$ diagonal and invertible, and $z \in
U_\delta^\C$. For convenience,
let us normalize so that $\|V\|=\|W\|=1$. We write
\begin{equation}\label{eq:TrDVDW}
\Tr \Delta V \Delta W=\sum_{\a,\b,\g,\r} \Delta_{\a\b}V_{\b\g}\Delta_{\g\r}
W_{\r\a}.
\end{equation}
Fixing $\a,\b$, summing over $\g,\r$, and applying
Lemma \ref{lemma:matrixlocallaw},
\begin{equation}\label{eq:gtsum}
\sum_{\g,\r \notin \{\a,\b\}} V_{\b\g}\Delta_{\g\r}W_{\r\a} \prec N^{-1/2}.
\end{equation}
Combining with the bound $\Delta_{\a\b} \prec N^{-1/2}$ and then summing over
$\a,\b$, we see that $\Tr \Delta V \Delta W \prec N$.

We show that the terms where $\a=\b$, $\a=\g$, $\b=\r$, and/or $\g=\r$ are
$\O(1)$: Consider first $\a=\b$. Applying again (\ref{eq:gtsum}) and
$\Delta_{\a\a} \prec N^{-1/2}$, we obtain
\[\sum_{\a,\g,\r} \Delta_{\a\a}V_{\a\g}\Delta_{\g\r}W_{\r\a}
\prec \sum_\a |\Delta_{\a\a}| N^{-1/2} \prec 1.\]
Symmetrically, for $\g=\r$,
\[\sum_{\a,\b,\g} \Delta_{\a\b}V_{\b\g}\Delta_{\g\g}W_{\g\a} \prec 1.\]
For $\a=\g$, let $\v_\a$ and $\w_\a$ be columns
$\a$ of $V$ and $W$. Summing first over $\b,\r$, we
have by Lemma \ref{lemma:matrixlocallaw}
\[\sum_{\a,\b,\r} \Delta_{\a\b}V_{\b\a}\Delta_{\a\r}W_{\r\a}
=\sum_\a \e_\a' \Delta \v_\a \e_\a'\Delta \w_\a
\prec \sum_\a N^{-1/2} \cdot N^{-1/2} \prec 1.\]
Symmetrically, for $\b=\r$,
\[\sum_{\a,\b,\g} \Delta_{\a\b}V_{\b\g}\Delta_{\g\b}W_{\b\a} \prec 1.\]
When two or more of these four cases hold simultaneously, for example
$\a=\b=\g$ or $\a=\g,\;\b=\r$ or $\a=\b=\g=\r$, we have
\begin{align*}
\sum_{\a,\r} \Delta_{\a\a}V_{\a\a}\Delta_{\a\r}W_{\r\a}
&\prec \sum_\a |\Delta_{\a\a}V_{\a\a}|N^{-1/2} \prec 1,\\
\sum_{\a,\b} \Delta_{\a\b}V_{\b\a}\Delta_{\a\b}W_{\b\a}
&\prec N^{-1} \sum_{\a,\b} |V_{\b\a}W_{\b\a}| \prec 1,\\
\sum_\a \Delta_{\a\a}V_{\a\a}\Delta_{\a\a}W_{\a\a}
&\prec N^{-1}\sum_\a |V_{\a\a}W_{\a\a}| \prec 1.
\end{align*}
Then we may eliminate all of these cases from the sum (\ref{eq:TrDVDW}) by
inclusion-exclusion.

The remaining cases are when possibly $\a=\r$ and/or $\b=\g$. We write the
contributions from these cases as
\[\mathrm{I}=\sum_{\a,\b,\g,\r}^* \Delta_{\a\b}
V_{\b\g}\Delta_{\g\r}W_{\r\a},\qquad
\mathrm{II}=\sum_{\a,\b,\g}^* \Delta_{\a\b}V_{\b\g}\Delta_{\g\a}W_{\a\a},\]
\[\mathrm{III}=\sum_{\a,\b,\r}^* \Delta_{\a\b}V_{\b\b}\Delta_{\b\r}W_{\r\a},
\qquad \mathrm{IV}=\sum_{\a,\b}^* \Delta_{\a\b}V_{\b\b}\Delta_{\b\a}W_{\a\a},\]
where summations with $*$ denote that all indices are restricted to be distinct.

For $\mathrm{I}$, let us first apply
\[G_{\a\a}/t_\a=\Pi_{\a\a}/t_\a+\O(N^{-1/2}),\qquad
G_{\b\b}^{(\a)}/t_\b=\Pi_{\b\b}/t_\b+\O(N^{-1/2})\]
from Lemma \ref{lemma:Gaa}.
Then, by (\ref{eq:Deltarewrite}) and Lemma \ref{lemma:schurcomplement}(b), we
have
\begin{equation}\label{eq:Deltaab}
\Delta_{\a\b}=\frac{G_{\a\b}}{t_\a t_\b}
=\frac{G_{\a\a}G_{\b\b}^{(\a)}}{t_\a t_\b}\cZ_{\a\b}
=\frac{\Pi_{\a\a}}{t_\a}\frac{\Pi_{\b\b}}{t_\b}\cZ_{\a\b}+\O(N^{-1}).
\end{equation}
Note that (\ref{eq:gtsum}) holds also with the summation further restricted to
$\g \neq \r$, by Lemma \ref{lemma:matrixlocallaw}. Then, as the $\O(N^{-1})$
remainder term in (\ref{eq:Deltaab})
does not depend on $\g$ and $\r$,
\begin{equation}\label{eq:Istep}
\mathrm{I}=\sum_{\a,\b,\g,\r}^*
\left(\frac{\Pi_{\a\a}}{t_\a}\frac{\Pi_{\b\b}}{t_\b}\cZ_{\a\b}\right)
V_{\b\g}\Delta_{\g\r}W_{\r\a}+\O(N^{1/2}).
\end{equation}
For fixed $\g$ and $\r$,
applying Lemma \ref{lemma:fluctuationavg}(b) and
Lemma \ref{lemma:Zgood}(c), we also have
\[\sum_{\a,\b \notin \{\g,\r\}}^* 
\left(\frac{\Pi_{\a\a}}{t_\a}\frac{\Pi_{\b\b}}{t_\b}
\cZ_{\a\b}\right)V_{\b\g}W_{\r\a} \prec N^{-1/2}.\]
Then we may apply the approximation (\ref{eq:Deltaab}) to $\Delta_{\g\r}$
in (\ref{eq:Istep}), yielding
\begin{equation}\label{eq:IZ}
\mathrm{I}=\sum_{\a,\b,\g,\r}^*
\left(\frac{\Pi_{\a\a}}{t_\a}\frac{\Pi_{\b\b}}{t_\b}
\cZ_{\a\b}\right)V_{\b\g}\left(\frac{\Pi_{\g\g}}{t_\g}
\frac{\Pi_{\r\r}}{t_\r}\cZ_{\g\r}\right)W_{\r\a}
+\O(N^{1/2}).
\end{equation}

Next, let us apply Lemma \ref{lemma:schurcomplement}(b--c) and write
\begin{align}
\cZ_{\a\b}&=\x_\a'G_N^{(\a\b)}\x_\b\nonumber\\
&=\sum_{i,j} X_{\a i}X_{\b j}\left(G_{ij}^{(\a\b\g)}+
\frac{G_{i\g}^{(\a\b)}G_{j\g}^{(\a\b)}}{G_{\g\g}^{(\a\b)}}\right)\nonumber\\
&=\sum_{i,j} X_{\a i}X_{\b j}\left(G_{ij}^{(\a\b\g)}+
G_{\g\g}^{(\a\b)}(\e_i'G_N^{(\a\b\g)}\x_\g)(\e_j'G_N^{(\a\b\g)}\x_\g)\right)
\nonumber\\
&=\x_\a'G_N^{(\a\b\g)}\x_\b+G_{\g\g}^{(\a\b)}
(\x_\a'G_N^{(\a\b\g)}\x_\g)(\x_\b'G_N^{(\a\b\g)}\x_\g).\label{eq:Zabremoveg}
\end{align}
Applying these steps again to the first term of (\ref{eq:Zabremoveg}), we obtain
$\cZ_{\a\b}=\cZ_{\a\b}^{(\g\r)}+R_{\a\b\g\r}$ where
\begin{align*}
\cZ_{\a\b}^{(\g\r)}&=\x_\a'G_N^{(\a\b\g\r)}\x_\b,\\
R_{\a\b\g\r}&=G_{\g\g}^{(\a\b)}
(\x_\a'G_N^{(\a\b\g)}\x_\g)(\x_\b'G_N^{(\a\b\g)}\x_\g)
+G_{\r\r}^{(\a\b\g)}
(\x_\a'G_N^{(\a\b\g\r)}\x_\r)(\x_\b'G_N^{(\a\b\g\r)}\x_\r).
\end{align*}
By Lemmas \ref{lemma:fluctuationavg}(b) and \ref{lemma:Zprodgood}(c),
for fixed $\g$ and $\r$, we have
\[\sum_{\a,\b \notin \{\g,\r\}}^*
\left(\frac{\Pi_{\a\a}}{t_\a}\frac{\Pi_{\b\b}}{t_\b}R_{\a\b\g\r}\right)
V_{\b\g}W_{\r\a} \prec N^{-1}.\]
Then, applying this and $\cZ_{\g\r} \prec N^{-1/2}$, (\ref{eq:IZ}) holds
with $\cZ_{\a\b}$ replaced by $\cZ_{\a\b}^{(\g\r)}$. Applying the symmetric
argument to replace $\cZ_{\g\r}$ by $\cZ_{\g\r}^{(\a\b)}$, we obtain
\[\mathrm{I}=\sum_{\a,\b,\g,\r}^*
\left(\frac{\Pi_{\a\a}}{t_\a}\frac{\Pi_{\b\b}}{t_\b}
\cZ_{\a\b}^{(\g\r)}\right)V_{\b\g}\left(\frac{\Pi_{\g\g}}{t_\g}
\frac{\Pi_{\r\r}}{t_\r}\cZ_{\g\r}^{(\a\b)}\right)W_{\r\a}
+\O(N^{1/2}).\]
Recognizing $\cZ_{\a\b}^{(\g\r)}\cZ_{\g\r}^{(\a\b)}=\cY_{\a\b\g\r}$ and
applying Lemmas \ref{lemma:fluctuationavg}(c) and
\ref{lemma:Zprodgood}(a),
the summation above is $\O(1)$. Then $\mathrm{I} \prec N^{1/2}$.

A similar argument holds for $\mathrm{II}$:
Lemma \ref{lemma:matrixlocallaw} yields for fixed $\a,\b$
\[\sum_{\g \notin \{\a,\b\}} V_{\b\g}\Delta_{\g\a}W_{\a\a} \prec N^{-1/2}.\]
Then applying (\ref{eq:Deltaab}),
\[\mathrm{II}=\sum_{\a,\b,\g}^* 
\left(\frac{\Pi_{\a\a}}{t_\a}\frac{\Pi_{\b\b}}{t_\b}
\cZ_{\a\b}\right)V_{\b\g}\Delta_{\g\a}W_{\a\a}+\O(N^{1/2}).\]
For fixed $\a,\g$, Lemmas \ref{lemma:fluctuationavg}(a) and Lemma
\ref{lemma:Zgood}(c) then yield
\[\sum_{\b \notin \{\a,\g\}}
\left(\frac{\Pi_{\a\a}}{t_\a}\frac{\Pi_{\b\b}}{t_\b}
\cZ_{\a\b}\right) V_{\b\g}W_{\a\a} \prec N^{-1/2},\]
so applying (\ref{eq:Deltaab}) again to approximate $\Delta_{\g\a}$ yields
\begin{equation}\label{eq:IIZ}
\mathrm{II}=\sum_{\a,\b,\g}^*
\left(\frac{\Pi_{\a\a}}{t_\a}\frac{\Pi_{\b\b}}{t_\b}
\cZ_{\a\b}\right)V_{\b\g}
\left(\frac{\Pi_{\g\g}}{t_\g}\frac{\Pi_{\a\a}}{t_\a}
\cZ_{\a\g}\right)W_{\a\a}+\O(N^{1/2}).
\end{equation}
Note that
\[\sum_{\b \notin \{\a,\g\}} \frac{\Pi_{\b\b}}{t_\b}G_{\g\g}^{(\a\b)}
(\x_\a'G_N^{(\a\b\g)}\x_\g)(\x_\b'G_N^{(\a\b\g)}\x_\g) V_{\b\g}
\prec N^{-1}\]
by Lemmas \ref{lemma:fluctuationavg}(a) and
\ref{lemma:Zprodgood}(c). Then applying (\ref{eq:Zabremoveg}) and $\cZ_{\a\g}
\prec N^{-1/2}$,
we may replace $\cZ_{\a\b}$ by $\cZ_{\a\b}^{(\g)}=\x_\a'G_N^{(\a\b\g)}\x_\b$
in (\ref{eq:IIZ}). Applying the symmetric argument to replace $\cZ_{\a\g}$ by
$\cZ_{\a\g}^{(\b)}$, we obtain
\[\mathrm{II}=\sum_{\a,\b,\g}^*
\left(\frac{\Pi_{\a\a}}{t_\a}\frac{\Pi_{\b\b}}{t_\b}
\cZ_{\a\b}^{(\g)}\right)V_{\b\g}
\left(\frac{\Pi_{\g\g}}{t_\g}\frac{\Pi_{\a\a}}{t_\a}
\cZ_{\a\g}^{(\b)}\right)W_{\a\a}+\O(N^{1/2}).\]
Recognizing $\cZ_{\a\b}^{(\g)}\cZ_{\a\g}^{(\b)}=\cY_{\a\b\g}$
and applying Lemmas \ref{lemma:fluctuationavg}(b) and
\ref{lemma:Zprodgood}(b),
\[\sum_{\b,\g \notin \{\a\}}^*
\left(\frac{\Pi_{\a\a}}{t_\a}\frac{\Pi_{\b\b}}{t_\b}
\cZ_{\a\b}^{(\g)}\right)V_{\b\g}
\left(\frac{\Pi_{\g\g}}{t_\g}\frac{\Pi_{\a\a}}{t_\a}
\cZ_{\a\g}^{(\b)}\right) \prec N^{-1}\|V\|_\HS \prec N^{-1/2}.\]
Then $\mathrm{II} \prec N^{1/2}$. By symmetry, $\mathrm{III} \prec N^{1/2}$
also.

For $\mathrm{IV}$, a direct bound using (\ref{eq:Deltaab}),
$|V_{\b\b}| \leq 1$, and $|W_{\a\a}| \leq 1$ yields
\[\mathrm{IV}=\sum_{\a,\b}^*
\left(\frac{\Pi_{\a\a}}{t_\a}\frac{\Pi_{\b\b}}{t_\b}
\cZ_{\a\b}\right)^2V_{\b\b}W_{\a\a}+\O(N^{1/2}).\]
Summing first over $\b$, Lemmas \ref{lemma:fluctuationavg}(a) and 
\ref{lemma:Zprodgood}(d) yield
\[\sum_{\b \notin \{\a\}}
\left(\frac{\Pi_{\b\b}}{t_\b}
\right)^2V_{\b\b}\Big(\cZ_{\a\b}^2-\E_\b[\cZ_{\a\b}^2]\Big)
\prec N^{-1/2}.\]
Then summing over $\a$ and applying $|W_{\a\a}| \leq 1$,
\[\mathrm{IV}=\sum_{\a,\b}^*
\left(\frac{\Pi_{\a\a}}{t_\a}\frac{\Pi_{\b\b}}{t_\b}\right)^2\E_\b[\cZ_{\a\b}^2]
V_{\b\b}W_{\a\a}+\O(N^{1/2}).\]
Next, summing first over $\a$, Lemmas \ref{lemma:fluctuationavg}(a) and 
\ref{lemma:Zprodgood}(e) yield
\[\sum_{\a \notin \{\b\}} \left(\frac{\Pi_{\a\a}}{t_\a}\right)^2W_{\a\a}
\Big(\E_\b[\cZ_{\a\b}^2]-\E_{\a\b}[\cZ_{\a\b}^2]\Big) \prec N^{-1/2}.\]
Summing over $\b$ and applying $|V_{\b\b}| \leq 1$,
\[\mathrm{IV}=\sum_{\a,\b}^* 
\left(\frac{\Pi_{\a\a}}{t_\a}\frac{\Pi_{\b\b}}{t_\b}
\right)^2 \E_{\a\b}[\cZ_{\a\b}^2]V_{\b\b}W_{\a\a}+\O(N^{1/2}).\]

Finally, let us verify
\begin{equation}\label{eq:EabZab2}
\E_{\a\b}[\cZ_{\a\b}^2]=N^{-1}\partial_z m_0+\O(N^{-3/2}).
\end{equation}
First note that $\E_{\a\b}[\cZ_{\a\b}^2]=N^{-2}\Tr [(G_N^{(\a\b)})^2]$. Writing
$G_N^{(\a\b)}=G_N+R$, Lemma \ref{lemma:schurcomplement}(c) implies that each
entry of $R$ is $\O(N^{-1})$. Then $\Tr [(G_N^{(\a\b)})^2]=\Tr G_N^2+2\Tr
G_NR+\Tr R^2$. We have
\[\Tr G_NR=\sum_i G_{ii}R_{ii}+\sum_{i \neq j} G_{ij}R_{ij}
\prec \sum_i 1 \cdot N^{-1}+\sum_{i \neq j} N^{-1/2} \cdot N^{-1}
\prec N^{1/2}, \qquad \Tr R^2=\sum_{i,j} R_{ij}^2 \prec 1.\]
Hence $\E_{\a\b}[\cZ_{\a\b}^2]=N^{-2}\Tr G_N^2+\O(N^{-3/2})$. Next, note
that $N^{-1}\Tr G_N^2=\partial_z m_N$ by the spectral representation of $G_N$.
From Lemma \ref{lemma:mNapprox}, $m_N-m_0 \prec N^{-1}$. Applying the same
Lipschitz continuity and Cauchy integral argument as in Section
\ref{subsec:LLN}, we obtain $\partial_z m_N-\partial_z m_0 \prec N^{-1}$, and
hence (\ref{eq:EabZab2}).

Combining these arguments,
\[\Tr \Delta V\Delta W=\mathrm{I}+\mathrm{II}+\mathrm{III}+\mathrm{IV}+\O(1)
=N^{-1}(\partial_z m_0)
\sum_{\a,\b}^* \left(\frac{\Pi_{\a\a}}{t_\a}\frac{\Pi_{\b\b}}{t_\b}\right)^2
V_{\b\b}W_{\a\a}+\O(N^{1/2}).\]
Including the $\a=\b$ case into the sum introduces an $\O(1)$ error.
Then writing $\sum_\b V_{\b\b}(\Pi_{\b\b}/t_\b)^2=\Tr (V[\Id+m_0T]^{-2})$ and
similarly for $W$ concludes the proof.

\bibliographystyle{alpha}
\bibliography{references}

\newcommand{\etalchar}[1]{$^{#1}$}
\begin{thebibliography}{EKYY13b}

\bibitem[Ame85]{amemiya}
Yasuo Amemiya.
\newblock What should be done when an estimated between-group covariance matrix
  is not nonnegative definite?
\newblock {\em The American Statistician}, 39(2):112--117, 1985.

\bibitem[BAC{\etalchar{+}}15]{blowsetal}
Mark~W Blows, Scott~L Allen, Julie~M Collet, Stephen~F Chenoweth, and Katrina
  McGuigan.
\newblock The phenome-wide distribution of genetic variance.
\newblock {\em The American Naturalist}, 186(1):15--30, 2015.

\bibitem[BBC{\etalchar{+}}17]{belinschietal}
Serban~T Belinschi, Hari Bercovici, Mireille Capitaine, Maxime F{\'e}vrier,
  et~al.
\newblock Outliers in the spectrum of large deformed unitarily invariant
  models.
\newblock {\em The Annals of Probability}, 45(6A):3571--3625, 2017.

\bibitem[BBP05]{baiketal}
Jinho Baik, Gerard {Ben~Arous}, and Sandrine P{\'e}ch{\'e}.
\newblock Phase transition of the largest eigenvalue for nonnull complex sample
  covariance matrices.
\newblock {\em The Annals of Probability}, 33(5):1643--1697, 2005.

\bibitem[BEK{\etalchar{+}}14]{bloemendaletal}
Alex Bloemendal, L{\'a}szl{\'o} Erdos, Antti Knowles, Horng-Tzer Yau, and Jun
  Yin.
\newblock Isotropic local laws for sample covariance and generalized {W}igner
  matrices.
\newblock {\em Electronic Journal of Probability}, 19(33):1--53, 2014.

\bibitem[BGGM11]{benaychgeorgesetal}
Florent Benaych-Georges, Alice Guionnet, and Myl{\`e}ne Maida.
\newblock Fluctuations of the extreme eigenvalues of finite rank deformations
  of random matrices.
\newblock {\em Electronic Journal of Probability}, 16:1621--1662, 2011.

\bibitem[BGN11]{benaychgeorgesnadakuditi}
Florent Benaych-Georges and Raj~Rao Nadakuditi.
\newblock The eigenvalues and eigenvectors of finite, low rank perturbations of
  large random matrices.
\newblock {\em Advances in Mathematics}, 227(1):494--521, 2011.

\bibitem[Blo07]{blows}
Mark~W Blows.
\newblock A tale of two matrices: {M}ultivariate approaches in evolutionary
  biology.
\newblock {\em Journal of Evolutionary Biology}, 20(1):1--8, 2007.

\bibitem[BM15]{blowsmcguigan}
Mark~W Blows and Katrina McGuigan.
\newblock The distribution of genetic variance across phenotypic space and the
  response to selection.
\newblock {\em Molecular Ecology}, 24(9):2056--2072, 2015.

\bibitem[BS98]{baisilverstein}
Zhidong Bai and Jack~W Silverstein.
\newblock No eigenvalues outside the support of the limiting spectral
  distribution of large-dimensional sample covariance matrices.
\newblock {\em The Annals of Probability}, 26(1):316--345, 1998.

\bibitem[BS06]{baiksilverstein}
Jinho Baik and Jack~W Silverstein.
\newblock Eigenvalues of large sample covariance matrices of spiked population
  models.
\newblock {\em Journal of Multivariate Analysis}, 97(6):1382--1408, 2006.

\bibitem[BY08]{baiyaoCLT}
Zhidong Bai and Jianfeng Yao.
\newblock Central limit theorems for eigenvalues in a spiked population model.
\newblock {\em Annales de l'Institut Henri Poincar{\'e}, Probabilit{\'e}s et
  Statistiques}, 44(3):447--474, 2008.

\bibitem[BY12]{baiyaogeneralized}
Zhidong Bai and Jianfeng Yao.
\newblock On sample eigenvalues in a generalized spiked population model.
\newblock {\em Journal of Multivariate Analysis}, 106:167--177, 2012.

\bibitem[CMA{\etalchar{+}}18]{colletetal}
Julie~M Collet, Katrina McGuigan, Scott~L Allen, Stephen~F Chenoweth, and
  Mark~W Blows.
\newblock Mutational pleiotropy and the strength of stabilizing selection
  within and between functional modules of gene expression.
\newblock {\em Genetics}, 208(4):1601--1616, 2018.

\bibitem[CR48]{comstockrobinson}
Ralph~E Comstock and Harold~F Robinson.
\newblock The components of genetic variance in populations of biparental
  progenies and their use in estimating the average degree of dominance.
\newblock {\em Biometrics}, 4(4):254--266, 1948.

\bibitem[EKYY13a]{erdoslocalSC}
L{\'a}szl{\'o} Erd{\H{o}}s, Antti Knowles, Horng-Tzer Yau, and Jun Yin.
\newblock The local semicircle law for a general class of random matrices.
\newblock {\em Electronic Journal of Probability}, 18(59):1--58, 2013.

\bibitem[EKYY13b]{erdosER}
L{\'a}szl{\'o} Erd{\H{o}}s, Antti Knowles, Horng-Tzer Yau, and Jun Yin.
\newblock Spectral statistics of {E}rd{\H{o}}s--{R}{\'e}nyi graphs {I}: {L}ocal
  semicircle law.
\newblock {\em The Annals of Probability}, 41(3B):2279--2375, 2013.

\bibitem[EYY11]{erdosbernoulli}
L{\'a}szl{\'o} Erd{\H{o}}s, Horng-Tzer Yau, and Jun Yin.
\newblock Universality for generalized {W}igner matrices with {B}ernoulli
  distribution.
\newblock {\em Journal of Combinatorics}, 2(1):15--81, 2011.

\bibitem[EYY12]{erdosyauyin}
L{\'a}szl{\'o} Erd{\H{o}}s, Horng-Tzer Yau, and Jun Yin.
\newblock Rigidity of eigenvalues of generalized {W}igner matrices.
\newblock {\em Advances in Mathematics}, 229(3):1435--1515, 2012.

\bibitem[FBSG{\etalchar{+}}15]{finucaneetal}
Hilary~K Finucane, Brendan Bulik-Sullivan, Alexander Gusev, et~al.
\newblock Partitioning heritability by functional annotation using genome-wide
  association summary statistics.
\newblock {\em Nature Genetics}, 47(11):1228, 2015.

\bibitem[Fis18]{fisher}
Ronald~A Fisher.
\newblock The correlation between relatives on the supposition of {M}endelian
  inheritance.
\newblock {\em Transactions of the Royal Society of Edinburgh},
  52(02):399--433, 1918.

\bibitem[FJ16]{fanjohnstonebulk}
Zhou Fan and Iain~M Johnstone.
\newblock Eigenvalue distributions of variance components estimators in
  high-dimensional random effects models.
\newblock {\em arXiv preprint 1607.02201v2}, 2016.

\bibitem[FJ17]{fanjohnstoneedges}
Zhou Fan and Iain~M Johnstone.
\newblock Tracy-{W}idom at each edge of real covariance estimators.
\newblock {\em arXiv preprint arXiv:1707.02352v2}, 2017.

\bibitem[FM96]{falconermackay}
Douglas~S Falconer and Trudy F~C Mackay.
\newblock {\em Introduction to Quantitative Genetics}.
\newblock Longman, Harlow, 1996.

\bibitem[HB06]{hineblows}
Emma Hine and Mark~W Blows.
\newblock Determining the effective dimensionality of the genetic
  variance--covariance matrix.
\newblock {\em Genetics}, 173(2):1135--1144, 2006.

\bibitem[HGO10]{houleetal}
David Houle, Diddahally~R Govindaraju, and Stig Omholt.
\newblock Phenomics: {T}he next challenge.
\newblock {\em Nature Reviews Genetics}, 11(12):855--866, 2010.

\bibitem[HH81]{hayeshill}
J~F Hayes and W~G Hill.
\newblock Modification of estimates of parameters in the construction of
  genetic selection indices (`{B}ending').
\newblock {\em Biometrics}, 37(3):483--493, 1981.

\bibitem[HMB14]{hineetal}
Emma Hine, Katrina McGuigan, and Mark~W Blows.
\newblock Evolutionary constraints in high-dimensional trait sets.
\newblock {\em The American Naturalist}, 184(1):119--131, 2014.

\bibitem[Hou10]{houle}
David Houle.
\newblock Numbering the hairs on our heads: {T}he shared challenge and promise
  of phenomics.
\newblock {\em Proceedings of the National Academy of Sciences}, 107(suppl
  1):1793--1799, 2010.

\bibitem[Joh01]{johnstone}
Iain~M Johnstone.
\newblock On the distribution of the largest eigenvalue in principal components
  analysis.
\newblock {\em The Annals of Statistics}, 29(2):295--327, 2001.

\bibitem[KM04]{kirkpatrickmeyerdirect}
Mark Kirkpatrick and Karin Meyer.
\newblock Direct estimation of genetic principal components.
\newblock {\em Genetics}, 168(4):2295--2306, 2004.

\bibitem[KP69]{klotzputter}
Jerome Klotz and Joseph Putter.
\newblock Maximum likelihood estimation of multivariate covariance components
  for the balanced one-way layout.
\newblock {\em The Annals of Mathematical Statistics}, 40(3):1100--1105, 1969.

\bibitem[KY17]{knowlesyin}
Antti Knowles and Jun Yin.
\newblock Anisotropic local laws for random matrices.
\newblock {\em Probability Theory and Related Fields}, 169(1-2):257--352, 2017.

\bibitem[LA83]{landearnold}
Russell Lande and Stevan~J Arnold.
\newblock The measurement of selection on correlated characters.
\newblock {\em Evolution}, 37(6):1210--1226, 1983.

\bibitem[LaM73]{lamotte}
Lynn~R LaMotte.
\newblock Quadratic estimation of variance components.
\newblock {\em Biometrics}, 29(2):311--330, 1973.

\bibitem[Lan79]{lande}
Russell Lande.
\newblock Quantitative genetic analysis of multivariate evolution, applied to
  brain: {B}ody size allometry.
\newblock {\em Evolution}, 33(1):402--416, 1979.

\bibitem[LTBS{\etalchar{+}}15]{lohetal}
Po-Ru Loh, George Tucker, Brendan~K Bulik-Sullivan, et~al.
\newblock Efficient {B}ayesian mixed-model analysis increases association power
  in large cohorts.
\newblock {\em Nature Genetics}, 47(3):284, 2015.

\bibitem[LW98]{lynchwalsh}
Michael Lynch and Bruce Walsh.
\newblock {\em Genetics and Analysis of Quantitative Traits}.
\newblock Sinauer Sunderland, 1998.

\bibitem[MAB15]{mcguiganmutation}
Katrina McGuigan, J~David Aguirre, and Mark~W Blows.
\newblock Simultaneous estimation of additive and mutational genetic variance
  in an outbred population of {D}rosophila serrata.
\newblock {\em Genetics}, 201(3):1239--1251, 2015.

\bibitem[MCM{\etalchar{+}}14]{mcguiganetal}
Katrina McGuigan, Julie~M Collet, Elizabeth~A McGraw, H~Ye Yixin, Scott~L
  Allen, Stephen~F Chenoweth, and Mark~W Blows.
\newblock The nature and extent of mutational pleiotropy in gene expression of
  male {D}rosophila serrata.
\newblock {\em Genetics}, 196(3):911--921, 2014.

\bibitem[Mey91]{meyerREML}
Karin Meyer.
\newblock Estimating variances and covariances for multivariate animal models
  by restricted maximum likelihood.
\newblock {\em Genetics Selection Evolution}, 23(1):67, 1991.

\bibitem[MH05]{mezeyhoule}
Jason~G Mezey and David Houle.
\newblock The dimensionality of genetic variation for wing shape in
  {D}rosophila melanogaster.
\newblock {\em Evolution}, 59(5):1027--1038, 2005.

\bibitem[MK05]{meyerkirkpatricksmoothed}
Karin Meyer and Mark Kirkpatrick.
\newblock Restricted maximum likelihood estimation of genetic principal
  components and smoothed covariance matrices.
\newblock {\em Genetics Selection Evolution}, 37(1):1, 2005.

\bibitem[MK08]{meyerkirkpatrickperils}
Karin Meyer and Mark Kirkpatrick.
\newblock Perils of parsimony: {P}roperties of reduced-rank estimates of
  genetic covariance matrices.
\newblock {\em Genetics}, 180(2):1153--1166, 2008.

\bibitem[MK10]{meyerkirkpatrickbending}
Karin Meyer and Mark Kirkpatrick.
\newblock Better estimates of genetic covariance matrices by ``bending'' using
  penalized maximum likelihood.
\newblock {\em Genetics}, 185(3):1097--1110, 2010.

\bibitem[MLH{\etalchar{+}}15]{moseretal}
Gerhard Moser, Sang~Hong Lee, Ben~J Hayes, Michael~E Goddard, Naomi~R Wray, and
  Peter~M Visscher.
\newblock Simultaneous discovery, estimation and prediction analysis of complex
  traits using a {B}ayesian mixture model.
\newblock {\em PLoS Genetics}, 11(4):e1004969, 2015.

\bibitem[MP67]{marcenkopastur}
Vladimir~A Marcenko and Leonid~Andreevich Pastur.
\newblock Distribution of eigenvalues for some sets of random matrices.
\newblock {\em Sbornik: Mathematics}, 1(4):457--483, 1967.

\bibitem[Nad08]{nadler}
Boaz Nadler.
\newblock Finite sample approximation results for principal component analysis:
  {A} matrix perturbation approach.
\newblock {\em The Annals of Statistics}, 36(6):2791--2817, 2008.

\bibitem[Pau07]{paul}
Debashis Paul.
\newblock Asymptotics of sample eigenstructure for a large dimensional spiked
  covariance model.
\newblock {\em Statistica Sinica}, 17(4):1617--1642, 2007.

\bibitem[Rao72]{rao}
C~Radhakrishna Rao.
\newblock Estimation of variance and covariance components in linear models.
\newblock {\em Journal of the American Statistical Association},
  67(337):112--115, 1972.

\bibitem[Rob59a]{robertsona}
Alan Robertson.
\newblock Experimental design in the evaluation of genetic parameters.
\newblock {\em Biometrics}, 15(2):219--226, 1959.

\bibitem[Rob59b]{robertsonb}
Alan Robertson.
\newblock The sampling variance of the genetic correlation coefficient.
\newblock {\em Biometrics}, 15(3):469--485, 1959.

\bibitem[RV13]{rudelsonvershynin}
Mark Rudelson and Roman Vershynin.
\newblock Hanson-{W}right inequality and sub-gaussian concentration.
\newblock {\em Electronic Communications in Probability}, 18, 2013.

\bibitem[SB95]{silversteinbai}
Jack~W Silverstein and Zhidong Bai.
\newblock On the empirical distribution of eigenvalues of a class of large
  dimensional random matrices.
\newblock {\em Journal of Multivariate Analysis}, 54(2):175--192, 1995.

\bibitem[SC95]{silversteinchoi}
Jack~W Silverstein and Sang-Il Choi.
\newblock Analysis of the limiting spectral distribution of large dimensional
  random matrices.
\newblock {\em Journal of Multivariate Analysis}, 54(2):295--309, 1995.

\bibitem[SCM09]{searleetal}
Shayle~R Searle, George Casella, and Charles~E McCulloch.
\newblock {\em Variance Components}.
\newblock John Wiley \& Sons, 2009.

\bibitem[Sil95]{silverstein}
Jack~W Silverstein.
\newblock Strong convergence of the empirical distribution of eigenvalues of
  large dimensional random matrices.
\newblock {\em Journal of Multivariate Analysis}, 55(2):331--339, 1995.

\bibitem[Spe83]{speed}
T~P Speed.
\newblock Cumulants and partition lattices.
\newblock {\em Australian \& New Zealand Journal of Statistics},
  25(2):378--388, 1983.

\bibitem[SR74]{searlerounsaville}
S~R Searle and T~R Rounsaville.
\newblock A note on estimating covariance components.
\newblock {\em The American Statistician}, 28(2):67--68, 1974.

\bibitem[SS78]{swallowsearle}
William~H Swallow and S~R Searle.
\newblock Minimum variance quadratic unbiased estimation ({MIVQUE}) of variance
  components.
\newblock {\em Technometrics}, 20(3):265--272, 1978.

\bibitem[VHW08]{visscheretal}
Peter~M Visscher, William~G Hill, and Naomi~R Wray.
\newblock Heritability in the genomics era---concepts and misconceptions.
\newblock {\em Nature Reviews Genetics}, 9(4):255--266, 2008.

\bibitem[WB09]{walshblows}
Bruce Walsh and Mark~W Blows.
\newblock Abundant genetic variation+strong selection=multivariate genetic
  constraints: {A} geometric view of adaptation.
\newblock {\em Annual Review of Ecology, Evolution, and Systematics},
  40:41--59, 2009.

\bibitem[Wri35]{wright}
Sewall Wright.
\newblock The analysis of variance and the correlations between relatives with
  respect to deviations from an optimum.
\newblock {\em Journal of Genetics}, 30(2):243--256, 1935.

\bibitem[YLGV11]{yangetal}
Jian Yang, S~Hong Lee, Michael~E Goddard, and Peter~M Visscher.
\newblock {GCTA}: a tool for genome-wide complex trait analysis.
\newblock {\em The American Journal of Human Genetics}, 88(1):76--82, 2011.

\bibitem[ZCS13]{zhouetal}
Xiang Zhou, Peter Carbonetto, and Matthew Stephens.
\newblock Polygenic modeling with {B}ayesian sparse linear mixed models.
\newblock {\em PLoS Genetics}, 9(2):e1003264, 2013.

\end{thebibliography}

\end{document}